\documentclass[reqno]{amsart}

\usepackage{amsmath, amssymb, amsthm}
\usepackage{hyperref}
\usepackage{cleveref}
\usepackage{dsfont}
\usepackage{color}

\newtheorem{theorem}{Theorem}[section]
\newtheorem{lemma}[theorem]{Lemma}
\newtheorem{corollary}[theorem]{Corollary}
\newtheorem{proposition}[theorem]{Proposition}

\theoremstyle{definition}
\newtheorem{definition}[theorem]{Definition}

\newtheorem{assumption}[theorem]{Assumption}

\theoremstyle{remark}
\newtheorem{remark}[theorem]{Remark}

\newcommand{\eps}{\varepsilon}
\newcommand{\E}{\mathbb{E}}
\newcommand{\R}{\mathbb{R}}
\renewcommand{\P}{\mathbb{P}}

\newcommand{\mfc}{\mathfrak{c}}
\newcommand{\mft}{\mathfrak{t}}

\renewcommand{\phi}{\varphi}

\newcommand{\tv}{\mathrm{TV}}
\newcommand{\var}{\mathrm{Var}}
\newcommand{\full}{\mathrm{full}}
\newcommand{\init}{\mathrm{init}}
\newcommand{\ind}{\perp\!\!\!\!\perp} 
\newcommand{\Var}{\operatorname{Var}}
\newcommand{\logit}{\operatorname{logit}}

\usepackage[
backend=biber,
style=numeric,
giveninits=true,
url = false,
doi = false,
isbn =false,
maxbibnames=99,
maxalphanames=99,
sortcites=true
]{biblatex}
\renewbibmacro{in:}{}
\DeclareFieldFormat{title}{#1} 
\DeclareFieldFormat[article]{title}{#1}
\DeclareFieldFormat[inbook]{title}{#1}
\DeclareFieldFormat[incollection]{title}{#1}
\AtBeginBibliography{\small}

\title[One-Step TMLE for WATEs]{One-step TMLE for weighted average treatment effects}
\date{\today}
\author{Yang Liu}
\author{Patrick Lopatto}
\author{Ivana Malenica}

\begin{document}
\begin{abstract}
We consider Targeted Maximum Likelihood Estimation (TMLE) of weighted average treatment effects (WATEs), a class of causal estimands that reweight the covariate distribution using a specified function of the propensity score. This class includes the average treatment effect and average treatment effect on the treated, as well as various overlap-based targets. 
We provide a comprehensive analysis of the one-step TMLE along the universal least favorable path for such parameters. Under explicit regularity conditions on the weight function and initialization, we show that the targeting procedure is  well-defined, reaches a solution of the estimating equation in finite time, and yields an asymptotically efficient estimator. In particular, convergence of the targeting dynamics and control of the second-order remainder are derived from these conditions rather than imposed as separate assumptions on the output of the algorithm.
\end{abstract}

\maketitle
\setcounter{tocdepth}{1}
\tableofcontents

\section{Introduction}

Targeted Maximum Likelihood Estimation (TMLE) is a general framework for constructing semiparametrically efficient estimators in complex statistical models \cite{vanderLaanRubin2006TMLE,book2011,book2018}. It combines flexible, data-adaptive estimation of nuisance components with a targeted fluctuation along a least favorable submodel aligned with the efficient influence function (EIF). Starting from an initial estimate of the data-generating distribution, the targeting step updates the model so that the resulting plug-in estimator solves the empirical EIF equation. Under suitable regularity conditions, this yields an asymptotically linear and semiparametrically efficient estimator. TMLE has been especially successful in causal inference, where nuisance functions are often high-dimensional and are naturally estimated using modern machine learning methods \cite{schuler2017targeted,SMITH202334}.

In parallel, weighted average treatment effects (WATEs) have emerged as a flexible and practically important class of causal estimands, indexed by a specified function of the propensity score that reweights the covariate distribution \cite{wang2025rate}. This class contains familiar targets such as the average treatment effect (ATE), the average treatment effect on the treated (ATT), and various overlap-based estimands as special cases, while also allowing investigators to focus on scientifically relevant subpopulations. This flexibility is particularly important in settings with limited overlap or heterogeneous covariate distributions, where standard estimands may be unstable or poorly aligned with the scientific question \cite{wang2025rate}. From a methodological perspective, the broader WATE class also presents additional challenges, because these estimands depend directly on the propensity score and thereby induce a more intricate coupling between nuisance components in the EIF. Although TMLE methodology is well developed for familiar special cases, extensions to more general weighting schemes have received less attention.

Despite its conceptual appeal and widespread use, the theoretical justification of TMLE is not fully self-contained. In the classical implementation, one iteratively updates an initial estimate along a fluctuation submodel until the empirical EIF equation is approximately satisfied \cite{vanderLaanRubin2006TMLE}. Standard efficiency arguments for this procedure are post hoc. Rather than deriving asymptotic efficiency directly from the targeting dynamics, they proceed by isolating two properties of the final estimator that would suffice for efficiency: approximate solution of the EIF equation, and negligibility of the associated second-order remainder in the von Mises expansion. These properties are then verified or assumed separately. In particular, convergence of the targeting algorithm is typically assumed except in special settings where one-step convergence can be established directly \cite{vanderLaanRubin2006TMLE,vanderlaan2006tmle}; see also \cite{li2025targeted} for a recent convergence analysis under additional strong regularity conditions. Likewise, control of the second-order remainder is usually imposed as a separate condition, or is addressed by higher-order targeting procedures that add further iterative structure \cite{carone2014higher,vanderlaan2021higherordertargetedmaximum}. As a result, existing theory does not provide an end-to-end account of why the targeting procedure itself should produce an efficient estimator.

A natural route to such an account is through the universal least favorable path (ULFP). Unlike a classical least favorable submodel, which is defined locally at a single distribution, a ULFP defines a trajectory whose score agrees with the EIF at every point along the path \cite{one_step_tmle}. This viewpoint gives rise to a one-step TMLE: rather than repeatedly re-linearizing around successive updates, one follows a single EIF-driven path and stops when the empirical EIF equation is solved. However, prior work does not establish the main analytic properties needed to justify this construction in general, such as local existence and uniqueness of the induced trajectory, control of its behavior on a nontrivial time interval, or finite-time attainment of the empirical EIF root.

The central contribution of this paper is an end-to-end analysis of one-step TMLE for WATEs. Under explicit local assumptions on the initial nuisance estimate and weight function, we show that the targeting dynamics themselves are sufficient to justify the estimator: they generate a well-defined universal least favorable path, drive the dynamics to a solution of the empirical EIF equation in finite time with high probability, and yield an asymptotically efficient plug-in estimator. In particular, these conclusions are derived from the dynamics rather than obtained by separately postulating convergence of the targeting algorithm or negligibility of the second-order remainder. To our knowledge, this is the first work to establish these three properties together within a single framework for the WATE class.

The paper is organized as follows. \Cref{s:notation} introduces the statistical model, the WATE estimand, the relevant full and restricted efficient influence functions, and the EIF-driven differential equation that defines the universal least favorable path. \Cref{s:main} then develops the main theory in three parts: first, a deterministic local well-posedness result for the ULFP (\Cref{mainthm:ODE-C1}); second, a finite-time targeting result in the empirical-marginal setting relevant for cross-fitting, showing that the score along this path reaches a solution of the empirical EIF equation with high probability (\Cref{mainthm:det-bracket-one}); and third, an asymptotic linearity and efficiency theorem for the resulting one-step TMLE, together with consistency of the plug-in variance estimator and asymptotically valid Wald inference (\Cref{mainthm:AN_cross}). As an illustrative example, we also verify that the assumptions of \Cref{mainthm:AN_cross} hold for spline-based nuisance estimators under suitable smoothness conditions. Technical proofs are deferred to the appendices.

\subsection*{Acknowledgments} 

This manuscript was written by the authors with the assistance of large language models. This assistance included
suggesting arguments, contributing to drafting and revision, and writing code
for computational checks. P.\ L.\ was partially supported by NSF grant DMS-2450004.

\section{Notation and Preliminaries}\label{s:notation}

Let $d$ be a fixed positive integer. For simplicity, we take the covariate space to be $\mathcal X = [0,1]^d$ and suppress the dependence of $\mathcal X$ on $d$ in the notation. We set $\mathcal A = \{0,1\}$ and $\mathcal Y = \{0,1\}$, and define the sample space as $\mathcal O = \mathcal X \times \mathcal A \times \mathcal Y$. Let $(\mathcal O,\mathcal F)$ denote the measurable space, where $\mathcal F$ is the product $\sigma$-algebra on $\mathcal O$. 

Let $O=(X,A,Y)$ denote a generic $\mathcal O$–valued random variable with law $P$. We write $P_X$ for the marginal distribution of $X$ under $P$, and similarly for the marginals of $A$ and $Y$. Let $O_1,O_2,\dots$ be independent and identically distributed (i.i.d.) $\mathcal O$–valued random variables with common law $P^*$, where $P^*$ denotes the true data-generating distribution on $\mathcal O$. For each $i\ge1$, write $O_i=(X_i,A_i,Y_i)$.

Let $n$ denote the sample size. We emphasize that the dimension $d$ is fixed and does not depend on $n$. Let $\P=(P^*)^{\otimes\mathbb N}$ denote the infinite product measure on $\mathcal O^{\mathbb N}$ governing the sequence $(O_i)_{i\ge1}$. All stochastic convergence statements are understood with respect to $\P$. For measurable functions $f\colon\mathcal O\to\mathbb R$, define the empirical measure $\P_n = \frac{1}{n}\sum_{i=1}^n \delta_{O_i}$, so that 
\[
\P_n[f] = \frac{1}{n}\sum_{i=1}^n f(O_i).
\]
We also write 
\[
P^*[f] = \int f(o)\, dP^*(o) = \E^*[f(O)], 
\]
where $\E^*[\cdot]$ denotes expectation under $P^*$. Unless otherwise specified, $\E[\cdot]$ denotes expectation under the distribution $P$.

\subsection{Statistical Model}\label{s:stat_model}
Let $\mu$ be a fixed $\sigma$-finite dominating measure on $(\mathcal O,\mathcal F)$. In our setup, $\mu
= \lambda_{\mathcal X} \otimes \nu_{\mathcal A} \otimes \nu_{\mathcal Y}$, where $\lambda_{\mathcal X}$ denotes Lebesgue measure on $\mathcal X$, and $\nu_{\mathcal A},\nu_{\mathcal Y}$ denote the counting measures on $\mathcal A$ and $\mathcal Y$, respectively. We begin by defining a full nonparametric model, which serves as the basis for our ultimate estimation result. We also define a  fixed-marginal model, which plays an important role in our results on the existence and convergence of TMLE.

\subsubsection{Full Model.}
We define the full nonparametric model as
\[
\mathcal M_{\full} = \left\{ P \text{ a probability measure on } (\mathcal O,\mathcal F) : P \ll \mu \right\}.
\]
For each $P\in\mathcal M_{\full}$, let 
$p=dP/d\mu$ denote its density with respect to $\mu$. Further, we define measurable versions of the conditional expectations
\[
q_a(x) = \E[Y \mid A=a,X=x], 
\qquad a\in\{0,1\},
\]
and the propensity score
\[
e(x) = P(A=1 \mid X=x),
\]
defined $P_X$–almost everywhere.

We refer to $q_1(\cdot), q_0(\cdot)$ and $e(\cdot)$ as the \emph{nuisance functions}. While they are not themselves parameters of primary inferential interest, they are essential for identification and efficient estimation of the target parameter defined in \Cref{s:wate_notation}. We collect the nuisance functions into the vector $U=(q_1,q_0,e)$,
and write $U^*=(q_1^*,q_0^*,e^*)$ for the nuisance functions induced by $P^*$. 
For the ODE-based least favorable flow developed below, we restrict attention to distributions $P$ for which the nuisance functions $U=(q_1,q_0,e)$ admit continuous versions on $\mathcal X$, and we identify these versions with elements of $C(\mathcal X;[0,1])^3$ equipped with the norm
\[
\|U\|_\infty = \max\{\|q_1\|_\infty,\|q_0\|_\infty,\|e\|_\infty\}.
\]

Let $p_X$ denote the marginal density of $X$ with respect to
$\lambda_{\mathcal X}$. For $P\in\mathcal M_{\full}$, the density admits the factorization
\begin{equation}\label{e:factorization}
p(x,a,y) = p_X(x)\, g(a\mid x)\, f(y\mid a,x),
\end{equation}
where
\[
g(a\mid x) = P(A=a\mid X=x),
\qquad
f(y\mid a,x) = P(Y=y\mid A=a,X=x).
\]
Any $P\in\mathcal M_{\full}$ is uniquely determined by the marginal $P_X$ together with the conditional laws of $A|X$ and $Y|A,X$, which, in the binary case, are encoded by the nuisance collection $U=(q_1,q_0,e)$ through chosen measurable versions.

\subsubsection{Fixed-Marginal Model.}\label{s:restricted}
Let $Q$ be a probability measure on $\mathcal X$.
For any nuisance collection $U=(q_1,q_0,e)\in C(\mathcal X;[0,1])^3$,
define the conditional law of $(A,Y)$ given $X=x$ by
\[
p_U(a,y\mid x)
=
e(x)^a\bigl(1-e(x)\bigr)^{1-a}
\bigl(q_a(x)\bigr)^y\bigl(1-q_a(x)\bigr)^{1-y},
\qquad (a,y)\in\{0,1\}^2.
\]
We then define $\widetilde P(Q,U)$ to be the unique probability measure on
$\mathcal O$ such that $X\sim Q$ and, conditional on $X=x$, the pair $(A,Y)$
has law $p_U(\cdot,\cdot\mid x)$.
Equivalently, for every bounded measurable $h\colon \mathcal O\to\mathbb R$,
\[
\widetilde P(Q,U)[h]
=
\int_{\mathcal X}\sum_{a\in\{0,1\}}\sum_{y\in\{0,1\}}
h(x,a,y)\,p_U(a,y\mid x)\,dQ(x).
\]

We then define the fixed-marginal model by
\[
\mathcal M(Q)=\{\widetilde P(Q,U):U\in C(\mathcal X;(0,1))^3\}.
\]
When $Q$ is absolutely continuous, this is a restricted version of the full model
$\mathcal M_{\full}$.

We often abbreviate $\mathcal M = \mathcal M(Q)$ when discussing a generic restricted model.

\begin{remark}\label{r:pair}
With a slight abuse of notation, we parametrize generic elements of the model by pairs $(Q,U)$, where $Q$ denotes the marginal law of $X$ and $U=(q_1,q_0,e)$ denotes the nuisance collection. 
At the true marginal, we write $Q=P_X^*$.
\end{remark}

\subsection{Weighted Average Treatment Effect}\label{s:wate_notation}

We adopt the potential outcomes framework and state the standard causal identification assumptions
\cite{Rubin1974,RosenbaumRubin1983,Neyman1990,Rosenbaum2002,ImbensRubin2015}. 

\begin{assumption}[Causal Identification]\label{a:identify}
Let $\{Y(a):a\in\mathcal A\}$ denote the potential outcomes defined on the same probability space as $(X,A,Y)$. The following conditions hold:
\begin{enumerate}
\item (Consistency/SUTVA): $Y = Y(A)$ almost surely.
\item (Conditional exchangeability): For all $a\in\mathcal A$, $Y(a) \ind A \mid X$.
\item (Strong Positivity): There exists $\eta\in(0,1/2)$ such that
\[
\eta \leq e^*(x) \leq 1-\eta
\quad \text{for $P_X^*$-almost every } x\in\mathcal X,
\]
where $e^*(x)=P^*(A=1\mid X=x)$ denotes the true propensity score.
\end{enumerate}
\end{assumption}

To define the target parameter, we first introduce the conditional average treatment effect (CATE) under the true law,
\[
\tau^*(x) = \E^*\!\left[Y(1)-Y(0)\mid X=x\right].
\]
Under \Cref{a:identify}, the CATE is identified from the observed data via
\[
\tau^*(x)=q_1^*(x)-q_0^*(x)
\qquad \text{for }P_X^*\text{-almost every }x\in\mathcal X,
\]
where $q_a^*(x)=\E^*[Y\mid A=a,X=x]$.

For a generic observed-data law $P\in\mathcal M_{\full}$, define the observed-data blip
\[
b_P(x)=q_1(x)-q_0(x).
\]
Throughout the paper, $\lambda\colon[0,1]\to\mathbb R_{\ge0}$
is a known, pre-specified Borel-measurable weight function satisfying
\[
\sup_{u\in[0,1]}\lambda(u)<\infty.
\]
We write $\dot\lambda(t)=d\lambda(t)/dt$ wherever the derivative
exists; the required local smoothness assumptions are stated below.
Further, let 
\[
\Omega(P)=\E\!\left[\lambda\bigl(e(X)\bigr)\right].
\]
The weighted average treatment effect (WATE) corresponding to fixed $\lambda$ is defined as
\begin{equation}\label{eq:wate_def}
\psi(P)
=
\frac{\E\!\left[\lambda\bigl(e(X)\bigr)b_P(X)\right]}
{\E\!\left[\lambda\bigl(e(X)\bigr)\right]}
=
\frac{\E\!\left[\lambda\bigl(e(X)\bigr)b_P(X)\right]}
{\Omega(P)}.
\end{equation}
We emphasize that $\psi(P)$ is defined, as a statistical functional of the observed-data law, for laws $P$ such that $\Omega(P)>0$ and $0<e(X)<1$ for $P_X$-almost every $x$ with $\lambda(e(x))>0$. Under Assumption~\ref{a:identify}, $\psi(P^*)$ coincides with the corresponding causal estimand defined via potential outcomes; the identification result is standard and hence omitted. Dependence of $\psi$ on $\lambda$ is suppressed in the notation.

We further note that \eqref{eq:wate_def} may equivalently be written as
\[
\psi(P) = \E_{P_{\lambda}}[b_P(X)],
\]
where we define $dP_\lambda(x)$ as the reweighted covariate distribution
\[
dP_\lambda(x) = \frac{\lambda(e(x))}{\Omega(P)}\, dP_X(x).
\]
Hence, $\psi(P)$ is the average observed-data blip under the distribution $dP_\lambda$. Under Assumption~\ref{a:identify}, the corresponding quantity at $P^*$ is the average treatment effect in that target population.
Different choices of $\lambda$ generate different target populations, including the ATE, ATT, average treatment effect on the controls (ATC), overlap, trimmed, entropy, and beta–weighted estimands summarized in Table~\ref{table::WATEs}. Our results apply to WATEs with fixed weight functions $\lambda$
that are bounded on $[0,1]$ and, on the propensity-score interval
$[\eta,1-\eta]$, are three times continuously differentiable,
bounded away from zero, and have bounded first, second, and third
derivatives (see \Cref{as::positivity} below). 

This includes ATE, ATT, ATC, average treatment effect on the overlap population (ATO), average treatment effect on the entropy population (ATEN), and average treatment effect on the beta-weighted population (ATB) with $\nu_1,\nu_2\ge1$, but excludes non-smooth choices such as the trimmed average treatment effect (TrATE$(\alpha)$), average treatment effect on the matching population (ATM), and average treatment effect for the population induced by trapezoidal weighting (ATTZ$(K)$) unless replaced by smooth approximations.

One particularly natural case is the ATO, corresponding to $\lambda(t)=t(1-t)$. To see this, consider a stochastic intervention that perturbs treatment assignment by a common shift on the log-odds scale:
\[
\logit\bigl(e_\delta(x)\bigr)=\logit\bigl(e(x)\bigr)+\delta,
\qquad \delta\approx 0.
\]
Then
\[
\left.\frac{d}{d\delta}e_\delta(x)\right|_{\delta=0}=e(x)\bigl(1-e(x)\bigr),
\]
so units with propensity scores near $1/2$ are the most affected by a small log-odds perturbation, while units with propensity scores near $0$ or $1$ are affected the least. Consequently,
\[
\frac{\left.\frac{d}{d\delta}\E\!\left[q_0(X)+e_\delta(X)\{q_1(X)-q_0(X)\}\right]\right|_{\delta=0}}
{\left.\frac{d}{d\delta}\E[e_\delta(X)]\right|_{\delta=0}}
=
\frac{\E\!\left[e(X)(1-e(X))\{q_1(X)-q_0(X)\}\right]}
{\E[e(X)(1-e(X))]},
\]
which is exactly the ATO. Thus, the ATO has a marginal effect interpretation: it is a weighted average of treatment effects, where the largest weight is placed on individuals whose treatment probabilities are most responsive to a small uniform shift in treatment log-odds. 

\begin{table}[ht]
\caption{Common weighted average treatment effects (WATEs) generated by  fixed weight functions $\lambda(t)$ applied to the propensity score, following \cite[Table 1]{wang2025rate}. The third column reports the names of the corresponding weighting schemes. The associated treated and control weights are $w_1(t)=\lambda(t)/t$ and $w_0(t)=\lambda(t)/(1-t)$. Applied at $t=e(X)$, they reweight treated and control units to target the same covariate distribution, with population weight $\lambda(e(X))$. The final column indicates whether the regularity conditions imposed in this paper are satisfied, in which case the corresponding one-step TMLE has theoretical support.}
\label{table::WATEs}
\begin{tabular}{lllll}
\hline
Estimand & $\lambda(t)$ & Weighting scheme & Ref. & Support? \\
\hline
ATE 
& $1$ 
& IPW
& \cite{Rubin1974} 
& Yes \\

ATT 
& $t$ 
& IPW (treated) 
& \cite{Hirano2003} 
& Yes \\

ATC 
& $1-t$ 
& IPW (controls) 
& \cite{TaoFu2019} 
& Yes \\

TrATE$(\alpha)$ 
& $\mathbb{I}(\alpha\le t\le 1-\alpha)$ 
& IPW trimming 
& \cite{Crump2009} 
& No \\

TrATE$_2(\alpha)$ 
& $\Phi_\varepsilon(1-\alpha-t)\,\Phi_\varepsilon(t-\alpha)$ 
& Smooth trimming 
& \cite{YangDing2018} 
& Yes \\

ATM 
& $\min\{t,1-t\}$ 
& Matching 
& \cite{LiGreene2013} 
& No \\

ATTZ$(K)$
& $\min\{1,\,K\min(t,1-t)\},\ K>2$
& Trapezoidal 
& \cite{Mao2019}
& No \\

ATO 
& $t(1-t)$ 
& Overlap 
& \cite{Li2018Overlap} 
& Yes \\

ATEN 
& $-t \log t - (1-t) \log (1-t)$ 
& Entropy 
& \cite{zhou2020propensity} 
& Yes \\

ATB 
& $t^{\nu_1-1}(1-t)^{\nu_2-1}$
& Beta 
& \cite{matsouaka2024causal} 
& $\nu_1,\nu_2\ge1$ \\

\hline
\end{tabular}

\vspace{0.2cm}
\parbox{\linewidth}{\raggedright
\footnotesize
\textit{Notes:} IPW stands for inverse propensity weights. ATE, ATT, ATC and ATO are special cases of ATB with $(\nu_1,\nu_2)=(1,1)$, $(2,1)$, $(1,2)$ and $(2,2)$, respectively. The function  
$\Phi_\varepsilon$ denotes the Gaussian CDF with mean $0$ and variance $\varepsilon^2$. 
As $\varepsilon\to0$,  the weight for TrATE$_2(\alpha)$  converges to that of TrATE$(\alpha)$.}
\end{table}

\begin{remark}
Let $Q=P_X$, and let $U$ denote chosen continuous versions of $(q_1,q_0,e)$ whenever such versions exist. Then $P=\widetilde P(Q,U)$, and we may equivalently regard $\psi$ as a functional of $(Q,U)$. In the restricted model, $Q$ is fixed and only $U$ varies.
\end{remark}

\subsection{Efficient Influence Functions} 
For the reader's convenience, we review efficient influence functions (EIFs) and some related concepts in \Cref{s:influence_appendix}. 
In what follows, we write the EIF of the WATE estimand under the full and restricted statistical models. For a generic law $P$ with nuisances $(q_1, q_0, e)$, recall that
\[
b_P(x)=q_1(x)-q_0(x).
\]
We first define the augmented inverse probability weighted pseudo-outcome for $b_P(x)$ as the function
$
\phi:\mathcal O\to\mathbb R
$
given by
\begin{equation}\label{e:aipw}
\phi(o) = \frac{a}{e(x)} \big( y - q_1(x)\big) - \frac{1-a}{1-e(x)} \big( y - q_0(x)\big) + \big( q_1(x)-q_0(x) \big).
\end{equation}
If $P$ satisfies $0<e(X)<1$ almost surely, then
\[
\E\big[\phi(O) \mid X\big] = b_P(X).
\]
The following expression is standard and can be verified by direct pathwise differentiation; see also \cite{wang2025rate}.

\begin{proposition}[Full-model EIF]
\label{prop:EIF_full}
Let $P\in\mathcal M_{\full}$ be such that there exists $\eta\in(0,1/2)$ with
\[
\eta \le e(x) \le 1-\eta
\qquad\text{for }P_X\text{-almost every }x\in\mathcal X,
\]
and $\Omega(P)>0$. Assume that $\lambda$ is bounded on $[0,1]$
and continuously differentiable on an open neighborhood of the
essential range of $e(X)$. Define $\phi$ as in \eqref{e:aipw}. The EIF of $\psi$ at $P$ relative to $\mathcal M_{\full}$ is
\begin{equation}\label{eqn::fullEIF}
D_{\full}^*(o; P) = \frac{\lambda(e(x))}{\Omega(P)} \big( \phi(o) - \psi(P) \big) + \frac{\dot{\lambda}(e(x))}{\Omega(P)} \big( b_P(x) - \psi(P)\big)\big(a - e(x)\big),
\end{equation}
where $\Omega(P)=\E[\lambda(e(X))]$. Moreover,
$D_{\full}^* = D_q^* + D_e^* + D_X^*$, with
\begin{align}
\begin{split}\label{e:EIF}
D_q^*(o;P) &= \frac{\lambda(e(x))}{\Omega(P)} \left[ \frac{a}{e(x)}(y-q_1(x)) - \frac{1-a}{1-e(x)}(y-q_0(x)) \right], \\[6pt]
D_e^*(o;P) &= \frac{\dot{\lambda}(e(x))}{\Omega(P)} \bigl(b_P(x)-\psi(P)\bigr)(a-e(x)), \\[6pt]
D_X^*(o;P) &= \frac{\lambda(e(x))}{\Omega(P)} \bigl(b_P(x)-\psi(P)\bigr).
\end{split}
\end{align}
The displayed formula also defines an algebraic extension of $D_{\full}^*(o;P)$ to distributions $P\in\mathcal M(Q)$, including the case where $Q$ is discrete and $P\notin\mathcal M_{\full}$.
\end{proposition}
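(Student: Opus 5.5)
We compute the pathwise derivative of $\psi$ along an arbitrary regular one-dimensional submodel of $\mathcal M_{\full}$ through $P$, and identify it as an inner product with $D_{\full}^*$. Since $\mathcal M_{\full}$ is nonparametric, its tangent space is all of $L_2^0(P)$, so any gradient is automatically the canonical gradient, i.e.\ the EIF. We use the factorization \eqref{e:factorization} and take submodels
\[
p_t(x,a,y)=p_X(x)\bigl(1+ts_X(x)\bigr)\,g(a\mid x)\bigl(1+ts_A(a,x)\bigr)\,f(y\mid a,x)\bigl(1+ts_Y(y,a,x)\bigr),
\]
with bounded scores satisfying $\E[s_X]=0$, $\E[s_A\mid X]=0$ and $\E[s_Y\mid A,X]=0$. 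Such submodels are dense in $L_2^0(P)$, and the score of the submodel is $s=s_X+s_A+s_Y$. The three pieces of $s$ are mutually orthogonal, and this explains the decomposition $D_q^*+D_e^*+D_X^*$: each component is the projection of the gradient onto the corresponding subspace.

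\emph{Step 1: derivatives of the nuisances.} Along the submodel we have
\[
\tfrac{d}{dt}q_{a,t}(x)\big|_0=\E[(Y-q_a(X))s_Y\mid A=a,X=x]
\]
and
\[
\tfrac{d}{dt}e_t(x)\big|_0=\E[(A-e(X))s_A\mid X=x].
\]
Using positivity, these give $\tfrac{d}{dt}b_t(x)=\E[\{\tfrac{A}{e}(Y-q_1)-\tfrac{1-A}{1-e}(Y-q_0)\}s_Y\mid X=x]$.

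\emph{Step 2: differentiate numerator and denominator.} Write $N(P)=\E[\lambda(e(X))b_P(X)]$ and $\Omega(P)$. We have
\[
\dot N=\E[\lambda(e)b\,s_X]+\E[\dot\lambda(e)\,\dot e\,b]+\E[\lambda(e)\,\dot b]
\]
and
\[
\dot\Omega=\E[\lambda(e)s_X]+\E[\dot\lambda(e)\dot e].
\]
The quotient rule gives $\dot\psi=(\dot N-\psi\dot\Omega)/\Omega$. Substituting Step 1 and applying the tower property converts each conditional expectation into $\E[(\cdot)s]$:
\begin{itemize}
\item The $s_X$ terms give $\E[\tfrac{\lambda(e)}{\Omega}(b-\psi)s_X]=\E[D_X^*s]$. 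Here $D_X^*$ has mean zero by the definition of $\psi$, and the centering is what permits replacing $s_X$ by $s$.
\item The $s_A$ terms give $\E[D_e^*s]$.
\item The $s_Y$ terms give $\E[D_q^*s]$.
\end{itemize}
In each case cross terms vanish by the conditional mean-zero properties: $D_q^*$ has conditional mean zero given $(A,X)$, and $D_e^*$ has conditional mean zero given $X$. We also check that $D_{\full}^*\in L_2^0(P)$, which holds because it is bounded by $\sup\lambda<\infty$, $\eta$, the bound on $\dot\lambda$ over the essential range, and $\Omega>0$. Finally, the algebraic identity $\phi-b=\frac{a}{e}(y-q_1)-\frac{1-a}{1-e}(y-q_0)$ shows that $D_q^*+D_X^*=\frac{\lambda(e)}{\Omega}(\phi-\psi)$, which yields \eqref{eqn::fullEIF}.

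\emph{Main obstacle.} The delicate step is justifying differentiation under the integral for $\E[\lambda(e_t(X))b_t(X)]$. We need $e_t$ to stay inside the open neighborhood of the essential range on which $\lambda$ is $C^1$, and we need a dominated-convergence bound on difference quotients. With bounded scores, $|e_t-e|\le Ct$ uniformly in $x$, so for small $t$ the values $e_t(x)$ lie in a compact subset of that neighborhood. Continuity of $\dot\lambda$ there, together with the mean value theorem, then gives uniform control of $(\lambda(e_t)-\lambda(e))/t$. The analogous control for $q_{a,t}$ uses $e_t\ge\eta/2$. The closing remark of the proposition, about extending the formula to $P\in\mathcal M(Q)$, is only a definition and requires no proof.
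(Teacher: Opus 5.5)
Your proposal is correct and carries out exactly the ``direct pathwise differentiation'' that the paper invokes without writing out; the paper only calls the formula standard and cites \cite{wang2025rate}, and your bounded-score submodels along \eqref{e:factorization}, with the orthogonal split $T_X(P)\oplus T_A(P)\oplus T_Y(P)$ accounting for $D_X^*+D_e^*+D_q^*$, supply that computation. One small correction: replacing $s_X$ by $s$ in $\E[D_X^* s_X]$ is justified because $D_X^*$ is $X$-measurable while $\E[s_A\mid X]=\E[s_Y\mid X]=0$; the centering $b_P-\psi(P)$ does not license that replacement, but it does give $\E[D_X^*]=0$, which places $D_{\full}^*$ in $L_0^2(P)$.
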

\begin{remark}
Although the full-model EIF contains an $X$--component reflecting perturbations of the marginal law $P_X$, that  first-order contribution can be accounted for when estimating $\psi$ by replacing $P_X$  by its empirical distribution (as we will prove below). Accordingly, our targeting step defined below fluctuates only the conditional law of $(A,Y)\mid X$ while holding the marginal law of $X$ fixed. When the marginal law is written explicitly, we view $\psi$ as a functional of $(Q,U)$; when $Q$ is fixed and understood, we abbreviate $\psi(Q,U)$ by $\psi(U)$.
\end{remark}

When $Q$ is absolutely continuous, the restricted tangent space excludes
variations in $P_X$, so the restricted-model EIF is obtained by projecting
$D_{\full}^*$ onto the tangent space of the restricted model; we recall the definition of tangent spaces in \Cref{s:tangent}. The result of \Cref{prop:EIF_restricted} then follows
immediately from \Cref{prop:EIF_full}, and the derivation is thus omitted. When $Q$ is discrete, the tangent-space
interpretation is not available within $\mathcal M_{\full}$, because
$\widetilde P(Q,U)\notin\mathcal M_{\full}$ in general. In that case, we still
use the expression \eqref{e:dstar} below as an algebraic extension of the restricted-model EIF
formula, and all subsequent uses of $D^*(o;P)$ for discrete $Q$ are to be
understood in this sense.

\begin{proposition}[Restricted-model EIF]
\label{prop:EIF_restricted}
Let $P\in\mathcal M$ be such that there exists $\eta\in(0,1/2)$ with
\[
\eta \le e(x) \le 1-\eta
\qquad\text{for }P_X\text{-almost every }x\in\mathcal X,
\]
and $\Omega(P)>0$. Assume that $\lambda$ is bounded on $[0,1]$
and continuously differentiable on an open neighborhood of the
essential range of $e(X)$. Define
\begin{equation*}
D^*(o;P) = D_{\full}^*(o;P) - E_P\!\big[D_{\full}^*(O;P)\mid X\big].
\end{equation*}
Equivalently,
\begin{align}\label{e:dstar}
D^*(o;P) &= D_q^*(o;P)+D_e^*(o;P) \\
&= \frac{\lambda(e(x))}{\Omega(P)}\Big(\phi(o)-b_P(x)\Big)
 + \frac{\dot{\lambda}(e(x))}{\Omega(P)}\bigl(b_P(x)-\psi(P)\bigr)(a-e(x)). \nonumber
\end{align}
When $Q$ is absolutely continuous with respect to Lebesgue measure, $D^*(\cdot;P)$ is the efficient influence function of $\psi$ at $P$ relative to $\mathcal M$.
\end{proposition}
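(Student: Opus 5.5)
Two things need checking: the algebraic identity relating the two displayed forms of $D^*$, and the claim that $D^*$ is the efficient influence function in $\mathcal M$ when $Q$ is absolutely continuous. I would do them in that order.

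\emph{Step 1 (algebra).} Take the decomposition $D_{\full}^*=D_q^*+D_e^*+D_X^*$ from \Cref{prop:EIF_full} and compute $E_P[\,\cdot\mid X]$ of each piece under $P$. This uses $\eta\le e\le 1-\eta$, so that every denominator is bounded.

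For $D_q^*$: since $E[A(Y-q_1(X))\mid X]=e(X)\bigl(q_1(X)-q_1(X)\bigr)=0$, and similarly for the control term, we get $E_P[D_q^*\mid X]=0$.

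For $D_e^*$: the factor $\dot\lambda(e)\bigl(b_P-\psi(P)\bigr)/\Omega(P)$ is $X$-measurable and $E[A-e(X)\mid X]=0$, so $E_P[D_e^*\mid X]=0$.

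For $D_X^*$: it is $X$-measurable, so $E_P[D_X^*\mid X]=D_X^*$.

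Hence $D_{\full}^*-E_P[D_{\full}^*\mid X]=D_q^*+D_e^*$. The second displayed form follows because the bracket in $D_q^*$ equals $\phi(o)-b_P(x)$ by \eqref{e:aipw}.

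\emph{Step 2 (EIF in $\mathcal M$).} Assume $Q\ll\lambda_{\mathcal X}$, so $\mathcal M\subset\mathcal M_{\full}$. By the factorization \eqref{e:factorization}, the tangent space of $\mathcal M_{\full}$ at $P$ decomposes orthogonally in $L^2_0(P)$ as $T_X\oplus T_{A}\oplus T_{Y}$:
\begin{itemize}
\item $T_X=\{s(X)\}$ with $E s(X)=0$;
\item $T_A=\{s(A,X)\}$ with $E[s\mid X]=0$;
\item $T_Y=\{s(Y,A,X)\}$ with $E[s\mid A,X]=0$.
\end{itemize}
Fixing the marginal to $Q$ removes $T_X$. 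For the remaining directions, I would fluctuate $q_a$ and $e$ along logistic paths $\logit q_{a,\epsilon}=\logit q_a+\epsilon h_a$ and $\logit e_\epsilon=\logit e+\epsilon h$, with $h_a,h$ continuous; these stay in $C(\mathcal X;(0,1))^3$. Their scores are $h_A(X)(Y-q_A(X))$ and $h(X)(A-e(X))$. Since continuous functions are $L^2(Q)$-dense, the closure of these scores is all of $T_A\oplus T_Y$. So the tangent space of $\mathcal M$ is $T_A\oplus T_Y$, and it is closed.

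Pathwise differentiability of $\psi$ along these paths is inherited from $\mathcal M_{\full}$: they are submodels of the full model, and \Cref{prop:EIF_full} applies under the stated conditions on $\lambda$. So $D_{\full}^*$ is a gradient in $\mathcal M$ as well. The canonical gradient is then its $L^2(P)$-projection onto $T_A\oplus T_Y$, which is $D_{\full}^*-E_P[D_{\full}^*\mid X]$. This is exactly $D^*$ by Step 1, and $D^*$ visibly lies in $T_A\oplus T_Y$.

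\emph{Main obstacle.} The main obstacle is the density argument in Step 2. The model restricts $U$ to continuous, $(0,1)$-valued nuisances, whereas the full-model tangent space uses arbitrary $L^2$ scores. Concretely, I need:
\begin{itemize}
\item bounded continuous $h$ to be dense in $L^2(Q)$, which holds on the compact set $[0,1]^d$ for finite $Q$;
\item the paths to be differentiable in quadratic mean, which follows from the boundedness of $h$ and the strict bounds on $e$ and $q_a$.
\end{itemize}
Everything else is routine.
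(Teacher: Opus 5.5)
Your proposal is correct and follows the route the paper indicates. You project $D_{\full}^*$ onto $T_A(P)\oplus T_Y(P)$, i.e.\ subtract $E_P[D_{\full}^*\mid X]$, which removes $D_X^*$ and leaves $D_q^*+D_e^*$. The paper omits this derivation as immediate from \Cref{prop:EIF_full}, so your conditional-mean computations and the density argument for continuous logistic fluctuations only fill in details it leaves implicit.
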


Let $U_t=(q_{1,t},q_{0,t},e_t)$ be a family of nuisance triples indexed by $t$
in an interval $I\subset\mathbb R$. We write
\[
P_t=\widetilde P(Q,U_t)
\]
for the induced distribution on $\mathcal O$, and we let $\E_t[\cdot]$ denote
expectation with respect to $P_t$. We also abbreviate
\[
p_t(a,y\mid x)=p_{U_t}(a,y\mid x),
\qquad (a,y)\in\{0,1\}^2.
\]

For each $t$, we write 
\[
D^*(o;U_t) = \frac{\lambda(e_t(x))}{\Omega_t}\bigl(\phi_t(o)-\tau_t(x)\bigr) + \frac{\dot\lambda(e_t(x))}{\Omega_t}\bigl(\tau_t(x) -\psi_t\bigr)\bigl(a - e_t(x)\bigr),
\]
where
\[
\phi_t(o) = \frac{a}{e_t(x)}\big( y-q_{1,t}(x)\big) - \frac{1-a}{1-e_t(x)}\big( y-q_{0,t}(x)\big) + \tau_t(x),
\]
and
\[
\Omega_t=\E_Q\!\left[\lambda\bigl(e_t(X)\bigr)\right],\qquad
\tau_t=q_{1,t}-q_{0,t},\qquad
\psi_t=\frac{\E_Q\!\left[\lambda\bigl(e_t(X)\bigr)\tau_t(X)\right]}{\Omega_t}.
\]

In the fixed-marginal model, the one-step TMLE is constructed by evolving an initial nuisance estimate $U_0$ along a path $t\mapsto U_t$ for which the conditional score coincides with the restricted-model EIF at every point on the path. This is the universal least favorable path (ULFP) viewpoint of \cite{one_step_tmle}. In the present setting, because the marginal law $Q$ of $X$ is held fixed, only the conditional law of $(A,Y)\mid X$ is updated, and the corresponding path is most naturally described through the nuisance functions $q_{1,t},q_{0,t},e_t$.

The next lemma records the resulting differential equations and the key score identities they imply. The ODE system in \eqref{eq:ATT-ODE} is the evolution equation obtained by requiring that the derivative of the conditional log-density along the path agree with the EIF $D^*(\cdot;U_t)$. Consequently, along this path the derivative of the empirical conditional log-likelihood is exactly the empirical EIF mean. Thus, if the path can be followed to a time $\hat t$ at which this derivative vanishes, then the resulting updated distribution $P_{\hat t}$ solves the empirical EIF equation, which is the defining targeting property of TMLE. The mean-zero identities in \eqref{eq:path-score} show that $D^*(\cdot;U_t)$ is indeed a valid score in the restricted model at each time $t$, both conditionally on $X$ and marginally. These facts provide the basic link between the dynamical system studied below and the statistical targeting step carried out by one-step TMLE.

\begin{lemma}\label{lm::ODE_flow_derivation}
Fix a probability measure $Q$ on $\mathcal X$, let $I\subset \mathbb R$ be an
open interval containing $0$, and let
$U_0=(q_1,q_0,e)\in C(\mathcal X;(0,1))^3$.
Consider the system of ordinary differential equations
\begin{align}
\begin{split}
\frac{d}{dt}q_{1,t}(x)
&= \frac{q_{1,t}(x)\bigl(1-q_{1,t}(x)\bigr)}{\Omega_t}\cdot
\frac{\lambda\bigl(e_t(x)\bigr)}{e_t(x)},\\
\frac{d}{dt}q_{0,t}(x)
&= - \frac{q_{0,t}(x)\bigl(1-q_{0,t}(x)\bigr)}{\Omega_t}\cdot
\frac{\lambda\bigl(e_t(x)\bigr)}{1-e_t(x)},\\
\frac{d}{dt}e_t(x)
&= \frac{e_t(x)\bigl(1-e_t(x)\bigr)\dot\lambda\bigl(e_t(x)\bigr)}{\Omega_t}
\bigl(\tau_t(x)-\psi_t\bigr),
\end{split}
\label{eq:ATT-ODE}
\end{align}
with initial condition $U_{t=0}=U_0$.
Suppose that $t\mapsto U_t$ is a map from $I$ into $C(\mathcal X;(0,1))^3$
such that $\Omega_t>0$ for all $t\in I$, and for every $x\in\mathcal X$ the map
\[
t\mapsto \bigl(q_{1,t}(x),q_{0,t}(x),e_t(x)\bigr)
\]
is $C^1$ on $I$ and solves \eqref{eq:ATT-ODE}.
Then for all $t \in I$ and $o=(x,a,y) \in \mathcal O$, 
\begin{equation}
\frac{d}{dt}\log p_t(a,y\mid x)=D^*(o;U_t), \quad 
\E_t\!\big[D^*(O;U_t)\mid X\big]=0,\quad
\E_t\!\big[D^*(O;U_t)\big]=0,
\label{eq:path-score}
\end{equation}
where $D^*(\cdot;U_t)$ is shorthand for
$D^*(\cdot;\widetilde P(Q,U_t))$.
\end{lemma}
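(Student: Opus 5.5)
The plan is a direct computation of the log-density derivative, followed by two conditional-expectation checks. Fix $x$ and $(a,y)$. Since
\[
\log p_t(a,y\mid x)=a\log e_t(x)+(1-a)\log(1-e_t(x))+y\log q_{a,t}(x)+(1-y)\log(1-q_{a,t}(x)),
\]
and $t\mapsto U_t(x)$ is $C^1$ with values in $(0,1)^3$, the chain rule gives
\[
\frac{d}{dt}\log p_t(a,y\mid x)=\frac{a-e_t(x)}{e_t(x)(1-e_t(x))}\,\dot e_t(x)+\frac{y-q_{a,t}(x)}{q_{a,t}(x)(1-q_{a,t}(x))}\,\dot q_{a,t}(x).
\]

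The first step is to substitute \eqref{eq:ATT-ODE}. In the $e$-term the factor $e_t(1-e_t)$ cancels, leaving
\[
\frac{\dot\lambda(e_t(x))}{\Omega_t}(\tau_t(x)-\psi_t)(a-e_t(x)),
\]
which is exactly the $D_e^*$ part of $D^*(o;U_t)$. For the $q$-term, split according to $a$. When $a=1$, the factor $q_{1,t}(1-q_{1,t})$ cancels, giving $\frac{\lambda(e_t)}{\Omega_t}\frac{1}{e_t}(y-q_{1,t})$. When $a=0$, it gives $-\frac{\lambda(e_t)}{\Omega_t}\frac{1}{1-e_t}(y-q_{0,t})$. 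Combining the two cases with the indicators $a$ and $1-a$ yields $\frac{\lambda(e_t(x))}{\Omega_t}(\phi_t(o)-\tau_t(x))$, which is the $D_q^*$ part. This proves the first identity. Note that $\Omega_t$ and $\psi_t$ enter only as scalars depending on $t$; they are never differentiated, so no regularity in $t$ of these integrals is needed.

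For the conditional mean-zero identity, compute under $p_t(\cdot,\cdot\mid x)$. Given $X=x$ and $A=a$, we have $\E_t[Y-q_{a,t}(x)\mid A=a,X=x]=0$, so each of the two $D_q^*$ terms has conditional mean zero. The factor multiplying $(a-e_t(x))$ in the $D_e^*$ term depends only on $x$ (and $t$), and $\E_t[A-e_t(x)\mid X=x]=0$. Hence $\E_t[D^*(O;U_t)\mid X]=0$ pointwise in $x$. Alternatively, this follows from the first identity, since $\sum_{a,y}p_t(a,y\mid x)=1$ for all $t$ and the sum is finite, so differentiating gives $\sum_{a,y}\dot p_t=0$. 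The marginal identity then follows by integrating against $Q$ via the tower property, which is valid because $D^*(\cdot;U_t)$ is bounded. Boundedness holds because, for fixed $t$, $e_t$ is continuous on the compact set $\mathcal X$ with values in $(0,1)$, hence bounded away from $0$ and $1$, and $\lambda$, $\dot\lambda$ are bounded on that range.

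The only real subtlety is bookkeeping rather than analysis. First, $\dot\lambda(e_t(x))$ must exist; this is implicit in $U_t$ solving \eqref{eq:ATT-ODE}. Second, the identity is pointwise in $x$ for every $x$, not just $Q$-a.e., so it covers discrete $Q$ and the algebraic extension of $D^*$. No further obstacle is expected.
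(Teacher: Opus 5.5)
Your proposal is correct and follows essentially the same route as the paper: differentiate the explicit Bernoulli log-likelihood, substitute the ODE so that the factors $e_t(1-e_t)$ and $q_{a,t}(1-q_{a,t})$ cancel to give $D_e^*$ and $D_q^*$, then verify conditional mean zero term by term and conclude by iterated expectation. Two of your additions go slightly beyond what the paper writes without changing the argument: the alternative derivation from $\sum_{a,y}p_t(a,y\mid x)=1$, and the integrability check for the tower step. Note that the integrability check relies on $\dot\lambda$ being bounded on the range of $e_t$ (e.g.\ via the continuity of $\dot\lambda$ assumed in \Cref{prop:EIF_restricted}), not on anything in the lemma's own hypotheses.
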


For future use, we define the following notation. Let $Q$ be a probability measure on $\mathcal X$, and let
\[
U=(q_1,q_0,e)\in C(\mathcal X;(0,1))^3.
\]
Define
\[
\tau(U)(x)=q_1(x)-q_0(x), \qquad 
\Omega(Q,U)=\E_Q\!\left[\lambda\bigl(e(X)\bigr)\right].
\]
Whenever $\Omega(Q,U)>0$, we set
\[
\psi(Q,U)=
\frac{\E_Q\!\left[\lambda\bigl(e(X)\bigr)\tau(U)(X)\right]}{\Omega(Q,U)}.
\]
For such $U$, we then define
\[
F(Q,U)=\big(F_1(Q,U),F_0(Q,U),F_e(Q,U)\big),
\]
where
\begin{align}
\begin{split}
F_1(Q,U)(x)
&=
\frac{q_1(x)\bigl(1-q_1(x)\bigr)\lambda\bigl(e(x)\bigr)}
{\Omega(Q,U)\,e(x)},\\
F_0(Q,U)(x)
&=
-\frac{q_0(x)\bigl(1-q_0(x)\bigr)\lambda\bigl(e(x)\bigr)}
{\Omega(Q,U)\,\bigl(1-e(x)\bigr)},\\
F_e(Q,U)(x)
&=
\frac{e(x)\bigl(1-e(x)\bigr)\dot\lambda\bigl(e(x)\bigr)}
{\Omega(Q,U)}
\bigl(\tau(U)(x)-\psi(Q,U)\bigr).
\end{split}
\end{align}
When $Q$ is fixed, we often abbreviate $\Omega(Q,U)$ and $\psi(Q,U)$ by
$\Omega(U)$ and $\psi(U)$, respectively, and likewise write $F(U)$ for
$F(Q,U)$.

\section{Main Results}\label{s:main}

\subsection{Notation}\label{s:assumption_notation}
We begin by introducing some necessary notation for our assumptions. 
Let $r>0$ be a parameter. Given $U_0=(q_1,q_0,e) \in C(\mathcal X)^3$, we define the closed ball 
\[ \mathcal B_r (U_0)=\{U\in C(\mathcal X)^3: \|U-U_0\|_\infty\le r\}\]
and the open ball
\[
\mathcal{B}_r^\circ(U_0) = \{U \in C(\mathcal{X})^3 : \|U - U_0\|_\infty < r\}.
\]
When the choice of $U_0$ is clear, we sometimes abbreviate $\mathcal B_r = \mathcal B_r (U_0)$ and $\mathcal{B}_r^\circ=\mathcal{B}_r^\circ(U_0)$. We also set $M_0=\|U_0\|_\infty$.

Further, given  $\eta \in (0, 1/4)$ and a weight function $\lambda\colon [0,1] \to \mathbb{R}_{\ge 0}$, we define \begin{align*}
&\qquad\qquad\qquad\Lambda_{\min}=\inf_{u\in[\eta,1-\eta]}\lambda(u),\qquad
\Lambda_{\max}=\sup_{u\in[\eta,1-\eta]}\lambda(u),\\
&\dot\Lambda_{\max}=\sup_{u\in[\eta,1-\eta]}|\dot\lambda(u)|, \quad \Lambda_{2,\max}=\sup_{u\in[\eta,1-\eta]}|\lambda''(u)|,\quad \Lambda_{3,\max}=\sup_{u\in[\eta,1-\eta]}|\lambda'''(u)|.
\end{align*}
For brevity, we let $\mfc$ denote the largest positive real number such that
\[
\mfc \le
\min
\{
\Lambda_{\min}, 
\eta 
\}
\]
and 
\[
\max
\{
\Lambda_{\max},
\dot\Lambda_{\max},
\Lambda_{2,\max}, \Lambda_{3,\max}, \eta 
\}
\le \mfc^{-1}.
\]

\subsection{Assumptions}\label{s:assumptions} 
The analysis below is local in nature. Toward this end,  the assumptions play three conceptually distinct roles.

First, Assumption~\ref{as::positivity} is a regularity condition. It keeps the nuisance functions away from the boundary and requires the weight function \(\lambda\) to be sufficiently smooth on the relevant propensity-score range. These are the conditions that make the ordinary differential equation defining the universal least favorable path locally well behaved.

Second, Assumptions~\ref{as::square_bound}--\ref{ass:mu0-small} are initialization conditions for the empirical targeting problem. Roughly speaking, they require the score at \(t=0\) to be nondegenerate, the initial conditional law to be locally close to the truth, and the initial score bias to be small enough that a nearby zero of the empirical score can be found along the path.

Third, Assumption~\ref{as::initial_convergence} is the usual nuisance function rate condition needed to turn the targeted construction into asymptotic linearity and efficiency under cross-fitting.

The constants \(\mfc\), \(c_\init\), and \(\delta_\init\) are bookkeeping quantities that summarize these local regularity and initialization requirements on a common scale. Their exact powers are used only in the proofs. For purposes of reading the main results, it is enough to remember that \(\mfc\) encodes uniform positivity and smoothness, while \(c_\init\) encodes nondegeneracy of the initial score.

\begin{assumption}\label{as::positivity}
Let $U_0=(q_{1,0},q_{0,0},e_0)\in C(\mathcal X;[0,1])^3$ and
$\lambda\colon [0,1] \to \mathbb{R}_{\ge 0}$ be given.
There exists $\eta\in(0,1/4)$ such that for all
$U=(q_1,q_0,e)\in\mathcal B_{2\eta}(U_0)$, we have
\[
q_1(x),\,q_0(x),\,e(x)\in[\eta,1-\eta]
\qquad\text{for all }x\in\mathcal X.
\]
Further, $\lambda$ is bounded on $[0,1]$, is $C^3$ on
$[\eta,1-\eta]$, and
\[
\Lambda_{\min} >0, \qquad
\Lambda_{\max} + \dot\Lambda_{\max} + \Lambda_{2,\max} + \Lambda_{3,\max}< \infty.
\]
\end{assumption}

\begin{assumption}\label{a:pstar}
Let $P^*$ be a given probability measure on $\mathcal O$.
Assume that $P^*\ll \mu$, and let
\[
p^*=\frac{dP^*}{d\mu}
\]
denote its density with respect to $\mu$.
\end{assumption}

When the previous assumption holds and the nuisance functions corresponding to $P^*$ admit continuous versions on $\mathcal X$, we fix one such triple and denote it by
\[
U^*=(q_1^*,q_0^*,e^*).
\]
Given $U_0 \in C (\mathcal X;[0,1])^3$ and a probability measure $Q$ on $\mathcal X$, we recall that the induced measure $\widetilde P(Q, U_0)$ was defined in \Cref{s:stat_model}. 

\begin{assumption}\label{as::square_bound}
Let a probability measure $Q$ on $\mathcal X$ 
be given, and suppose that $Q$ is either absolutely  continuous with respect to Lebesgue measure or pure point. 
There exists a constant $c_\init \in (0,1)$ such that 
\[
\E_0\big[\big(D^*(O;\widetilde{P}(Q, U_0))\big)^2\big] \ge c_\init,
\]
where  $\E_0$ denotes expectation with respect to $\widetilde P (Q, U_0)$. 
\end{assumption}

For the next assumption, we write $\tv(\nu_1,\nu_2)$ for the total variation distance between two probability measures $\nu_1$ and $\nu_2$ on a common measurable space. When $\nu_1$ and $\nu_2$ are both absolutely continuous with respect to a common dominating measure, with densities $p$ and $q$, this distance is given by
\[
\tv(\nu_1,\nu_2)
=
\frac12 \|p-q\|_1,
\]
where $\|\cdot\|_1$ denotes the $L^1$ norm with respect to that dominating measure. In this case, we sometimes write $\tv(p,q)$ as shorthand for $\tv(\nu_1,\nu_2)$. 
\begin{assumption}\label{as::TV}
Assume $U^*\in \mathcal B_\eta$ and
\[
\E_Q \big[\tv\big(p^*(\cdot\mid X),p_{U_0}(\cdot\mid X)\big)\big]
\leq \frac{\mfc^{10} c_\init}{600},
\]
where $p^*(\cdot\mid x)$ is the conditional law of $(A,Y)$ given $X=x$ under $P^*$, and
for $U_0=(q_{1,0},q_{0,0},e_0)$,
\[
p_{U_0}(a,y\mid x)
=
\bigl(e_0(x)\bigr)^a\bigl(1-e_0(x)\bigr)^{1-a}
\bigl(q_{a,0}(x)\bigr)^y\bigl(1-q_{a,0}(x)\bigr)^{1-y}.
\]
\end{assumption}

The next assumption controls the \emph{initial score bias}. In the empirical-marginal setting considered below, where $Q$ is taken to be the empirical distribution of the evaluation-fold covariates, $\mu_0$ is the conditional mean of the score  given those covariates and the initial nuisance estimate $U_0$. Thus, the assumption requires the score at $t=0$ to start sufficiently close to zero on the local scale $\delta_\init$. We now define 
\begin{equation}\label{e:Tdef}
\delta_\init=\frac{c_\init\mfc^{20}}{10^6 }.
\end{equation}
\begin{assumption}\label{ass:mu0-small}
Let
\[
\mu_0
=
\E_Q\!\left[
\sum_{a,y\in\{0,1\}}
D^*\!\big((X,a,y);\widetilde P(Q,U_0)\big)\,p^*(a,y\mid X)
\right].
\]
We assume
\[
|\mu_0| \le \frac{c_{\mathrm{init}}\delta_\init}{8},
\]
where $\delta_\init$ is defined in \eqref{e:Tdef}.
\end{assumption}
\begin{definition}
Fix a probability measure $P^*$ on $\mathcal O$ and a function
$\lambda\colon [0,1]\to\mathbb R_{\ge 0}$.
We will say that a quadruple $(P^*,\lambda,Q,U_0)$ satisfies the local bracketing conditions with constants $(\mfc,c_\init)$ if Assumptions \ref{as::positivity}, \ref{as::square_bound}, \ref{as::TV}, and \ref{ass:mu0-small} hold with that choice of $P^*$, $\lambda$, $Q$, and $U_0$.
When $P^*$ and $\lambda$ are fixed by context, we suppress them and simply say that the pair $(Q,U_0)$ satisfies the local bracketing conditions with constants $(\mfc,c_\init)$.
\end{definition}

For $s\in[1,\infty)$, we use $\|\cdot\|_{s,*}$ to denote the $L^s(P_X^*)$ norm on $\mathcal X$. 
When $f$ is random, this norm is still taken only over the $x$-variable, and convergence statements such as $o_p(\cdot)$ refer to the randomness of the sample.

We let $(q_{1,0,n})_{n=1}^\infty$, $(q_{0,0,n})_{n=1}^\infty$, and $(e_{0,n})_{n=1}^\infty$ denote sequences of maps such that, for every $n$,
\[
q_{1,0,n},\,q_{0,0,n},\,e_{0,n}\colon \mathcal O^n\to C(\mathcal X;[0,1]).
\]
We sometimes abbreviate $q_{1,0}=q_{1,0,n}$, $q_{0,0}=q_{0,0,n}$, and $e_0=e_{0,n}$ when the value of $n$ is clear from context. 
\begin{assumption}\label{as::initial_convergence}
Let a probability measure $P^*$ on $\mathcal O$ and estimator sequences
$(q_{1,0,n})_{n=1}^\infty$, $(q_{0,0,n})_{n=1}^\infty$, and $(e_{0,n})_{n=1}^\infty$
be given. As $n\to\infty$, we have
\[
\|q_{1,0,n}-q_1^*\|_{2,*}=o_p(n^{-1/4}),\ 
\|q_{0,0,n}-q_0^*\|_{2,*}=o_p(n^{-1/4}),\ 
\|e_{0,n}-e^*\|_{2,*}=o_p(n^{-1/4}).
\]
\end{assumption}

\subsection{Results}\label{s:results}

We now state our three main results. The first is a deterministic well-posedness result for the universal least favorable path. Informally, it says that once the nuisance functions are uniformly bounded away from $0$ and $1$ and the weight function is smooth on the relevant range, the efficient-influence-function-driven ODE is not merely formal but defines a genuine local trajectory in the restricted model.

\begin{theorem}\label{mainthm:ODE-C1}
Fix a probability measure $Q$ on $\mathcal X$ that is either absolutely continuous with respect to Lebesgue measure or pure point,
$U_0 \in C(\mathcal X;[0,1])^3$, and
$\lambda\colon [0,1] \to \mathbb{R}_{\ge 0}$, and suppose that
\Cref{as::positivity} holds.
Let $\mft_1  = \mfc^6/8$ and $I_1 = ( - \mft_1, \mft_1)$. There exists a unique solution $U\in C^1(I_1; C(\mathcal X)^3)$ to the initial-value problem
\begin{equation}\label{e:main_ode}
U(0)=U_0,
\qquad
\frac{d}{dt}U_t =  F(Q, U_t),
\quad t\in I_1,
\end{equation}
such that for all $t\in I_1$, we have $U_t\in \mathcal B_\eta$.
Additionally, for every $x\in\mathcal X$,
the function $t\mapsto U_t(x)\in\mathbb R^3$ is an element of $C^1(I_1;\mathbb R^3)$ and satisfies
\[
\frac{d}{dt}U_t(x)=  F(Q, U_t)(x)
\]
for all $t \in I_1$.
\end{theorem}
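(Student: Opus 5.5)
The plan is to apply the Picard--Lindel\"of (Banach fixed-point) theorem in the Banach space $C(\mathcal X)^3$ with the sup norm, on the closed ball $\mathcal B_\eta=\mathcal B_\eta(U_0)$. The argument has four steps: bound $F$ on the ball, show $F$ is Lipschitz there, run a contraction on the short interval $I_1$, and finish with uniqueness and the pointwise statement.

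\textbf{Step 1: bound $F$ on $\mathcal B_{2\eta}$.} By \Cref{as::positivity}, every $U\in\mathcal B_{2\eta}$ has all three components taking values in $[\eta,1-\eta]$. Hence $\lambda(e(x))\ge\Lambda_{\min}\ge\mfc$, so $\Omega(Q,U)\ge\mfc$ for any probability measure $Q$. This gives $|\psi(Q,U)|\le 1$ and $|\tau(U)-\psi(Q,U)|\le 2$. Using $\lambda\le\mfc^{-1}$, $|\dot\lambda|\le\mfc^{-1}$ and $1/e,\,1/(1-e)\le\eta^{-1}\le\mfc^{-1}$, each component of $F$ is bounded by a constant times a small negative power of $\mfc$, for instance $\|F(Q,U)\|_\infty\le 2\mfc^{-3}$. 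Each $F_j(Q,U)$ is continuous in $x$, being a composition of continuous maps, so $F$ maps $\mathcal B_{2\eta}$ into $C(\mathcal X)^3$. Note that $Q$ enters only through the scalars $\Omega$ and $\psi$, which are integrals of bounded continuous functions. So the hypothesis that $Q$ is absolutely continuous or pure point plays no real role beyond measurability.

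\textbf{Step 2: $F(Q,\cdot)$ is Lipschitz on $\mathcal B_\eta$ in $\|\cdot\|_\infty$.} Write each component as a product or quotient of factors that are pointwise smooth functions of $(q_1,q_0,e)\in[\eta,1-\eta]^3$, multiplied by the scalar functionals $1/\Omega$ and $\psi$. For the pointwise factors I will use the mean value theorem with $\dot\lambda$ and $\lambda''$ bounded by $\mfc^{-1}$; this is where $C^2$ regularity is used, since $F_e$ contains $\dot\lambda$. For the functionals:
\[
|\Omega(U)-\Omega(V)|\le \mfc^{-1}\|U-V\|_\infty,\qquad |1/\Omega(U)-1/\Omega(V)|\le \mfc^{-3}\|U-V\|_\infty,
\]
and $|\psi(U)-\psi(V)|$ is bounded by a similar power of $\mfc^{-1}$ times $\|U-V\|_\infty$. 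Collecting terms gives a Lipschitz constant $L\le C\mfc^{-k}$. Tracking exponents carefully should give $k\le 6$, consistent with $\mft_1=\mfc^6/8$. This bookkeeping is the main obstacle, and the one place the exact constant matters. If the exponents come out larger, I would instead avoid needing $L\mft_1<1$ by using a weighted (Bielecki) norm $\sup_t e^{-2L|t|}\|U_t\|_\infty$, or by iterating the Picard map. Then only the invariance condition $\mft_1\cdot\sup\|F\|\le\eta$ is needed, and that one certainly holds because $\mft_1\cdot 2\mfc^{-3}\le\mfc^3/4<\eta$.

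\textbf{Step 3: contraction.} Work on the complete metric space $X=C(\bar I_1';\mathcal B_\eta)$ for a closed subinterval $\bar I_1'$ of $I_1$, and define
\[
(\Phi U)_t = U_0+\int_0^t F(Q,U_s)\,ds,
\]
a Bochner or Riemann integral in $C(\mathcal X)^3$. By Step 1, $\Phi$ maps $X$ into itself. By Step 2, with the weighted norm if necessary, $\Phi$ is a contraction. The fixed point is a $C^1$ solution on each compact subinterval. Uniqueness on each subinterval, via the contraction or via Gronwall among solutions staying in $\mathcal B_\eta$, lets the solutions on nested subintervals patch together to a unique $U\in C^1(I_1;C(\mathcal X)^3)$ with $U_t\in\mathcal B_\eta$ for all $t\in I_1$.

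\textbf{Step 4: pointwise statement.} For each $x$, evaluation $U\mapsto U(x)$ is a bounded linear map $C(\mathcal X)^3\to\mathbb R^3$. It therefore commutes with differentiation in $t$, so $t\mapsto U_t(x)$ is $C^1$ on $I_1$ and satisfies
\[
\frac{d}{dt}U_t(x)=F(Q,U_t)(x)\qquad\text{for all }t\in I_1.
\]
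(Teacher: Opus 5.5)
Your proposal is correct and follows essentially the same route as the paper. Both arguments bound $F(Q,\cdot)$ in sup norm and show it is Lipschitz on $\mathcal B_\eta$, apply the Banach contraction theorem to the Picard operator on paths valued in $\mathcal B_\eta$, use the Banach-valued fundamental theorem of calculus to obtain a $C^1$ solution, prove uniqueness by showing any other solution staying in $\mathcal B_\eta$ is a fixed point, and get the pointwise claim from evaluation maps.

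The one difference concerns the constant. The paper does the bookkeeping you leave open: it obtains Lipschitz constant $4\mfc^{-6}$, so $4\mft_1\mfc^{-6}=1/2$ and the plain sup-norm contraction works on the whole closed interval $[-\mft_1,\mft_1]$. Your Bielecki-norm fallback, together with patching over compact subintervals, makes the exact constant irrelevant, and that fallback is what makes your argument complete as written.
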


The next theorem concerns the empirical-marginal setting used in cross-fitting. One subsample is used both to define
\begin{equation}\label{e:qempirical}
Q_n=\frac1n\sum_{i=1}^n \delta_{X_i},
\end{equation}
the empirical distribution of its covariates, and to evaluate the empirical score, while the initial nuisance estimate \(U_0\) is constructed from the complementary subsample. Conditional on \(U_0\) and on the covariates entering \(Q_n\), the targeting path is fixed, so only the treatment and outcome variables in that same subsample remain random. Under the local bracketing conditions, the empirical score along this path has, with exponentially high conditional probability, a unique zero near the initial value.

\begin{theorem}\label{mainthm:det-bracket-one}
Fix a probability measure $P^*$ on $\mathcal O$, a sub-$\sigma$-field $\mathcal G_n$, a $\mathcal G_n$-measurable random element $U_0\in C(\mathcal X;[0,1])^3$, and $\lambda\colon [0,1]\to\mathbb R_{\ge 0}$.
Assume that \Cref{a:pstar} holds, that $X_1,\dots,X_n$ are $\mathcal G_n$-measurable, and that conditional on $\mathcal G_n$, the pairs $(A_i,Y_i)$ are independent with
\[
\P\big((A_i,Y_i)=(a,y)\mid \mathcal G_n\big)=p^*(a,y\mid X_i),
\qquad i=1,\dots,n,\ (a,y)\in\{0,1\}^2.
\]
Define $Q_n$ through \eqref{e:qempirical}. 
Let $\mathcal A_n\in\mathcal G_n$ be an event on which the pair $(Q_n,U_0)$ satisfies the local bracketing conditions with constants $(\mfc,c_\init)$.

On $\mathcal A_n$, let $U_t$ be the solution to \eqref{e:main_ode}
provided by \Cref{mainthm:ODE-C1} with the pair $(Q_n,U_0)$, define
\[
p_t(a,y\mid x)=p_{U_t}(a,y\mid x),
\qquad (a,y)\in\{0,1\}^2,
\]
and recall that $\mft_1=\mfc^6/8$.
Define the random function $L_n\colon(-\mft_1,\mft_1)\to\mathbb R$ by
\[
L_n(t)
=
\begin{cases}
\displaystyle \frac{1}{n}\sum_{i=1}^n \log p_t(A_i,Y_i\mid X_i),
& \text{on }\mathcal A_n,\\[1ex]
0,
& \text{on }\mathcal A_n^{\mathrm c},
\end{cases}
\qquad t\in(-\mft_1,\mft_1).
\]

Then on $\mathcal A_n$, the map $t\mapsto L_n(t)$ belongs to
$C^2((-\mft_1,\mft_1))$, and there exist constants 
$\mft_2\in(0,\mft_1)$ and $m_0>0$ depending only on $(\mfc,c_\init)$ such that the following holds. 
Let $\mathcal E_n$ denote the subset of $\mathcal A_n$ on which there exists
a unique $\hat t\in[-\mft_2,\mft_2]$ such that $L_n'(\hat t)=0$.
Then
\[
\mathbf{1}_{\mathcal A_n}\,
\P\!\left(\mathcal E_n^{\mathrm c}\mid \mathcal G_n\right)
\le \frac{1}{m_0}\exp(-m_0 n).
\]
\end{theorem}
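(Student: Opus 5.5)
Everything is conditional on $\mathcal G_n$ and takes place on $\mathcal A_n$. There $Q_n$, $U_0$ and hence the whole path $t\mapsto U_t$ from \Cref{mainthm:ODE-C1} are fixed, and the only randomness left is in the independent pairs $(A_i,Y_i)$.

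\emph{Regularity and the derivative formula.} Since $U_t\in\mathcal B_\eta$, every coordinate lies in $[\eta,1-\eta]$. By \Cref{lm::ODE_flow_derivation},
\[
L_n'(t)=\P_n\bigl[D^*(O;U_t)\bigr]=\frac1n\sum_{i=1}^n D^*(O_i;U_t).
\]
Only finitely many points $X_i$ enter, so $L_n$ depends on $t$ through the finitely many values $U_t(X_i)$, which are $C^1$ in $t$. Moreover $D^*(o;U)$ is a $C^1$ function of the finite vector $(U(X_j))_j$, through $\lambda,\dot\lambda$ (which is where $\lambda\in C^3$ enters), $\Omega$ and $\psi$. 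Hence $L_n'$ is $C^1$ and $L_n\in C^2$.

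The same differentiation gives
\[
L_n''(t)=\P_n[\partial_t D^*(O;U_t)].
\]
Write $\partial_t D^*=-(D^*)^2+R_t$, where $R_t(o)=\partial_t D^*+ (D^*)^2$. Along the path, $\partial_t\log p_t=D^*$, and differentiating the identity $\E_t[D^*\mid X]=0$ gives $\E_t[\partial_t D^*\mid X]=-\E_t[(D^*)^2\mid X]$. So $R_t$ has conditional mean zero under $p_t$, and it is bounded by a polynomial in $\mfc^{-1}$.

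\emph{Step 1: the mean at $t=0$.} The conditional mean of $L_n'(0)$ given $\mathcal G_n$ is exactly $\mu_0$, and \Cref{ass:mu0-small} gives $|\mu_0|\le c_\init\delta_\init/8$. The summands are independent and bounded by $C\mfc^{-k}$. Hoeffding's inequality then gives $|L_n'(0)-\mu_0|\le c_\init\delta_\init/8$ outside an event of probability $2e^{-m n}$.

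\emph{Step 2: uniform strict concavity on $[-\mft_2,\mft_2]$.} This is the main obstacle, because it needs a bound uniform in $t$. I would compare $\P_n[\partial_tD^*(O;U_t)]$ with its $\mathcal G_n$-conditional mean under $p^*$ in three stages.

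First, Lipschitz continuity of $t\mapsto \partial_tD^*(\cdot;U_t)$ in sup norm reduces the uniform statement to a finite $t$-grid of mesh $\propto\delta_\init$. A union bound of Hoeffding inequalities over the grid keeps the failure probability exponentially small.

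Second, the conditional mean under $p^*$ differs from the mean under $p_0$ by at most $2\sup|\partial_tD^*|\cdot\E_{Q_n}[\tv(p^*,p_{U_0})]$. By \Cref{as::TV} this is at most $c_\init/4$, provided the power $\mfc^{10}$ dominates the sup bound.

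Third, the mean under $p_0$ of $-(D^*(\cdot;U_t))^2$ is within $O(\mfc^{-k}|t|)$ of $-\E_0[(D^*(\cdot;U_0))^2]\le -c_\init$ by \Cref{as::square_bound}, and the $R_t$ term has mean $O(|t|\mfc^{-k})$ under $p_0$.

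Choosing $\mft_2=\delta_\init\cdot$const, small enough relative to these constants (this is where the factor $\mfc^{20}$ in $\delta_\init$ is used), yields $L_n''(t)\le -c_\init/2$ on $[-\mft_2,\mft_2]$ with probability at least $1-m_0^{-1}e^{-m_0n}$.

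\emph{Step 3: bracketing.} On the intersection of the good events, $|L_n'(0)|\le c_\init\delta_\init/4$ and $L_n''\le -c_\init/2$ on the interval. Take $\mft_2\ge\delta_\init$ (choose constants compatibly with $\mft_2<\mft_1=\mfc^6/8$, which holds since $\delta_\init\ll\mfc^6$). Then
\[
L_n'(\mft_2)\le c_\init\delta_\init/4-c_\init\mft_2/2<0,
\qquad
L_n'(-\mft_2)>0.
\]
The intermediate value theorem gives a zero, and strict monotonicity of $L_n'$ makes it unique. Collecting the failure probabilities and adjusting $m_0$ to depend only on $(\mfc,c_\init)$ gives the stated bound.
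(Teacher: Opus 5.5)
Your proposal is correct and follows essentially the same route as the paper. It uses the score formula $L_n'=\P_n[D^*(\cdot;U_t)]$, the curvature identity from differentiating the mean-zero property of the score along the path, transfer to $p^*$ via the total-variation assumption, grid-plus-exponential concentration for $L_n''$, concentration of $L_n'(0)$ around $\mu_0$, and monotone bracketing with $\mft_2=\delta_\init$. The differences are minor: you get $C^2$ regularity by a finite-dimensional reduction using that $Q_n$ is discrete rather than Fr\'echet calculus on $C(\mathcal X)^3$, and you compare to $p_0$ rather than $p_t$; the exponent checks you left as provisos do hold, since $\|\partial_t D^*(\cdot;U_t)\|_\infty\le 9\mfc^{-10}$ and the drift terms are $O(\mfc^{-18}|t|)$.
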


Now, we consider the cross-fitted one-step TMLE estimator. For simplicity, assume \(n\) is even and write \(n=2m\). 
Let \(O_1,\dots,O_n\) be i.i.d.\ from \(P^*\).
Define the index sets \(I_0=\{1,\dots,m\}\) and \(I_1=\{m+1,\dots,2m\}\).
For \(k\in\{0,1\}\), define the fold-specific empirical measure
\[
\mathbb P_m^{(k)}[f]=\frac{1}{m}\sum_{i\in I_k} f(O_i),
\]
and note that
\[
\P_n[f]=\frac{1}{2}\mathbb P_m^{(0)}[f]+\frac{1}{2}\mathbb P_m^{(1)}[f].
\]
For later use, define
\[
\mathcal G_m^{(k)}=\sigma\!\big((O_i)_{i\in I_{1-k}},(X_i)_{i\in I_k}\big).
\]

For each \(k\in\{0,1\}\), let \(\hat U_{0}^{(k)}=(\hat q_{1,0}^{(k)},\hat q_{0,0}^{(k)},\hat e_0^{(k)})\) be an initial estimator of the nuisance parameters constructed as
\[
\hat q_{1,0}^{(k)}
= 
 q_{1,0,m}\left( (O_i)_{i \in I_{k}} \right),
\quad
\hat q_{0,0}^{(k)}
= 
 q_{0,0,m}\left( (O_i)_{i \in I_{k}} \right), 
\quad
\hat e_{0}^{(k)}
= 
 e_{0,m}\left( (O_i)_{i \in I_{k}} \right).
\]
Let \(\hat{Q}^{(k)}\) be the estimator of the marginal of \(X\) given by the fold-specific marginal empirical measure,
\[
\hat{Q}^{(k)}
 = \frac{1}{m}\sum_{i \in I_k} \delta_{X_i}.
\]

Given positive constants \(\mfc\) and \(c_\init\),  let \(\mft_2\in(0,\mfc^6/8)\) denote the constant furnished by \Cref{mainthm:det-bracket-one} for the pair \((\mfc,c_\init)\).
For each \(k\in\{0,1\}\), define the event
\[
\mathcal A_m^{(k)}
=
\left\{
(\hat Q^{(k)},\hat U_0^{(1-k)}) \text{ satisfies the local bracketing conditions with constants } (\mfc,c_\init)
\right\}.
\]
Since \(\hat Q^{(k)}\) and \(\hat U_0^{(1-k)}\) are \(\mathcal G_m^{(k)}\)-measurable, the event \(\mathcal A_m^{(k)}\) is also \(\mathcal G_m^{(k)}\)-measurable.

On \(\mathcal A_m^{(k)}\), let \(t\mapsto \hat U_t^{(k,1-k)}\) denote the solution of the ODE
\eqref{e:main_ode} with initial condition \(\hat U_0^{(1-k)}\) and fixed marginal law \(\hat Q^{(k)}\).
On \((\mathcal A_m^{(k)})^c\), set
\[
\hat U_t^{(k,1-k)} \equiv \hat U_0^{(1-k)}
\qquad\text{for all }t\in\mathbb R.
\]
Abbreviate
\[
\hat p_t^{(k,1-k)}(a,y\mid x)=p_{\hat U_t^{(k,1-k)}}(a,y\mid x),
\qquad (a,y)\in\{0,1\}^2.
\]

On \(\mathcal A_m^{(k)}\), define the fold-specific empirical loss function
\[
L_m^{(k)}(t)=\mathbb P_m^{(k)}\big[\log \hat p_t^{(k,1-k)}(A,Y\mid X)\big].
\]
Define the event
\[
\mathcal E_m^{(k)}
=
\mathcal A_m^{(k)}\cap
\left\{
\text{there exists a unique } t\in [-\mft_2,\mft_2]
\text{ such that }
\left(L_m^{(k)}\right)'(t)=0
\right\}.
\]
On the event \(\mathcal E_m^{(k)}\), define \(\hat t_k\) to be the unique \(t\in[-\mft_2,\mft_2]\) such that
\[
\left(L_m^{(k)}\right)'(t)
=
\mathbb P_m^{(k)}\!\left[
D^*\!\left(\cdot;\widetilde P\big(\hat Q^{(k)},\hat U_t^{(k,1-k)}\big)\right)
\right]
=0.
\]
On \(\big(\mathcal E_m^{(k)}\big)^c\), set \(\hat t_k=0\).
Set
\[
\hat U_k^\dagger=\hat U_{\hat t_k}^{(k,1-k)}.
\]
On $\mathcal A_m^{(k)}\cap \mathcal E_m^{(k)}$, define
\[
\hat\psi_k=\psi\big(\widetilde P(\hat Q^{(k)},\hat U_k^\dagger)\big),
\]
and for $i\in I_k$ define
\[
\hat D_{\mathrm{CF},i}
=
D_{\full}^*(O_i;\widetilde P(\hat Q^{(k)},\hat U_k^\dagger)).
\]
On $(\mathcal A_m^{(k)}\cap \mathcal E_m^{(k)})^c$, set
\[
\hat\psi_k=0,
\qquad
\hat D_{\mathrm{CF},i}=0
\quad\text{for all }i\in I_k.
\]
Finally, set
\[
\hat\psi_{\mathrm{CF}}=\frac{1}{2}\hat\psi_0+\frac{1}{2}\hat\psi_1.
\]

The following theorem gives our main asymptotic result for the cross-fitted one-step TMLE \(\hat\psi_{\mathrm{CF}}\), including semiparametric efficiency and Wald inference. For \(\gamma\in[0,1]\), let \(z_\gamma\) denote the \(\gamma\)-quantile of a standard normal random variable.

\begin{theorem}
\label{mainthm:AN_cross}
Assume that \(n\) is even and write \(n=2m\).
Fix a probability measure $P^*$ on $\mathcal O$,
$\lambda\colon [0,1] \to \mathbb{R}_{\ge 0}$,
nuisance estimator sequences $( q_{1,0,s})_{s=1}^\infty$, $( q_{0,0,s})_{s=1}^\infty$, and $( e_{0,s})_{s=1}^\infty$,
and constants $\mfc, c_\init>0$. 
Suppose that the following three conditions hold:
\begin{enumerate}
\item Assumption~\ref{a:pstar} holds.
\item  With probability tending to one, for each $k\in\{0,1\}$, the pair $(\hat Q^{(k)},\hat U_0^{(1-k)})$ satisfies the local bracketing conditions with constants $(\mfc,c_\init)$.
\item \Cref{as::initial_convergence} holds for $(q_{1,0,s})_{s=1}^\infty$, $(q_{0,0,s})_{s=1}^\infty$, and $(e_{0,s})_{s=1}^\infty$.
\end{enumerate}
Then
\[
\sqrt{n}\left(\hat\psi_{\mathrm{CF}}-\psi(P^*)\right)
=\frac{1}{\sqrt{n}}\sum_{i=1}^n D_{\full}^*(O_i;P^*)+o_p(1).
\]
Further, the plug-in variance estimator based on the fold-specific estimated EIF values,
\[
\hat\sigma_{\mathrm{CF}}^2
=
\frac{1}{n}\sum_{i=1}^n \hat D_{\mathrm{CF},i}^2,
\]
converges in probability to 
\[\sigma^2 =\mathrm{Var}_{P^*}\big(D_{\full}^*(O;P^*)\big).
\]

Additionally, $\sigma^2 >0$, 
\[
\sqrt{n}\left(\hat\psi_{\mathrm{CF}}-\psi(P^*)\right)\xrightarrow{d}N(0,\sigma^2),
\]
and for every $\alpha \in (0,1)$, the Wald interval
\[
\left[
\hat\psi_{\mathrm{CF}} -  z_{1-\alpha/2}\frac{\hat\sigma_{\mathrm{CF}}}{\sqrt{n}}, \hat\psi_{\mathrm{CF}} + z_{1-\alpha/2}\frac{\hat\sigma_{\mathrm{CF}}}{\sqrt{n}}\right]
\]
is an asymptotically valid level-$(1-\alpha)$ confidence interval.
\end{theorem}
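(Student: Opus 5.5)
The plan is to work fold by fold, using the standard von Mises expansion for $\psi$ at the targeted law $P_k^\dagger=\widetilde P(\hat Q^{(k)},\hat U_k^\dagger)$, and then average the two folds. We intersect the events $\mathcal A_m^{(0)}\cap\mathcal A_m^{(1)}$, which has probability tending to one by condition (2). By \Cref{mainthm:det-bracket-one}, applied conditionally on $\mathcal G_m^{(k)}$ and then integrated, $\P((\mathcal E_m^{(k)})^c\cap\mathcal A_m^{(k)})\le m_0^{-1}e^{-m_0 m}\to0$. Hence with probability tending to one both $\hat t_k$ exist, $|\hat t_k|\le\mft_2$, and $\mathbb P_m^{(k)}[D^*(\cdot;P_k^\dagger)]=0$.

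Since $\hat Q^{(k)}$ is the empirical law of the fold covariates, $\psi(P_k^\dagger)$ averages $\lambda(\hat e^\dagger)\hat\tau^\dagger$ over $\hat Q^{(k)}$. It follows that $\mathbb P_m^{(k)}[D_X^*(\cdot;P_k^\dagger)]=0$ exactly, so $\mathbb P_m^{(k)}[D^*_{\full}(\cdot;P_k^\dagger)]=0$. Now write $P^\dagger_{k,*}=\widetilde P(P_X^*,\hat U_k^\dagger)$, which has the same nuisances but the true marginal. The decomposition is
\[
\hat\psi_k-\psi(P^*)=(\mathbb P_m^{(k)}-P^*)D^*_{\full}(\cdot;P^*)+(\mathbb P_m^{(k)}-P^*)\big[D^*_{\full}(\cdot;P_k^\dagger)-D^*_{\full}(\cdot;P^*)\big]+R_k ,
\]
where $R_k=\psi(P_k^\dagger)-\psi(P^*)+P^*D^*_{\full}(\cdot;P_k^\dagger)$. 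One subtlety is that $\Omega$ and $\psi$ inside $D^*_{\full}(\cdot;P_k^\dagger)$ are computed under $\hat Q^{(k)}$ rather than $P_X^*$. I would handle this by replacing them with their $P_X^*$ versions. For fixed nuisances the difference is $O_p(m^{-1/2})$, and since it multiplies centered quantities the error is only $o_p(m^{-1/2})$.

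The second-order term $R_k$ has the usual WATE structure. Its components are integrals of $(\hat e^\dagger-e^*)(\hat q_a^\dagger-q_a^*)$, of $(\hat e^\dagger-e^*)^2$ (from Taylor expanding $\lambda$ to second order, using $\Lambda_{2,\max}$), and of products of $(\Omega^\dagger-\Omega^*)$ with first-order errors. All denominators are bounded by $\mfc$, since $\hat U^\dagger\in\mathcal B_\eta$ by \Cref{mainthm:ODE-C1}. So $|R_k|\lesssim\|\hat U_k^\dagger-U^*\|_{2,*}^2$. We need this to be $o_p(n^{-1/2})$ given \Cref{as::initial_convergence}, which means we must control how far the targeted nuisances move from $\hat U_0^{(1-k)}$. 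This is the main obstacle. Since $\|F(U)\|_\infty\le C(\mfc)$, we have $\|\hat U^\dagger-\hat U_0\|_\infty\le C|\hat t_k|$, so it suffices to show $\hat t_k=O_p(m^{-1/2})+O_p(\|\hat U_0-U^*\|_{2,*}^2)$.

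To get this rate, I would use the structure behind \Cref{mainthm:det-bracket-one}. The function $L_m'(t)$ has derivative roughly $-\mathbb P_m[D^{*2}]\le -c_\init/2$ on $[-\mft_2,\mft_2]$ (strong concavity). Therefore $|\hat t_k|\lesssim c_\init^{-1}|L_m'(0)|$. Then $L'_m(0)=\mu_0+(\text{conditional mean-zero average})$. The average is $O_p(m^{-1/2})$ by a conditional Chebyshev argument, since $D^*$ is bounded. The term $\mu_0$ equals $\E_{\hat Q}[\sum D^*(p^*-p_{U_0})]$, a doubly robust bias: for the $q$-part it is the product $(e^*-e_0)(q^*-q_0)$, and for the $e$-part it is $\dot\lambda(b_0-\psi_0)(e^*-e_0)$. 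The $e$-part looks first order. Hence I would rather bound $R_k$ directly along the path. Writing $\mu(t)$ for the $P^*$-mean of $D^*(\cdot;U_t)$, the identity $R_k\approx P^*D^*(\cdot;P_k^\dagger)$ plus a quadratic term lets us combine $\mathbb P_m D^*=0$ with empirical-process control. Then
\[
P^*D^*(\cdot;P_k^\dagger)=-(\mathbb P_m-P^*)D^*(\cdot;P_k^\dagger),
\]
which is handled with the other empirical-process terms, while $\hat t_k=O_p(1)$-small merely keeps $\|\hat U^\dagger-U^*\|_{2,*}\lesssim\|\hat U_0-U^*\|_{2,*}+|\hat t_k|$. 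If this circularity is an issue, I would prove $\hat t_k=o_p(n^{-1/4})$ by the concavity argument above, together with $\mu_0=O_p(\|\hat U_0-U^*\|_{2,*})$, since the $L^2$ rate on $e$ is $o_p(n^{-1/4})$.

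For the empirical-process term, the difficulty is that $\hat U^\dagger$ depends on fold $k$ through $\hat Q^{(k)}$ and the $(A_i,Y_i)$ via $\hat t_k$. Conditional on $\mathcal G_m^{(k)}$, the path $t\mapsto \hat U_t$ is fixed, so the functions $\{D^*_{\full}(\cdot;U_t)-D^*_{\full}(\cdot;P^*)\}_{|t|\le\mft_2}$ form a one-dimensional Lipschitz-in-$t$ class. Chaining (or a simple grid plus Lipschitz bound) gives a supremum of order $m^{-1/2}\sup_t\|D_t-D^*\|_{L^2}$, which is $o_p(m^{-1/2})$ by the previous rate. The remaining dependence of the path on the fold-$k$ covariates only enters through the $X$-marginal part, and for that part one could condition on all $X_i$ as well. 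Summing over the two folds gives the stated asymptotic linearity.

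Next, $\sigma^2>0$ should follow from $\E[\Var(D^*\mid X)]\ge c_\init$-type nondegeneracy transferred to $P^*$ via the TV bound of \Cref{as::TV}. The CLT then gives normality. For consistency of $\hat\sigma^2_{\mathrm{CF}}$, bounded $D^*$, the $L^2$ convergence $D^*_{\full}(\cdot;P_k^\dagger)\to D^*_{\full}(\cdot;P^*)$, and the law of large numbers suffice. Slutsky then yields validity of the Wald interval.
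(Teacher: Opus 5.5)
Your skeleton matches the paper's. That covers existence of $\hat t_k$ with probability tending to one, the exact identity $\mathbb P_m^{(k)}[D^*_{\full}(\cdot;P_k^\dagger)]=0$, and $\hat t_k=o_p(m^{-1/4})$ from strong concavity and $|\hat t_k|\le 4c_\init^{-1}|L_m'(0)|$. With $r_t=|q_{1,t}-q_1^*|+|q_{0,t}-q_0^*|+|e_t-e^*|$, this last step gives $\hat Q^{(k)}[r_t^2]\le 2\hat Q^{(k)}[r_0^2]+Ct^2$.

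There is, however, a genuine gap in your empirical-process term. Write $U_t=\hat U_t^{(k,1-k)}$, $H_t=D^*_{\full}(\cdot;\widetilde P(\hat Q^{(k)},U_t))-D^*_{\full}(\cdot;P^*)$, and $\bar P^*_{m,k}[f]=\frac1m\sum_{i\in I_k}\sum_{a,y}f(X_i,a,y)p^*(a,y\mid X_i)$. Because you center at $P^*$, your term splits as $(\mathbb P_m^{(k)}-P^*)[H_t]=(\mathbb P_m^{(k)}-\bar P^*_{m,k})[H_t]+(\hat Q^{(k)}-P_X^*)[\bar H_t]$, where $\bar H_t(x)=\sum_{a,y}H_t(x,a,y)p^*(a,y\mid x)$. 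Conditioning on $\mathcal G_m^{(k)}$ controls only the first piece.

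The second piece is a design fluctuation applied to a function that depends on the same evaluation-fold covariates. This dependence is not only through the scalars $\Omega,\psi$: the entire path depends on $\hat Q^{(k)}$, because the vector field $F(\hat Q^{(k)},\cdot)$ does. Conditioning on all $X_i$ does not help, since given the $X_i$ this term is deterministic.

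The term is also not negligible by crude bounds. Since $D_X^*(\cdot;\widetilde P(\hat Q^{(k)},U))$ averages to zero under $\hat Q^{(k)}$, one gets $(\hat Q^{(k)}-P_X^*)[D_X^*(\cdot;\widetilde P(\hat Q^{(k)},U_t))]=\frac{\Omega(P_X^*,U_t)}{\Omega(\hat Q^{(k)},U_t)}\bigl(\psi(\hat Q^{(k)},U_t)-\psi(P_X^*,U_t)\bigr)$. This is generically of exact order $m^{-1/2}$. It becomes $o_p(m^{-1/2})$ only after cancellation against $(\hat Q^{(k)}-P_X^*)[D_X^*(\cdot;P^*)]$, and that cancellation is a stochastic-equicontinuity statement along a $\hat Q^{(k)}$-dependent path, which you do not supply. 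The $D_q^*,D_e^*$ parts of $\bar H_t$ are pointwise $O(r_t)$, so trivial bounds give only $o_p(m^{-1/4})$. Your $R_k$ is in fact second order; all the first-order design terms land in this piece.

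The missing idea, which the paper uses, is not to split $\bar P^*_{m,k}[H_t]$ at all. Instead write $\hat\psi_k-\psi(P^*)=(\mathbb P_m^{(k)}-P^*)[D^*_{\full}(\cdot;P^*)]+(\mathbb P_m^{(k)}-\bar P^*_{m,k})[H_{\hat t_k}]+\{\hat\psi_k-\psi(P^*)+\bar P^*_{m,k}[H_{\hat t_k}]\}$.

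The middle term is conditionally centered with the path fixed. A grid plus Chebyshev over a deterministic window $|t|\le\delta_m=o(m^{-1/4})$ handles it. Over $|t|\le\mft_2$, as you wrote it, $\sup_t\bar P^*_{m,k}[H_t^2]$ is not small, so this localization is needed.

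The braces equal $A_t+C_m$, with the following pieces:
\begin{itemize}
\item $A_t=\psi(\hat Q^{(k)},U_t)-\psi(\hat Q^{(k)},U^*)+\bar P^*_{m,k}[D^*(\cdot;\widetilde P(\hat Q^{(k)},U_t))]$ is a von Mises remainder computed entirely under the empirical marginal, bounded deterministically by $C(\mfc)\hat Q^{(k)}[r_t^2]$.
\item $C_m=\psi(\hat Q^{(k)},U^*)-\psi(P^*)-\hat Q^{(k)}[D_X^*(\cdot;P^*)]=\frac{\Omega^*-\Omega^*_m}{\Omega^*}(\psi^*_m-\psi^*)=O_p(m^{-1})$, where $\Omega^*_m=\hat Q^{(k)}[\lambda(e^*)]$, $\psi^*_m=\psi(\hat Q^{(k)},U^*)$, $\Omega^*=P_X^*[\lambda(e^*)]$, and $\psi^*=\psi(P^*)$. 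It involves only the fixed functions $\lambda(e^*)$ and $\lambda(e^*)\tau^*$ (\Cref{lem:bias-full}).
\end{itemize}

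To keep your own decomposition, you would instead need a Gr\"onwall comparison between the path driven by $F(\hat Q^{(k)},\cdot)$ and the fold-independent path driven by $F(P_X^*,\cdot)$. Their sup-distance is $O_p(|t|(m^{-1/2}+|t|))$, and you would follow this with a linearization in $\Omega,\psi$ and a grid argument.

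Finally, $\sigma^2>0$ is simpler than a TV transfer, which would also need to move the score from $(\hat Q^{(k)},U_0)$ to $(P_X^*,U^*)$. The condition $U^*\in\mathcal B_\eta(U_0)$ already forces $q_1^*,q_0^*,e^*\in[\eta,1-\eta]$, whence $\E^*[\Var(D^*(O;P^*)\mid X)]\ge 2\eta\Lambda_{\min}^2/\Lambda_{\max}^2$.
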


\subsection{Spline Estimation}
To apply \Cref{mainthm:AN_cross}, we must verify two things. First, for each $k\in\{0,1\}$, the pair $(\hat Q^{(k)},\hat U_0^{(1-k)})$ must satisfy the local bracketing conditions with probability tending to one. Second, the nuisance estimators used to construct $\hat U_0^{(k)}$ must converge sufficiently quickly for \Cref{as::initial_convergence} to hold. These requirements are standard and can be verified for many classical nonparametric estimators under smoothness and positivity assumptions on the nuisance functions. As an example, we verify them for regression spline estimators. We assume some familiarity with the definition of B-splines; for details, see \cite[Section 6]{chen2015optimal}. Essential details are recalled in \Cref{a:spline}.

We consider splines constructed from an isotropic tensor-product B-spline basis on $[0,1]^d$. Given a spline degree $r \ge 1$ and an integer $J\ge r+1$, we consider a univariate degree-$r$ B-spline basis on $[0,1]$ with uniformly spaced interior knots. The associated tensor-product basis on $[0,1]^d$ has a total of $K=J^d$ elements. The corresponding spline estimators for the nuisance functions are then constructed by least squares regression onto these tensor-product basis functions. Finally, the regression estimates are  truncated so that they take values in the interval $[\eta^\circ, 1 - \eta^\circ]$, where $\eta^\circ \in (0, 1/2)$ is a deterministic parameter. Full details of this construction are given in \Cref{def:spline}.

To state the following assumption on the data-generating process, we use the notion of a H\"older ball, recalled in \Cref{d:holderball}.

\begin{assumption}\label{ass:spline}
Let $P^*$ denote the true law of $O=(X,A,Y)$, satisfying \Cref{a:pstar}, with nuisance functions
$U^*=(q_1^*,q_0^*,e^*)$. 

\begin{enumerate}
\item \textbf{Design.}
The support of $X$ is $[0,1]^d$, and the marginal law $P_X^*$ admits a density
$f_X$ with respect to Lebesgue measure. Further, there exist constants $C_X, c_X>0$ such that
\[
0<c_X\le f_X(x)\le C_X<\infty
\qquad\text{for all }x\in[0,1]^d.
\]

\item \textbf{H\"older Smoothness.}
There exist $\beta>d/2$ and $L>0$ such that each of $q_1^*$, $q_0^*$, and $e^*$ belong to
the H\"older ball $\mathcal{C}^\beta([0,1]^d,L)$.

\item \textbf{Positivity.}
There exists $\eta^*\in(0,1/4)$ such that
\[
e^*(x)\in[2\eta^*,1-2\eta^*]
\qquad\text{for all }x\in[0,1]^d.
\]
\item \textbf{Outcome-Regression Boundedness.}
There exists $\kappa^*\in(0,1/4)$ such that for each $a\in\{0,1\}$,
\[
q_a^*(x)\in[2\kappa^*,1-2\kappa^*]
\qquad\text{for all }x\in[0,1]^d.
\]
\end{enumerate}
\end{assumption}

\begin{remark}
In order for the truncated spline estimator in \Cref{def:spline} to be consistent for the nuisance functions, it suffices to choose the truncation parameter so that 
\[
\eta^\circ \le 2\min\{\eta^*,\kappa^*\}.
\]
Generically, the values of \(\eta^*\) and \(\kappa^*\) are unknown, so one must assume lower bounds on both in order to select \(\eta^\circ\). This is consistent with common TMLE implementations, which often impose a small positive truncation level on estimated nuisance functions for numerical stability \cite{cai2020}.
We also require some smoothness information about \(\beta\) to select \(r\) and \(J\): an upper bound on \(\beta\) suffices for choosing \(r\), while a lower bound \(\underline\beta>d/2\) suffices for a conservative choice of \(J\).
It would be possible to weaken the first of these assumptions by making the truncation level shrink to zero as the sample size increases. Similarly, the second can be loosened using adaptive estimation. For brevity, we do not take this up here.
\end{remark}

We now show that, under the previous assumption, spline estimators of the nuisance functions satisfy the hypotheses of \Cref{mainthm:AN_cross}. The proof is given in \Cref{a:spline}.

\begin{lemma}
\label{lem:spline}
Let $\{(X_i,A_i,Y_i)\}_{i=1}^{2m}$, with $X_i\in[0,1]^d$, $A_i\in\{0,1\}$, and $Y_i\in\{0,1\}$, be independent and identically distributed observations from a distribution $P^*$ satisfying \Cref{ass:spline} with constants $L, C_X, c_X>0$, $\beta>d/2$, and $\eta^*,\kappa^*\in(0,1/4)$.

Fix $\eta^\circ \in \bigl(0,2\min\{\eta^*,\kappa^*\}\bigr)$, $r > \max\{\beta,1\}$,
\[
J_m=\max\left\{r+1,\left\lfloor (m/\log(m\vee 3))^{1/(2\beta+d)} \right\rfloor\right\},
\]
and construct $U_0=(\tilde q_1,\tilde q_0,\tilde e)$ using the tensor-product construction of Definition~\ref{def:spline}
with $J=J_m$ applied to the observations $\{(X_i,A_i,Y_i)\}_{i=1}^{m}$.
Let
\[
Q
=
\frac1m\sum_{i=m+1}^{2m}\delta_{X_i}
\]
denote the empirical distribution of $X$ computed from the observations $\{(X_i,A_i,Y_i)\}_{i=m+1}^{2m}$.
Let $\lambda\colon [0,1] \to \mathbb{R}_{\ge 0}$ be a bounded function that is $C^3$ on $[\eta^\circ/4,1-\eta^\circ/4]$ satisfying
\[
\Lambda_{\min} >0, \qquad
\Lambda_{\max} + \dot\Lambda_{\max} + \Lambda_{2,\max} + \Lambda_{3,\max}< \infty,
\]
where these quantities are defined relative to the domain $[\eta^\circ/4,1-\eta^\circ/4]$.
Set $\eta=\eta^\circ/4$, and let $\mfc$ be the corresponding constant from \Cref{s:assumption_notation}.

Then, under Assumption~\ref{ass:spline},
\[
\|\tilde q_1-q_1^*\|_{2,*}=o_p(m^{-1/4}),\qquad
\|\tilde q_0-q_0^*\|_{2,*}=o_p(m^{-1/4}),\qquad
\|\tilde e-e^*\|_{2,*}=o_p(m^{-1/4}).
\]
Moreover, there exists a constant $c_\init>0$ such that, with probability tending to one as $m\to\infty$, the pair $(Q,U_0)$ satisfies the local bracketing conditions with constants $(\mfc,c_\init)$.
\end{lemma}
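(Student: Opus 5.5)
The proof has two parts: rates for the truncated spline estimators, and then verification of the four local bracketing conditions for $(Q,U_0)$ with $\eta=\eta^\circ/4$.

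\textbf{Rates.} Under the design condition (density bounded between $c_X$ and $C_X$), the Hölder smoothness $\beta$ and the degree choice $r>\beta$, the standard least-squares series bounds of \cite{chen2015optimal} give the $L^2(P_X^*)$ rate
\[
\sqrt{J_m^d/m}+J_m^{-\beta}.
\]
With $J_m\asymp (m/\log m)^{1/(2\beta+d)}$ this equals $(m/\log m)^{-\beta/(2\beta+d)}$. Since $\beta>d/2$, we have $\beta/(2\beta+d)>1/4$, so the rate is $o(m^{-1/4})$. The same references give the sup-norm rate
\[
\sqrt{J_m^d\log m/m}+J_m^{-\beta}\to 0.
\]
Truncation to $[\eta^\circ,1-\eta^\circ]$ is a contraction toward the truth, because $\eta^\circ<2\min\{\eta^*,\kappa^*\}$ places the true functions inside the truncation interval. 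Hence both rates survive truncation.

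\textbf{Bracketing conditions.} I would check them in turn, on the event that $\|U_0-U^*\|_\infty\le \epsilon_m\to0$.

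Assumption \ref{as::positivity}: $U_0$ takes values in $[\eta^\circ,1-\eta^\circ]$. Any $U\in\mathcal B_{2\eta}(U_0)$ therefore lies in $[\eta^\circ-\eta^\circ/2,\,1-\eta^\circ/2]\subset[\eta,1-\eta]$. The conditions on $\lambda$ are assumed on $[\eta,1-\eta]$.

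Assumption \ref{as::TV}: $U^*\in\mathcal B_\eta(U_0)$ once $\epsilon_m<\eta$. The conditional TV distance is bounded by $|e_0-e^*|+|q_{1,0}-q_1^*|+|q_{0,0}-q_0^*|$ pointwise, so its $Q$-average is at most $3\epsilon_m\to0$. This falls below the fixed threshold $\mfc^{10}c_\init/600$ once $c_\init$ is fixed.

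Assumption \ref{as::square_bound}: I would choose $c_\init=\tfrac12\min\{\tfrac12,\ \E^*[D^*(O;P^*)^2]\}$ with respect to the restricted EIF at $(P_X^*,U^*)$. This is positive because the $D_q^*$ component has conditional variance
\[
\frac{\lambda^2}{\Omega^2}\Bigl(\frac{q_1(1-q_1)}{e}+\frac{q_0(1-q_0)}{1-e}\Bigr),
\]
which is bounded below by positivity of $\lambda$ and of the true nuisances. The map $U\mapsto \E_Q[\sum_{a,y}D^*((X,a,y);U)^2p_U(a,y\mid X)]$ is Lipschitz in $\|\cdot\|_\infty$ on $\mathcal B_{2\eta}$, using the bounds on $\lambda,\dot\lambda$ and the fact that $\Omega\ge\Lambda_{\min}$. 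Replacing $Q$ by $P_X^*$ costs $o_p(1)$ by the law of large numbers applied to a fixed bounded continuous function of $X$. This step conditions on fold one; the integrand depends on $U_0$, which is independent of $Q$, and conditional Chebyshev applies. Together these give Assumption \ref{as::square_bound} with probability tending to one.

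Assumption \ref{ass:mu0-small}: compute
\[
\mu_0=\E_Q\Bigl[\sum_{a,y} D^*(\cdot;U_0)\,(p^*-p_{U_0})\Bigr],
\]
using that $D^*(\cdot;U_0)$ has mean zero under $p_{U_0}(\cdot\mid x)$. Then $|\mu_0|\le \sup|D^*|\cdot 2\,\E_Q[\tv]\lesssim \epsilon_m\to0$, which is eventually below the fixed threshold $c_\init\delta_\init/8$.

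\textbf{Main obstacle.} The key input is the sup-norm consistency of the spline estimators under $\beta>d/2$, including control of the design Gram matrix eigenvalues with high probability and handling of the $\log m$ factor in $J_m$. I would cite the sup-norm Lebesgue constant bound for tensor B-spline projections from \cite{chen2015optimal} rather than reprove it. A secondary subtlety is that $\Omega$ and $\psi$ inside $D^*$ are evaluated under the empirical $Q$. This is handled by the conditional-independence/LLN argument above, which also shows that $\Omega(Q,U_0)\ge\Lambda_{\min}$ deterministically.
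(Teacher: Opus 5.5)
Your proposal is correct. It matches the paper's proof in the rate step: sup-norm rates from \cite{chen2015optimal}, the $L^2(P_X^*)$ norm bounded by the sup norm, and truncation as a $1$-Lipschitz map that fixes the truth because $\eta^\circ<2\min\{\eta^*,\kappa^*\}$. It also matches in the verification of Assumptions~\ref{as::positivity}, \ref{as::TV}, and \ref{ass:mu0-small}.

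The one genuinely different step is Assumption~\ref{as::square_bound}.

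\textbf{Your route.} You anchor $c_\init$ at the truth, taking $c_\init=\frac12\min\{\frac12,\E^*[D^*(O;P^*)^2]\}$. You then transfer the lower bound to $\E_0[D^*(O;\widetilde P(Q,U_0))^2]$ in two moves: a Lipschitz estimate in $U$, fed by sup-norm consistency of $U_0$, and a conditional LLN in $Q$ for $\Omega$, $\psi$ and the outer average. This works.

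\textbf{The paper's route.} The paper notes that $\E_0$ is expectation under the model law $\widetilde P(Q,U_0)$ itself. So the conditional-variance formula you already wrote for the $D_q^*$ component applies directly at $U_0$. Truncation gives $q_{a,0},e_0\in[\eta^\circ,1-\eta^\circ]$, hence $q_{1,0}(1-q_{1,0})/e_0\ge\eta^\circ$, and likewise for the control term. Combined with $\lambda(e_0)\ge\Lambda_{\min}$ and $\Omega_0\le\Lambda_{\max}$, this yields $\E_0[D_0^2\mid X]\ge 2\eta^\circ\Lambda_{\min}^2/\Lambda_{\max}^2\ge 8\mfc^5$ for every realization.

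\textbf{What each buys.} The paper gets an explicit $c_\init=\mfc^5$ that depends only on $(\eta^\circ,\lambda)$ and holds deterministically, with no consistency or LLN input. Your $c_\init$ depends on the unknown $P^*$ and the bound holds only with probability tending to one. That still suffices for the lemma, since once $c_\init$ is fixed the TV and $\mu_0$ thresholds are fixed positive constants and both quantities are $o_p(1)$.

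\textbf{Minor point.} For $\bar q_a$, you should say, as the paper does, that the regression is fit on the random subsample $\{A_i=a\}$. The series result is therefore applied conditionally on that index set. This uses two facts: $f_{X\mid A=a}$ is bounded above and below by Assumption~\ref{ass:spline}(1),(3), and $N_a\asymp m$ with probability tending to one, so $J_m$ has the right order relative to $N_a$.
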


Applying \Cref{lem:spline} first with the first half-sample used to construct $U_0$ and the second half-sample used to define $Q$, and then again with the roles of the two halves reversed, yields the desired conclusion for the two-fold construction. In the notation of \Cref{s:results}, with probability tending to one, for each $k\in\{0,1\}$, the pair $(\hat Q^{(k)},\hat U_0^{(1-k)})$ satisfies the local bracketing conditions with constants $(\mfc,c_\init)$. Moreover, \Cref{as::initial_convergence} holds for the corresponding spline nuisance estimator sequences. Hence the hypotheses of \Cref{mainthm:AN_cross} are satisfied for the two-fold spline construction above, and the resulting cross-fitted one-step TMLE is asymptotically linear, efficient, and asymptotically normal under Assumption \ref{ass:spline}.

\appendix 
\section{Background on TMLE}

\subsection{Semiparametric Preliminaries}\label{s:influence_appendix}

We briefly review regular parametric submodels, tangent spaces, pathwise differentiability, asymptotic linearity, and regularity.

Let \(P\in\mathcal M_{\full}\). We write \(L_0^2(P)\) for the Hilbert space of square–integrable mean-zero functions under \(P\), equipped with inner product
\[
\langle f,g\rangle_P = \E_P[fg].
\]

A one-dimensional regular parametric submodel through \(P\) is a family
\[
\{P_\epsilon:\epsilon\in(-\ell,\ell)\}\subset\mathcal M_{\full}
\]
such that \(P_0=P\) and, for some common dominating measure \(\nu\), the densities
\[
p_\epsilon=\frac{dP_\epsilon}{d\nu}
\]
are differentiable in quadratic mean at \(\epsilon=0\). The corresponding score is the element
\[
h
=
\left.\frac{d}{d\epsilon}\log p_\epsilon\right|_{\epsilon=0}
\in L_0^2(P).
\]

The tangent space \(T_{\full}(P)\) of the full model at \(P\) is defined as the closure in \(L_0^2(P)\) of the set of scores of all one-dimensional regular parametric submodels through \(P\).

Let \(\psi:\mathcal M_{\full}\to\mathbb R\) be a target parameter and write
\[
\psi^*=\psi(P^*).
\]
Let \(\psi_n\) be an estimator of \(\psi^*\) based on \(n\) i.i.d.\ observations. We now formalize pathwise differentiability, asymptotic linearity, and regularity.

\begin{definition}[Pathwise Differentiability]\label{def::PD}
A parameter \(\psi:\mathcal M_{\full}\to\mathbb R\) is said to be pathwise differentiable at \(P\in\mathcal M_{\full}\) if there exists a function
\[
D^*(\cdot;P)\in \overline{T_{\full}(P)}\subset L_0^2(P)
\]
such that, for every one-dimensional regular parametric submodel
\[
\{P_\epsilon:\epsilon\in(-\ell,\ell)\}\subset\mathcal M_{\full}
\]
through \(P\) with score \(h\in T_{\full}(P)\), the map \(\epsilon\mapsto\psi(P_\epsilon)\) is differentiable at \(\epsilon=0\) and
\[
\left.\frac{d}{d\epsilon}\psi(P_\epsilon)\right|_{\epsilon=0}
=
E_P\!\big[D^*(O;P)\,h(O)\big].
\]
The function \(D^*(\cdot;P)\) is called the efficient influence function (EIF), or canonical gradient, of \(\psi\) at \(P\) relative to \(\mathcal M_{\full}\).
\end{definition}

\begin{definition}[Asymptotic Linearity]\label{def::AL}
An estimator \(\psi_n\) of \(\psi^*=\psi(P^*)\) is said to be asymptotically linear at \(P^*\) if there exists a function
\[
D(\cdot;P^*)\in L_0^2(P^*)
\]
such that
\[
\sqrt{n}(\psi_n-\psi^*)
=
\frac{1}{\sqrt{n}}\sum_{i=1}^n D(O_i;P^*) + o_{P^{*\otimes n}}(1).
\]
The function \(D(\cdot;P^*)\) is called the influence function of the estimator at \(P^*\).
\end{definition}

\begin{definition}[Regularity]\label{def::reg}
An estimator \(\psi_n\) of \(\psi^*=\psi(P^*)\) is said to be regular at \(P^*\in\mathcal M_{\full}\) if, for every one-dimensional regular parametric submodel
\[
\{P_\epsilon:\epsilon\in(-\ell,\ell)\}\subset\mathcal M_{\full}
\]
with \(P_0=P^*\), and for every fixed \(t\in\mathbb R\), the sequence
\[
\sqrt{n}\bigl(\psi_n-\psi(P_{\epsilon_n})\bigr),
\qquad
\epsilon_n=t/\sqrt{n},
\]
converges in distribution under \(P_{\epsilon_n}^{\otimes n}\) to a limit that does not depend on \(t\) or on the particular choice of submodel.
\end{definition}

If \(\psi\) is pathwise differentiable at \(P^*\), then the EIF \(D^*(\cdot;P^*)\) determines the semiparametric efficiency bound. In particular, among regular asymptotically linear estimators, efficiency at \(P^*\) is equivalent to having influence function equal to \(D^*(\cdot;P^*)\), or equivalently asymptotic variance
\[
\E_{P^*}\!\big[D^*(O;P^*)^2\big].
\]
See \cite{bickel1998} for details.

\subsection{Tangent Space Identifications}\label{s:tangent}

We now specialize the preceding definitions to the statistical models considered in the paper.

Under the factorization \eqref{e:factorization}, the tangent space of the full model admits the orthogonal decomposition
\[
T_{\full}(P)=T_X(P)\oplus T_A(P)\oplus T_Y(P),
\]
where
\[
T_X(P)=\{h(X):\E_P[h(X)]=0\},
\]
\[
T_A(P)=\{h(A,X):\E_P[h(A,X)\mid X]=0\},
\]
and
\[
T_Y(P)=\{h(Y,A,X):\E_P[h(Y,A,X)\mid A,X]=0\}.
\]

When \(Q\) is absolutely continuous with respect to Lebesgue measure and
\[
P=\widetilde P(Q,U)\in\mathcal M(Q),
\]
the tangent space of the restricted model \(\mathcal M(Q)\) at \(P\) is
\[
T(P)
=
\{h\in T_{\full}(P): h\perp T_X(P)\}
=
T_A(P)\oplus T_Y(P).
\]
Thus, in the restricted model, first-order perturbations of the marginal law of \(X\) are excluded, and only the conditional law of \((A,Y)\mid X\) is allowed to vary.

\subsection{Derivation of the WATE Dynamics}
\begin{proof}[Proof of \Cref{lm::ODE_flow_derivation}]
For each $t\in I$, the assumption $U_t\in C(\mathcal X;(0,1))^3$ implies that for all $(a,y)\in\{0,1\}^2$,
\[
p_t(a,y\mid x)
=
e_t(x)^a\big(1-e_t(x)\big)^{1-a}\,q_{a,t}(x)^y\big(1-q_{a,t}(x)\big)^{1-y}
\]
is a well-defined and strictly positive conditional probability mass function of
$(A,Y)$ given $X=x$. Hence $P_t=\widetilde P(Q,U_t)$, the logarithm
$\log p_t(a,y\mid x)$, and the shorthand
$D^*(\cdot;U_t)=D^*(\cdot;\widetilde P(Q,U_t))$ are all well defined.
Since $\Omega_t>0$ for all $t\in I$, the differential equations in
\eqref{eq:ATT-ODE} and the expression for $D^*(\cdot;U_t)$ are also well defined.

Fix $t\in I$, $x\in\mathcal X$, and $(a,y)\in\{0,1\}^2$. By definition, 
\[
\log p_t(a,y\mid x)
=
a\log e_t(x)+(1-a)\log\big(1-e_t(x)\big)
+y\log q_{a,t}(x)+(1-y)\log\big(1-q_{a,t}(x)\big),
\]
where $q_{a,t}(x)=q_{1,t}(x)$ if $a=1$ and $q_{a,t}(x)=q_{0,t}(x)$ if $a=0$.
Since $t\mapsto (q_{1,t}(x),q_{0,t}(x),e_t(x))$ is $C^1$, differentiating with
respect to $t$ yields
\begin{align}
\frac{d}{dt}\log p_t(a,y\mid x)
&= \frac{\frac{d}{dt}e_t(x)}{e_t(x)\big(1-e_t(x)\big)}\bigl(a-e_t(x)\bigr)\notag\\
&\qquad
+\mathbf 1(a=1)\,
\frac{\frac{d}{dt}q_{1,t}(x)}{q_{1,t}(x)\big(1-q_{1,t}(x)\big)}
\bigl(y-q_{1,t}(x)\bigr)\notag\\
&\qquad
+\mathbf 1(a=0)\,
\frac{\frac{d}{dt}q_{0,t}(x)}{q_{0,t}(x)\big(1-q_{0,t}(x)\big)}
\bigl(y-q_{0,t}(x)\bigr).
\label{eq:appendix-score-general}
\end{align}
By definition,
\[
\phi_t(o)-\tau_t(x)
=
\frac{a}{e_t(x)}\big(y-q_{1,t}(x)\big)
-\frac{1-a}{1-e_t(x)}\big(y-q_{0,t}(x)\big).
\]
Hence, using the formula for the restricted-model EIF,
\begin{align}
D^*(o;U_t)
&=
\frac{\lambda(e_t(x))}{\Omega_t}
\left[
\frac{a}{e_t(x)}\big(y-q_{1,t}(x)\big)
-\frac{1-a}{1-e_t(x)}\big(y-q_{0,t}(x)\big)
\right]\notag\\
&\qquad
+\frac{\dot\lambda(e_t(x))}{\Omega_t}
\bigl(\tau_t(x)-\psi_t\bigr)\bigl(a-e_t(x)\bigr).
\label{eq:appendix-eif-expanded}
\end{align}
Now suppose that $U_t$ solves \eqref{eq:ATT-ODE}. Substituting the three
differential equations into \eqref{eq:appendix-score-general} gives
\begin{align*}
\frac{d}{dt}\log p_t(a,y\mid x)
&=
\frac{1}{e_t(x)\big(1-e_t(x)\big)}
\left[
\frac{e_t(x)\big(1-e_t(x)\big)\dot\lambda(e_t(x))}{\Omega_t}
\bigl(\tau_t(x)-\psi_t\bigr)
\right]
\bigl(a-e_t(x)\bigr)\\
&\qquad
+\mathbf 1(a=1)\,
\frac{1}{q_{1,t}(x)\big(1-q_{1,t}(x)\big)}
\left[
\frac{q_{1,t}(x)\big(1-q_{1,t}(x)\big)}{\Omega_t}
\frac{\lambda(e_t(x))}{e_t(x)}
\right]
\bigl(y-q_{1,t}(x)\bigr)\\
&\qquad
+\mathbf 1(a=0)\,
\frac{1}{q_{0,t}(x)\big(1-q_{0,t}(x)\big)}
\left[
-\frac{q_{0,t}(x)\big(1-q_{0,t}(x)\big)}{\Omega_t}
\frac{\lambda(e_t(x))}{1-e_t(x)}
\right]
\bigl(y-q_{0,t}(x)\bigr).
\end{align*}
After cancellation,
\begin{align*}
\frac{d}{dt}\log p_t(a,y\mid x)
&=
\frac{\dot\lambda(e_t(x))}{\Omega_t}
\bigl(\tau_t(x)-\psi_t\bigr)\bigl(a-e_t(x)\bigr)\\
&\qquad
+\frac{\lambda(e_t(x))}{\Omega_t}\frac{a}{e_t(x)}
\bigl(y-q_{1,t}(x)\bigr)
-\frac{\lambda(e_t(x))}{\Omega_t}\frac{1-a}{1-e_t(x)}
\bigl(y-q_{0,t}(x)\bigr),
\end{align*}
which is exactly \eqref{eq:appendix-eif-expanded}. Therefore,
\[
\frac{d}{dt}\log p_t(a,y\mid x)=D^*(o;U_t),
\qquad o=(x,a,y).
\]
Since $t\in I$, $x\in\mathcal X$, and $a,y\in\{0,1\}$ were arbitrary, this
proves the first identity in \eqref{eq:path-score}.

It remains to verify the mean-zero properties. Fix $t\in I$. Under $P_t$, we have
\[
\E_t[A-e_t(X)\mid X]=0.
\]
Moreover, for each $x\in\mathcal X$,
\begin{align*}
\E_t\!\left[\frac{A}{e_t(X)}\big(Y-q_{1,t}(X)\big)\Bigm|X=x\right]
&=
\frac{1}{e_t(x)}
\E_t\!\left[A\big(Y-q_{1,t}(x)\big)\mid X=x\right]\\
&=
\frac{P_t(A=1\mid X=x)}{e_t(x)}
\E_t\!\left[Y-q_{1,t}(x)\mid A=1,X=x\right]\\
&=0,
\end{align*}
because $P_t(A=1\mid X=x)=e_t(x)$ and
\[
\E_t[Y\mid A=1,X=x]=q_{1,t}(x).
\]
Similarly, for each $x\in\mathcal X$,
\begin{align*}
\E_t\!\left[\frac{1-A}{1-e_t(X)}\big(Y-q_{0,t}(X)\big)\Bigm|X=x\right]
&=
\frac{1}{1-e_t(x)}
\E_t\!\left[(1-A)\big(Y-q_{0,t}(x)\big)\mid X=x\right]\\
&=
\frac{P_t(A=0\mid X=x)}{1-e_t(x)}
\E_t\!\left[Y-q_{0,t}(x)\mid A=0,X=x\right]\\
&=0,
\end{align*}
because $P_t(A=0\mid X=x)=1-e_t(x)$ and
\[
\E_t[Y\mid A=0,X=x]=q_{0,t}(x).
\]
Therefore,
\[
\E_t[\phi_t(O)-\tau_t(X)\mid X]=0.
\]
Combining this with \eqref{eq:appendix-eif-expanded} yields
\[
\E_t[D^*(O;U_t)\mid X]=0.
\]
Finally, by iterated expectation,
\[
\E_t[D^*(O;U_t)]
=
\E_t\big[\E_t[D^*(O;U_t)\mid X]\big]
=0.
\]
This proves the remaining identities in \eqref{eq:path-score}.
\end{proof}

\section{Proof of \texorpdfstring{\Cref{mainthm:ODE-C1}}{First Main Result}}
\label{sec:result1}

Throughout this section, we work entirely in the deterministic setting of \Cref{mainthm:ODE-C1}. In particular, the probability measure $Q$, the initial nuisance triple $U_0\in C(\mathcal X;[0,1])^3$, and the weight function $\lambda$ are all fixed, and no conditioning or empirical-process argument enters anywhere in the proof. The goal is to solve the ODE
\[
\frac{d}{dt}U_t=F(Q,U_t), \qquad U_{t=0}=U_0,
\]
locally in the Banach space $C(\mathcal X)^3$.

The proof has two steps. First, we show that the vector field $U\mapsto F(Q,U)$ is uniformly bounded and Lipschitz on the local ball $\mathcal B_\eta(U_0)$. These estimates imply that the integral operator
\[
(\mathcal T\mathsf U)(t)=U_0+\int_0^t F(Q,\mathsf U(s))\,ds
\]
maps a suitable closed ball in the path space $C([-\mft_1,\mft_1];C(\mathcal X)^3)$ into itself and is a contraction there. Banach's fixed-point theorem therefore yields a unique continuous fixed point.

Second, we upgrade this fixed point to a $C^1$ solution of the ODE by applying the Banach-space-valued fundamental theorem of calculus to the integral equation. Uniqueness of the fixed point gives uniqueness of the ODE solution among paths that remain in $\mathcal B_\eta(U_0)$, and the pointwise ODE follows by composing with the evaluation maps $\mathrm{ev}_x\colon C(\mathcal X)^3\to\mathbb R^3$. Thus the theorem reduces to a local contraction argument for a deterministic flow, with all constants controlled by the positivity neighborhood from \Cref{as::positivity}.

We first recall the Banach Contraction Theorem; see, for example \cite[Corollary 1.3]{pata2019fixed}. Recall that a function is called a contraction if it is Lipschitz continuous with Lipschitz constant strictly less than one.

\begin{theorem}\label{t:fixedpoint}
Let $S$ be a nonempty complete metric space, and let $f\colon S\to S$ be a contraction. Then $f$ has a unique fixed point.
\end{theorem}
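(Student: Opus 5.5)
The plan is the classical Picard iteration argument. Let $d$ denote the metric on $S$ and let $k\in[0,1)$ be a Lipschitz constant for $f$, so $d(f(u),f(v))\le k\,d(u,v)$ for all $u,v\in S$. We treat uniqueness first, since it is immediate. Suppose $u$ and $v$ are both fixed points. Then
\[
d(u,v)=d(f(u),f(v))\le k\,d(u,v).
\]
Hence $(1-k)\,d(u,v)\le 0$. Since $k<1$, this forces $d(u,v)=0$, that is, $u=v$.

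For existence, we use nonemptiness of $S$ to pick an arbitrary $u_0\in S$ and define the iterates $u_{j+1}=f(u_j)$ for $j\ge0$. Applying the contraction bound inductively gives
\[
d(u_{j+1},u_j)\le k^j\,d(u_1,u_0).
\]
The triangle inequality then yields, for $\ell>j$,
\[
d(u_\ell,u_j)\le \sum_{i=j}^{\ell-1}k^i\,d(u_1,u_0)\le \frac{k^j}{1-k}\,d(u_1,u_0).
\]
The right-hand side tends to zero as $j\to\infty$, uniformly in $\ell$. This is the one step that requires a little care: the geometric-series estimate must hold uniformly in $\ell$, which is precisely where the strict inequality $k<1$ enters. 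It shows that $(u_j)$ is Cauchy, so by completeness of $S$ it converges to some $u_\infty\in S$.

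It remains to check that $u_\infty$ is fixed. Any Lipschitz map is continuous, so $f(u_j)\to f(u_\infty)$. On the other hand, $f(u_j)=u_{j+1}\to u_\infty$. Limits in a metric space are unique, so $f(u_\infty)=u_\infty$.

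There is no substantive obstacle in this argument. The only points to watch are that nonemptiness is used to start the iteration and that completeness is used to pass to the limit. Both are supplied exactly by the hypotheses of \Cref{t:fixedpoint}.
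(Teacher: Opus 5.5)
Your Picard-iteration argument is correct and complete, and it uses the paper's definition of a contraction (Lipschitz constant $k<1$). The paper does not prove this statement itself; it cites it as the standard Banach contraction principle \cite[Corollary 1.3]{pata2019fixed}, so your proof supplies the textbook argument behind that citation.
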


We now derive various estimates that allow us to apply \Cref{t:fixedpoint} to prove \Cref{mainthm:ODE-C1}.

\begin{lemma}
We have 
\begin{equation}\label{eq:MF}
\sup_{U\in\mathcal B_\eta(U_0)}\|F(U)\|_\infty \le \frac{1}{2\mfc^4}.
\end{equation}
\end{lemma}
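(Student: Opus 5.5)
The bound follows by estimating the three components $F_1$, $F_0$, $F_e$ separately and pointwise in $x$, using only \Cref{as::positivity} and the definition of $\mfc$. Fix $U=(q_1,q_0,e)\in\mathcal B_\eta(U_0)\subset\mathcal B_{2\eta}(U_0)$. By \Cref{as::positivity}, $q_1(x),q_0(x),e(x)\in[\eta,1-\eta]$ for every $x$. Hence $\lambda(e(x))\in[\Lambda_{\min},\Lambda_{\max}]\subset[\mfc,\mfc^{-1}]$ and $|\dot\lambda(e(x))|\le\mfc^{-1}$.

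\emph{Step 1 (denominator).} Since $Q$ is a probability measure and $\lambda(e(x))\ge\Lambda_{\min}\ge\mfc$ pointwise, we get $\Omega(Q,U)\ge\mfc>0$. In particular $\psi(Q,U)$ is well defined. Because $\psi$ is a $\lambda(e)\,dQ$-weighted average of $\tau(U)=q_1-q_0$, and $|\tau(U)(x)|\le 1-2\eta<1$, we also get $|\psi(Q,U)|\le 1$. Therefore $|\tau(U)(x)-\psi(Q,U)|\le 2$ for every $x$.

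\emph{Step 2 (componentwise bounds).} We use $q(1-q)\le 1/4$, $e(1-e)\le 1/4$, $1/e\le\eta^{-1}\le\mfc^{-1}$ and $1/(1-e)\le\mfc^{-1}$. These give
\[
|F_1(U)(x)|\le \frac{1}{4}\cdot\frac{\mfc^{-1}}{\mfc\cdot\mfc}=\frac{1}{4\mfc^3},\qquad |F_0(U)(x)|\le \frac{1}{4\mfc^3},
\]
and
\[
|F_e(U)(x)|\le \frac14\cdot\frac{\mfc^{-1}}{\mfc}\cdot 2=\frac{1}{2\mfc^2}.
\]
Since $\mfc\le\eta<1/4<1$, each of these is at most $\frac{1}{2\mfc^4}$. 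For example, $\frac{1}{4\mfc^3}\le\frac{1}{2\mfc^4}$ because $\mfc\le 2$. Taking the supremum over $x$, then the maximum over the three components, and finally the supremum over $U$ gives \eqref{eq:MF}.

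No step is a real obstacle. The only point needing care is Step 1: the lower bound on $\Omega$ and the bound $|\psi|\le1$ must hold for an arbitrary $Q$, including a pure point (empirical) $Q$. This works because both use only pointwise bounds on $\lambda(e(x))$ and $\tau(U)(x)$ valid for \emph{all} $x\in\mathcal X$, so they hold whatever measure $Q$ is. The remaining care is the power-counting of $\mfc$: the factors $\eta^{-1}\le\mfc^{-1}$ and $\Lambda_{\max}\le\mfc^{-1}$ must be absorbed using $\mfc<1$.
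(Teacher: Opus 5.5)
Your proof is correct and follows essentially the same route as the paper: pointwise bounds on each of $F_1$, $F_0$, $F_e$ using $q(1-q),\,e(1-e)\le 1/4$, $\Omega\ge\Lambda_{\min}$, $e,1-e\ge\eta$, and $|\psi|\le 1$, with the constants absorbed into powers of $\mfc\le 1$. Your $F_e$ bound $\frac{1}{2\mfc^2}$ is the same intermediate quantity the paper obtains before loosening it to $\frac{1}{2\mfc^4}$.
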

\begin{proof}
Under \Cref{as::positivity}, we have 
\begin{align*}
\|F_1(U)\|_{\infty} &= \left \|\frac{q_{1}(x)\big(1-q_{1}(x)\big)\lambda\big(e(x)\big)}{\Omega e(x)}\right \|_{\infty}\\
&\leq \frac{\Lambda_{\max}}{\Lambda_{\min}\eta}\|q_1(x)(1-q_1(x))\|_{\infty}\\
&\leq \frac{\Lambda_{\max}}{4\Lambda_{\min}\eta} \le \frac{1}{4 \mfc^3}.
\end{align*}
The first inequality uses that $\Omega(U)=\E_Q[\lambda(e(X))]\ge \Lambda_{\min}$ and that $e(x)\ge \eta$ for all $x\in\mathcal X$ under \Cref{as::positivity}. The second inequality uses that $q_1(x)(1-q_1(x))\le 1/4$ for all $x$.
Similarly, we have
\[
\|F_0(U)\|_{\infty}\le \frac{1}{4 \mfc^3}.
\]
For $F_e(U)$, we first note that by the definition of $\psi$,
\[
|\psi(U)| \le 1.\]
Then, we have 
\begin{align*}
\|F_e(U)\|_{\infty} &= \left\|\frac{e(x)(1-e(x))\dot\lambda(e(x))}{\Omega}\big (\tau(x)-\psi \big)\right \|_{\infty}\\
&\le \frac{\dot\Lambda_{\max}}{4\Lambda_{\min}}(1+|\psi|) \le \frac{1}{2 \mfc^4}
\end{align*}
Therefore,
\[
\sup_{U\in\mathcal B_\eta(U_0)}\|F(U)\|_\infty \le \max\left\{\frac{1}{4\mfc^3},\frac{1}{2\mfc^4}\right\}=\frac{1}{2\mfc^4}
\]
\end{proof}

\begin{lemma}
\label{l:Flips}
For all $U,V\in\mathcal B_\eta(U_0)$, 
\[ \|F(U)-F(V)\|_\infty \le 4\mfc^{-6}\|U-V\|_\infty.\]
\end{lemma}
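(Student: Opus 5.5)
We prove \Cref{l:Flips} componentwise: for each of $F_1,F_0,F_e$ we bound $|F_\bullet(U)(x)-F_\bullet(V)(x)|$ uniformly in $x$ by a multiple of $\|U-V\|_\infty$, then take the maximum. Throughout, Assumption~\ref{as::positivity} gives, for $U,V\in\mathcal B_\eta(U_0)\subset\mathcal B_{2\eta}(U_0)$, that all of $q_1,q_0,e$ and their $V$-counterparts lie in $[\eta,1-\eta]$ pointwise. The segment between $U$ and $V$ stays in $\mathcal B_\eta(U_0)$ by convexity, so the mean value theorem applies to $\lambda$ and $\dot\lambda$ on $[\eta,1-\eta]$.

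\textbf{Scalar ingredients.} We first record the following bounds.
\begin{itemize}
\item[(i)] $|\lambda(e(x))-\lambda(e'(x))|\le \dot\Lambda_{\max}\|U-V\|_\infty$ and $|\dot\lambda(e(x))-\dot\lambda(e'(x))|\le\Lambda_{2,\max}\|U-V\|_\infty$.
\item[(ii)] Consequently $|\Omega(U)-\Omega(V)|\le\dot\Lambda_{\max}\|U-V\|_\infty$, and, since $\Omega\ge\Lambda_{\min}$, we get $|1/\Omega(U)-1/\Omega(V)|\le \dot\Lambda_{\max}\Lambda_{\min}^{-2}\|U-V\|_\infty$.
\item[(iii)] $|q(1-q)-q'(1-q')|=|q-q'|\,|1-q-q'|\le|q-q'|$, and $|1/e-1/e'|\le\eta^{-2}|e-e'|$ (similarly for $1/(1-e)$).
\item[(iv)] Writing $\psi=N/\Omega$ with $N(U)=\E_Q[\lambda(e)\tau]$, we have $|N(U)-N(V)|\le(\dot\Lambda_{\max}+2\Lambda_{\max})\|U-V\|_\infty$, since $|\tau|\le 1$ and $|\tau-\tau'|\le2\|U-V\|_\infty$. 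Together with $|N|\le\Lambda_{\max}$ and (ii), this yields
\[
|\psi(U)-\psi(V)|\le C\|U-V\|_\infty, \qquad C\lesssim \mfc^{-3}.
\]
\end{itemize}

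\textbf{Products.} Each component of $F$ is a product of factors that are bounded on the ball and Lipschitz with the constants above, for instance $F_1=q_1(1-q_1)\cdot\lambda(e)\cdot e^{-1}\cdot\Omega^{-1}$. We therefore apply the telescoping product rule
\[
\Bigl|\prod a_i-\prod b_i\Bigr|\le\sum_i |a_i-b_i|\prod_{j\ne i}\max(|a_j|,|b_j|).
\]
The sup bounds needed for the other factors are $q(1-q)\le1/4$, $\lambda\le\mfc^{-1}$, $1/e\le\mfc^{-1}$, $1/\Omega\le\mfc^{-1}$, $|\dot\lambda|\le\mfc^{-1}$, $e(1-e)\le1/4$, and $|\tau-\psi|\le2$. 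This is the same accounting used in the proof of \eqref{eq:MF}.

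\textbf{Main obstacle.} The substantive difficulty is the bookkeeping of constants, so that the sum lands at most at $4\mfc^{-6}$. The worst term is $F_e$: the factor $\dot\lambda/\Omega$ contributes $\mfc^{-2}$, and the Lipschitz constant of $\tau-\psi$ contributes up to about $\mfc^{-3}$ via $\psi$. I would bound each term by a power of $\mfc^{-1}$ at most $6$, using $\mfc\le\eta<1/4$ to absorb numerical constants like $2$ and $1/4$ (e.g.\ $2\le \mfc^{-1}/2$). Then I would check that the worst case sums to at most $4\mfc^{-6}$, with $F_1$ and $F_0$ being cheaper, of order $\mfc^{-4}$. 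If the crude count overshoots, I would tighten the $\psi$ bound by writing
\[
\psi(U)-\psi(V)=\frac{N(U)-N(V)}{\Omega(U)}+N(V)\Bigl(\frac{1}{\Omega(U)}-\frac{1}{\Omega(V)}\Bigr)
\]
and using $|N(V)|/\Omega(V)=|\psi(V)|\le1$, which gives the sharper bound
\[
|\psi(U)-\psi(V)|\le\frac{2\dot\Lambda_{\max}+2\Lambda_{\max}}{\Lambda_{\min}}\|U-V\|_\infty\le 4\mfc^{-2}\|U-V\|_\infty .
\]
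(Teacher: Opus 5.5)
Your proposal is correct and is essentially the paper's argument: a componentwise product/quotient Lipschitz estimate using the mean value theorem for $\lambda,\dot\lambda$ and the bounds on $\Omega$, $1/\Omega$, $N$, $\psi$. The paper groups the factors as $q(1-q)\cdot G(e,\Omega)$ and $H(e,\Omega)\cdot(\tau-\psi)$ rather than telescoping over all four, and it uses the cruder bound $|\tau-\psi|\le 3/(2\mfc^2)$ where you use $2$.

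One correction to your bookkeeping. The crude $\psi$ constant is of order $\mfc^{-4}$, not $\mfc^{-3}$; the paper gets $4\mfc^{-4}$. Likewise, $F_1$ and $F_0$ come out of order $\mfc^{-5}$, not $\mfc^{-4}$, because of the $1/\Omega$ factor. Even so, the crude count for $F_e$ is about $\tfrac14\mfc^{-6}$ plus lower-order terms, and all components land well below $4\mfc^{-6}$ once you use $\mfc<1/4$. So your sharper $\psi$ bound, though valid, is not needed.
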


\begin{proof}
Denote 
\[ U=(q_1,q_0,e), \quad V=(q_1',q_0',e'), \quad \Delta=\|U-V\|_\infty,\]
and 
\[
\tau=q_1-q_0, \quad \tau'=q_1'-q_0', \quad \Omega=\Omega(U), \quad \Omega'=\Omega(V), \quad 
\psi=\psi(U), \quad \psi'=\psi(V).
\]

We consider each coordinate of $F$ separately. 
For $F_1$, we write $F_1(U)=f(q_1)G(e,\Omega)$, where
\[ f(q)=q(1-q), \qquad G(e,\Omega) =\frac{\lambda(e)}{\Omega e }.\]
Then
\begin{align}\label{e:f1lipschitz}
\begin{split}
 \|F_1(U)-F_1(V)  \|_\infty \le&  \|f(q_1)-f(q_1') \|_\infty \|G(e,\Omega) \|_\infty \\ &+ \|f(q_1')\|_\infty  \|G(e,\Omega)-G(e',\Omega') \|_\infty.
\end{split}
\end{align}
Since $|f'(q)|=|1-2q|\le 1$, we have  that 
\[  \|f (q_1)  -f(q'_1)  \|_\infty \le \|q_1  -q_1' \|_\infty \le \Delta, \qquad  \|G(e,\Omega)  \|_\infty \le\frac{\Lambda_{\max}}{\Lambda_{\min}\eta},\] 
which implies
\begin{equation}\label{e:l1term}
\big \|f(q_1)-f(q_1')\big \|_{\infty} \big \|G(e,\Omega) \big\|_{\infty}
\le \frac{\Lambda_{\max}}{\Lambda_{\min}\eta} \Delta.
\end{equation}
To bound the second term in \eqref{e:f1lipschitz}, we use that 
\[
\left|\frac{\partial G}{\partial e}\right|
=\frac{|e\dot\lambda(e)-\lambda(e)|}{\Omega e^2}
\le \frac{\dot\Lambda_{\max}+\Lambda_{\max}}{\Lambda_{\min}\eta^2},\qquad
\left|\frac{\partial G}{\partial \Omega}\right|
=\frac{|\lambda(e)|}{\Omega^2 e}\le \frac{\Lambda_{\max}}{\Lambda_{\min}^2\eta}.
\]
These inequalities imply, together with the mean value theorem, that
\[
\|G(e,\Omega)-G(e',\Omega') \|_\infty \le
\frac{\dot\Lambda_{\max}+\Lambda_{\max}}{\Lambda_{\min} \eta^2}\|e-e'\|_\infty
+\frac{\Lambda_{\max}}{\Lambda_{\min}^2\eta} |\Omega-\Omega'|.
\]
Then using $\|f(q_1')\|_\infty\le 1/4$,  $\|e-e'\|_\infty \le \Delta$ and 
\[
|\Omega-\Omega'|\le \E_Q\Big[ \big|\lambda\bigl(e(X)\bigr)-\lambda\bigl(e'(X)\bigr)\big|\Big]\le \dot\Lambda_{\max}\Delta,\]
where the second inequality follows from the mean value theorem, we find 
\begin{equation}\label{e:l2term}
\|f(q_1')\|_\infty  \|G(e,\Omega)-G(e',\Omega') \|_\infty
 \le 
\frac{1}{4}\left(\frac{\dot\Lambda_{\max}+\Lambda_{\max}}{\Lambda_{\min}\eta^2}
+\frac{\Lambda_{\max}\dot\Lambda_{\max}}{\Lambda_{\min}^2\eta}\right)\Delta.
\end{equation}
Then \eqref{e:l1term} and \eqref{e:l2term} imply 
\begin{align*}
    \|F_1(U)-F_1(V)\|_\infty &\le\left(\frac{\Lambda_{\max}}{\Lambda_{\min}\eta}
 +\frac{1}{4}\left(\frac{\dot\Lambda_{\max}+\Lambda_{\max}}{\Lambda_{\min}\eta^2}
 +\frac{\Lambda_{\max}\dot\Lambda_{\max}}{\Lambda_{\min}^2\eta}\right)\right)\Delta\\
 &\le \left(\frac{1}{\mfc^3}+\frac{1}{4}\left(\frac{2}{\mfc^4}+\frac{1}{\mfc^5}\right)\right)\Delta\\
 &\le 2\mfc^{-5}\Delta.
\end{align*}

For $F_0$, the same argument with 
\[ G_0(e,\Omega) =\frac{\lambda(e)}{\Omega(1-e)}\]
in place of $G$, together with the bound $1-e(x)\ge \eta$ for all $x \in \mathcal X$ (which follows from \Cref{as::positivity}), yields
\[ \|F_0(U)-F_0(V)\|_\infty\le 2\mfc^{-5} \Delta.\]
Lastly, for $F_e$, we write $F_e(U)=H(e,\Omega) (\tau-\psi)$ with 
\[
H(e,\Omega)=\frac{e(1-e)\dot\lambda(e)}{\Omega}.
\]
Then
\begin{equation}\label{e:Fedecomp}
\|F_e(U)-F_e(V)\|_\infty \le \|H-H'\|_{\infty}\|\tau-\psi\|_{\infty} + \|H'\|_{\infty}\big(\|\tau-\tau'\|_{\infty}+|\psi-\psi'|\big),
\end{equation}
where we abbreviate $H'=H(e',\Omega')$.
Using \Cref{as::positivity}, we have
\begin{align}
\begin{split}\label{e:Feinitialest}
    &\|\tau-\tau'\|_{\infty}\le \|q_1-q_1'\|_{\infty}+\|q_0-q_0'\|_{\infty}\le 2\Delta,\\
    &\|\tau-\psi\|_{\infty}\le \|\tau\|_{\infty}+|\psi|
\le 1+\frac{\E_Q\big[|\lambda(e(X))\tau(X)|\big]}{\Omega}
\le 1+\frac{\Lambda_{\max}}{\Lambda_{\min}}
\le \frac{3}{2\mfc^2},\\
    &\|H'\|_{\infty}\le \frac{\big \|e'(1-e') |\dot\lambda(e')| \big \|_{\infty}}{\Omega'} \le \frac{\dot\Lambda_{\max}}{4\Lambda_{\min}}\leq\frac{1}{4\mfc^2}.
\end{split}
\end{align}
Let 
\[g(u)=u(1-u)\dot\lambda(u), \qquad
g'(u)=(1-2u)\dot\lambda(u)+u(1-u)\lambda''(u).\]
We have
$$\sup_{u\in[\eta,1-\eta]}|g'(u)|\le \dot\Lambda_{\max}+\frac{\Lambda_{2,\max}}{4}.$$
By \Cref{as::positivity},
$$\|g(e)-g(e')\|_{\infty}\le \left(\dot\Lambda_{\max}+\frac{\Lambda_{2,\max}}{4}\right) \|e-e'\|_{\infty}\le \left(\dot\Lambda_{\max}+\frac{\Lambda_{2,\max}}{4}\right) \Delta.$$
Moreover
\begin{equation}\label{e:omega_reciprocal}
\left|\frac{1}{\Omega}-\frac{1}{\Omega'}\right| \le \frac{|\Omega-\Omega'|}{\Omega\Omega'}
\le \frac{\dot\Lambda_{\max}}{\Lambda_{\min}^2}\Delta,
\qquad
\max\bigl\{\|g(e)\|_{\infty},\|g(e')\|_{\infty}\bigr\}\le \frac{\dot\Lambda_{\max}}{4}.
\end{equation}
Consequently,
\begin{align}
\begin{split}\label{e:qqprime}
  \|H-H'\|_{\infty}
  &=\left\|\frac{g(e)}{\Omega}-\frac{g(e')}{\Omega'}\right\|_{\infty}  \\
  &\le \frac{\|g(e)-g(e')\|_{\infty}}{\Omega}
  + \left|\frac{1}{\Omega}-\frac{1}{\Omega'}\right|\|g(e')\|_{\infty}\\
  &\le \left(\frac{4\dot\Lambda_{\max}+\Lambda_{2,\max}}{4\Lambda_{\min}}+\frac{\dot\Lambda_{\max}^2}{4\Lambda_{\min}^2}\right)\Delta
  \le \frac{3}{2\mfc^4}\Delta.
\end{split}
\end{align}

Denote 
\[
N(U)=\E_Q\Big[\lambda\bigl(e(X)\bigr)\tau(X)\Big],
\]
so that $\psi=N/\Omega$.
Then
\begin{equation}\label{e:t1psidiff}
|\psi-\psi'|
\le \left|\frac{1}{\Omega}-\frac{1}{\Omega'}\right||N|
+\frac{1}{\Omega'}|N-N'|.
\end{equation}
We bound $|N|\le \Lambda_{\max}$ and
\begin{align*}
  |N-N'|&=\left|\E_Q\big[\lambda\bigl(e(X)\bigr)\tau(X)-\lambda\bigl(e'(X)\bigr)\tau'(X)\big]\right|  \\
  &\le \E_Q\big[ \big|\lambda\bigl(e(X)\bigr)-\lambda\bigl(e'(X)\bigr)\big|\big]
   + \Lambda_{\max}\E_Q\big[ |\tau(X)-\tau'(X)|\big]\\
  &\le \dot\Lambda_{\max}\Delta + 2\Lambda_{\max}\Delta.
\end{align*}
Inserting this and  first bound of \eqref{e:omega_reciprocal} in \eqref{e:t1psidiff} yields
\[
|\psi-\psi'|\le
\left(\frac{\Lambda_{\max}\dot\Lambda_{\max}}{\Lambda_{\min}^2}
+\frac{\dot\Lambda_{\max}+2\Lambda_{\max}}{\Lambda_{\min}}\right)\Delta\le \frac{4}{\mfc^4}\Delta.
\]
Putting this estimate, \eqref{e:Feinitialest}, and \eqref{e:qqprime} in \eqref{e:Fedecomp}, we find 
\begin{equation*}
    \|F_e(U)-F_e(V)\|_{\infty}\le 4\mfc^{-6}\Delta.
\end{equation*}
Therefore, \[ \|F(U)-F(V)\|_\infty\le \max\{2\mfc^{-5},4\mfc^{-6}\}\Delta=4\mfc^{-6}\Delta,\]
concluding the proof. 
\end{proof}

Recall that $I_1$ was defined in \Cref{mainthm:ODE-C1}. 
Define 
\[
\mathsf X_{I_1}=\Big\{ \mathsf U \in C\big ([-\mft_1,\mft_1];C(\mathcal{X})^3\big): \| \mathsf U -U_0\|_{\sup}\le \eta\Big\},
\]
where $U_0$ represents the constant function on $[ - \mft_1, \mft_1]$ taking the value $U_0$, and we  define the norm
\[
\|\mathsf U\|_{\sup}=\sup_{|t|\le \mft_1}\|U_t\|_\infty,
\]
where we use the shorthand $U_t = \mathsf U(t)$. 
The induced metric is then
\[
d(\mathsf U, \mathsf V)=\| \mathsf U- \mathsf V\|_{\sup}.
\]

We now apply the Banach Contraction Theorem on $\mathsf X_{I_1}$.
\begin{theorem}
\label{thm:ODE}
Define the operator
\[
(\mathcal T \mathsf U)(t)
=
U_0+\int_0^t F(\mathsf U(s))\,ds,
\qquad t\in [-\mft_1,\mft_1],\ \mathsf U\in \mathsf X_{I_1}.
\]
Then $\mathcal T\colon \mathsf X_{I_1}\to \mathsf X_{I_1}$ is a contraction with Lipschitz constant at most $1/2$, and there exists a unique $\mathsf U\in\mathsf X_{I_1}$ such that
$\mathsf U=\mathcal{T} \mathsf U$.
\end{theorem}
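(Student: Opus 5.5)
The plan is to verify the three hypotheses of \Cref{t:fixedpoint}: $\mathsf X_{I_1}$ is a nonempty complete metric space, $\mathcal T$ maps $\mathsf X_{I_1}$ into itself, and $\mathcal T$ is a contraction with constant $1/2$. Then \Cref{t:fixedpoint} gives the unique fixed point.

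\emph{Completeness and well-definedness.} Completeness is standard. The space $C([-\mft_1,\mft_1];C(\mathcal X)^3)$ with the sup norm is a Banach space, because $C(\mathcal X)^3$ with $\|\cdot\|_\infty$ is complete. The set $\mathsf X_{I_1}$ is a closed ball in it, and it is nonempty since it contains the constant path $U_0$.

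For $\mathsf U\in\mathsf X_{I_1}$ we have $U_s\in\mathcal B_\eta(U_0)$ for every $s$. Hence $F(\mathsf U(s))$ is defined; in particular $\Omega(U_s)\ge\Lambda_{\min}>0$ by \Cref{as::positivity}. By \Cref{l:Flips}, the map $s\mapsto F(\mathsf U(s))$ is continuous into $C(\mathcal X)^3$, being a composition of a continuous path with a Lipschitz map. The integral is therefore a Riemann integral of a continuous Banach-valued function, and $t\mapsto(\mathcal T\mathsf U)(t)$ is continuous; indeed it is Lipschitz, with constant given by the bound \eqref{eq:MF}.

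\emph{Self-map.} For $|t|\le\mft_1$, using \eqref{eq:MF} we get
\[
\|(\mathcal T\mathsf U)(t)-U_0\|_\infty\le |t|\sup_{s}\|F(U_s)\|_\infty\le \frac{\mft_1}{2\mfc^4}=\frac{\mfc^2}{16}.
\]
It remains to check that $\mfc^2/16\le\eta$. This follows from $\mfc\le\eta$ and $\mfc\le\eta<1/4$, which give $\mfc^2\le\eta/4$.

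\emph{Contraction.} For $\mathsf U,\mathsf V\in\mathsf X_{I_1}$, \Cref{l:Flips} gives
\[
\|(\mathcal T\mathsf U)(t)-(\mathcal T\mathsf V)(t)\|_\infty\le |t|\,4\mfc^{-6}\,d(\mathsf U,\mathsf V)\le 4\mfc^{-6}\mft_1\, d(\mathsf U,\mathsf V)=\tfrac12 d(\mathsf U,\mathsf V).
\]
This holds for every $|t|\le\mft_1$, so $\mathcal T$ has Lipschitz constant at most $1/2$, and \Cref{t:fixedpoint} yields the unique fixed point.

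The only delicate points are the justification of the Banach-valued integral and the bookkeeping showing $\mfc^2/16\le\eta$. Both are routine, so I expect no real obstacle.
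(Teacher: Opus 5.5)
Your proposal is correct and follows essentially the same route as the paper's proof. Both argue completeness of the closed ball $\mathsf X_{I_1}$, then the self-map property via \eqref{eq:MF} (giving $\mft_1/(2\mfc^4)=\mfc^2/16<\eta$), then the contraction constant $4\mfc^{-6}\mft_1=1/2$ from \Cref{l:Flips}, and finally conclude with \Cref{t:fixedpoint}.
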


\begin{proof}
Because $C(\mathcal X)^3$ is a Banach space under $\|\cdot\|_\infty$, the space
\[
C\big([-\mft_1,\mft_1];C(\mathcal X)^3\big)
\]
is a Banach space under $\|\cdot\|_{\sup}$. Since $\mathsf X_{I_1}$ is a closed ball in this Banach space, $(\mathsf X_{I_1},d)$ is complete.

Fix $\mathsf U\in\mathsf X_{I_1}$, and write $U_s=\mathsf U(s)$. Since $F$ is Lipschitz on $\mathcal B_\eta(U_0)$ by \Cref{l:Flips}, it is continuous there. Hence $s\mapsto F(U_s)$ is continuous, so $t\mapsto (\mathcal T\mathsf U)(t)$ is continuous.

Moreover, for every $t\in[-\mft_1,\mft_1]$,
\[
\|(\mathcal T\mathsf U)(t)-U_0\|_\infty
\le \int_{\min\{0,t\}}^{\max\{0,t\}} \|F(U_s)\|_\infty\,ds
\le \mft_1 \sup_{|s|\le \mft_1}\|F(U_s)\|_\infty
\le \frac{\mft_1}{2\mfc^4}
= \frac{\mfc^2}{16}.
\]
Since $\mfc\le \eta$ and $\eta<1/4$, we have $\mfc\le 1$, hence
\[
\frac{\mfc^2}{16}\le \frac{\mfc}{16}\le \frac{\eta}{16}<\eta.
\]
Therefore
\[
\|(\mathcal T\mathsf U)(t)-U_0\|_\infty \le \eta
\qquad\text{for all }t\in[-\mft_1,\mft_1],
\]
so $\mathcal T\mathsf U\in\mathsf X_{I_1}$.

Now let $\mathsf U,\mathsf V\in\mathsf X_{I_1}$, and write $U_s=\mathsf U(s)$ and $V_s=\mathsf V(s)$. Then for every $t\in[-\mft_1,\mft_1]$,
\begin{align*}
\|(\mathcal T\mathsf U)(t)-(\mathcal T\mathsf V)(t)\|_\infty
& \le \int_{\min\{0,t\}}^{\max\{0,t\}} \|F(U_s)-F(V_s)\|_\infty\,ds \\
&\le 4\mft_1\mfc^{-6}\|\mathsf U-\mathsf V\|_{\sup}
=\frac12 \|\mathsf U-\mathsf V\|_{\sup},
\end{align*}
by \Cref{l:Flips} and $\mft_1=\mfc^6/8$. Taking the supremum over $t$ shows that $\mathcal T$ is a contraction with Lipschitz constant at most $1/2$. The existence and uniqueness of the fixed point now follow from \Cref{t:fixedpoint}.
\end{proof}

We are now ready for the proof of \Cref{mainthm:ODE-C1}. 
\begin{proof}[Proof of \Cref{mainthm:ODE-C1}]
Let $\mathsf U\in\mathsf X_{I_1}$ be the unique fixed point from \Cref{thm:ODE}, and write $U_t=\mathsf U(t)$. Then
\[
U_t = U_0 + \int_{0}^{t} F(U_s)\,ds,
\qquad t\in[-\mft_1,\mft_1].
\]
Since $t\mapsto U_t$ is continuous and $F$ is Lipschitz on $\mathcal B_\eta(U_0)$ by \Cref{l:Flips}, the map
\[
f:[-\mft_1,\mft_1]\to C(\mathcal X)^3,
\qquad
f(t)=F(U_t),
\]
is continuous. Define
\[
\Phi(t)=\int_0^t f(s)\,ds,
\qquad t\in[-\mft_1,\mft_1].
\]
By the Banach-space-valued fundamental theorem of calculus, $\Phi\in C^1(I_1;C(\mathcal X)^3)$ and
\[
\Phi'(t)=f(t)=F(U_t)
\qquad\text{for all }t\in I_1.
\]
Here $\Phi'(t)=f(t)$ means that the derivative is taken in the Banach space $C(\mathcal X)^3$, i.e.,
\[
\lim_{h\to 0}\left\|\frac{\Phi(t+h)-\Phi(t)}{h}-f(t)\right\|_\infty=0.
\] 
Therefore $t\mapsto U_t=U_0+\Phi(t)$ belongs to $C^1(I_1;C(\mathcal X)^3)$ and satisfies
\[
\frac{d}{dt}U_t = F(U_t)=F(Q,U_t),
\qquad t\in I_1.
\]
Because $\mathsf U\in\mathsf X_{I_1}$, we have $U_t\in \mathcal B_\eta(U_0)$ for all $t\in I_1$.

To prove uniqueness, let $V\in C^1(I_1;C(\mathcal X)^3)$ be any other solution such that $V_t\in\mathcal B_\eta(U_0)$ for all $t\in I_1$ and $V_0=U_0$. Since
\[
\|V'(t)\|_\infty = \|F(V_t)\|_\infty \le \frac{1}{2\mfc^4}
\]
for all $t\in I_1$ by \eqref{eq:MF}, the map $t\mapsto V_t$ is Lipschitz on $I_1$ and therefore extends uniquely to a continuous path on $[-\mft_1,\mft_1]$, still denoted by $V$, with values in $\mathcal B_\eta(U_0)$. 
Since $\mathcal B_\eta(U_0)$ is closed in $C(\mathcal X)^3$ and $V_t\in\mathcal B_\eta(U_0)$ for all $t\in I_1$, the extended path also satisfies $V_t\in\mathcal B_\eta(U_0)$ for all $t\in[-\mft_1,\mft_1]$. 
Integrating the identity $V'(t)=F(V_t)$ from $0$ to $t$ and using continuity at the endpoints yields
\[
V_t = U_0+\int_0^t F(V_s)\,ds,
\qquad t\in[-\mft_1,\mft_1].
\]
Thus $V$ is a fixed point of $\mathcal T$ in $\mathsf X_{I_1}$. By \Cref{thm:ODE}, the fixed point is unique, so $V_t=U_t$ for all $t\in I_1$.

For the pointwise statement, fix $x\in\mathcal X$ and consider the bounded linear evaluation map $\mathrm{ev}_x\colon C(\mathcal X)^3\to\mathbb R^3$ given by $\mathrm{ev}_x(W)=W(x)$. By \Cref{lem:commute-der},
\[
\frac{d}{dt}U_t(x)=F(U_t)(x)=F(Q,U_t)(x),
\]
so $t\mapsto U_t(x)$ is $C^1$ on $I_1$ and solves the ODE pointwise.
\end{proof}
\begin{remark}
\label{rem:parameter_dependence_ODE}
The solution constructed in the proof of \Cref{mainthm:ODE-C1} depends continuously, and hence measurably, on the pair \((Q,U_0)\). While the details are routine, we include them for completeness.

Let \(\mathcal P(\mathcal X)\) denote the set of Borel probability measures on
\(\mathcal X=[0,1]^d\), equipped with the weak topology, and define
\[
\mathcal A_\eta
=
\left\{
U_0\in C(\mathcal X;[0,1])^3:
\mathcal B_{2\eta}(U_0)\subset C(\mathcal X;[\eta,1-\eta])^3
\right\}.
\]
Also write
\[
\overline I_1=[-\mft_1,\mft_1],
\qquad
\mathbb X=C(\overline I_1;C(\mathcal X)^3),
\qquad
\mathbb B_\eta=\{\mathsf V\in\mathbb X:\|\mathsf V\|_{\sup}\le \eta\}.
\]
For \((Q,U_0)\in \mathcal P(\mathcal X)\times \mathcal A_\eta\) and
\(\mathsf V\in \mathbb B_\eta\), define
\[
(\Phi_{Q,U_0}\mathsf V)(t)
=
\int_0^t F(Q,U_0+\mathsf V(s))\,ds,
\qquad t\in \overline I_1.
\]
Since \(\|\mathsf V\|_{\sup}\le \eta\), we have
\(U_0+\mathsf V(s)\in \mathcal B_\eta(U_0)\subset \mathcal B_{2\eta}(U_0)\) for
all \(s\in \overline I_1\), and therefore
\(U_0+\mathsf V(s)\in C(\mathcal X;[\eta,1-\eta])^3\). Thus the right-hand side
is well defined.

The estimates proved in \eqref{eq:MF} and \Cref{l:Flips} depend only on the
bounds \(q_1,q_0,e\in[\eta,1-\eta]\) and on the regularity constants of
\(\lambda\). Hence, for every \(Q\in\mathcal P(\mathcal X)\) and every
\(U,V\in C(\mathcal X;[\eta,1-\eta])^3\),
\[
\|F(Q,U)\|_\infty\le \frac{1}{2\mfc^4},
\qquad
\|F(Q,U)-F(Q,V)\|_\infty\le 4\mfc^{-6}\|U-V\|_\infty.
\]
Therefore, exactly as in the proof of \Cref{thm:ODE}, the map
\(\Phi_{Q,U_0}\) sends \(\mathbb B_\eta\) into itself and is a contraction with
Lipschitz constant at most \(1/2\), uniformly over
\((Q,U_0)\in \mathcal P(\mathcal X)\times \mathcal A_\eta\).
Let \(\mathsf V^{Q,U_0}\in\mathbb B_\eta\) denote its unique fixed point, and
define
\[
\mathsf U^{Q,U_0}=U_0+\mathsf V^{Q,U_0}\in \mathbb X.
\]
Then \(\mathsf U^{Q,U_0}\) is the unique solution path furnished by
\Cref{mainthm:ODE-C1}.

We next show that \((Q,U_0)\mapsto \mathsf U^{Q,U_0}\) is continuous. First,
\((Q,U)\mapsto F(Q,U)\) is continuous on
\(\mathcal P(\mathcal X)\times C(\mathcal X;[\eta,1-\eta])^3\). Indeed, if
\(Q_m\Rightarrow Q\) weakly and \(U_m=(q_{1,m},q_{0,m},e_m)\to U=(q_1,q_0,e)\)
in \(C(\mathcal X)^3\), then
\[
\lambda(e_m)\to \lambda(e),
\qquad
\lambda(e_m)(q_{1,m}-q_{0,m})\to \lambda(e)(q_1-q_0)
\]
uniformly on \(\mathcal X\). Hence
\[
Q_m[\lambda(e_m)]\to Q[\lambda(e)],
\qquad
Q_m[\lambda(e_m)(q_{1,m}-q_{0,m})]\to Q[\lambda(e)(q_1-q_0)],
\]
so \(\Omega(Q_m,U_m)\to \Omega(Q,U)\) and \(\psi(Q_m,U_m)\to \psi(Q,U)\), and
the explicit formula for \(F\) then yields \(F(Q_m,U_m)\to F(Q,U)\) in
\(C(\mathcal X)^3\).

Now let \((Q_m,U_{0,m},\mathsf V_m)\to (Q,U_0,\mathsf V)\) in
\(\mathcal P(\mathcal X)\times \mathcal A_\eta\times \mathbb B_\eta\). Then
\begin{align*}
\|\Phi_{Q_m,U_{0,m}}\mathsf V_m-\Phi_{Q,U_0}\mathsf V\|_{\sup}
&\le
2\mft_1\sup_{s\in\overline I_1}
\|F(Q_m,U_{0,m}+\mathsf V_m(s))-F(Q_m,U_0+\mathsf V(s))\|_\infty\\
&\quad+
2\mft_1\sup_{s\in\overline I_1}
\|F(Q_m,U_0+\mathsf V(s))-F(Q,U_0+\mathsf V(s))\|_\infty .
\end{align*}
The first term tends to \(0\) by the uniform Lipschitz bound above. For the
second term, the set
\[
K=\{U_0+\mathsf V(s):s\in\overline I_1\}\subset C(\mathcal X;[\eta,1-\eta])^3
\]
is compact, because \(s\mapsto U_0+\mathsf V(s)\) is continuous on the compact
interval \(\overline I_1\). Also, the set
\[
\{Q\}\cup\{Q_m:m\ge 1\}\subset \mathcal P(\mathcal X)
\]
is compact, since \(Q_m\to Q\). Therefore
\(F\) is uniformly continuous on the compact product of these two sets, and
hence
\[
\sup_{U\in K}\|F(Q_m,U)-F(Q,U)\|_\infty\to 0.
\]
Thus \((Q,U_0,\mathsf V)\mapsto \Phi_{Q,U_0}\mathsf V\) is continuous.

Finally, let \((Q_m,U_{0,m})\to(Q,U_0)\). Since
\(\mathsf V^{Q_m,U_{0,m}}=\Phi_{Q_m,U_{0,m}}(\mathsf V^{Q_m,U_{0,m}})\) and
\(\mathsf V^{Q,U_0}=\Phi_{Q,U_0}(\mathsf V^{Q,U_0})\), we have
\begin{align*}
\|\mathsf V^{Q_m,U_{0,m}}-\mathsf V^{Q,U_0}\|_{\sup}
&\le
\|\Phi_{Q_m,U_{0,m}}(\mathsf V^{Q_m,U_{0,m}})
-\Phi_{Q_m,U_{0,m}}(\mathsf V^{Q,U_0})\|_{\sup}\\
&\quad+
\|\Phi_{Q_m,U_{0,m}}(\mathsf V^{Q,U_0})
-\Phi_{Q,U_0}(\mathsf V^{Q,U_0})\|_{\sup}\\
&\le
\frac12\|\mathsf V^{Q_m,U_{0,m}}-\mathsf V^{Q,U_0}\|_{\sup}
+
\|\Phi_{Q_m,U_{0,m}}(\mathsf V^{Q,U_0})
-\Phi_{Q,U_0}(\mathsf V^{Q,U_0})\|_{\sup},
\end{align*}
and therefore
\[
\|\mathsf V^{Q_m,U_{0,m}}-\mathsf V^{Q,U_0}\|_{\sup}
\le
2\,
\|\Phi_{Q_m,U_{0,m}}(\mathsf V^{Q,U_0})
-\Phi_{Q,U_0}(\mathsf V^{Q,U_0})\|_{\sup}\to 0.
\]
Hence
\[
(Q,U_0)\longmapsto \mathsf U^{Q,U_0}
\]
is continuous from \(\mathcal P(\mathcal X)\times \mathcal A_\eta\) into
\(C(\overline I_1;C(\mathcal X)^3)\). In particular, for each fixed
\(t\in I_1\), the map \((Q,U_0)\mapsto U_t^{Q,U_0}\) is continuous.

Consequently, if \(A\) is an event and \((Q,U_0)\) is a measurable random
element of \(\mathcal P(\mathcal X)\times C(\mathcal X;[0,1])^3\) such that
\[
(Q(\omega),U_0(\omega))\in \mathcal P(\mathcal X)\times \mathcal A_\eta
\qquad\text{for all }\omega\in A,
\]
then
\[
\omega\longmapsto \mathsf U^{Q(\omega),U_0(\omega)}
\]
is a measurable \(C(\overline I_1;C(\mathcal X)^3)\)-valued random element on
\(A\). After any measurable extension on \(A^c\), the full random path is
globally measurable. In particular, for each fixed \(t\in I_1\), the map
\[
\omega\longmapsto U_t^{Q(\omega),U_0(\omega)}
\]
is measurable on \(A\). Whenever \Cref{as::positivity} holds for
a given pair \((Q,U_0)\), we have \(U_0\in \mathcal A_\eta\), so the preceding
conclusion applies on any event where \Cref{as::positivity} is assumed.
\end{remark}

\section{Proof of \texorpdfstring{\Cref{mainthm:det-bracket-one}}{Second Main Result}}

Throughout this section we work in the empirical-marginal setting of \Cref{mainthm:det-bracket-one}. The first main theorem supplies the deterministic path \(t\mapsto U_t\); the present theorem shows that, with high conditional probability, the empirical score along that path has a unique zero in a deterministic neighborhood of \(0\).

The key point is that the argument is \emph{conditional on} \(\mathcal G_n\). On the good event \(\mathcal A_n\), the empirical design law
\[
Q_n=\frac1n\sum_{i=1}^n\delta_{X_i},
\]
the initial nuisance estimate \(U_0\), the covariates \(X_1,\dots,X_n\), and therefore the path \(t\mapsto U_t\) from \Cref{mainthm:ODE-C1} are all fixed once we condition on \(\mathcal G_n\). Consequently, only the pairs \((A_i,Y_i)\) remain random, conditionally independent with conditional law \(p^*(\cdot,\cdot\mid X_i)\). Thus the proof is a conditional fixed-design argument: once \(\mathcal G_n\) is fixed, all analytic properties of the path and score process are deterministic, while the concentration bounds apply only to the remaining treatment/outcome randomness.

The proof has three steps. First, we show that along the ODE path the map
\[
t\longmapsto L_n(t)=\frac1n\sum_{i=1}^n\log p_t(A_i,Y_i\mid X_i)
\]
is twice continuously differentiable, with
\[
L_n'(t)=\P_n[D^*(\cdot;U_t)],
\qquad
L_n''(t)=\P_n[\partial_t D^*(\cdot;U_t)].
\]
This reduces the theorem to controlling the empirical score and empirical curvature along a deterministic path. Second, we identify the conditional mean of \(L_n''(t)\) and show that it has strictly negative curvature on a deterministic interval: the population curvature identity gives
\[
\bar P_n^*[\partial_t D^*(\cdot;U_t)]
=
-\E_t[(D^*(O;U_t))^2]+R_t,
\]
where the remainder \(R_t\) is small because the initial conditional law is locally close to the truth. Combined with the square-bound assumption, this yields a uniform negative upper bound for the conditional mean of \(L_n''(t)\). Third, we use conditional concentration inequalities to show that \(L_n''(t)\) stays close to its conditional mean uniformly over \(t\), and that \(L_n'(0)\) is small. On the resulting high-probability event, \(L_n'\) is strictly decreasing on \([-\mft_2,\mft_2]\) and changes sign there, so it has a unique zero \(\hat t\). This gives the desired targeted update and the exponential conditional probability bound in the theorem.

We now begin the technical estimates needed for this argument. The first group of lemmas establishes regularity of the score map \(U\mapsto D^*(\cdot;U)\), which in turn yields the differentiability of \(t\mapsto L_n(t)\). The second group establishes the deterministic curvature bound and the conditional empirical-process bounds that force \(L_n'\) to have a unique root.

\begin{lemma}\label{l:Lip}
For all $U,V\in\mathcal{B}_{2\eta}$, we have 
\[
\|D^*(\cdot;U)-D^*(\cdot;V)\|_\infty\le 18 \mfc^{-6}\|U-V\|_\infty
\]
and 
\[
\sup_{U\in\mathcal B_{2\eta}}\|D^*(\cdot;U)\|_\infty\le 18 \mfc^{-6}.
\]
\end{lemma}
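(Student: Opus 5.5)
The plan is to mirror the proof of \Cref{l:Flips}: write $D^*(o;U)$ explicitly as a finite sum of products of elementary factors, bound each factor and its increment in sup norm uniformly over $\mathcal B_{2\eta}$, and then combine the bounds with the product rule
\[
\|fg-f'g'\|_\infty\le\|f-f'\|_\infty\|g\|_\infty+\|f'\|_\infty\|g-g'\|_\infty .
\]
By \Cref{as::positivity}, every $U\in\mathcal B_{2\eta}$ has $q_1,q_0,e\in[\eta,1-\eta]$. This gives $\Omega(U)\ge\Lambda_{\min}$, $|\psi(U)|\le 1$, and $\sup_u|\lambda(u)|,|\dot\lambda(u)|,|\lambda''(u)|\le\mfc^{-1}$ on the relevant range. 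The bounds depend only on these pointwise constraints and not on $Q$, so they hold uniformly for arbitrary (including discrete) $Q$. Since $a,y\in\{0,1\}$, the supremum over $o\in\mathcal O$ reduces to the supremum over $x$ with $|y-q_a|\le1$ and $|a-e|\le1$.

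I will split $D^*=D_q^*+D_e^*$ as in \eqref{e:EIF}.

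\emph{Sup bound.} For $D_q^*$, the factor $\lambda(e)/(\Omega e)$ (and likewise $\lambda(e)/(\Omega(1-e))$) is at most $\Lambda_{\max}/(\Lambda_{\min}\eta)\le\mfc^{-3}$. Since only one of the two indicator terms is nonzero, $\|D_q^*\|_\infty\le\mfc^{-3}$. For $D_e^*$, we have $|\dot\lambda(e)|/\Omega\le\mfc^{-2}$ and $|\tau-\psi|\le 1+\Lambda_{\max}/\Lambda_{\min}\le 2\mfc^{-2}$, so $\|D_e^*\|_\infty\le 2\mfc^{-4}$. Together these give $3\mfc^{-4}\le18\mfc^{-6}$, using $\mfc\le1$.

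\emph{Lipschitz bound.} Set $\Delta=\|U-V\|_\infty$. For the $D_q^*$ part, reuse the bound on $\|G(e,\Omega)-G(e',\Omega')\|_\infty$ (and its $G_0$ analogue) derived inside the proof of \Cref{l:Flips}, which is at most roughly $3\mfc^{-5}\Delta$. Multiply it by $|y-q_1|\le1$, and add $\|G\|_\infty\,|q_1-q_1'|\le\mfc^{-3}\Delta$. For $D_e^*$, write it as $(\dot\lambda(e)/\Omega)(\tau-\psi)(a-e)$ and telescope over the three factors, using the following increment bounds:
\begin{itemize}
\item $|\dot\lambda(e)/\Omega-\dot\lambda(e')/\Omega'|\le(\Lambda_{2,\max}/\Lambda_{\min}+\dot\Lambda_{\max}^2/\Lambda_{\min}^2)\Delta\le 2\mfc^{-4}\Delta$, using the mean value theorem and \eqref{e:omega_reciprocal};
\item $|(\tau-\psi)-(\tau'-\psi')|\le(2+4\mfc^{-4})\Delta$, using the estimate for $|\psi-\psi'|$ from \Cref{l:Flips};
\item $|(a-e)-(a-e')|\le\Delta$.
\end{itemize}
Multiplying by the sup bounds of the complementary factors ($\mfc^{-2}$, $2\mfc^{-2}$, and $1$) gives terms of order $\mfc^{-6}$. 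Summing all contributions and absorbing lower powers via $\mfc\le1$ should yield a constant at most $18\mfc^{-6}$.

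The main obstacle is only the bookkeeping of constants, to ensure that the final tally stays below $18\mfc^{-6}$. The dominant term is $\mfc^{-2}\cdot4\mfc^{-4}$ from the $\psi$-increment. The step needing care is that the Lipschitz estimates of \Cref{l:Flips} were stated on $\mathcal B_\eta$ and here are needed on $\mathcal B_{2\eta}$. I will handle this by observing, as in \Cref{rem:parameter_dependence_ODE}, that those estimates used only $q_1,q_0,e\in[\eta,1-\eta]$, which \Cref{as::positivity} guarantees on all of $\mathcal B_{2\eta}$.
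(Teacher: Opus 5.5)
Your proposal is correct and follows essentially the same route as the paper: split $D^*=D_q^*+D_e^*$, bound every factor and its increment pointwise using only $q_1,q_0,e\in[\eta,1-\eta]$ on all of $\mathcal B_{2\eta}$, and telescope $D_e^*$ over $\dot\lambda(e)/\Omega$, $\tau-\psi$, $a-e$, exactly as the paper does. The only cosmetic difference is in $D_q^*$. You factor it as $\frac{\lambda(e)}{\Omega e}\cdot(y-q_1)$ and reuse the $G$-estimate from \Cref{l:Flips}, whereas the paper factors it as $\frac{\lambda(e)}{\Omega}\cdot\frac{y-q_1}{e}$. Your tally of roughly $4\mfc^{-5}\Delta+12\mfc^{-6}\Delta\le 16\mfc^{-6}\Delta$ indeed stays below $18\mfc^{-6}\Delta$.
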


\begin{proof}
Denote 
\[ U=(q_1,q_0,e), \quad V=(q_1',q_0',e'), \quad \Delta=\|U-V\|_\infty,\]
and 
\[
\tau=q_1-q_0, \quad \tau'=q_1'-q_0', \quad \Omega=\Omega(U), \quad \Omega'=\Omega(V), \quad 
\psi=\psi(U), \quad \psi'=\psi(V).
\]

By \Cref{as::positivity}, for all $x\in \mathcal X$,
\begin{equation}\label{e:bounds1}
e(x)\in[\eta,1-\eta],\quad \lambda\big(e(x) \big) \in[\Lambda_{\min},\Lambda_{\max}], \quad \Omega\in[\Lambda_{\min},\Lambda_{\max}].
\end{equation}
Further, for all $y \in \{0,1\}$,  $x \in \mathcal X$, and $a\in\{0,1\}$, we have
\begin{equation}\label{e:bounds2}
|y-q_a(x)|\le 1, \qquad \tau(x) \in [-1,1],
\end{equation}
and 
\begin{equation}\label{e:bounds3}
\frac{a}{e(x)}\le \frac{1}{\eta}, \qquad \frac{1-a}{1-e(x)}\le \frac{1}{\eta}.\end{equation}
Using \eqref{e:bounds1} and the bound on $\tau$ in  \eqref{e:bounds2}, we have 
\begin{equation}\label{e:bounds4}
\big |\psi(U) \big|
=\left|\frac{\E_Q\Big[\lambda\big(e(X)\big)\tau(X)\Big]}{\E_Q\Big[\lambda\big(e(X)\big)\Big]}\right|
\le \frac{\Lambda_{\max}\E_Q\Big[ \big|\tau(X)\big| \Big] }{\Lambda_{\min}}
\le \frac{\Lambda_{\max}}{\Lambda_{\min}}.
\end{equation}
Recalling the definitions of $D^*_q$, and $D^*_e$ in \eqref{e:EIF}, and using \eqref{e:bounds1}, \eqref{e:bounds2}, \eqref{e:bounds3}, and \eqref{e:bounds4}, we have 
\[
\|D^*_q(\cdot;U)\|_\infty
\le \frac{2\Lambda_{\max}}{\Lambda_{\min}\eta},
\quad
\|D^*_e(\cdot;U)\|_\infty
\le \frac{\dot\Lambda_{\max}}{\Lambda_{\min}}\left(1+\frac{\Lambda_{\max}}{\Lambda_{\min}}\right).
\]
By the triangle inequality,

\begin{equation}
\label{eq:CR}
\sup_{U\in\mathcal B_{2\eta}}\|D^*(\cdot;U)\|_\infty
\le 4 \mfc^{-4}.
\end{equation}

Next, for the Lipschitz bound, we will show that the functions $U\mapsto D^*_q(\cdot;U)$, and $U\mapsto D^*_e(\cdot;U)$ are Lipschitz, then sum the resulting Lipschitz constants.

First, for the $D^*_q$ term,
write $$D^*_q(o ; U)=\frac{\lambda(e)}{\Omega}\left[ a b_1-(1-a)b_0\right]$$ with 
$$b_1=e^{-1}(y-q_1),\quad b_0 =(1-e)^{-1}(y-q_0),$$
and define $b_0'$ and $b_1'$ analogously. 
Thus, for every $o \in \mathcal O$, 
\begin{equation}\label{e:twotbound}
\big |D^*_q(o; U)-D^*_q(o; V) \big| \le T_{q,1}(o)+T_{q,2}(o)
\end{equation}
where
\begin{align}
\begin{split}
T_{q,1}(o) &=\left|\frac{\lambda(e)}{\Omega}-\frac{\lambda(e')}{\Omega'}\right|\big|ab_1-(1-a)b_0\big|,\\
T_{q,2}(o) &= \frac{\lambda(e')}{\Omega'}\big|a (b_1-b_1')-(1-a)(b_0-b_0')\big|.
\end{split}
\end{align}
We have 
\begin{equation}\label{e:alphabd}
\left |\frac{\lambda(e)}{\Omega}-\frac{\lambda(e')}{\Omega'}\right|
\le \frac{|\lambda(e)-\lambda(e')|}{\Omega}
+\lambda(e')\left|\frac{1}{\Omega}-\frac{1}{\Omega'}\right|
\le \frac{\dot\Lambda_{\max}}{\Lambda_{\min}}\Delta
+\frac{\Lambda_{\max}\dot\Lambda_{\max}}{\Lambda_{\min}^2}\Delta,
\end{equation}
where the last inequality follows from the mean value theorem,  \Cref{as::positivity}, and \eqref{e:bounds1}. Also, since $|a| \le 1$, $$|a b_1-(1-a)b_0 |\le |b_1|+|b_0|\le \frac{2}{\eta}.$$ 
The previous bounds imply $$T_{q,1}\le \frac{2}{\eta}\left(\frac{\dot\Lambda_{\max}}{\Lambda_{\min}}+\frac{\Lambda_{\max}\dot\Lambda_{\max}}{\Lambda_{\min}^2}\right)\Delta.$$
Next, since \[ \frac{\lambda(e')}{\Omega'}\le \frac{ \Lambda_{\max}}{\Lambda_{\min}},\] we can bound
\begin{align*}
    |b_1-b_1'|
&=\left|\frac{y-q_1}{e}-\frac{y-q_1'}{e'}\right|\\
&\le |y-q_1|\left|\frac{1}{e}-\frac{1}{e'}\right|+\frac{|q_1-q_1'|}{e'}\\
&\le \frac{|e-e'|}{\eta^2}+\frac{|q_1-q_1'|}{\eta}\\
&\le \left(\frac{1}{\eta^2}+\frac{1}{\eta}\right)\Delta,
\end{align*}
and similarly we have 
\[
|b_0-b_0'|
\le \left(\frac{1}{\eta^2}+\frac{1}{\eta}\right)\Delta.
\]
Therefore
\[
T_{q,2}\le \frac{2\Lambda_{\max}}{\Lambda_{\min}}
\left(\frac{1}{\eta^2}+\frac{1}{\eta}\right)\Delta.
\]
Putting our estimates on $T_{q,1}$ and $T_{q,2}$ into \eqref{e:twotbound}, we obtain 
\[
\|D^*_q(\cdot;U)-D^*_q(\cdot;V)\|_\infty
\le 8 \mfc^{-6}\Delta.
\]

Then, for the $D^*_e$ term,
write $D^*_e = \rho \beta \zeta$ with $\rho=\dot\lambda(e)/\Omega$, $\beta=\tau-\psi$, and $\zeta=a-e$. 
Then
\[
\big|D^*_e(U)-D^*_e(V)\big|
\le |\rho -\rho'||\beta||\zeta|
+ |\rho'||\beta -\beta'||\zeta|
+ |\rho'||\beta'||\zeta-\zeta'|.
\]
Using the definitions of $\tau$ and $\psi$, 
\begin{equation}\label{e:betabound}
|\beta |\le |\tau|+|\psi|\le 1+\frac{\Lambda_{\max}}{\Lambda_{\min}},
\end{equation}
and
\begin{equation}\label{e:betadiff}
|\beta -\beta'|\le |\tau-\tau'|+|\psi-\psi'|\ \le \left (2+\frac{\Lambda_{\max}\dot\Lambda_{\max}}{\Lambda_{\min}^2}
+\frac{\dot\Lambda_{\max}+2\Lambda_{\max}}{\Lambda_{\min}}\right)\Delta.
\end{equation}
where in the last inequality, we used the bound 
\begin{align*}
|\psi-\psi'|
&\le \left|\frac{1}{\Omega}-\frac{1}{\Omega'}\right|\left|\E[\lambda(e)\tau]\right|+\frac{1}{\Omega'}\E[\left|\lambda(e)\tau-\lambda(e')\tau'\right|]\\
&\le \frac{\Lambda_{\max}\dot\Lambda_{\max}}{\Lambda_{\min}^2}\Delta
+\frac{\dot\Lambda_{\max}+2\Lambda_{\max}}{\Lambda_{\min}}\Delta,
\end{align*}
which follows from the mean value theorem.
Using \eqref{e:betabound} and the estimates 
\begin{align*}
    |\zeta |,|\zeta '|\le 1,\quad 
    |\rho'|\le \frac{\dot\Lambda_{\max}}{\Lambda_{\min}},\quad|\zeta -\zeta '|=|e-e'|\le \Delta,
\end{align*}
it suffices to bound $|\rho -\rho '|$ and $|\beta - \beta'|$.
We already gave a bound for $|\beta -\beta'|$ in \eqref{e:betadiff}. 
Further, 
\[
|\rho -\rho '|\le \frac{|\dot\lambda(e)-\dot\lambda(e')|}{\Omega}
+\big| \dot\lambda(e')\big| \left|\frac{1}{\Omega}-\frac{1}{\Omega'}\right|
\le \frac{\Lambda_{2,\max}}{\Lambda_{\min}}\Delta
+\frac{\dot\Lambda_{\max}^2}{\Lambda_{\min}^2}\Delta.
\]
Hence
\[
|D^*_e(o; U)-D^*_e(o; V)|
\le 10 \mfc^{-6}\Delta.
\]

By the triangle inequality, 
\[
\|D^*(\cdot;U)-D^*(\cdot;V)\|_\infty
\le 18 \mfc^{-6}\|U-V\|_\infty.
\]
Together with \eqref{eq:CR}, this proves the lemma.
\end{proof}

\begin{lemma}
\label{lemma:wJ}
Under \Cref{as::positivity}, the map $U\mapsto D^*(\cdot;U)$ lies in $C^1(\mathcal B^{\circ}_{2\eta};C(\mathcal O))$. Let $J(U)$ denote its Fr\'echet derivative at $U$ and set
\[
\|J(U)\|_{\mathrm{op}}=\sup_{\|h\|_{\infty}\leq 1}\|J(U)[h]\|_{\infty}.
\]
Then, for all $U,V\in\mathcal{B}_{2\eta}^{\circ}$,
\[
\|J(U)-J(V)\|_{\rm op}\leq 104\mfc^{-10}\|U-V\|_{\infty}.
\]
\end{lemma}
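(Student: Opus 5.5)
The plan is to compute the Fréchet derivative of $U\mapsto D^*(\cdot;U)$ explicitly and then bound its variation, term by term. Write $D^*=D_q^*+D_e^*$ as in \eqref{e:EIF}. For fixed $o=(x,a,y)$, each piece is a composition of three kinds of maps: smooth pointwise functions of $(q_1(x),q_0(x),e(x))$, namely $\lambda(e)$, $\dot\lambda(e)$, $1/e$, $1/(1-e)$, $y-q_a$ and $a-e$; the scalar functionals $\Omega(U)=\E_Q[\lambda(e(X))]$ and $N(U)=\E_Q[\lambda(e(X))\tau(X)]$; and the combinations $1/\Omega$ and $\psi=N/\Omega$.

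First, I would show that the scalar functionals are $C^1$ on $\mathcal B^\circ_{2\eta}$, with derivatives
\[
\Omega'(U)[h]=\E_Q[\dot\lambda(e)h_e],\qquad N'(U)[h]=\E_Q[\dot\lambda(e)\tau h_e+\lambda(e)(h_1-h_0)],
\]
where $h=(h_1,h_0,h_e)$. Pointwise Nemytskii maps $U\mapsto g(U(\cdot))$ with $g\in C^2$ on $[\eta,1-\eta]^3$ are Fréchet differentiable in sup norm, by Taylor's theorem with a uniform remainder bounded by $\sup|g''|\,\|h\|_\infty^2$. This step is where $C^3$ smoothness of $\lambda$ is needed, since $D_e^*$ already contains $\dot\lambda$. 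The map $J(U)[h]$ is then given by the product and chain rule. For example, the $D_e^*$ part is
\[
\Bigl(\tfrac{\lambda''(e)h_e}{\Omega}-\tfrac{\dot\lambda(e)\Omega'[h]}{\Omega^2}\Bigr)(\tau-\psi)(a-e)+\tfrac{\dot\lambda(e)}{\Omega}\bigl(h_1-h_0-\psi'[h]\bigr)(a-e)-\tfrac{\dot\lambda(e)}{\Omega}(\tau-\psi)h_e,
\]
with $\psi'[h]=N'[h]/\Omega-N\,\Omega'[h]/\Omega^2$, and the $D_q^*$ part is analogous. The resulting function is continuous on $\mathcal O$, since $\mathcal O$ is $\mathcal X$ times a finite set, so $J(U)$ maps into $C(\mathcal O)$. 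Continuity of $U\mapsto J(U)$ in operator norm follows from the Lipschitz bound proved below, which gives $C^1$.

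Second, for the Lipschitz estimate, I would write $J(U)[h]$ as a finite sum of products. Each product has factors that are bounded on $\mathcal B_{2\eta}$ and Lipschitz in $U$, with one factor linear in $h$ of size at most $\|h\|_\infty$ times a constant. The needed bounds are:
\begin{itemize}
\item $|\Omega|,|N|\le \Lambda_{\max}$, $\Omega\ge\Lambda_{\min}$, and $|\psi|\le \Lambda_{\max}/\Lambda_{\min}$, all as in \Cref{l:Lip};
\item $|\Omega'[h]|\le\dot\Lambda_{\max}\|h\|_\infty$ and $|N'[h]|\le(\dot\Lambda_{\max}+2\Lambda_{\max})\|h\|_\infty$;
\item Lipschitz constants in $U$: $\Lambda_{2,\max}$ for $\dot\lambda(e)$ and $\Lambda_{3,\max}$ for $\lambda''(e)$, via the mean value theorem; $1/\eta^2$ for $1/e$; $2/\eta^3$ for $1/e^2$;
\item differences of the linear functionals, e.g. $|\Omega'(U)[h]-\Omega'(V)[h]|\le\Lambda_{2,\max}\|U-V\|_\infty\|h\|_\infty$, and similarly for $N'$.
\end{itemize}
A telescoping product rule, $|\prod A_i-\prod B_i|\le\sum_j\prod_{i<j}|B_i|\,|A_j-B_j|\prod_{i>j}|A_i|$, then gives $\|J(U)h-J(V)h\|_\infty\le C\|U-V\|_\infty\|h\|_\infty$. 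Every constant is a monomial in $\Lambda_{\max},\dot\Lambda_{\max},\Lambda_{2,\max},\Lambda_{3,\max},\Lambda_{\min}^{-1},\eta^{-1}$, so it is dominated by a power of $\mfc^{-1}$. The worst terms, such as $1/\Omega^3$ times three $\Lambda$-factors times $1/\eta^3$-type factors, have total degree about $10$, which is consistent with $\mfc^{-10}$.

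The main obstacle is pure bookkeeping: enumerating all roughly 20 terms arising from differentiating $D_q^*$ and $D_e^*$, especially the $\psi'$ and $1/\Omega^2$ terms, and checking that the numerical constants sum to at most $104$ with exponent $10$. I would organise the count as two separate bounds, $\le C_q\mfc^{-10}$ for the $D_q^*$ part and $\le C_e\mfc^{-10}$ for the $D_e^*$ part. Each term would use $\mfc\le1$ to raise its exponent up to $10$, and the proof would then check that $C_q+C_e\le 104$.
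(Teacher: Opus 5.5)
Your proposal is correct and follows essentially the same route as the paper. Both arguments obtain $J(U)$ explicitly from the Fr\'echet derivatives of $\Omega$, $N$, $\psi=N/\Omega$ and the pointwise compositions via the chain and product rules, then bound $J(U)[h]-J(V)[h]$ by telescoping product estimates with constants given by powers of $\mfc^{-1}$. The paper packages the terms as $c_1(U)A(Y-q_1)$ with $c_1=(\lambda\circ e)/(\Omega e)$ (plus its control analogue) and $D_e^*=\rho\beta\zeta$, obtaining $36\mfc^{-9}$ and $68\mfc^{-10}$ respectively. Since $\mfc\le\eta<1/4$ absorbs any lower-order powers, the bookkeeping you defer does close under $104\mfc^{-10}$.
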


\begin{proof}
Let $U = (q_1, q_0, e)$ and $V = (q_1', q_0', e')$ be elements of $\mathcal{B}_{2\eta}^{\circ}(U_0)$, and let $\Delta = \|U - V\|_\infty$. 
Let $h = (h_1, h_0, h_e)$ be a direction in $C(\mathcal{X})^3$ with $\|h\|_\infty \le 1$. Fix $o=(x,a,y)\in\mathcal O$. All identities and inequalities below are understood pointwise in $o$; at the end we take the supremum over $o\in\mathcal O$ to obtain the claimed $\|\cdot\|_\infty$ bounds.

The Fréchet derivative of the efficient influence function is
\[
J(U)[h] = J_q(U)[h] + J_e(U)[h],
\]
where $J_q(U)[h]$ and $J_e(U)[h]$ are the directional derivatives of $D_q^*(\cdot; U)$ and $D_e^*(\cdot; U)$, respectively. To justify the existence of these Fréchet derivatives, note first that on the open set $\mathcal B^\circ_{2\eta}(U_0)$, Assumption~\ref{as::positivity} implies that the coordinates $q_1,q_0,e$ take values in $[\eta,1-\eta]$, so the maps
\[
f\mapsto \lambda\circ f,\qquad f\mapsto \dot\lambda\circ f,\qquad f\mapsto \frac1f,\qquad f\mapsto \frac1{1-f}
\]
are $C^1$ as maps on the corresponding open subset of $C(\mathcal X)$, with derivatives given by the usual pointwise formulas. Moreover, pointwise multiplication on $C(\mathcal X)$ is continuous bilinear, and $f\mapsto \E[f(X)]$ is bounded linear. It follows, by repeated applications of \Cref{l:chain_rule} together with the product and quotient rules, that the maps $U\mapsto \Omega(U)$, $U\mapsto N(U)$, $U\mapsto \psi(U)$, and hence $U\mapsto D_q^*(\cdot;U)$, $U\mapsto D_e^*(\cdot;U)$, are Fréchet differentiable on $\mathcal B^\circ_{2\eta}(U_0)$. The formulas displayed below are precisely these Fréchet derivatives. We will systematically bound the magnitude of each derivative component and the difference $|J(U)[h] - J(V)[h]|$. 

Define
\[
N(U)=\E_Q[\lambda(e(X))(q_1-q_0)(X)],\]
which allows us to write
\[
\psi(U)=\frac{N(U)}{\Omega(U)}.
\]
The expectation functional $L(W)=\E_Q[W(X)]$ is a bounded linear function $C(\mathcal X)\to\mathbb R$.
By \Cref{l:chain_rule}, for any direction $h=(h_1,h_0,h_e)\in C(\mathcal{X})^3$,
\[
D\Omega(U)[h]=\E[(\dot\lambda\circ e)(X)h_e(X)],
\]
and by linearity plus \Cref{l:chain_rule},
\[
DN(U)[h]=\E\left[(\dot\lambda\circ e)(X)\,h_e(X)\,(q_1-q_0)(X)+(\lambda\circ e)(X)\,(h_1-h_0)(X)\right].
\]
We then know
$$|D\Omega(U)[h]| \le \dot{\Lambda}_{\max}\|h\|_\infty \le \mfc^{-1}\|h\|_\infty,$$
and we know $$|D\Omega(U)[h] - D\Omega(V)[h]| \le \mathbb{E}[|\dot{\lambda}(e) - \dot{\lambda}(e')| |h_e|] \le \Lambda_{2,\max}\Delta\|h\|_\infty \le \mfc^{-1}\Delta\|h\|_\infty.$$
Similarly, we have $$|DN(U)[h]| \le (\dot{\Lambda}_{\max} + 2\Lambda_{\max})\|h\|_\infty \le 3\mfc^{-1}\|h\|_\infty,$$
and
\begin{align*}
|DN(U)[h] - DN(V)[h]| &\le \mathbb{E}[|\dot{\lambda}(e)\tau - \dot{\lambda}(e')\tau'||h_e| + |\lambda(e) - \lambda(e')||h_1 - h_0|]\\
&\le  3\mfc^{-1}\Delta\|h\|_{\infty} + 2\mfc^{-1}\Delta\|h\|_{\infty}\\
&= 5\mfc^{-1}\Delta\|h\|_{\infty}
\end{align*}
From the proof of \Cref{l:Lip}, we know $|\psi(U) - \psi(V)| \le 4\mfc^{-4}\Delta$. The derivative is $$D\psi(U)[h] = \frac{DN(U)[h]}{\Omega} - \frac{N(U)D\Omega(U)[h]}{\Omega^2}.$$
Then, we have
$$|D\psi(U)[h]| \le \frac{3\mfc^{-1}\|h\|_\infty}{\mfc} + \frac{\mfc^{-1}(\mfc^{-1}\|h\|_\infty)}{\mfc^2} \le 4\mfc^{-4}\|h\|_\infty,$$
and
$$|D\psi(U)[h] - D\psi(V)[h]| \le  14\mfc^{-6}\Delta\|h\|_\infty.$$

Next, we bound the derivative component $J_q(U)[h]$. Let $D_{q1}^*(U) = c_1(U) A(Y-q_1)$, where \[ c_1(U) = \frac{\lambda\circ e}{\Omega(U) e}.\] Define \[f_1(e) = \frac{\lambda\circ e}{e}.\] By the quotient rule,$$Df_1(e)[h] = \frac{(\dot\lambda\circ e)e - (\lambda\circ e)}{e^2}h_e.$$
So, we have$$|Df_1(e)[h]| \le \frac{2\mfc^{-1}}{\mfc^2}\|h\|_\infty = 2\mfc^{-3}\|h\|_\infty.$$
Using the quotient difference rules and the bound $|\lambda''(e(X))| \le \mfc^{-1}$,
\begin{equation*}|Df_1(e)[h] - Df_1(e')[h]| \le 5\mfc^{-5}\Delta\|h\|_\infty.\end{equation*}
For $c_1(U)$, the derivative is$$Dc_1(U)[h] = \frac{Df_1(e)[h]}{\Omega(U)} - \frac{f_1(e)D\Omega(U)[h]}{\Omega(U)^2}.$$
We then know,
$$|Dc_1(U)[h]| \le \frac{2\mfc^{-3}\|h\|_\infty}{\mfc} + \frac{\mfc^{-2}(\mfc^{-1}\|h\|_\infty)}{\mfc^2} \le 3\mfc^{-5}\|h\|_\infty.$$
By splitting the difference into two terms,
\begin{align*}
|Dc_1(U)[h] - Dc_1(V)[h]| &\le \left| \frac{Df_1}{\Omega} - \frac{Df_1'}{\Omega'} \right| + \left| \frac{f_1 D\Omega}{\Omega^2} - \frac{f_1' D\Omega'}{(\Omega')^2} \right|\\  &\le 7\mfc^{-9}\Delta\|h\|_\infty + 5\mfc^{-9}\Delta\|h\|_\infty\\ &\le 12\mfc^{-9}\Delta\|h\|_\infty.
\end{align*}
The directional derivative of the treated component is$$J_{q1}(U)[h] = Dc_1(U)[h]A(Y-q_1) - c_1(U)Ah_1.$$Similarly, we have
\begin{align*}
|J_{q1}(U)[h] - J_{q1}(V)[h]| &\le |Dc_1 - Dc_1'||A(Y-q_1)| + |Dc_1'||A(q_1' - q_1)| + |c_1 - c_1'||Ah_1|\\
&\le 12\mfc^{-9}\Delta\|h\|_\infty + 3\mfc^{-5}\|h\|_\infty\Delta + 3\mfc^{-7}\Delta\|h\|_\infty \\
&\le 18\mfc^{-9}\Delta\|h\|_\infty.
\end{align*}
Summing the symmetric treated and control components gives
$$|J_q(U)[h] - J_q(V)[h]| \le 36\mfc^{-9}\Delta\|h\|_\infty.$$

Now, we bound the derivative component $J_e(U)[h]$. We write $D_e^*(U) = \rho(U)\beta(U)\zeta(U)$, where$$\rho(U) = \frac{\dot\lambda\circ e}{\Omega(U)}, \quad \beta(U) = (q_1 - q_0) - \psi(U), \quad \zeta(U) = A - e.$$
The derivative of $\rho(U)$ is$$D\rho(U)[h] = \frac{(\lambda''\circ e)h_e}{\Omega(U)} - \frac{(\dot\lambda\circ e)D\Omega(U)[h]}{\Omega(U)^2}.$$
So, we have$$|D\rho(U)[h]| \le \frac{\mfc^{-1}\|h\|_\infty}{\mfc} + \frac{\mfc^{-1}(\mfc^{-1}\|h\|_\infty)}{\mfc^2} \le 2\mfc^{-4}\|h\|_\infty.$$
Using the assumption $\Lambda_{3,\max} \le \mfc^{-1}$, we bound the difference:
\begin{align*}|D\rho(U)[h] - D\rho(V)[h]| &\le \left| \frac{(\lambda''\circ e)h_e}{\Omega} - \frac{(\lambda''\circ e')h_e}{\Omega'} \right| + \left| \frac{(\dot\lambda\circ e)D\Omega}{\Omega^2} - \frac{(\dot\lambda\circ e')D\Omega'}{(\Omega')^2} \right|\\  
&\le 2\mfc^{-4}\Delta\|h\|_\infty + 3\mfc^{-8}\Delta\|h\|_\infty\\ 
&\le 5\mfc^{-8}\Delta\|h\|_\infty.
\end{align*}
For $\beta(U)$, the derivative is $D\beta(U)[h] = h_1 - h_0 - D\psi(U)[h]$, which implies $|D\beta(U)[h]| \le 6\mfc^{-4}\|h\|_\infty$, and
$$|D\beta(U)[h] - D\beta(V)[h]| = |D\psi(V)[h] - D\psi(U)[h]| \le 14\mfc^{-6}\Delta\|h\|_\infty.$$
For $\zeta(U)$, the derivative is $D\zeta(U)[h] = -h_e$, which implies $|D\zeta(U)[h]| \le \|h\|_\infty$, and $|D\zeta(U)[h] - D\zeta(V)[h]| = 0$. By the product rule, the derivative is$$J_e(U)[h] = D\rho(U)[h]\beta(U)\zeta(U) + \rho(U)D\beta(U)[h]\zeta(U) + \rho(U)\beta(U)D\zeta(U)[h].$$
We bound the difference of each term. From the proof of \Cref{l:Lip}, we know $|\beta(U)| \le 2\mfc^{-2}$, $|\beta(U) - \beta(V)| \le 6\mfc^{-4}\Delta$, $|\rho(U)| \le \mfc^{-2}$, $|\rho(U) - \rho(V)| \le 2\mfc^{-4}\Delta$, $|\zeta(U)|\le 1$, and $|\zeta(U)-\zeta(V)|\le\Delta$.
We then know,
\begin{align*}|D\rho\cdot\beta\cdot\zeta - D\rho'\cdot\beta'\cdot\zeta'| &\le |D\rho - D\rho'||\beta||\zeta| + |D\rho'||\beta - \beta'||\zeta| + |D\rho'||\beta'||\zeta - \zeta'|\\ &\le 10\mfc^{-10}\Delta\|h\|_\infty + 12\mfc^{-8}\Delta\|h\|_\infty + 4\mfc^{-6}\Delta\|h\|_\infty\\ 
&\le 26\mfc^{-10}\Delta\|h\|_\infty.
\end{align*}
Similarly, we have
\begin{align*}|\rho\cdot D\beta\cdot\zeta - \rho'\cdot D\beta'\cdot\zeta'| &\le |\rho - \rho'||D\beta||\zeta| + |\rho'||D\beta - D\beta'||\zeta| + |\rho'||D\beta'||\zeta - \zeta'|\\
&\le 12\mfc^{-8}\Delta\|h\|_\infty + 14\mfc^{-8}\Delta\|h\|_\infty + 6\mfc^{-6}\Delta\|h\|_\infty\\ 
&\le 32\mfc^{-8}\Delta\|h\|_\infty.
\end{align*}
and
\begin{align*}|\rho\cdot\beta\cdot D\zeta - \rho'\cdot\beta'\cdot D\zeta'| &\le |\rho - \rho'|| \beta || D\zeta | + |\rho'|| \beta - \beta'|| D\zeta | + 0\\
&\le (2\mfc^{-4}\Delta)(2\mfc^{-2})\|h\|_\infty + (\mfc^{-2})(6\mfc^{-4}\Delta)\|h\|_\infty\\
&\le  10\mfc^{-6}\Delta\|h\|_\infty.
\end{align*}
Summing these gives$$|J_e(U)[h] - J_e(V)[h]| \le 68\mfc^{-10}\Delta\|h\|_\infty.$$
Finally, combining the bounds for the two components of the Fréchet derivative gives
\begin{align*}
\|J(U)[h] - J(V)[h]\|_\infty
&\le \|J_q(U)[h] - J_q(V)[h]\|_\infty + \|J_e(U)[h] - J_e(V)[h]\|_\infty\\ 
&\le 36\mfc^{-9} \Delta \|h\|_\infty + 68\mfc^{-10} \Delta \|h\|_\infty\\ 
&\le 104\mfc^{-10} \Delta \|h\|_\infty.
\end{align*}
Taking the supremum over all $\|h\|_\infty \le 1$ yields
\[
\|J(U)-J(V)\|_{\mathrm{op}}
\le 104\mfc^{-10}\|U-V\|_\infty.
\]
This completes the proof.
\end{proof}

Define the modulus of continuity of $J(\cdot)$ on $\mathcal B_\eta$ by
\begin{equation}\label{e:mod_con_J}
\omega_J(\delta)
=
\sup \left\{
\|J(U)-J(V)\|_{\mathrm{op}}
:
U,V\in \mathcal B_\eta,\ \|U-V\|_\infty\le \delta
\right\},
\end{equation}
and define the generalized inverse by
\[
\omega_J^{-1}(y)
=
\sup \left\{
\delta\ge 0 : \omega_J(\delta)\le y
\right\}.
\]
Then, by \Cref{lemma:wJ}, for every $y>0$,
\begin{equation}\label{e:modulus_bound}
\omega_J(y)\le \frac{104y}{\mfc^{10}},
\qquad
\omega_J^{-1}(y)\ge \frac{y\mfc^{10}}{104}>0.
\end{equation}

We now set
\begin{equation}\label{e:Tstar-def}
T_\star
=
\min\left\{
\frac{\mft_1}{2},
\frac{c_\init}{12312\mfc^{-18}e^{9/4}}
\right\},
\qquad
\mft_2=\delta_\init=\frac{c_\init\mfc^{20}}{10^6}.
\end{equation}
Since $c_\init\in(0,1)$ and $\mfc\le 1$, we have $\mft_2<T_\star<\mft_1$.

The following lemma shows that along the one-step TMLE path,
\[
L_n'(t)=\P_n \left[D^*(\cdot;U_t)\right].
\]

\begin{lemma}\label{lem:Ln-2nd-deriv}
On $\mathcal A_n$, we have
\[
t\mapsto L_n(t)\in C^2((-\mft_1,\mft_1)),
\qquad
L_n'(t)=\P_n \left[D^*(\cdot;U_t)\right],
\qquad
L_n''(t)=\P_n \left[\partial_t D^*(\cdot;U_t)\right].
\]
\end{lemma}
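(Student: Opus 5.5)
On $\mathcal A_n$ everything except the pairs $(A_i,Y_i)$ is fixed, so the statement is a deterministic regularity claim about finitely many real functions of $t$. The plan is to show $L_n'$ equals the empirical score by \Cref{lm::ODE_flow_derivation}, and then to differentiate that score once more using the chain rule for the Fréchet differentiable map $U\mapsto D^*(\cdot;U)$ of \Cref{lemma:wJ}.

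\textbf{Step 1.} By \Cref{mainthm:ODE-C1}, applied with the pair $(Q_n,U_0)$ (legitimate because \Cref{as::positivity} holds on $\mathcal A_n$), we get a path $U\in C^1(I_1;C(\mathcal X)^3)$ with $U_t\in\mathcal B_\eta\subset C(\mathcal X;[\eta,1-\eta])^3$. Hence $\Omega_t\ge\Lambda_{\min}>0$. For each $x$, the map $t\mapsto U_t(x)$ is $C^1$ and solves \eqref{eq:ATT-ODE}. The hypotheses of \Cref{lm::ODE_flow_derivation} therefore hold with $Q=Q_n$ and $I=I_1$. That lemma gives, for each $i$,
\[
\frac{d}{dt}\log p_t(A_i,Y_i\mid X_i)=D^*(O_i;U_t).
\]
Since $\log p_t(A_i,Y_i\mid X_i)$ is a smooth function of $U_t(X_i)\in[\eta,1-\eta]^3$, each summand is $C^1$. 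Averaging the $n$ terms gives $L_n\in C^1$ and $L_n'(t)=\P_n[D^*(\cdot;U_t)]$.

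\textbf{Step 2.} Next I would show that $t\mapsto D^*(\cdot;U_t)$ is $C^1$ as a map into $C(\mathcal O)$. First, $U_t\in\mathcal B_\eta\subset\mathcal B^\circ_{2\eta}$. By \Cref{lemma:wJ}, the map $U\mapsto D^*(\cdot;U)$ is $C^1$ on $\mathcal B^\circ_{2\eta}$, with derivative $J(U)$. The Banach-space chain rule (\Cref{l:chain_rule}) then gives
\[
\partial_t D^*(\cdot;U_t)=J(U_t)[F(Q_n,U_t)].
\]
This derivative is continuous in $t$ for three reasons: $J$ is Lipschitz in operator norm by \Cref{lemma:wJ}, $F$ is Lipschitz by \Cref{l:Flips}, and $t\mapsto U_t$ is continuous. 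Concretely,
\[
\|J(U_t)F(U_t)-J(U_s)F(U_s)\|_\infty\le\|J(U_t)-J(U_s)\|_{\rm op}\|F(U_t)\|_\infty+\|J(U_s)\|_{\rm op}\|F(U_t)-F(U_s)\|_\infty .
\]
To bound $\|J(U_s)\|_{\rm op}$ uniformly in $s$, I would use either the Lipschitz bound of \Cref{l:Lip}, which bounds $\|J\|_{\rm op}$ by $18\mfc^{-6}$, or \Cref{lemma:wJ} together with $\|J(U_0)\|_{\rm op}<\infty$.

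\textbf{Step 3.} For each $o_i=O_i$, the evaluation $f\mapsto f(o_i)$ is a bounded linear functional on $C(\mathcal O)$, so it commutes with $t$-derivatives (\Cref{lem:commute-der}). Thus $t\mapsto D^*(O_i;U_t)$ is $C^1$ with derivative $\partial_tD^*(O_i;U_t)$. Averaging over $i$ gives
\[
L_n''(t)=\P_n[\partial_tD^*(\cdot;U_t)],
\]
and this is continuous in $t$, so $L_n\in C^2((-\mft_1,\mft_1))$.

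The main obstacle is Step 2. One has to make sure the path stays inside the open set on which \Cref{lemma:wJ} applies, which follows from $\mathcal B_\eta\subset\mathcal B^\circ_{2\eta}$. One also has to check that the Fréchet derivative in $C(\mathcal O)$ is really taken with respect to the sup norm over all $o$. This is harmless because $\mathcal O=\mathcal X\times\{0,1\}^2$, so $C(\mathcal O)$ amounts to four copies of $C(\mathcal X)$. Everything else is bookkeeping.
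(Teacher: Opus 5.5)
Your proposal is correct and follows essentially the same route as the paper. The paper also obtains $L_n'$ from the score identity of \Cref{lm::ODE_flow_derivation}, and obtains $\partial_t D^*(\cdot;U_t)=J(U_t)[F(Q_n,U_t)]$ by applying the Banach-space chain rule to the $C^1$ map of \Cref{lemma:wJ} along the $C^1$ path $t\mapsto U_t$; it likewise bounds $\|J(U)\|_{\mathrm{op}}\le 18\mfc^{-6}$ on $\mathcal B_\eta$ via \Cref{l:Lip}, and then averages over the evaluation points.
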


\begin{proof}
Write
\[
S_t(o)=D^*(o;U_t).
\]
Because $U_t$ solves \eqref{e:main_ode}, the score identity \eqref{eq:path-score} from \Cref{lm::ODE_flow_derivation} applies with $Q=Q_n$. Thus, for each $o=(x,a,y)\in\mathcal O$ and $t\in(-\mft_1,\mft_1)$,
\[
\frac{d}{dt}\log p_t(a,y\mid x)=S_t(o),
\]
so $t\mapsto \log p_t(a,y\mid x)$ is $C^1$ once $t\mapsto S_t(o)$ is continuous, and is $C^2$ once $t\mapsto S_t(o)$ is $C^1$.

Consider the map
\[
U\mapsto D^*(\cdot;U)
\quad\text{from}\quad
\mathcal B_{2\eta}\subset C(\mathcal X)^3
\quad\text{into}\quad
C(\mathcal O).
\]
By \Cref{lemma:wJ}, this map is Fr\'echet differentiable on $\mathcal B_{2\eta}^\circ$. 
Moreover, by \Cref{l:Lip},
\[
\|D^*(\cdot;U)-D^*(\cdot;V)\|_\infty
\le 18\mfc^{-6}\|U-V\|_\infty,
\qquad U,V\in\mathcal B_{2\eta}.
\]
Fix $U\in\mathcal B_\eta$ and $h\in C(\mathcal X)^3$ with $\|h\|_\infty\le 1$. If $|s|<\eta$, then
\[
\|U+sh-U_0\|_\infty
\le \|U-U_0\|_\infty+|s|\|h\|_\infty
< \eta+\eta
=
2\eta,
\]
so $U+sh\in\mathcal B_{2\eta}^\circ$. Therefore
\[
\left\|
\frac{D^*(\cdot;U+sh)-D^*(\cdot;U)}{s}
\right\|_\infty
\le
18\mfc^{-6}\|h\|_\infty
\le
18\mfc^{-6}.
\]
Letting $s\to 0$ and using Fr\'echet differentiability at $U$ gives
\[
\|J(U)[h]\|_\infty\le 18\mfc^{-6}.
\]
Taking the supremum over $\|h\|_\infty\le 1$ yields
\begin{equation}\label{e:Juniform}
\|J(U)\|_{\mathrm{op}}\le 18\mfc^{-6},
\qquad U\in\mathcal B_\eta.
\end{equation}

By \Cref{mainthm:ODE-C1}, the path $t\mapsto U_t$ lies in
$C^1((-\mft_1,\mft_1);C(\mathcal X)^3)$ and satisfies $U_t'=F(U_t)$.
Hence, by the Banach-space chain rule,
\[
t\mapsto S_t=D^*(\cdot;U_t)
\]
belongs to $C^1((-\mft_1,\mft_1);C(\mathcal O))$ and
\[
\partial_t S_t=J(U_t)[U_t'].
\]
Evaluating pointwise in $o\in\mathcal O$, we obtain
\[
\partial_t S_t(o)=J(U_t)[U_t'](o).
\]

By definition,
\[
L_n(t)=\frac{1}{n}\sum_{i=1}^n \log p_t(A_i,Y_i\mid X_i).
\]
Each summand is therefore $C^2$ in $t$, and finite sums preserve differentiability. Thus
\[
L_n'(t)
=
\frac{1}{n}\sum_{i=1}^n \partial_t\log p_t(A_i,Y_i\mid X_i)
=
\frac{1}{n}\sum_{i=1}^n S_t(O_i)
=
\P_n \left[D^*(\cdot;U_t)\right],
\]
and
\[
L_n''(t)
=
\frac{1}{n}\sum_{i=1}^n \partial_t S_t(O_i)
=
\P_n \left[\partial_t D^*(\cdot;U_t)\right].
\]
This proves the lemma.
\end{proof}

\begin{lemma}\label{lem:TV}
For every $T<\mft_1$,
\[
\sup_{|t|\le T}\|\partial_t D^*(\cdot;U_t)\|_\infty\le 9\mfc^{-10}.
\]
\end{lemma}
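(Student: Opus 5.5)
The bound will come from the chain rule identity established in the proof of \Cref{lem:Ln-2nd-deriv}, namely $\partial_t D^*(\cdot;U_t)=J(U_t)[U_t']$ with $U_t'=F(U_t)$. This gives the pointwise-in-$o$ estimate
\[
\|\partial_t D^*(\cdot;U_t)\|_\infty\le \|J(U_t)\|_{\mathrm{op}}\,\|F(U_t)\|_\infty .
\]
The plan is to bound the two factors separately and uniformly in $|t|\le T<\mft_1$, using that $U_t\in\mathcal B_\eta(U_0)$ by \Cref{mainthm:ODE-C1}.

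For the second factor, \eqref{eq:MF} gives $\|F(U_t)\|_\infty\le \tfrac{1}{2\mfc^4}$ for every $U_t\in\mathcal B_\eta$.

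For the first factor, the crude bound \eqref{e:Juniform} gives $\|J(U_t)\|_{\mathrm{op}}\le 18\mfc^{-6}$. This follows from the Lipschitz constant in \Cref{l:Lip} together with Fr\'echet differentiability from \Cref{lemma:wJ}, valid since $\mathcal B_\eta\subset\mathcal B^\circ_{2\eta}$. Multiplying the two factors gives
\[
\|\partial_t D^*(\cdot;U_t)\|_\infty\le 18\mfc^{-6}\cdot\tfrac12\mfc^{-4}=9\mfc^{-10},
\]
which is exactly the claimed constant.

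Because both bounds hold for every $t$ in $(-\mft_1,\mft_1)$, taking the supremum over $|t|\le T$ finishes the argument. The only subtle point is making sure the pointwise derivative $\partial_t D^*(o;U_t)$ agrees with the evaluation at $o$ of the Banach-space derivative $J(U_t)[U_t']$. This is already justified in \Cref{lem:Ln-2nd-deriv} via \Cref{lem:commute-der}, since evaluation at $o$ is a bounded linear map on $C(\mathcal O)$. I expect no real obstacle beyond checking that $U_t$ lies in the open ball where \Cref{lemma:wJ} applies, which holds because $\|U_t-U_0\|_\infty\le\eta<2\eta$.
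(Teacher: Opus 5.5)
Your proof is correct and is essentially the paper's argument. The paper skips the chain-rule identity $\partial_t D^*(\cdot;U_t)=J(U_t)[U_t']$ and bounds the difference quotient $h^{-1}\bigl(D^*(\cdot;U_{t+h})-D^*(\cdot;U_t)\bigr)$ directly by the Lipschitz constant $18\mfc^{-6}$ of \Cref{l:Lip} times the speed bound $\frac12\mfc^{-4}$ from \eqref{eq:MF}, then lets $h\to0$; since \eqref{e:Juniform} is itself derived from that same Lipschitz constant, both routes combine identical ingredients and give $9\mfc^{-10}$.
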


\begin{proof}
Recall from \eqref{eq:MF} that
\[
\sup_{U\in\mathcal B_\eta}\|F(U)\|_\infty\le \frac{1}{2}\mfc^{-4}.
\]
Since $U_t$ satisfies $U_t'=F(U_t)$ and $U_t\in\mathcal B_\eta$ for $|t|<\mft_1$, we have
\[
\sup_{|t|<\mft_1}\|U_t'\|_\infty\le \frac{1}{2}\mfc^{-4}.
\]

Fix $t\in(-\mft_1,\mft_1)$ and $h$ such that $t+h\in(-\mft_1,\mft_1)$. By \Cref{l:Lip},
\begin{align*}
\left\|
\frac{D^*(\cdot;U_{t+h})-D^*(\cdot;U_t)}{h}
\right\|_\infty
&\le
18\mfc^{-6}
\frac{\|U_{t+h}-U_t\|_\infty}{|h|}\\
&\le
18\mfc^{-6}
\frac{1}{|h|}
\int_{\min\{t,t+h\}}^{\max\{t,t+h\}}\|U_s'\|_\infty \, ds\\
&\le 9\mfc^{-10}.
\end{align*}
Taking $h\to 0$ gives the claim.
\end{proof}

For the next proof, we need the following standard fact about exchanging derivatives and integrals (see, e.g., \cite[Theorem 2.27(b)]{folland1999real}).

\begin{lemma}\label{l:folland}
Let $(X,\mu)$ be a measure space, and let $I\subset\mathbb R$ be a compact interval.
Let $f\colon X\times I\to\mathbb R$ be measurable such that $f(\cdot,t)\in L^1(\mu)$ for each $t\in I$.
Suppose that $\partial_t f(x,t)$ exists for all $x\in X$ and all $t$ in the interior of $I$, and that there exists $g\in L^1(\mu)$ such that
\[
|\partial_t f(x,t)|\le g(x)
\qquad\text{for all }x\in X\text{ and }t\in I.
\]
Then the function
\[
F(t)=\int_X f(x,t) \, d\mu(x)
\]
is differentiable on the interior of $I$ and satisfies
\[
F'(t)=\int_X \partial_t f(x,t) \, d\mu(x).
\]
\end{lemma}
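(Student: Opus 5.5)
We will prove the lemma by the standard argument combining the mean value theorem with dominated convergence, as in \cite[Theorem 2.27(b)]{folland1999real}. Fix $t_0$ in the interior of $I$. Since $F$ is a real function of one variable, it suffices to show that for every sequence $h_k\to 0$ with $h_k\neq 0$, the difference quotients $(F(t_0+h_k)-F(t_0))/h_k$ converge to $\int_X \partial_t f(x,t_0)\,d\mu(x)$. Because $t_0$ is interior, we may discard finitely many terms and assume $t_0+h_k$ lies in the interior of $I$ for all $k$.

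The first step is to set up the difference quotients. Define
\[
g_k(x)=\frac{f(x,t_0+h_k)-f(x,t_0)}{h_k}.
\]
Each $g_k$ is measurable and lies in $L^1(\mu)$, because $f(\cdot,t)\in L^1(\mu)$ for every $t\in I$. Linearity of the integral then gives
\[
\frac{F(t_0+h_k)-F(t_0)}{h_k}=\int_X g_k\,d\mu.
\]
Since $\partial_t f(x,t_0)$ exists for every $x$, we have $g_k(x)\to\partial_t f(x,t_0)$ pointwise. In particular, $x\mapsto \partial_t f(x,t_0)$ is measurable as a pointwise limit of measurable functions. It is also integrable, since it is dominated by $g$.

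The second step is the key domination estimate. For fixed $x$, the map $t\mapsto f(x,t)$ is differentiable at every point of the closed segment between $t_0$ and $t_0+h_k$. That segment lies in the interior of $I$, and differentiability there implies continuity on the segment. The one-variable mean value theorem therefore gives some $\xi_k(x)$ strictly between $t_0$ and $t_0+h_k$ with
\[
g_k(x)=\partial_t f\bigl(x,\xi_k(x)\bigr).
\]
Hence $|g_k(x)|\le g(x)$ for all $x$ and $k$. We never need measurability of $\xi_k$, because only the resulting bound on $g_k$ is used.

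The final step is to apply the dominated convergence theorem with dominating function $g\in L^1(\mu)$. This yields
\[
\int_X g_k\,d\mu\to\int_X \partial_t f(x,t_0)\,d\mu(x).
\]
Since the sequence $h_k$ was arbitrary, $F$ is differentiable at $t_0$ with the stated derivative. The only mildly delicate point is ensuring the mean value theorem applies, namely that the whole segment stays inside the region where $\partial_t f$ exists. Restricting to $t_0+h_k$ in the interior of $I$ handles this.
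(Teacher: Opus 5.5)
Your proof is correct and is essentially the standard argument. The paper gives no proof of its own and simply cites Folland's Theorem 2.27(b), whose proof is exactly this combination of the mean value theorem (to get the bound $|g_k|\le g$) with dominated convergence along an arbitrary sequence $h_k\to 0$; your restriction of $t_0+h_k$ to the interior of $I$, so that the mean value theorem is applied only where $\partial_t f$ exists, correctly adapts that argument to the interior-only hypothesis here.
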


\begin{lemma}\label{lem:pop-curv}
We have
\[
\E_t \left[\partial_t D^*(O;U_t)\right]
=
-\E_t \left[(D^*(O;U_t))^2\right].
\]
\end{lemma}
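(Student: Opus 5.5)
We would prove the identity by differentiating the mean-zero identity $\E_t[D^*(O;U_t)]=0$ from \Cref{lm::ODE_flow_derivation} in $t$. This is the classical information identity: the derivative of the expected score equals the expected derivative of the score plus the expected squared score. Concretely, since $P_t=\widetilde P(Q,U_t)$ has $X\sim Q$ fixed, we write
\[
0=\E_t[D^*(O;U_t)]=\int_{\mathcal X}\sum_{a,y\in\{0,1\}} D^*((x,a,y);U_t)\,p_t(a,y\mid x)\,dQ(x)
\]
for every $t\in(-\mft_1,\mft_1)$. We then differentiate under the integral. The integrand is $f(x,t)=\sum_{a,y}S_t(x,a,y)\,p_t(a,y\mid x)$ with $S_t=D^*(\cdot;U_t)$.

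For fixed $(x,a,y)$, the map $t\mapsto S_t(o)$ is $C^1$ by the proof of \Cref{lem:Ln-2nd-deriv}, and $t\mapsto p_t(a,y\mid x)$ is $C^1$ with $\partial_t p_t=p_t\,\partial_t\log p_t=p_t S_t$, using the first identity in \eqref{eq:path-score}. By the product rule,
\[
\partial_t f(x,t)=\sum_{a,y}\bigl(\partial_t S_t(o)+S_t(o)^2\bigr)p_t(a,y\mid x).
\]
This is bounded uniformly in $(x,t)$ on any compact $[-T,T]\subset(-\mft_1,\mft_1)$ by $9\mfc^{-10}+(18\mfc^{-6})^2$. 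Here we use \Cref{lem:TV}, \Cref{l:Lip}, the fact that $U_t\in\mathcal B_\eta$, and $p_t\le 1$. A constant is integrable against the probability measure $Q$, so \Cref{l:folland} applies on $[-T,T]$ and gives
\[
0=\frac{d}{dt}\E_t[S_t]=\E_t[\partial_t S_t]+\E_t[S_t^2].
\]
Since $T<\mft_1$ is arbitrary, this holds on all of $(-\mft_1,\mft_1)$ and rearranges to the claim.

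The only delicate point is justifying the interchange of $d/dt$ and $\int dQ$ together with the pointwise differentiability of $p_t$. Both are handled by the $C^1$ regularity of $t\mapsto U_t(x)$ from \Cref{mainthm:ODE-C1} and the uniform bounds on $S_t$ and $\partial_t S_t$ already established. The identity actually holds conditionally on $X=x$ before integrating, so there is no real obstacle.
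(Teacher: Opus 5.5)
Your proof is correct and follows the paper's argument: differentiate $\E_t[D^*(O;U_t)]=0$ under the integral using \Cref{l:folland} on compact subintervals of $(-\mft_1,\mft_1)$, with $\partial_t p_t=p_tS_t$ and the bounds from \Cref{l:Lip} and \Cref{lem:TV}. The only difference is the dominating function. The paper bounds $p_t\le e^{9/4}p_0$ through the log-likelihood-ratio estimate, whereas your constant bound via $\sum_{a,y}p_t(a,y\mid x)=1$ is simpler and equally valid. As you note, the conditional identity holds for each fixed $x$ with a finite sum over $(a,y)$, so the interchange is essentially trivial.
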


\begin{proof}
Let $\nu_n$ denote the product of $Q_n$ on $\mathcal X$ and counting
measure on $\{0,1\}^2$. Then, for each $t\in(-\mft_1,\mft_1)$,
the law
\[
P_t=\widetilde P(Q_n,U_t)
\]
has density $p_t(a,y\mid x)$ with respect to $\nu_n$.

Set
\[
S_t(x,a,y)=D^*((x,a,y);U_t),
\qquad
f((x,a,y),t)=S_t(x,a,y)p_t(a,y\mid x).
\]
By \Cref{lem:Ln-2nd-deriv}, for each $(x,a,y)$ the map
$t\mapsto S_t(x,a,y)$ is differentiable on $(-\mft_1,\mft_1)$.
Also, by \eqref{eq:path-score},
\[
\partial_t p_t(a,y\mid x)
=
p_t(a,y\mid x)\partial_t \log p_t(a,y\mid x)
=
p_t(a,y\mid x)S_t(x,a,y).
\]
Hence
\[
\partial_t f((x,a,y),t)
=
(\partial_t S_t(x,a,y))p_t(a,y\mid x)
+
S_t(x,a,y)^2 p_t(a,y\mid x).
\]

Fix a compact interval $I\subset(-\mft_1,\mft_1)$. For $t\in I$, we have
\[
\left|
\log \frac{p_t(a,y\mid x)}{p_0(a,y\mid x)}
\right|
=
\left|
\int_0^t S_s(x,a,y)\,ds
\right|
\le
18\mfc^{-6}|t|
\le
18\mfc^{-6}\mft_1
=
\frac{9}{4},
\]
where we used \Cref{l:Lip} and $\mft_1=\mfc^6/8$. Therefore
\[
p_t(a,y\mid x)\le e^{9/4}p_0(a,y\mid x).
\]
By \Cref{lem:TV} and \Cref{l:Lip},
\[
|\partial_t S_t(x,a,y)|\le 9\mfc^{-10},
\qquad
|S_t(x,a,y)|\le 18\mfc^{-6}.
\]
Thus, for all $(x,a,y)\in\mathcal X\times\{0,1\}^2$ and all $t\in I$,
\[
|\partial_t f((x,a,y),t)|
\le
\left(9\mfc^{-10}+324\mfc^{-12}\right)e^{9/4}p_0(a,y\mid x).
\]
The right-hand side is $\nu_n$-integrable because
\[
\int p_0(a,y\mid x)\,d\nu_n(x,a,y)=1.
\]
Hence \Cref{l:folland} applies and yields
\[
\frac{d}{dt}\E_t[S_t(O)]
=
\E_t[\partial_t S_t(O)]
+
\E_t[S_t(O)^2]
\qquad\text{for all }t\in I^\circ.
\]

Finally, by \eqref{eq:path-score},
\[
\E_t[S_t(O)]
=
\E_t[D^*(O;U_t)]
=
0
\]
for all $t\in(-\mft_1,\mft_1)$. Differentiating this identity gives
\[
0
=
\E_t[\partial_t D^*(O;U_t)]
+
\E_t[(D^*(O;U_t))^2].
\]
Since $I\subset(-\mft_1,\mft_1)$ was arbitrary, the identity holds for all
$t\in(-\mft_1,\mft_1)$.
\end{proof}

\begin{lemma}\label{l:curv}
We have
\[
\inf_{|t|\le \mft_2}\E_t \left[(D^*(O;U_t))^2\right]\ge \frac{c_\init}{2}.
\]
\end{lemma}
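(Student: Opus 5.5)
We will show that $t\mapsto \E_t[(D^*(O;U_t))^2]$ is Lipschitz on $(-\mft_1,\mft_1)$ with an explicit constant. The claimed lower bound will then follow from \Cref{as::square_bound} at $t=0$ and the smallness of $\mft_2$. Write $S_t=D^*(\cdot;U_t)$ and $G(t)=\E_t[S_t^2]=\int S_t^2\,p_t\,d\nu_n$, with $\nu_n$ as in the proof of \Cref{lem:pop-curv}. By \Cref{as::square_bound} applied with $Q=Q_n$ (valid on $\mathcal A_n$), we have $G(0)\ge c_\init$. It therefore suffices to show that
\[
\sup_{|t|\le \mft_2}|G(t)-G(0)|\le c_\init/2 .
\]

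The plan is to differentiate $G$ under the integral exactly as in the proof of \Cref{lem:pop-curv}, using \Cref{l:folland}. Pointwise we have
\[
\partial_t\bigl(S_t^2p_t\bigr)=2S_t(\partial_tS_t)\,p_t+S_t^3\,p_t,
\]
where we used $\partial_tp_t=p_tS_t$ from \eqref{eq:path-score}. By \Cref{l:Lip} and \Cref{lem:TV}, $|S_t|\le 18\mfc^{-6}$ and $|\partial_tS_t|\le 9\mfc^{-10}$ uniformly for $|t|\le T<\mft_1$. As before, $p_t\le e^{9/4}p_0$ provides an integrable dominating function.

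Consequently, for $|t|<\mft_1$,
\[
|G'(t)|\le \E_t\bigl[2|S_t||\partial_tS_t|+|S_t|^3\bigr]\le 2\cdot 18\cdot 9\,\mfc^{-16}+18^3\mfc^{-18}\le 6156\,\mfc^{-18}.
\]
Here we used that $\E_t$ is a probability expectation and that $\mfc\le 1$. This constant matches (half of) the $12312\mfc^{-18}$ appearing in $T_\star$, which suggests this is the intended route. Alternatively, one can bound $|G(t)-G(0)|$ via $\int |S_t^2-S_0^2|p_t+\int S_0^2|p_t-p_0|$ using $|p_t-p_0|\le (e^{9/4}-1)p_0$-type bounds, which explains the factor $e^{9/4}$ in $T_\star$. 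Either approach works.

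By the mean value theorem, $|G(t)-G(0)|\le 6156\,\mfc^{-18}|t|$. For $|t|\le \mft_2=c_\init\mfc^{20}/10^6$ this is at most $6156\,c_\init\mfc^{2}/10^6<c_\init/2$, giving $G(t)\ge c_\init/2$. The only delicate point is justifying differentiation under the integral at the endpoints $\pm\mft_2$. Since $\mft_2<\mft_1$, we can work on a compact interval $I$ with $[-\mft_2,\mft_2]\subset I^\circ\subset(-\mft_1,\mft_1)$, so this causes no difficulty. Note also that $Q_n$ is discrete; this causes no issue because all bounds are sup-norm bounds, and $\int p_0\,d\nu_n=1$.
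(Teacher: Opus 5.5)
Your proof is correct, but it takes a different route from the paper's.

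\emph{Your route.} You differentiate $G(t)=\E_t[S_t^2]$ under the integral sign, using \Cref{l:folland} and the score identity $\partial_t p_t=p_tS_t$, to get $G'(t)=\E_t[2S_t\partial_tS_t+S_t^3]$, and then apply the mean value theorem. Because $G'(t)$ is an expectation under the probability measure $P_t$ itself, your Lipschitz constant is $6156\mfc^{-18}$, with no factor $e^{9/4}$.

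\emph{The paper's route.} The paper never differentiates $G$. It writes $\E_t[f_t]=\E_0[f_tR_t]$ with $R_t=p_t/p_0$ and splits $\E_t[f_t]-\E_0[f_0]$ into $\E_0[|f_t-f_0|R_t]+\E_0[f_0|R_t-1|]$. It then bounds the two pieces using $\|f_t-f_0\|_\infty\le 324\mfc^{-16}|t|$ (from \Cref{l:Lip} and \eqref{eq:MF}) and $\|R_t-1\|_\infty\le 18\mfc^{-6}e^{9/4}|t|$ (from the integrated log-ratio bound $\|\log R_t\|_\infty\le 18\mfc^{-6}|t|$). This gives $6156\mfc^{-18}e^{9/4}|t|$, which is exactly what $T_\star$ is calibrated to. So the $e^{9/4}$ in $T_\star$ comes from this change-of-measure route, not from yours. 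Your second, ``alternative'' sketch is essentially the paper's proof.

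\emph{Comparison.} The two arguments are the differentiated and integrated forms of the same estimate; the coefficients $324$ and $18^3=5832$ appear in both. Yours is marginally sharper and reuses the machinery of \Cref{lem:pop-curv} and \Cref{lem:TV}. The paper's avoids any interchange of derivative and integral and needs no bound on $\partial_tS_t$. It relies only on the sup-norm Lipschitz bound for $U\mapsto D^*(\cdot;U)$, the speed bound \eqref{eq:MF}, and the log-likelihood-ratio bound.
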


\begin{proof}
Let
\[
f_t(o)=\left(D^*(o;U_t)\right)^2.
\]
We first bound $\|f_t-f_0\|_\infty$. By the identity
\[
|a^2-b^2|\le (|a|+|b|)|a-b|
\]
and \Cref{l:Lip},
\begin{align*}
\|f_t-f_0\|_\infty
&\le
\left(\|D^*(\cdot;U_t)\|_\infty+\|D^*(\cdot;U_0)\|_\infty\right)
\|D^*(\cdot;U_t)-D^*(\cdot;U_0)\|_\infty\\
&\le
(18\mfc^{-6}+18\mfc^{-6})18\mfc^{-6}\|U_t-U_0\|_\infty\\
&=
648\mfc^{-12}\|U_t-U_0\|_\infty.
\end{align*}
By \eqref{eq:MF},
\[
\|U_t-U_0\|_\infty
\le
\int_{\min\{0,t\}}^{\max\{0,t\}}\|F(U_s)\|_\infty \, ds
\le
\frac{1}{2}\mfc^{-4}|t|.
\]
Hence
\begin{equation}\label{eq:ftLipschitz}
\|f_t-f_0\|_\infty\le 324\mfc^{-16}|t|.
\end{equation}

Let $\nu_n$ be the finite measure from the proof of \Cref{lem:pop-curv},
so that $P_t=\widetilde P(Q_n,U_t)$ has density $p_t(a,y\mid x)$ with
respect to $\nu_n$. Define
\[
R_t(x,a,y)=\frac{p_t(a,y\mid x)}{p_0(a,y\mid x)}.
\]
Then, for every bounded measurable function $h$,
\[
\E_t[h(O)]=\E_0[h(O)R_t(O)].
\]

As in the proof of \Cref{lem:pop-curv},
\[
\|\log R_t\|_\infty\le 18\mfc^{-6}|t|,
\]
so
\begin{equation}\label{eq:LR-bdd}
\|R_t\|_\infty\le e^{18\mfc^{-6}|t|},
\qquad
\|R_t-1\|_\infty\le e^{18\mfc^{-6}|t|}18\mfc^{-6}|t|,
\end{equation}
where we used $|e^x - 1| \le e^{|x|} | x |$ in the last inequality. 
Since $|t|\le \mft_2<\mft_1$, we have $e^{18\mfc^{-6}|t|}\le e^{9/4}$. 
Because $\E_t[f_t]=\E_0[f_tR_t]$, we have
\begin{equation}\label{eq:two-terms}
\left|\E_t[f_t]-\E_0[f_0]\right|
\le
(\mathrm I)+(\mathrm{II}),
\end{equation}
where
\[
(\mathrm I)=\E_0[|f_t-f_0|R_t],
\qquad
(\mathrm{II})=\E_0[f_0|R_t-1|].
\]
Using \eqref{eq:ftLipschitz} and the first bound in \eqref{eq:LR-bdd},
\[
(\mathrm I)
\le
\|f_t-f_0\|_\infty\|R_t\|_\infty
\le
324\mfc^{-16}e^{9/4}|t|
\le
324\mfc^{-18}e^{9/4}|t|,
\]
since $\mfc\le 1$.
Also, by \Cref{l:Lip},
\[
\|f_0\|_\infty
\le
\|D^*(\cdot;U_0)\|_\infty^2
\le 324\mfc^{-12},
\]
so the second bound in \eqref{eq:LR-bdd} gives
\[
(\mathrm{II})
\le
324\mfc^{-12}18\mfc^{-6}e^{9/4}|t|
=
5832\mfc^{-18}e^{9/4}|t|.
\]
Combining these estimates with \eqref{eq:two-terms}, we obtain
\begin{equation}\label{eq:cont-bound}
\left|\E_t[f_t]-\E_0[f_0]\right|
\le
6156\mfc^{-18}e^{9/4}|t|.
\end{equation}

Since $\mft_2\le T_\star$ by \eqref{e:Tstar-def},
\[
6156\mfc^{-18}e^{9/4}\mft_2\le \frac{c_\init}{2}.
\]
Therefore, for all $|t|\le \mft_2$,
\[
\E_t[f_t]
\ge
\E_0[f_0]-6156\mfc^{-18}e^{9/4}|t|
\ge
c_\init-\frac{c_\init}{2}
=
\frac{c_\init}{2},
\]
where we used \Cref{as::square_bound} in the second step. Recalling the definition of $f_t$ gives the claim.
\end{proof}

From this point onward, we work on the event $\mathcal A_n$ from the statement of \Cref{mainthm:det-bracket-one} and condition on $\mathcal G_n$. Thus $Q_n$, $U_0$, and $X_1,\dots,X_n$ are fixed. For any bounded measurable function $f\colon\mathcal O\to\mathbb R$, define
\[
\bar P_n^*[f]
=
\frac{1}{n}\sum_{i=1}^n \sum_{a,y\in\{0,1\}} f(X_i,a,y)p^*(a,y\mid X_i).
\]
This is the conditional expectation of $\P_n[f]$ given $\mathcal G_n$.

\begin{lemma}\label{lemma::emprical_score}
For every $T\in(0,\mft_1)$ and every $\eps>0$, on $\mathcal A_n$ we have
\[
\P\left(
\sup_{|t|\le T}\left|(\P_n-\bar P_n^*)[\partial_t D^*(\cdot;U_t)]\right|
\ge 2\eps
\mid
\mathcal G_n
\right)
\le
2m_\star(\eps,\mfc,T)\exp(-C_\star(\eps,\mfc)n),
\]
where $m_\star(\eps,\mfc,T)>0$ and $C_\star(\eps,\mfc)>0$ depend only on $(\eps,\mfc,T)$ and $(\eps,\mfc)$, respectively.
\end{lemma}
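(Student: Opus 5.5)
We prove \Cref{lemma::emprical_score} with a chaining-free discretization argument: a finite grid in $t$, Hoeffding's inequality at each grid point, and a deterministic Lipschitz bound in $t$ to pass from the grid to the whole interval. Throughout we condition on $\mathcal G_n$ and work on $\mathcal A_n$. Then the path $t\mapsto U_t$ is deterministic, and so is the function $g_t(o)=\partial_t D^*(o;U_t)=J(U_t)[F(U_t)](o)$. Write
\[
Z_n(t)=(\P_n-\bar P_n^*)[g_t]=\frac1n\sum_{i=1}^n\Bigl(g_t(O_i)-\E[g_t(O_i)\mid\mathcal G_n]\Bigr).
\]
This is an average of conditionally independent, centered, bounded variables. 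By \Cref{lem:TV}, $\|g_t\|_\infty\le 9\mfc^{-10}$ for all $|t|\le T$.

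\emph{Step 1: Lipschitz control in $t$.} We show that $t\mapsto g_t$ is Lipschitz in $\|\cdot\|_\infty$ on $[-T,T]$, with a constant $K=K(\mfc)$. Split
\[
g_t-g_s=\bigl(J(U_t)-J(U_s)\bigr)[F(U_t)]+J(U_s)\bigl[F(U_t)-F(U_s)\bigr].
\]
For the first term, \Cref{lemma:wJ} gives $\|J(U_t)-J(U_s)\|_{\rm op}\le 104\mfc^{-10}\|U_t-U_s\|_\infty$, and \eqref{eq:MF} gives $\|F(U_t)\|_\infty\le\tfrac12\mfc^{-4}$. For the second term, \eqref{e:Juniform} gives $\|J(U_s)\|_{\rm op}\le 18\mfc^{-6}$, and \Cref{l:Flips} gives $\|F(U_t)-F(U_s)\|_\infty\le 4\mfc^{-6}\|U_t-U_s\|_\infty$. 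Finally, $\|U_t-U_s\|_\infty\le\tfrac12\mfc^{-4}|t-s|$ by \eqref{eq:MF}. Combining these bounds gives an explicit $K$, of order $\mfc^{-18}$, depending only on $\mfc$. Consequently $|Z_n(t)-Z_n(s)|\le 2K|t-s|$ deterministically, since both $\P_n$ and $\bar P_n^*$ are probability averages.

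\emph{Step 2: grid and Hoeffding.} Take a grid $t_1,\dots,t_N$ in $[-T,T]$ with mesh at most $\eps/(2K)$. Then $N\le m_\star(\eps,\mfc,T)\asymp 1+2TK/\eps$. At each grid point, conditional Hoeffding with summands bounded by $9\mfc^{-10}$ gives
\[
\P\bigl(|Z_n(t_j)|\ge\eps\mid\mathcal G_n\bigr)\le 2\exp\!\Bigl(-\frac{n\eps^2\mfc^{20}}{162}\Bigr).
\]
A union bound over the grid, together with $\sup_{|t|\le T}|Z_n(t)|\le\max_j|Z_n(t_j)|+\eps$, yields the claim with $C_\star=\eps^2\mfc^{20}/162$.

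\emph{Step 3: measurability.} The main technical point is that the supremum over $t$ is a measurable random variable. Continuity of $t\mapsto Z_n(t)$ from Step 1 reduces it to a supremum over rational $t$. Measurability of the path in $(Q_n,U_0)$ is already provided by \Cref{rem:parameter_dependence_ODE}.
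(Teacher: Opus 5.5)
Your proposal is correct and follows essentially the same route as the paper. Both arguments use the same Lipschitz bound $\|g_t-g_s\|_\infty\le 62\mfc^{-18}|t-s|$, obtained from the split $(J(U_t)-J(U_s))[U_t']+J(U_s)[U_t'-U_s']$, then a finite grid in $t$, an exponential tail bound at each grid point, a union bound, and interpolation. The differences are cosmetic: you use Hoeffding where the paper uses Bernstein, and your rate $\eps^2\mfc^{20}/162$ is slightly better than the paper's. You also add an explicit measurability remark for the supremum, which the paper leaves implicit.
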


\begin{proof}
Let
\[
g_t=\partial_t D^*(\cdot;U_t),
\qquad |t|\le T.
\]
By \Cref{mainthm:ODE-C1} and \eqref{eq:MF},
\begin{equation}\label{e:c7continuitybounds}
\|U_t-U_s\|_\infty\le \frac{1}{2}\mfc^{-4}|t-s|,
\qquad
\|U_t'\|_\infty\le \frac{1}{2}\mfc^{-4}
\end{equation}
for all $s,t\in[-T,T]$.
By \Cref{lem:TV},
\[
\|g_t\|_\infty\le 9\mfc^{-10},
\qquad |t|\le T.
\]

By \Cref{lemma:wJ} and \eqref{e:Juniform},
\[
g_t=J(U_t)[U_t'].
\]
Therefore, for all $s,t\in[-T,T]$,
\begin{align*}
\|g_t-g_s\|_\infty
&\le
\|(J(U_t)-J(U_s))[U_t']\|_\infty
+
\|J(U_s)[U_t'-U_s']\|_\infty\\
&\le
\|J(U_t)-J(U_s)\|_{\mathrm{op}}\|U_t'\|_\infty
+
\|J(U_s)\|_{\mathrm{op}}\|U_t'-U_s'\|_\infty\\
&\le
104\mfc^{-10}\|U_t-U_s\|_\infty \cdot \frac{1}{2}\mfc^{-4}
+
18\mfc^{-6}\|F(U_t)-F(U_s)\|_\infty\\
&\le
52\mfc^{-14}\|U_t-U_s\|_\infty
+
72\mfc^{-12}\|U_t-U_s\|_\infty\\
&\le
26\mfc^{-18}|t-s|
+
36\mfc^{-16}|t-s|\\
&\le
62\mfc^{-18}|t-s|,
\end{align*}
where we used \Cref{l:Flips} in the fourth line and $\mfc\le 1$ in the last line.

Fix $\Delta>0$ and set
\[
N=\left\lceil \frac{2T}{\Delta}\right\rceil,
\qquad
\mathcal T
=
\left\{
-T+\frac{2Tj}{N}:j=0,\dots,N
\right\}.
\]
Then $|\mathcal T|=N+1\le 2+2T/\Delta$, and every $t\in[-T,T]$ lies within distance at most $\Delta$ of some $t'\in\mathcal T$.

For each fixed $t\in\mathcal T$, the random variables
\[
Z_{i,t}
=
g_t(O_i)-\E[g_t(O_i)\mid\mathcal G_n],
\qquad i=1,\dots,n,
\]
are conditionally independent given $\mathcal G_n$, mean zero, and satisfy
\[
|Z_{i,t}|\le 18\mfc^{-10},
\qquad
\var(Z_{i,t}\mid\mathcal G_n)\le \E[g_t(O_i)^2\mid\mathcal G_n]\le 81\mfc^{-20}.
\]
Hence, Bernstein's inequality yields
\[
\P\left(
\left|(\P_n-\bar P_n^*)[g_t]\right|\ge \eps
\mid
\mathcal G_n
\right)
\le
2\exp\left(
-\frac{n\eps^2}{162\mfc^{-20}+12\mfc^{-10}\eps}
\right).
\]
A union bound over $\mathcal T$ gives
\[
\P\left(
\max_{t\in\mathcal T}\left|(\P_n-\bar P_n^*)[g_t]\right|\ge \eps
\mid
\mathcal G_n
\right)
\le
2|\mathcal T|\exp\left(
-\frac{n\eps^2}{162\mfc^{-20}+12\mfc^{-10}\eps}
\right).
\]

Now let $t\in[-T,T]$ and choose $t'\in\mathcal T$ with $|t-t'|\le \Delta$.
Then
\[
\left|(\P_n-\bar P_n^*)[g_t]\right|
\le
\left|(\P_n-\bar P_n^*)[g_{t'}]\right|
+
2\|g_t-g_{t'}\|_\infty
\le
\left|(\P_n-\bar P_n^*)[g_{t'}]\right|
+
124\mfc^{-18}\Delta.
\]
Therefore,
\[
\sup_{|t|\le T}\left|(\P_n-\bar P_n^*)[g_t]\right|
\le
\max_{t\in\mathcal T}\left|(\P_n-\bar P_n^*)[g_t]\right|
+
124\mfc^{-18}\Delta.
\]

Choose
\[
\Delta_\eps(\mfc,T)=\min\left\{T,\frac{\eps\mfc^{18}}{124}\right\}.
\]
Then \(124\mfc^{-18}\Delta_\eps(\mfc,T)\le \eps\), so
\[
\P\left(
\sup_{|t|\le T}\left|(\P_n-\bar P_n^*)[g_t]\right|\ge 2\eps
\mid
\mathcal G_n
\right)
\le
\P\left(
\max_{t\in\mathcal T}\left|(\P_n-\bar P_n^*)[g_t]\right|\ge \eps
\mid
\mathcal G_n
\right).
\]
Applying the previous grid bound with \(\Delta=\Delta_\eps(\mfc,T)\) completes the proof, with
\[
m_\star(\eps,\mfc,T)
=
2+\frac{2T}{\Delta_\eps(\mfc,T)},
\qquad
C_\star(\eps,\mfc)
=
\frac{\eps^2}{162\mfc^{-20}+12\mfc^{-10}\eps}.
\]
\end{proof}

\begin{lemma}\label{lemma::concavity}
On $\mathcal A_n$, we have
\[
\P\left(
\sup_{|t|\le \mft_2}L_n''(t)>-\frac{c_\init}{4}
\mid
\mathcal G_n
\right)
\le
2\bar m_\star\exp(-C_{1,\star}n),
\]
where
\[
\bar m_\star=m_\star\left(\frac{c_\init}{16},\mfc,\mft_2\right),
\qquad
C_{1,\star}=C_\star\left(\frac{c_\init}{16},\mfc\right).
\]
\end{lemma}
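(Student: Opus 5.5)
We need to show that, with high conditional probability, $L_n''(t)\le -c_\init/4$ for all $|t|\le\mft_2$. By \Cref{lem:Ln-2nd-deriv}, $L_n''(t)=\P_n[g_t]$ with $g_t=\partial_tD^*(\cdot;U_t)$. We split
\[
L_n''(t)=\bar P_n^*[g_t]+(\P_n-\bar P_n^*)[g_t]
\]
and show two things on $\mathcal A_n$. First, deterministically, $\bar P_n^*[g_t]\le -3c_\init/8$ uniformly for $|t|\le\mft_2$. Second, \Cref{lemma::emprical_score} with $\eps=c_\init/16$ and $T=\mft_2$ shows that the fluctuation term is below $c_\init/8$ uniformly in $t$, except on an event of conditional probability at most $2\bar m_\star\exp(-C_{1,\star}n)$. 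On the complement, $L_n''(t)\le -3c_\init/8+c_\init/8=-c_\init/4$, which gives the claim.

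For the deterministic bound, we compare $\bar P_n^*$ with $\E_t$, where $P_t=\widetilde P(Q_n,U_t)$. Both measures have $X$-marginal $Q_n$; they differ only through the conditional laws $p^*(\cdot\mid X_i)$ and $p_t(\cdot\mid X_i)$. Hence
\[
\bigl|\bar P_n^*[g_t]-\E_t[g_t]\bigr|\le 2\|g_t\|_\infty\,\E_{Q_n}\bigl[\tv\bigl(p^*(\cdot\mid X),p_t(\cdot\mid X)\bigr)\bigr].
\]
By \Cref{lem:TV}, $\|g_t\|_\infty\le 9\mfc^{-10}$. We then use the triangle inequality for total variation, going through $p_0=p_{U_0}$:
\begin{itemize}
\item the term $\E_{Q_n}\tv(p^*,p_0)$ is at most $\mfc^{10}c_\init/600$ by \Cref{as::TV};
\item the term $\tv(p_0,p_t)$ satisfies $\tv(p_0,p_t)\le \tfrac12\sum_{a,y}p_0|R_t-1|\le \tfrac12\|R_t-1\|_\infty\le \tfrac12 e^{9/4}18\mfc^{-6}|t|$, by \eqref{eq:LR-bdd}.
\end{itemize}
Multiplying by $18\mfc^{-10}$ gives two contributions:
\begin{itemize}
\item the first is at most $18c_\init/600=3c_\init/100$;
\item the second is at most $162e^{9/4}\mfc^{-16}\mft_2=162e^{9/4}c_\init\mfc^{4}/10^6$, which is far below $c_\init/100$ because $\mft_2=c_\init\mfc^{20}/10^6$.
\end{itemize}
So the remainder $|\bar P_n^*[g_t]-\E_t[g_t]|$ is at most about $c_\init/25$.

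Next we combine \Cref{lem:pop-curv} and \Cref{l:curv}, which give $\E_t[g_t]=-\E_t[(D^*)^2]\le -c_\init/2$ for $|t|\le\mft_2$. Therefore $\bar P_n^*[g_t]\le -c_\init/2+c_\init/25\le -3c_\init/8$, as required.

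The only delicate step is this remainder $R_t$: we must check that the constants in \Cref{as::TV} and the smallness of $\mft_2$ are enough to absorb the $\mfc^{-10}$ sup-norm bound on $g_t$. The arithmetic above indicates a comfortable margin. One further point to verify is measurability: the supremum over $t$ is an event in the conditional probability space, which holds by the continuity of $t\mapsto g_t$ established in the proof of \Cref{lemma::emprical_score}. Everything else is a direct combination of the preceding lemmas.
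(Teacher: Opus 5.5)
Your proposal is correct and follows essentially the same route as the paper. You split $L_n''(t)=\bar P_n^*[g_t]+(\P_n-\bar P_n^*)[g_t]$, bound the conditional mean using \Cref{lem:pop-curv}, \Cref{l:curv}, and a total-variation remainder controlled by \Cref{as::TV} plus the drift $\tv(p_0,p_t)$, and then apply \Cref{lemma::emprical_score} with $\eps=c_\init/16$ and $T=\mft_2$. The differences are cosmetic: you bound $\tv(p_0,p_t)$ through the likelihood-ratio estimate \eqref{eq:LR-bdd}, which costs a harmless factor $e^{9/4}$, whereas the paper integrates $\partial_s p_s=p_s D^*(\cdot;U_s)$ to get $9\mfc^{-6}|t|$; and you use the intermediate threshold $-3c_\init/8$ where the paper uses $-7c_\init/16$.
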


\begin{proof}
Let
\[
g_t=\partial_t D^*(\cdot;U_t).
\]
By \Cref{lem:Ln-2nd-deriv},
\[
L_n''(t)=\P_n[g_t]
=
\bar P_n^*[g_t]
+
(\P_n-\bar P_n^*)[g_t].
\]

We first bound the conditional mean term $\bar P_n^*[g_t]$.
Since $Q_n$ is defined through \eqref{e:qempirical}, we may write
\[
\bar P_n^*[g_t]
=
P_t[g_t]+R_t,
\]
where
\[
P_t=\widetilde P(Q_n,U_t)
\]
and
\[
R_t
=
\E_{Q_n}\left[
\sum_{a,y\in\{0,1\}}
g_t(X,a,y)\left(p^*(a,y\mid X)-p_t(a,y\mid X)\right)
\right].
\]
By \Cref{lem:pop-curv} and \Cref{l:curv},
\[
P_t[g_t]
=
-\E_t \left[(D^*(O;U_t))^2\right]
\le -\frac{c_\init}{2}
\qquad\text{for all }|t|\le \mft_2.
\]

Next, since $\|g_t\|_\infty\le 9\mfc^{-10}$ by \Cref{lem:TV},
\begin{align*}
|R_t|
&\le
18\mfc^{-10}
\E_{Q_n}\left[\tv\left(p^*(\cdot\mid X),p_t(\cdot\mid X)\right)\right]\\
&\le
18\mfc^{-10}
\left(
\E_{Q_n}\left[\tv\left(p^*(\cdot\mid X),p_0(\cdot\mid X)\right)\right]
+
\E_{Q_n}\left[\tv\left(p_0(\cdot\mid X),p_t(\cdot\mid X)\right)\right]
\right).
\end{align*}
By \Cref{as::TV},
\[
\E_{Q_n}\left[\tv\left(p^*(\cdot\mid X),p_0(\cdot\mid X)\right)\right]
\le \frac{\mfc^{10}c_\init}{600}.
\]
Also, for each fixed $x\in\mathcal X$, \Cref{lm::ODE_flow_derivation} and \Cref{l:Lip} yield
\begin{align*}
\sum_{a,y}|p_t(a,y\mid x)-p_0(a,y\mid x)|
&\le
\int_{\min\{0,t\}}^{\max\{0,t\}}
\sum_{a,y}|\partial_s p_s(a,y\mid x)| \, ds\\
&=
\int_{\min\{0,t\}}^{\max\{0,t\}}
\sum_{a,y}p_s(a,y\mid x)|D^*(x,a,y;U_s)| \, ds\\
&\le
18\mfc^{-6}|t|,
\end{align*}
so
\[
\tv\left(p_0(\cdot\mid x),p_t(\cdot\mid x)\right)\le 9\mfc^{-6}|t|.
\]
Averaging with respect to $Q_n$ yields
\[
\E_{Q_n}\left[\tv\left(p_0(\cdot\mid X),p_t(\cdot\mid X)\right)\right]
\le 9\mfc^{-6}|t|.
\]
Therefore, for $|t|\le \mft_2$,
\begin{align*}
|R_t|
&\le
18\mfc^{-10}
\left(
\frac{\mfc^{10}c_\init}{600}
+
9\mfc^{-6}\mft_2
\right)\\
&=
\frac{3c_\init}{100}
+
162\mfc^{-16}\mft_2\\
&=
\frac{3c_\init}{100}
+
\frac{162c_\init\mfc^4}{10^6}
<
\frac{c_\init}{16}.
\end{align*}
Hence
\[
\sup_{|t|\le \mft_2}\bar P_n^*[g_t]
\le
-\frac{c_\init}{2}+\frac{c_\init}{16}
=
-\frac{7c_\init}{16}.
\]

It follows that
\[
\left\{
\sup_{|t|\le \mft_2}L_n''(t)>-\frac{c_\init}{4}
\right\}
\subseteq
\left\{
\sup_{|t|\le \mft_2}\left|(\P_n-\bar P_n^*)[g_t]\right|>\frac{c_\init}{8}
\right\}.
\]
Applying \Cref{lemma::emprical_score} with $T=\mft_2$ and $\eps=c_\init/16$ proves the claim.
\end{proof}

\begin{lemma}\label{lemma:first_order_loss}
Recall that $\mft_2=\delta_\init$. On $\mathcal A_n$, we have
\[
\P\left(
|L_n'(0)|\ge \frac{c_\init\mft_2}{4}
\mid
\mathcal G_n
\right)
\le
2\exp(-C_{2,\star}n),
\]
where $C_{2,\star}>0$ depends only on $(\mfc,c_\init)$.
\end{lemma}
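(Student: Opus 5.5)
By \Cref{lem:Ln-2nd-deriv} at $t=0$, $L_n'(0)=\P_n[D^*(\cdot;U_0)]$. We split it into its conditional mean given $\mathcal G_n$ and a centered fluctuation, as in the proof of \Cref{lemma::concavity}:
\[
L_n'(0)=\bar P_n^*[D^*(\cdot;U_0)]+(\P_n-\bar P_n^*)[D^*(\cdot;U_0)].
\]
Since $Q_n$ is the empirical measure of $X_1,\dots,X_n$, the definition of $\bar P_n^*$ gives $\bar P_n^*[D^*(\cdot;U_0)]=\mu_0$, where $\mu_0$ is the quantity in \Cref{ass:mu0-small} with $Q=Q_n$. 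On $\mathcal A_n$ that assumption yields $|\mu_0|\le c_\init\delta_\init/8=c_\init\mft_2/8$, because $\mft_2=\delta_\init$ by \eqref{e:Tstar-def}. It therefore suffices to show
\[
\P\Bigl(\bigl|(\P_n-\bar P_n^*)[D^*(\cdot;U_0)]\bigr|\ge \tfrac{c_\init\mft_2}{8}\Bigm|\mathcal G_n\Bigr)\le 2\exp(-C_{2,\star}n),
\]
since the event $\{|L_n'(0)|\ge c_\init\mft_2/4\}$ is contained in this one.

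For the fluctuation term, condition on $\mathcal G_n$. Then $U_0$ and $X_1,\dots,X_n$ are fixed, so $f=D^*(\cdot;U_0)$ is a fixed function. The variables $Z_i=f(O_i)-\E[f(O_i)\mid\mathcal G_n]$ are conditionally independent and mean zero. By \Cref{l:Lip}, $|Z_i|\le 36\mfc^{-6}$ and $\var(Z_i\mid\mathcal G_n)\le 324\mfc^{-12}$. Hoeffding's or Bernstein's inequality with $\eps=c_\init\mft_2/8$ then gives the bound
\[
2\exp\!\Bigl(-\frac{n\eps^2}{2\cdot 324\mfc^{-12}+\tfrac{2}{3}\cdot 36\mfc^{-6}\eps}\Bigr).
\]
So we may take
\[
C_{2,\star}=\frac{\eps^2}{648\mfc^{-12}+24\mfc^{-6}\eps},\qquad \eps=\frac{c_\init\mft_2}{8}=\frac{c_\init^2\mfc^{20}}{8\cdot10^6},
\]
which depends only on $(\mfc,c_\init)$.

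No step is a real obstacle. The one point that needs care is the identity $\bar P_n^*[D^*(\cdot;U_0)]=\mu_0$, that is, checking that the expectation over $Q$ in \Cref{ass:mu0-small}, with $Q=Q_n$, coincides with the conditional mean defining $\bar P_n^*$. This holds because $D^*(\cdot;\widetilde P(Q_n,U_0))$ is exactly the function $D^*(\cdot;U_0)$ used along the path. We also need measurability of $D^*(\cdot;U_0)$ with respect to $\mathcal G_n$, which follows from \Cref{rem:parameter_dependence_ODE} and the $\mathcal G_n$-measurability of $U_0$ and $Q_n$.
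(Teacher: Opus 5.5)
Your proposal is correct and matches the paper's proof essentially line by line. Both arguments identify the conditional mean of $L_n'(0)$ with $\mu_0$, bound $|\mu_0|\le c_\init\mft_2/8$, use the bounds $|Z_i|\le 36\mfc^{-6}$ and $\var(Z_i\mid\mathcal G_n)\le 324\mfc^{-12}$ from \Cref{l:Lip}, and apply Bernstein's inequality at level $c_\init\mft_2/8$, which yields the same constant $C_{2,\star}$. (You do not need \Cref{rem:parameter_dependence_ODE} here, since at $t=0$ the function $D^*(\cdot;\widetilde P(Q_n,U_0))$ is directly $\mathcal G_n$-measurable, but citing it does no harm.)
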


\begin{proof}
By \Cref{lem:Ln-2nd-deriv},
\[
L_n'(0)=\P_n \left[D^*\left(\cdot;\widetilde P(Q_n,U_0)\right)\right]
=
\frac{1}{n}\sum_{i=1}^n \xi_i,
\]
where
\[
\xi_i
=
D^*\left(O_i;\widetilde P(Q_n,U_0)\right).
\]
Conditional on $\mathcal G_n$, the variables $\xi_1,\dots,\xi_n$ are independent and satisfy
\[
\E[\xi_i\mid\mathcal G_n]
=
\sum_{a,y}
D^*\left((X_i,a,y);\widetilde P(Q_n,U_0)\right)p^*(a,y\mid X_i).
\]
Since $Q_n$ is defined through \eqref{e:qempirical}, the average of these conditional means is exactly
\[
\mu_0
=
\E_{Q_n}\left[
\sum_{a,y\in\{0,1\}}
D^*\left((X,a,y);\widetilde P(Q_n,U_0)\right)p^*(a,y\mid X)
\right].
\]
Thus
\[
\mu_0=\frac{1}{n}\sum_{i=1}^n \E[\xi_i\mid\mathcal G_n].
\]

Set
\[
Z_i=\xi_i-\E[\xi_i\mid\mathcal G_n].
\]
Then conditional on $\mathcal G_n$, the variables $Z_1,\dots,Z_n$ are independent and mean zero. By \Cref{l:Lip},
\[
|\xi_i|\le 18\mfc^{-6},
\]
so
\[
|Z_i|
\le
|\xi_i|+\left|\E[\xi_i\mid\mathcal G_n]\right|
\le 36\mfc^{-6},
\qquad
\var(Z_i\mid\mathcal G_n)\le \E[\xi_i^2\mid\mathcal G_n]\le 324\mfc^{-12}.
\]
Bernstein's inequality therefore gives, for every $a>0$,
\[
\P\left(
|L_n'(0)-\mu_0|\ge a
\mid
\mathcal G_n
\right)
\le
2\exp\left(
-\frac{na^2}{648\mfc^{-12}+24\mfc^{-6}a}
\right).
\]
Taking
\[
a=\frac{c_\init\mft_2}{8}
\]
yields
\[
\P\left(
|L_n'(0)-\mu_0|\ge \frac{c_\init\mft_2}{8}
\mid
\mathcal G_n
\right)
\le
2\exp\left(
-\frac{n c_\init^2\mft_2^2}{41472\mfc^{-12}+192\mfc^{-6}c_\init\mft_2}
\right).
\]
By \Cref{ass:mu0-small},
\[
|\mu_0|\le \frac{c_\init\mft_2}{8},
\]
so the triangle inequality gives
\[
\left\{
|L_n'(0)|\ge \frac{c_\init\mft_2}{4}
\right\}
\subseteq
\left\{
|L_n'(0)-\mu_0|\ge \frac{c_\init\mft_2}{8}
\right\}.
\]
Thus the desired bound holds with
\[
C_{2,\star}
=
\frac{c_\init^2\mft_2^2}{41472\mfc^{-12}+192\mfc^{-6}c_\init\mft_2}.
\]
Since $\mft_2=\delta_\init$ depends only on $(\mfc,c_\init)$, so does $C_{2,\star}$.
\end{proof}

Let
\[
\mathcal B_n
=
\left\{
\sup_{|t|\le \mft_2}L_n''(t)\le -\frac{c_\init}{4}
\right\}
\cap
\left\{
|L_n'(0)|\le \frac{c_\init\mft_2}{4}
\right\}.
\]
Also define
\begin{equation}\label{eq:hatt}
t_-
=
-\frac{4[-L_n'(0)]_+}{c_\init},
\qquad
t_+
=
\frac{4[L_n'(0)]_+}{c_\init}.
\end{equation}

\begin{proof}[Proof of \Cref{mainthm:det-bracket-one}]
On $\mathcal A_n$, the path $t\mapsto U_t$ exists by \Cref{mainthm:ODE-C1}, and \Cref{lem:Ln-2nd-deriv} shows that the map $t\mapsto L_n(t)$ belongs to $C^2((-\mft_1,\mft_1))$.

By \Cref{lemma::concavity} and \Cref{lemma:first_order_loss}, on $\mathcal A_n$ we have
\[
\P(\mathcal B_n^{\mathrm c}\mid \mathcal G_n)
\le
2\bar m_\star e^{-C_{1,\star}n}
+
2e^{-C_{2,\star}n},
\]
where
\[
\bar m_\star=m_\star\left(\frac{c_\init}{16},\mfc,\mft_2\right),
\qquad
C_{1,\star}=C_\star\left(\frac{c_\init}{16},\mfc\right).
\]

We now show that
\[
\mathcal A_n\cap \mathcal B_n\subseteq \mathcal E_n.
\]
Fix an outcome in $\mathcal A_n\cap \mathcal B_n$.
Since
\[
\sup_{|t|\le \mft_2}L_n''(t)\le -\frac{c_\init}{4},
\]
the mean-value theorem implies that for any $s<t$ in $[-\mft_2,\mft_2]$,
\[
L_n'(t)-L_n'(s)
=
L_n''(\xi)(t-s)
\le
-\frac{c_\init}{4}(t-s)
\]
for some $\xi\in(s,t)$.
Hence $L_n'$ is continuous and strictly decreasing on $[-\mft_2,\mft_2]$.

If $L_n'(0)=0$, then $\hat t=0$ is a solution of $L_n'(\hat t)=0$. Since $L_n'$ is strictly decreasing on $[-\mft_2,\mft_2]$, this solution is unique.

If $L_n'(0)>0$, then by \eqref{eq:hatt},
\[
t_+=\frac{4L_n'(0)}{c_\init}\in(0,\mft_2],
\]
because $|L_n'(0)|\le c_\init\mft_2/4$ on $\mathcal B_n$.
Applying the mean-value theorem on $[0,t_+]$, we obtain
\[
L_n'(t_+)
=
L_n'(0)+L_n''(\xi)t_+
\le
L_n'(0)-\frac{c_\init}{4}t_+
=
0
\]
for some $\xi\in(0,t_+)$.
Since $L_n'(0)>0$, $L_n'(t_+)\le 0$, and $L_n'$ is continuous and strictly decreasing, there exists a unique
\[
\hat t\in[0,t_+]\subset[-\mft_2,\mft_2]
\]
such that $L_n'(\hat t)=0$.

If $L_n'(0)<0$, then
\[
t_-=\frac{4L_n'(0)}{c_\init}\in[-\mft_2,0),
\]
again because $|L_n'(0)|\le c_\init\mft_2/4$ on $\mathcal B_n$.
Applying the mean-value theorem on $[t_-,0]$, we get
\[
L_n'(0)-L_n'(t_-)
=
L_n''(\xi)(0-t_-)
\le
-\frac{c_\init}{4}(0-t_-)
\]
for some $\xi\in(t_-,0)$, hence
\[
L_n'(t_-)
\ge
L_n'(0)+\frac{c_\init}{4}(0-t_-)
=
0.
\]
Since $L_n'(t_-)\ge 0$, $L_n'(0)<0$, and $L_n'$ is continuous and strictly decreasing, there exists a unique
\[
\hat t\in[t_-,0]\subset[-\mft_2,\mft_2]
\]
such that $L_n'(\hat t)=0$.

Thus in all cases there exists a unique $\hat t\in[-\mft_2,\mft_2]$ with $L_n'(\hat t)=0$, so indeed
\[
\mathcal A_n\cap \mathcal B_n\subseteq \mathcal E_n.
\]
Therefore,
\[
\mathbf{1}_{\mathcal A_n}\P(\mathcal E_n^{\mathrm c}\mid \mathcal G_n)
\le
\mathbf{1}_{\mathcal A_n}\P(\mathcal B_n^{\mathrm c}\mid \mathcal G_n)
\le
2\bar m_\star e^{-C_{1,\star}n}+2e^{-C_{2,\star}n}.
\]

Finally, define
\[
m_0
=
\min\left\{
C_{1,\star},
C_{2,\star},
1/(2\bar m_\star+2)
\right\}.
\]
Then $m_0>0$ depends only on $(\mfc,c_\init)$, and
\[
2\bar m_\star e^{-C_{1,\star}n}+2e^{-C_{2,\star}n}
\le
(2\bar m_\star+2)e^{-m_0 n}
\le
\frac{1}{m_0}e^{-m_0 n}.
\]
This proves
\[
\mathbf 1_{\mathcal A_n}\P(\mathcal E_n^{\mathrm c}\mid \mathcal G_n)
\le \frac{1}{m_0}e^{-m_0 n},
\]
as desired.
\end{proof}

\section{Proof of \texorpdfstring{\Cref{mainthm:AN_cross}}{Third Main Result}}
\label{append::AN}

\subsection{Remainder Estimates}

Throughout this subsection we work in the foldwise cross-fitted setting used in the proof of \Cref{mainthm:AN_cross}. We suppress the fold index and write
\[
Q_n=\frac{1}{n}\sum_{i=1}^n \delta_{X_i},
\qquad
Q^*=P_X^*.
\]
Here \(Q_n\) is the empirical law of the evaluation-fold covariates, while \(U_0\) is constructed from the opposite fold. The \(\sigma\)-field \(\mathcal G_n\) contains the opposite-fold data together with the evaluation-fold covariates. Consequently, conditional on \(\mathcal G_n\), the empirical design law \(Q_n\), the initial nuisance estimate \(U_0\), and, on \(\mathcal A_n\), the entire path \(t\mapsto U_t\) are fixed; only the treatment and outcome variables in the evaluation fold remain random and conditionally independent.

The arguments below therefore have two components. First, we prove conditional fixed-design bounds for empirical fluctuations around
\[
\bar P_n^*[f]
=
\frac{1}{n}\sum_{i=1}^n \sum_{a,y\in\{0,1\}} f(X_i,a,y)\,p^*(a,y\mid X_i),
\]
which is the conditional expectation of \(\P_n[f]\) given \(\mathcal G_n\). In these arguments, the realized covariates and the path \(t\mapsto U_t\) are treated as deterministic. Second, we prove separate unconditional bounds that quantify the effect of replacing the realized design law \(Q_n\) by the population design law \(Q^*=P_X^*\).

All lemmas in this subsection are stated for this setting only. In the proof of \Cref{mainthm:AN_cross}, they are applied with
\[
n=m,
\qquad
Q_n=\hat Q^{(k)},
\qquad
U_0=\hat U_0^{(1-k)},
\qquad
\mathcal G_n=\mathcal G_m^{(k)}.
\]

From this point onward, all pathwise remainder objects are interpreted on the good event \(\mathcal A_n\). On \(\mathcal A_n^c\), we adopt the bookkeeping convention
\[
U_t \equiv U_0
\qquad\text{for all }t,
\]
and, whenever the quantities
\[
r_t,\ \Omega_t^n,\ \Omega_t^*,\ N_t^n,\ N_t^*,\ \psi_t^n,\ \psi_t^*,\ H_t,\ G_t
\]
are introduced below, we extend them to \(\mathcal A_n^c\) by setting
\[
r_t\equiv 0,\qquad
\Omega_t^n=\Omega_t^*=1,\qquad
N_t^n=N_t^*=0,\qquad
\psi_t^n=\psi_t^*=0,\qquad
H_t\equiv 0,\qquad
G_t\equiv 0.
\]
These extensions serve only to make the corresponding random objects globally measurable. Unless explicitly stated otherwise, all equalities and inequalities in the remainder proofs are asserted on \(\mathcal A_n\). When these lemmas are applied in the proof of \Cref{mainthm:AN_cross}, condition 2 implies \(\P(\mathcal A_n^c)\to 0\), so the above conventions do not affect any \(o_p(\cdot)\) conclusion.

\begin{lemma}\label{lem:score-small-root}
We have
\[
\mathbf 1_{\mathcal A_n} L_n'(0)=o_p(n^{-1/4}),
\qquad
\mathbf 1_{\mathcal A_n} \hat t=o_p(n^{-1/4}).
\]
\end{lemma}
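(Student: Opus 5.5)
Write $\xi_i=D^*(O_i;\widetilde P(Q_n,U_0))$, so that $L_n'(0)=\P_n[\xi]$, and decompose on $\mathcal A_n$
\[
L_n'(0)=\bigl(L_n'(0)-\mu_0\bigr)+\mu_0 ,
\]
where $\mu_0=\bar P_n^*[\xi]$ is the conditional mean given $\mathcal G_n$, as in the proof of \Cref{lemma:first_order_loss}. The two terms are handled separately.

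\emph{Fluctuation term.} Conditional on $\mathcal G_n$, the summands $\xi_i-\E[\xi_i\mid\mathcal G_n]$ are independent, mean zero, and bounded by $36\mfc^{-6}$ (\Cref{l:Lip}). Chebyshev's inequality, or the Bernstein bound already used in \Cref{lemma:first_order_loss}, therefore gives $\mathbf 1_{\mathcal A_n}|L_n'(0)-\mu_0|=O_p(n^{-1/2})$. Integrating the conditional bound shows the same holds unconditionally.

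\emph{Bias term.} This is the main step. Since $\E_{\widetilde P(Q_n,U_0)}[\xi\mid X]=0$ by \eqref{eq:path-score}, we can write
\[
\mu_0=\E_{Q_n}\Bigl[\sum_{a,y}D^*((X,a,y);U_0)\bigl(p^*(a,y\mid X)-p_{U_0}(a,y\mid X)\bigr)\Bigr].
\]
Computing the inner sum pointwise in $x$ gives the usual doubly-robust structure. Since $\E^*[A\mid x]=e^*$ and $\E^*[Y\mid A=a,x]=q_a^*$, the $D_q^*$ part contributes
\[
\frac{\lambda(e_0)}{\Omega_0}\Bigl[\frac{e^*}{e_0}(q_1^*-q_{1,0})-\frac{1-e^*}{1-e_0}(q_0^*-q_{0,0})\Bigr],
\]
and the $D_e^*$ part contributes
\[
\frac{\dot\lambda(e_0)}{\Omega_0}(\tau_0-\psi_0)(e^*-e_0).
\]
Each of these is linear in the nuisance errors, with coefficients bounded by powers of $\mfc^{-1}$ (\Cref{as::positivity}). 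The pointwise bound is
\[
|\text{integrand}(x)|\le C(\mfc)\,\|U_0(x)-U^*(x)\|,
\]
so that
\[
|\mu_0|\le C(\mfc)\,\E_{Q_n}\bigl[|q_{1,0}-q_1^*|+|q_{0,0}-q_0^*|+|e_0-e^*|\bigr](X).
\]
I will not try to show the first-order term cancels. A crude $L^1$ bound suffices because the target rate is only $n^{-1/4}$.

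To pass from $Q_n$ to $Q^*$, I condition on the opposite fold. Then $U_0$ is fixed and $X_i\sim P_X^*$ i.i.d., so
\[
\E\bigl[\E_{Q_n}|f(X)|\,\big|\,U_0\bigr]=\|f\|_{1,*}\le\|f\|_{2,*}=o_p(n^{-1/4})
\]
by \Cref{as::initial_convergence}. A conditional Markov argument then gives $\E_{Q_n}|f|=o_p(n^{-1/4})$. Combining with the fluctuation term yields $\mathbf 1_{\mathcal A_n}L_n'(0)=o_p(n^{-1/4})$.

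\emph{Root.} On $\mathcal A_n\cap\mathcal B_n$, the construction in the proof of \Cref{mainthm:det-bracket-one} places $\hat t$ in $[t_-,t_+]$. Hence
\[
|\hat t|\le \frac{4|L_n'(0)|}{c_\init},
\]
which also follows directly from $L_n''\le -c_\init/4$ and the mean value theorem. By \Cref{mainthm:det-bracket-one}, $\P(\mathcal A_n\setminus\mathcal B_n)\le m_0^{-1}e^{-m_0n}\to0$. On $\mathcal A_n\setminus\mathcal E_n$, $\hat t=0$ by convention, and since $\mathcal B_n\cap\mathcal A_n\subseteq\mathcal E_n$ this exceptional set has vanishing probability anyway. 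Therefore $\mathbf 1_{\mathcal A_n}\hat t=o_p(n^{-1/4})$.

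The main obstacle is the bias step: checking the conditional-expectation algebra for $\mu_0$ and handling the random design $Q_n$ by conditioning on the opposite fold.
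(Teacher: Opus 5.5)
Your proof is correct, but it is organized differently from the paper's. Write $S_Q(U)=D^*(\cdot;\widetilde P(Q,U))$ and $r_0=|q_{1,0}-q_1^*|+|q_{0,0}-q_0^*|+|e_0-e^*|$.

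\textbf{Your route.} You center $L_n'(0)$ at its conditional mean given $\mathcal G_n$.
\begin{itemize}
\item The fluctuation $(\P_n-\bar P_n^*)[S_{Q_n}(U_0)]$ is $O_p(n^{-1/2})$ from boundedness alone.
\item The bias $\mu_0$ is computed exactly as the first-order (doubly robust) expression, which gives $\mathbf 1_{\mathcal A_n}|\mu_0|\le C(\mfc)\,Q_n[r_0]$.
\item Then $\E[Q_n[r_0]\mid U_0]=\|r_0\|_{1,*}$, and a conditional Markov step finishes.
\end{itemize}

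\textbf{The paper's route.} It centers at the truth instead, writing $L_n'(0)=\P_n[S_{Q_n}(U_0)-S_{Q_n}(U^*)]+\P_n[S_{Q_n}(U^*)]$.
\begin{itemize}
\item The second term is a conditionally centered sum, with mean zero because $U^*$ is the true nuisance.
\item The first term is controlled by the pointwise Lipschitz bound $|S_{Q_n}(U_0)-S_{Q_n}(U^*)|\le C(\mfc)(r_0+Q_n[r_0])$.
\item This is combined with a conditional second-moment bound $C(\mfc)\,Q_n[r_0^2]$, the separate estimate $Q_n[r_0^2]=o_p(n^{-1/2})$, a second conditional Markov argument, and Cauchy--Schwarz.
\end{itemize}

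\textbf{What each buys.} Your route is shorter and needs only an $L^1(P_X^*)$ rate. It never evaluates the score at $U^*$, so this step needs positivity only for $U_0$, not for $U^*$. The paper's route avoids the explicit conditional-expectation computation under $p^*$, relying only on Lipschitz continuity in $U$ and the mean-zero property at the truth. Its by-product $Q_n[r_0^2]=o_p(n^{-1/2})$ is also quoted later in \Cref{lem:path-controls}. If your proof replaced the paper's, that estimate would have to be established separately there, by the same conditional Chebyshev argument.

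\textbf{A small attribution point.} The bound $\mathbf 1_{\mathcal A_n}\P(\mathcal B_n^{\mathrm c}\mid\mathcal G_n)\le m_0^{-1}e^{-m_0 n}$ comes from \Cref{lemma::concavity,lemma:first_order_loss}, as combined in the proof of \Cref{mainthm:det-bracket-one}. The theorem's statement itself only controls $\mathcal E_n^{\mathrm c}$.
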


\begin{proof}
We first prove that
\[
\mathbf 1_{\mathcal A_n}L_n'(0)=o_p(n^{-1/4}).
\]

Define, for a fixed probability measure $Q$ on $\mathcal X$,
\[
S_Q(U)(o)=D^*(o;\widetilde P(Q,U)).
\]
Then
\[
\mathbf 1_{\mathcal A_n}L_n'(0)
=
\mathbf 1_{\mathcal A_n}\P_n[S_{Q_n}(U_0)]
=
\mathbf 1_{\mathcal A_n}\P_n\!\left[S_{Q_n}(U_0)-S_{Q_n}(U^*)\right]
+
\mathbf 1_{\mathcal A_n}\P_n\!\left[S_{Q_n}(U^*)\right].
\]

We first treat the second term. On $\mathcal A_n$, the local bracketing conditions imply
$U^*\in\mathcal B_\eta(U_0)$, so \Cref{as::positivity} applies to both $U_0$ and $U^*$.
Hence $S_{Q_n}(U^*)$ is uniformly bounded by a constant depending only on $\mfc$.

Set
\[
\xi_i=S_{Q_n}(U^*)(O_i),\qquad i=1,\dots,n.
\]
Conditional on $\mathcal G_n$, the quantities $Q_n$ and $X_1,\dots,X_n$ are fixed, while
$(A_i,Y_i)$ are conditionally independent with conditional law
\[
\P\big((A_i,Y_i)=(a,y)\mid \mathcal G_n\big)=p^*(a,y\mid X_i).
\]
Moreover, because $U^*$ is the true nuisance triple, the restricted EIF has conditional
mean zero given $X$ for every fixed marginal law. Therefore, for each $i$,
\[
\E\!\left[\xi_i\mid \mathcal G_n\right]
=
\sum_{a,y\in\{0,1\}}
S_{Q_n}(U^*)((X_i,a,y))\,p^*(a,y\mid X_i)=0.
\]
Thus, conditional on $\mathcal G_n$, the variables $\xi_1,\dots,\xi_n$ are independent,
mean zero, and uniformly bounded by a constant depending only on $\mfc$. In particular,
\[
\E\!\left[\left(\sqrt n\,\P_n[S_{Q_n}(U^*)]\right)^2\Bigm|\mathcal G_n\right]
=
\E\!\left[\left(\frac{1}{\sqrt n}\sum_{i=1}^n \xi_i\right)^2\Bigm|\mathcal G_n\right]
=
\frac{1}{n}\sum_{i=1}^n \E\!\left[\xi_i^2\mid \mathcal G_n\right]
\le C(\mfc).
\]
Hence, by the conditional Chebyshev inequality,
for every $\eps>0$,
\[
\P\!\left(
\sqrt n\,\left|\P_n[S_{Q_n}(U^*)]\right|>\eps
\Bigm|\mathcal G_n
\right)\mathbf 1_{\mathcal A_n}
\le \frac{C(\mfc)}{\eps^2}.
\]
Therefore
\[
\mathbf 1_{\mathcal A_n}\sqrt n\,\P_n[S_{Q_n}(U^*)]=O_p(1),
\qquad\text{and hence}\qquad
\mathbf 1_{\mathcal A_n}\P_n[S_{Q_n}(U^*)]=O_p(n^{-1/2}).
\]

Next, consider the first term. Set
\[
r_0(x)
=
|q_{1,0}(x)-q_1^*(x)|
+
|q_{0,0}(x)-q_0^*(x)|
+
|e_0(x)-e^*(x)|.
\]
Define
\[
\Omega_n(U)=Q_n[\lambda(e)],
\qquad
\psi_n(U)=\psi(Q_n,U),
\]
and
\[
c_{1,n}(U)(x,a)=\frac{\lambda(e(x))}{\Omega_n(U)}\frac{a}{e(x)},
\qquad
c_{0,n}(U)(x,a)=\frac{\lambda(e(x))}{\Omega_n(U)}\frac{1-a}{1-e(x)},
\]
\[
b_n(U)(x,a)=\frac{\dot\lambda(e(x))}{\Omega_n(U)}(a-e(x)).
\]
Then
\[
S_{Q_n}(U)(x,a,y)
=
c_{1,n}(U)(x,a)(y-q_1(x))
-
c_{0,n}(U)(x,a)(y-q_0(x))
+
b_n(U)(x,a)\bigl((q_1-q_0)(x)-\psi_n(U)\bigr).
\]

On $\mathcal A_n$, positivity gives
\[
\Omega_n(U_0),\Omega_n(U^*)\in[\Lambda_{\min},\Lambda_{\max}],
\qquad
e_0(x),e^*(x)\in[\eta,1-\eta].
\]
Moreover,
\[
|\Omega_n(U_0)-\Omega_n(U^*)|
\le
\dot\Lambda_{\max}Q_n[r_0].
\]
Let
\[
\tau_0=q_{1,0}-q_{0,0},
\qquad
\tau^*=q_1^*-q_0^*,
\qquad
N_n(U_0)=Q_n[\lambda(e_0)\tau_0],
\qquad
N_n(U^*)=Q_n[\lambda(e^*)\tau^*].
\]
Then
\[
\psi_n(U_0)=\frac{N_n(U_0)}{\Omega_n(U_0)},
\qquad
\psi_n(U^*)=\frac{N_n(U^*)}{\Omega_n(U^*)},
\]
so
\[
\psi_n(U_0)-\psi_n(U^*)
=
\frac{N_n(U_0)-N_n(U^*)}{\Omega_n(U_0)}
+
N_n(U^*)\left(
\frac{1}{\Omega_n(U_0)}-\frac{1}{\Omega_n(U^*)}
\right).
\]
Since
\[
|\tau_0-\tau^*|
\le
|q_{1,0}-q_1^*|+|q_{0,0}-q_0^*|
\le r_0,
\]
the mean value theorem yields
\begin{align*}
|N_n(U_0)-N_n(U^*)|
&=
\left|
Q_n[\lambda(e_0)\tau_0-\lambda(e^*)\tau^*]
\right| \\
&\le
Q_n\left[|\lambda(e_0)-\lambda(e^*)||\tau_0|\right]
+
Q_n\left[\lambda(e^*)|\tau_0-\tau^*|\right] \\
&\le
\dot\Lambda_{\max}Q_n[|e_0-e^*|]
+
\Lambda_{\max}Q_n[|\tau_0-\tau^*|] \\
&\le
(\dot\Lambda_{\max}+\Lambda_{\max})Q_n[r_0].
\end{align*}
Also,
\[
|N_n(U^*)|\le \Lambda_{\max},
\qquad
\Omega_n(U_0),\Omega_n(U^*)\in[\Lambda_{\min},\Lambda_{\max}].
\]
Therefore, on $\mathcal A_n$,
\[
|\psi_n(U_0)-\psi_n(U^*)|
\le
\frac{|N_n(U_0)-N_n(U^*)|}{\Lambda_{\min}}
+
\frac{\Lambda_{\max}}{\Lambda_{\min}^2}
|\Omega_n(U_0)-\Omega_n(U^*)|
\le
C(\mfc)\,Q_n[r_0].
\]
Using the mean value theorem and the previous two displays, we obtain on $\mathcal A_n$
\[
|c_{1,n}(U_0)-c_{1,n}(U^*)|
+
|c_{0,n}(U_0)-c_{0,n}(U^*)|
+
|b_n(U_0)-b_n(U^*)|
\le
C(\mfc)\bigl(r_0(x)+Q_n[r_0]\bigr).
\]
Also,
\[
|q_{1,0}(x)-q_1^*(x)|+|q_{0,0}(x)-q_0^*(x)|\le r_0(x),
\qquad
|\tau_0(x)-\tau^*(x)|\le r_0(x).
\]
Since $|y-q_a^*(x)|\le 1$, $|a-e_0(x)|\le 1$, and
\[
|\tau^*(x)-\psi_n(U^*)|
\le 1+\frac{\Lambda_{\max}}{\Lambda_{\min}},
\]
it follows that for every $(x,a,y)$,
\[
\mathbf 1_{\mathcal A_n}
|S_{Q_n}(U_0)((x,a,y))-S_{Q_n}(U^*)((x,a,y))|
\le
C(\mfc)\bigl(r_0(x)+Q_n[r_0]\bigr).
\]
Therefore, conditional on $\mathcal G_n$,

\begin{align*}
\E\!\left[
\mathbf 1_{\mathcal A_n}
\P_n\!\left[\left(S_{Q_n}(U_0)-S_{Q_n}(U^*)\right)^2\right]
\Bigm| \mathcal G_n
\right]
&\le
C(\mfc)\bigl(Q_n[r_0^2]+Q_n[r_0]^2\bigr)\\
&\le
C(\mfc)\,Q_n[r_0^2].
\end{align*}

Because $U_0$ is constructed on the opposite fold, $r_0$ is independent of the
evaluation-fold covariates $X_1,\dots,X_n$. Therefore, conditional on $U_0$,
the variables $r_0(X_1)^2,\dots,r_0(X_n)^2$ are independent and identically
distributed with mean $P_X^*[r_0^2]$. Hence
\[
\E\!\left[Q_n[r_0^2]\mid U_0\right]=P_X^*[r_0^2].
\]
Also, since $0\le r_0\le 3$,
\[
\var\!\left(Q_n[r_0^2]\mid U_0\right)
=
\frac{1}{n}\var\!\left(r_0(X)^2\mid U_0\right)
\le \frac{1}{n}\E\!\left[r_0(X)^4\mid U_0\right]
\le \frac{9}{n}P_X^*[r_0^2].
\]

By \Cref{as::initial_convergence},
\[
P_X^*[r_0^2]
\lesssim
\|q_{1,0}-q_1^*\|_{2,*}^2
+
\|q_{0,0}-q_0^*\|_{2,*}^2
+
\|e_0-e^*\|_{2,*}^2
=
o_p(n^{-1/2}).
\]
We now show that this implies
\[
Q_n[r_0^2]=o_p(n^{-1/2}).
\]

Let $a_n=n^{-1/2}$. Fix $\eps,\eta>0$, and set
\[
\mu_n=P_X^*[r_0^2].
\]
Then
\[
\P\!\left(Q_n[r_0^2]>\eps a_n\right)
\le
\P\!\left(\mu_n>\frac{\eps}{2}a_n\right)
+
\P\!\left(Q_n[r_0^2]>\eps a_n,\ \mu_n\le \frac{\eps}{2}a_n\right).
\]
On the event $\{\mu_n\le \eps a_n/2\}$, conditional Chebyshev gives
\begin{align*}
\P\!\left(Q_n[r_0^2]>\eps a_n\mid U_0\right)
&\le
\P\!\left(|Q_n[r_0^2]-\mu_n|>\frac{\eps}{2}a_n\mid U_0\right) \\
&\le
\frac{4\,\var(Q_n[r_0^2]\mid U_0)}{\eps^2 a_n^2} \\
&\le
\frac{36}{\eps^2}\mu_n.
\end{align*}
Therefore
\begin{align*}
\P\!\left(Q_n[r_0^2]>\eps a_n,\ \mu_n\le \frac{\eps}{2}a_n\right)
&\le
\E\!\left[
\mathbf 1\!\left\{\mu_n\le \frac{\eps}{2}a_n\right\}
\P\!\left(Q_n[r_0^2]>\eps a_n\mid U_0\right)
\right] \\
&\le
\frac{36}{\eps^2}\E\!\left[
\mathbf 1\!\left\{\mu_n\le \frac{\eps}{2}a_n\right\}\mu_n
\right] \\
&\le
\frac{18}{\eps}a_n.
\end{align*}
Since $\mu_n=o_p(a_n)$, the first term also tends to zero, so
\[
Q_n[r_0^2]=o_p(n^{-1/2}).
\]

Next define
\[
Z_n
=
\mathbf 1_{\mathcal A_n}
\P_n\!\left[\left(S_{Q_n}(U_0)-S_{Q_n}(U^*)\right)^2\right].
\]
From the conditional second-moment bound proved above,
\[
\E[Z_n\mid \mathcal G_n]
\le
C(\mfc)\,Q_n[r_0^2].
\]
Since $Q_n[r_0^2]=o_p(n^{-1/2})$, we have
\[
\E[Z_n\mid \mathcal G_n]=o_p(n^{-1/2}).
\]
To conclude that $Z_n=o_p(n^{-1/2})$, fix $\eps,\eta>0$ and write
\begin{align*}
\P\!\left(Z_n>\eps n^{-1/2}\right)
&\le
\P\!\left(\E[Z_n\mid \mathcal G_n]>\eta n^{-1/2}\right) \\
&\quad+
\P\!\left(
Z_n>\eps n^{-1/2},\ 
\E[Z_n\mid \mathcal G_n]\le \eta n^{-1/2}
\right).
\end{align*}
On the event $\{\E[Z_n\mid \mathcal G_n]\le \eta n^{-1/2}\}$,
conditional Markov gives
\[
\P\!\left(
Z_n>\eps n^{-1/2}\mid \mathcal G_n
\right)
\le
\frac{\E[Z_n\mid \mathcal G_n]}{\eps n^{-1/2}}
\le
\frac{\eta}{\eps}.
\]
Hence
\[
\P\!\left(
Z_n>\eps n^{-1/2},\ 
\E[Z_n\mid \mathcal G_n]\le \eta n^{-1/2}
\right)
\le
\frac{\eta}{\eps}.
\]
Since $\E[Z_n\mid \mathcal G_n]=o_p(n^{-1/2})$, the first term tends to zero,
and then letting $\eta\downarrow 0$ yields
\[
\mathbf 1_{\mathcal A_n}
\P_n\!\left[\left(S_{Q_n}(U_0)-S_{Q_n}(U^*)\right)^2\right]
=o_p(n^{-1/2}).
\]

By Cauchy--Schwarz,
\[
\mathbf 1_{\mathcal A_n}
\left|
\P_n\!\left[S_{Q_n}(U_0)-S_{Q_n}(U^*)\right]
\right|
\le
\left(
\mathbf 1_{\mathcal A_n}
\P_n\!\left[\left(S_{Q_n}(U_0)-S_{Q_n}(U^*)\right)^2\right]
\right)^{1/2}
=
o_p(n^{-1/4}).
\]
Since also
\[
\mathbf 1_{\mathcal A_n}\P_n[S_{Q_n}(U^*)]
=
O_p(n^{-1/2})
=
o_p(n^{-1/4}),
\]
we conclude that
\[
\mathbf 1_{\mathcal A_n}L_n'(0)=o_p(n^{-1/4}).
\]

For the second claim, note that $\hat t=0$ on $\mathcal E_n^{\mathrm c}$. Moreover, on
$\mathcal A_n\cap \mathcal B_n$, the proof of \Cref{mainthm:det-bracket-one}
shows that the unique root $\hat t$ lies in $[t_-,t_+]$, where $t_-,t_+$ are
defined in \eqref{eq:hatt}. Hence, on $\mathcal A_n\cap \mathcal B_n$,
\[
|\hat t|
\le \max\{|t_-|,|t_+|\}
= \frac{4}{c_\init}|L_n'(0)|.
\]
Therefore
\[
\mathbf 1_{\mathcal A_n}|\hat t|
\le
\mathbf 1_{\mathcal A_n\cap\mathcal B_n}|\hat t|
+
\mft_2\,\mathbf 1_{\mathcal A_n\cap\mathcal B_n^{\mathrm c}}
\le
\frac{4}{c_\init}\mathbf 1_{\mathcal A_n}|L_n'(0)|
+
\mft_2\,\mathbf 1_{\mathcal A_n\cap\mathcal B_n^{\mathrm c}}.
\]
By the first part of the lemma,
\[
\mathbf 1_{\mathcal A_n}L_n'(0)=o_p(n^{-1/4}).
\]
Also, by \Cref{lemma::concavity,lemma:first_order_loss},
\[
\mathbf 1_{\mathcal A_n}\P(\mathcal B_n^{\mathrm c}\mid \mathcal G_n)
\le
2m_\star e^{-C_{1,\star}n}+2e^{-C_{2,\star}n},
\]
so
\[
\P(\mathcal A_n\cap\mathcal B_n^{\mathrm c})\to 0.
\]
Hence
\[
\mft_2\,\mathbf 1_{\mathcal A_n\cap\mathcal B_n^{\mathrm c}}=o_p(n^{-1/4}),
\]
and therefore
\[
\mathbf 1_{\mathcal A_n}\hat t=o_p(n^{-1/4}).
\]
This proves the lemma.
\end{proof}

For $|t|\le \mft_2$, define
\[
r_t(x)
=
|q_{1,t}(x)-q_1^*(x)|
+
|q_{0,t}(x)-q_0^*(x)|
+
|e_t(x)-e^*(x)|.
\]
Also define
\[
\Omega_t^n=\Omega(Q_n,U_t),
\qquad
\Omega_t^*=\Omega(Q^*,U_t),
\qquad
N_t^n=Q_n[\lambda(e_t)\tau_t],
\qquad
N_t^*=Q^*[\lambda(e_t)\tau_t],
\]
and
\[
\psi_t^n=\psi(Q_n,U_t),
\qquad
\psi_t^*=\psi(Q^*,U_t).
\]
Finally define
\[
H_t(\cdot)
=
D_{\full}^*(\cdot;\widetilde P(Q_n,U_t))
-
D_{\full}^*(\cdot;P^*),
\]
and
\[
G_t(\cdot)
=
D_{\full}^*(\cdot;\widetilde P(Q_n,U_t))^2
-
D_{\full}^*(\cdot;P^*)^2.
\]
\begin{lemma}\label{lem:full-eif-bdd}
Assume \Cref{as::positivity} and fix a probability measure $Q$ on $\mathcal X$.
There exists a constant $C_{\full}(\mfc)>0$ such that
\[
\sup_{U\in\mathcal B_{2\eta}(U_0)}
\|D_{\full}^*(\cdot;\widetilde P(Q,U))\|_\infty
\le C_{\full}(\mfc).
\]
Further, for all $U,V\in\mathcal B_{2\eta}(U_0)$,
\[
\|D_{\full}^*(\cdot;\widetilde P(Q,U))^2
-
D_{\full}^*(\cdot;\widetilde P(Q,V))^2\|_\infty
\le
2C_{\full}(\mfc)\,
\|D_{\full}^*(\cdot;\widetilde P(Q,U))
-
D_{\full}^*(\cdot;\widetilde P(Q,V))\|_\infty.
\]
If, in addition, $U^*\in\mathcal B_\eta(U_0)$, then
\[
\|D_{\full}^*(\cdot;P^*)\|_\infty\le C_{\full}(\mfc).
\]
\end{lemma}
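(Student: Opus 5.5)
The plan is a direct termwise bound in the spirit of the proof of \Cref{l:Lip}, using the decomposition $D_{\full}^*=D_q^*+D_e^*+D_X^*$ from \Cref{prop:EIF_full}. Fix $U=(q_1,q_0,e)\in\mathcal B_{2\eta}(U_0)$ and write $P=\widetilde P(Q,U)$. By \Cref{as::positivity}, $q_1,q_0,e$ take values in $[\eta,1-\eta]$. Hence $\lambda(e(x))\in[\Lambda_{\min},\Lambda_{\max}]$, $|\dot\lambda(e(x))|\le\dot\Lambda_{\max}$, and $\Omega(Q,U)\ge\Lambda_{\min}$. As in \eqref{e:bounds4}, $|\psi(Q,U)|\le\Lambda_{\max}/\Lambda_{\min}$, and $|b_P|\le 1$.

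The sum $D_q^*+D_e^*$ is exactly $D^*(\cdot;P)$. By \eqref{eq:CR} in the proof of \Cref{l:Lip}, it is bounded by $4\mfc^{-4}$. The extra component satisfies
\[
|D_X^*(o;P)|\le \frac{\Lambda_{\max}}{\Lambda_{\min}}\Bigl(1+\frac{\Lambda_{\max}}{\Lambda_{\min}}\Bigr)\le 2\mfc^{-4}.
\]
Adding these gives $C_{\full}(\mfc)=6\mfc^{-4}$, or any larger constant such as $18\mfc^{-6}$ for uniformity with \Cref{l:Lip}. This bound depends only on $\mfc$ and not on $Q$, which matters because the lemma is later applied with the random $Q_n$. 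The algebraic extension noted in \Cref{prop:EIF_full} makes the formula meaningful even for discrete $Q$.

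The second claim is the elementary inequality $|f^2-g^2|\le(|f|+|g|)|f-g|$, applied pointwise with $f=D_{\full}^*(\cdot;\widetilde P(Q,U))$ and $g=D_{\full}^*(\cdot;\widetilde P(Q,V))$. Taking the supremum over $o$ and inserting the uniform bound $C_{\full}(\mfc)$ for both $f$ and $g$ gives the stated factor $2C_{\full}(\mfc)$.

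For the last claim, $U^*\in\mathcal B_\eta(U_0)\subset\mathcal B_{2\eta}(U_0)$, so $U^*$ takes values in $[\eta,1-\eta]$, and the same bounds apply with $Q$ replaced by $P_X^*$. The only subtlety is that $D_{\full}^*(\cdot;P^*)$ must be computed from the continuous versions $U^*$ and the true marginal, which is $\widetilde P(P_X^*,U^*)$. The first bound was proved uniformly over all probability measures $Q$, so it covers this case as well.

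There is no real obstacle here. The only care needed is to make the constant genuinely independent of $Q$, and to note that $\Omega(Q,U)\ge\Lambda_{\min}$ holds for every $Q$ because $\lambda(e(x))\ge\Lambda_{\min}$ pointwise.
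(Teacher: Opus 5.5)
Your proposal is correct and follows the paper's proof essentially step for step. You bound $D^*=D_q^*+D_e^*$ via \Cref{l:Lip}, bound the $X$-component by $2\mfc^{-4}$, use $|f^2-g^2|\le(|f|+|g|)|f-g|$, and apply the first bound at $(P_X^*,U^*)$. The only difference is the constant: you use the sharper intermediate estimate \eqref{eq:CR} to get $C_{\full}(\mfc)=6\mfc^{-4}$, whereas the paper uses the cruder stated bound $18\mfc^{-6}$ and takes $C_{\full}(\mfc)=20\mfc^{-6}$. Either constant works.
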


\begin{proof}
By \Cref{l:Lip},
\[
\sup_{U\in\mathcal B_{2\eta}(U_0)}
\|D^*(\cdot;\widetilde P(Q,U))\|_\infty
\le 18\mfc^{-6}.
\]
Moreover, for $U=(q_1,q_0,e)\in\mathcal B_{2\eta}(U_0)$ and
$o=(x,a,y)\in\mathcal O$,
\[
D_X^*(o;\widetilde P(Q,U))
=
\frac{\lambda(e(x))}{\Omega(Q,U)}\bigl(\tau(U)(x)-\psi(Q,U)\bigr).
\]
By \Cref{as::positivity},
\[
\lambda(e(x))\le \Lambda_{\max},
\qquad
\Omega(Q,U)\ge \Lambda_{\min},
\qquad
|\tau(U)(x)|\le 1,
\]
and
\[
|\psi(Q,U)|
=
\left|
\frac{Q[\lambda(e)\tau(U)]}{\Omega(Q,U)}
\right|
\le
\frac{\Lambda_{\max}}{\Lambda_{\min}}.
\]
Hence
\[
\|D_X^*(\cdot;\widetilde P(Q,U))\|_\infty
\le
\frac{\Lambda_{\max}}{\Lambda_{\min}}
\left(1+\frac{\Lambda_{\max}}{\Lambda_{\min}}\right)
\le 2\mfc^{-4}.
\]
Therefore
\[
\sup_{U\in\mathcal B_{2\eta}(U_0)}
\|D_{\full}^*(\cdot;\widetilde P(Q,U))\|_\infty
\le
18\mfc^{-6}+2\mfc^{-4}
\le 20\mfc^{-6}.
\]
Thus one may take \(C_{\full}(\mfc)=20\mfc^{-6}\).

For the second claim, set
\[
A_U=D_{\full}^*(\cdot;\widetilde P(Q,U)),
\qquad
A_V=D_{\full}^*(\cdot;\widetilde P(Q,V)).
\]
Then
\[
A_U^2-A_V^2=(A_U+A_V)(A_U-A_V),
\]
so
\[
\|A_U^2-A_V^2\|_\infty
\le
(\|A_U\|_\infty+\|A_V\|_\infty)\|A_U-A_V\|_\infty
\le
2C_{\full}(\mfc)\|A_U-A_V\|_\infty.
\]

Finally, if \(U^*\in\mathcal B_\eta(U_0)\), then \(U^*\in\mathcal B_{2\eta}(U_0)\) and
\[
P^*=\widetilde P(Q^*,U^*),
\qquad Q^*=P_X^*,
\]
so the first bound applied with \(Q=Q^*\) and \(U=U^*\) gives
\[
\|D_{\full}^*(\cdot;P^*)\|_\infty\le C_{\full}(\mfc).
\]
\end{proof}

\begin{lemma}\label{lem:path-controls}
Let $(\delta_n)$ be any deterministic sequence with $\delta_n=o(n^{-1/4})$.
Then
\[
\mathbf 1_{\mathcal A_n}\sup_{|t|\le \delta_n}Q_n[r_t^2]=o_p(n^{-1/2}),
\]
and
\[
\mathbf 1_{\mathcal A_n}\sup_{|t|\le \delta_n}
\left(
|\Omega_t^n-\Omega_t^*|
+
|N_t^n-N_t^*|
+
|\psi_t^n-\psi_t^*|
\right)
=
o_p(n^{-1/4}).
\]
Moreover, there exists a constant $C_{\full,\infty}(\mfc)>0$ such that on $\mathcal A_n$,
for all $|s|,|t|\le \mft_2$,
\[
\|H_t-H_s\|_\infty+\|G_t-G_s\|_\infty
\le
C_{\full,\infty}(\mfc)|t-s|.
\]
Finally,
\[
\mathbf 1_{\mathcal A_n}\sup_{|t|\le \delta_n}\bar P_n^*[H_t^2]=o_p(n^{-1/2}),
\qquad
\mathbf 1_{\mathcal A_n}\sup_{|t|\le \delta_n}\bar P_n^*[|G_t|]=o_p(1).
\]
\end{lemma}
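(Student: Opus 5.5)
The plan is to reduce every claim to the initial error $r_0$ and to Lipschitz control of the path, using the bound $\|U_t-U_0\|_\infty\le \tfrac12\mfc^{-4}|t|$ from \eqref{eq:MF}. On $\mathcal A_n$ we have the pointwise bound $r_t(x)\le r_0(x)+3\|U_t-U_0\|_\infty\le r_0(x)+\tfrac32\mfc^{-4}|t|$. Hence
\[
\sup_{|t|\le\delta_n}Q_n[r_t^2]\le 2Q_n[r_0^2]+C(\mfc)\delta_n^2 .
\]
The first term is $o_p(n^{-1/2})$, as shown in the proof of \Cref{lem:score-small-root}; this uses cross-fitting independence, conditional Chebyshev, and \Cref{as::initial_convergence}. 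The second term is $o(n^{-1/2})$ because $\delta_n=o(n^{-1/4})$. This gives the first display.

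For the design-law differences, I will write $\lambda(e_t)=\lambda(e^*)+(\lambda(e_t)-\lambda(e^*))$ and use
\[
|(Q_n-Q^*)[\lambda(e_t)]|\le |(Q_n-Q^*)[\lambda(e^*)]|+Q_n[\dot\Lambda_{\max}r_t]+Q^*[\dot\Lambda_{\max}r_t].
\]
The fixed-function term is $O_p(n^{-1/2})$ by Chebyshev, because $\lambda(e^*)$ is deterministic and bounded. The term $Q_n[r_t]\le (Q_n[r_t^2])^{1/2}$ is $o_p(n^{-1/4})$ uniformly in $|t|\le\delta_n$. Likewise $Q^*[r_t]\le \|r_0\|_{2,*}+C\delta_n=o_p(n^{-1/4})$. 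The same argument applies to $N_t$ with $\lambda(e)\tau$ in place of $\lambda(e)$. For $\psi$, I will use the quotient bound already written in \Cref{lem:score-small-root}, together with $\Omega\ge\Lambda_{\min}$.

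For the Lipschitz claim, the map $U\mapsto D^*_{\full}(\cdot;\widetilde P(Q_n,U))$ is Lipschitz on $\mathcal B_{2\eta}$ with constant $C(\mfc)$. This follows from \Cref{l:Lip} together with the analogous elementary bound for $D_X^*$, whose ingredients are $\lambda(e)/\Omega$ (Lipschitz by \eqref{e:alphabd}), $\tau$, and $\psi$ (Lipschitz by the bound in \Cref{l:Lip}). Composing with $t\mapsto U_t$ gives $\|H_t-H_s\|_\infty\le C|t-s|$, since the subtracted term $D^*_{\full}(\cdot;P^*)$ does not depend on $t$. The bound for $G_t$ then follows from the squares inequality in \Cref{lem:full-eif-bdd}.

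The last claims are the main obstacle, because $H_t$ compares objects built on different marginals, $Q_n$ versus $Q^*$. I will split
\[
H_t=\bigl[D^*_{\full}(\cdot;\widetilde P(Q_n,U_t))-D^*_{\full}(\cdot;\widetilde P(Q_n,U^*))\bigr]+\bigl[D^*_{\full}(\cdot;\widetilde P(Q_n,U^*))-D^*_{\full}(\cdot;\widetilde P(Q^*,U^*))\bigr].
\]
I will bound the pieces as follows.
\begin{itemize}
\item The first bracket is pointwise at most $C(\mfc)(r_t(x)+Q_n[r_t])$, by the same mean-value computation as in \Cref{lem:score-small-root} applied to the full EIF.
\item The second bracket depends on the marginal only through the scalars $\Omega$ and $\psi$. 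Its sup norm is therefore at most $C(|\Omega(Q_n,U^*)-\Omega(Q^*,U^*)|+|\psi(Q_n,U^*)-\psi(Q^*,U^*)|)=O_p(n^{-1/2})$, since these are averages of fixed bounded functions.
\end{itemize}
Because $\bar P_n^*$ integrates in $x$ against $Q_n$, this gives
\[
\bar P_n^*[H_t^2]\le C\bigl(Q_n[r_t^2]+O_p(n^{-1})\bigr)=o_p(n^{-1/2}),
\]
uniformly over $|t|\le\delta_n$. Finally, $|G_t|\le 2C_{\full}|H_t|$, so Cauchy--Schwarz gives $\bar P_n^*[|G_t|]\le 2C_{\full}(\bar P_n^*[H_t^2])^{1/2}=o_p(1)$. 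Throughout, the indicator $\mathbf 1_{\mathcal A_n}$ supplies $U^*\in\mathcal B_\eta$ and positivity, and the conventions adopted on $\mathcal A_n^c$ make all the displayed quantities vanish there.
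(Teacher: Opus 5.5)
Your argument is correct. The first display and the Lipschitz claim are handled essentially as in the paper, but the design-law step and the final two claims take a different route.

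For $\Omega_t^n-\Omega_t^*$ and $N_t^n-N_t^*$, the paper centers at $\lambda(e_0)$ and $\lambda(e_0)\tau_0$. These depend only on the opposite fold, so conditional Chebyshev given $U_0$, plus Lipschitz dependence on $t$, gives $O_p(n^{-1/2})+O(\delta_n)$ without using any nuisance rate. You instead center at the deterministic functions $\lambda(e^*)$ and $\lambda(e^*)\tau^*$. This needs no conditioning, but it costs $Q_n[r_t]+Q^*[r_t]$. That term is $o_p(n^{-1/4})$ only because of the first display and \Cref{as::initial_convergence}; this is exactly the rate required, so nothing is lost.

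One small citation fix for the $\psi$ step. You need the quotient identity $\psi_t^n-\psi_t^*=(N_t^n-N_t^*)/\Omega_t^n+N_t^*(1/\Omega_t^n-1/\Omega_t^*)$, which compares two marginals at a common $U_t$. The bound in \Cref{lem:score-small-root} compares two nuisances at a common $Q_n$, but the algebra is identical.

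For $\bar P_n^*[H_t^2]$, the paper pivots at $\widetilde P(Q^*,U_t)$. Its marginal-law piece is controlled through the second display, so only at order $o_p(n^{-1/4})$. Its nuisance piece lives under $Q^*$, which requires the extra bound $\sup_{|t|\le\delta_n}Q^*[r_t^2]=o_p(n^{-1/2})$ alongside $Q_n[r_t^2]$.

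Your pivot at $\widetilde P(Q_n,U^*)$ works differently. The marginal-law piece depends only on the fixed $U^*$, so it is $O_p(n^{-1/2})$ in sup norm; this is the same device the paper uses for $C_n$ in \Cref{lem:bias-full}. The nuisance piece stays entirely under $Q_n$, where the pointwise bound $C(\mfc)(r_t(x)+Q_n[r_t])$ from \Cref{lem:score-small-root} carries over directly. The last two claims then follow from the first display alone.

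The paper's arrangement reuses its second display, which is part of the lemma's statement anyway. Yours gives a sharper and more self-contained bound for $\bar P_n^*[H_t^2]$.
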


\begin{proof}
We begin by establishing deterministic inequalities that hold on $\mathcal A_n$ for the
realized empirical design law $Q_n$ and the corresponding path $t\mapsto U_t$. These pathwise inequalities will then be
combined with unconditional stochastic bounds established earlier, in particular
the estimate
\[
Q_n[r_0^2]=o_p(n^{-1/2}),
\]
proved in \Cref{lem:score-small-root}.
All equalities and inequalities below are asserted on the event $\mathcal A_n$.

Since $|t|\le \mft_2<\mft_1$, \Cref{mainthm:ODE-C1} ensures that
$U_s\in\mathcal B_\eta(U_0)$ for every $s$ between $0$ and $t$. Therefore,
by \eqref{eq:MF},
\[
\|F(Q_n,U_s)\|_\infty\le \frac{1}{2\mfc^4}
\qquad\text{for all such }s.
\]
Using
\[
U_t-U_0=\int_0^t F(Q_n,U_s)\,ds,
\]
we obtain
\[
\|U_t-U_0\|_\infty
\le
\int_{\min\{0,t\}}^{\max\{0,t\}}\|F(Q_n,U_s)\|_\infty\,ds
\le
\frac{|t|}{2\mfc^4}
\qquad\text{for all }|t|\le \mft_2.
\]
Hence
\[
r_t(x)\le r_0(x)+C_F(\mfc)|t|,
\]
and therefore
\[
Q_n[r_t^2]
\le 2Q_n[r_0^2]+2C_F(\mfc)^2 t^2.
\]
Combining this deterministic inequality with the unconditional bound
$Q_n[r_0^2]=o_p(n^{-1/2})$ from \Cref{lem:score-small-root}, and using
$\delta_n=o(n^{-1/4})$, gives
\[
\sup_{|t|\le \delta_n}Q_n[r_t^2]=o_p(n^{-1/2}).
\]

We now turn to the design-to-population part of the argument, namely the terms
that measure the effect of replacing the realized design law $Q_n$ by $Q^*$. Set
\[
a_t(x)=\lambda(e_t(x)),
\qquad
b_t(x)=\lambda(e_t(x))\tau_t(x).
\]
Because $t\mapsto U_t$ is Lipschitz in supremum norm and $\lambda$ is $C^1$ on
$[\eta,1-\eta]$, there exists a constant $C_{ab}(\mfc)>0$ such that
\[
\|a_t-a_s\|_\infty+\|b_t-b_s\|_\infty
\le
C_{ab}(\mfc)|t-s|
\qquad\text{for all }|s|,|t|\le \mft_2.
\]
Thus
\[
|\Omega_t^n-\Omega_t^*|
=
|(Q_n-Q^*)[a_t]|
\le
|(Q_n-Q^*)[a_0]|+2C_{ab}|t|,
\]
and similarly
\[
|N_t^n-N_t^*|
\le
|(Q_n-Q^*)[b_0]|+2C_{ab}|t|.
\]

Define
\[
\mathcal A_n^{\mathrm{pos}}
=
\left\{
e_0(x)\in[\eta,1-\eta]\text{ for all }x\in[0,1]^d
\right\}.
\]
Since \(\mathcal A_n\) means that the pair \((Q_n,U_0)\) satisfies the local
bracketing conditions with constants \((\mfc,c_{\mathrm{init}})\), \Cref{as::positivity} 
holds on \(\mathcal A_n\). Therefore
\[
\mathcal A_n\subseteq \mathcal A_n^{\mathrm{pos}}.
\]
Moreover, \(\mathcal A_n^{\mathrm{pos}}\) is \(U_0\)-measurable. On
\(\mathcal A_n^{\mathrm{pos}}\),
\[
\|a_0\|_\infty+\|b_0\|_\infty\le C(\mfc).
\]
Because \(U_0\) is constructed on the opposite fold, conditional on \(U_0\) the
evaluation-fold covariates \(X_1,\dots,X_n\) are i.i.d. with common law \(Q^*\).
Hence
\[
\E\!\left[\mathbf 1_{\mathcal A_n^{\mathrm{pos}}}\bigl((Q_n-Q^*)[a_0]\bigr)^2 \,\middle|\, U_0\right]
\le
\mathbf 1_{\mathcal A_n^{\mathrm{pos}}}\frac{C(\mfc)}{n},
\]
and similarly for \(b_0\). Taking expectations and using
\(\mathbf 1_{\mathcal A_n}\le \mathbf 1_{\mathcal A_n^{\mathrm{pos}}}\), we get
\[
\E\!\left[\mathbf 1_{\mathcal A_n}\bigl((Q_n-Q^*)[a_0]\bigr)^2\right]
\le \frac{C(\mfc)}{n},
\qquad
\E\!\left[\mathbf 1_{\mathcal A_n}\bigl((Q_n-Q^*)[b_0]\bigr)^2\right]
\le \frac{C(\mfc)}{n}.
\]
Therefore, by Chebyshev,
\[
\mathbf 1_{\mathcal A_n}(Q_n-Q^*)[a_0]=O_p(n^{-1/2}),
\qquad
\mathbf 1_{\mathcal A_n}(Q_n-Q^*)[b_0]=O_p(n^{-1/2}).
\]
Hence
\[
\mathbf 1_{\mathcal A_n}\sup_{|t|\le \delta_n}
\left(
|\Omega_t^n-\Omega_t^*|+|N_t^n-N_t^*|
\right)
=
O_p(n^{-1/2})+O(\delta_n)
=
o_p(n^{-1/4}).
\]
Since $\Omega_t^n,\Omega_t^*\in[\Lambda_{\min},\Lambda_{\max}]$ and
$|N_t^n|,|N_t^*|\le \Lambda_{\max}$, it follows that
\[
\sup_{|t|\le \delta_n}|\psi_t^n-\psi_t^*|=o_p(n^{-1/4}).
\]

Now inspect the explicit formula for the full EIF:
\begin{align*}
D_{\full}^*(o;\widetilde P(Q_n,U_t))
&=
\frac{\lambda(e_t(x))}{\Omega_t^n}
\left[
\frac{a}{e_t(x)}(y-q_{1,t}(x))
-
\frac{1-a}{1-e_t(x)}(y-q_{0,t}(x))
+
\tau_t(x)-\psi_t^n
\right] \\
&\quad+
\frac{\dot\lambda(e_t(x))}{\Omega_t^n}
(\tau_t(x)-\psi_t^n)(a-e_t(x)).
\end{align*}
Every coefficient in this display is Lipschitz in
\[
(q_{1,t}(x),q_{0,t}(x),e_t(x),\Omega_t^n,\psi_t^n)
\]
on the positivity region. Since
\[
\|U_t-U_s\|_\infty+|\Omega_t^n-\Omega_s^n|+|\psi_t^n-\psi_s^n|
\le C(\mfc)|t-s|,
\]
it follows that there exists a constant $C_{\full,\infty}(\mfc)>0$ such that
\[
\|D_{\full}^*(\cdot;\widetilde P(Q_n,U_t))
-
D_{\full}^*(\cdot;\widetilde P(Q_n,U_s))\|_\infty
\le
C_{\full,\infty}(\mfc)|t-s|.
\]
By \Cref{lem:full-eif-bdd}, applied with \(Q=Q_n\), we have on \(\mathcal A_n\)
\[
\sup_{|u|\le \mft_2}
\|D_{\full}^*(\cdot;\widetilde P(Q_n,U_u))\|_\infty
\le C_{\full}(\mfc),
\]
because \(U_u\in\mathcal B_\eta(U_0)\subset\mathcal B_{2\eta}(U_0)\) for \(|u|\le \mft_2\).
Therefore
\begin{align*}
\|G_t-G_s\|_\infty
&=
\left\|
D_{\full}^*(\cdot;\widetilde P(Q_n,U_t))^2
-
D_{\full}^*(\cdot;\widetilde P(Q_n,U_s))^2
\right\|_\infty\\
&\le
2C_{\full}(\mfc)
\left\|
D_{\full}^*(\cdot;\widetilde P(Q_n,U_t))
-
D_{\full}^*(\cdot;\widetilde P(Q_n,U_s))
\right\|_\infty\\
&\le
2C_{\full}(\mfc)C_{\full,\infty}(\mfc)|t-s|.
\end{align*}
After enlarging \(C_{\full,\infty}(\mfc)\) if necessary, this proves
\[
\|H_t-H_s\|_\infty+\|G_t-G_s\|_\infty
\le
C_{\full,\infty}(\mfc)|t-s|.
\]

It remains to bound $\bar P_n^*[H_t^2]$ and $\bar P_n^*[|G_t|]$.

First note that
\[
Q^*[r_t^2]
\le
2Q^*[r_0^2]+2C_F(\mfc)^2 t^2
\qquad\text{for all }|t|\le \mft_2.
\]
By \Cref{as::initial_convergence},
\[
Q^*[r_0^2]
\lesssim
\|q_{1,0}-q_1^*\|_{2,*}^2
+
\|q_{0,0}-q_0^*\|_{2,*}^2
+
\|e_0-e^*\|_{2,*}^2
=
o_p(n^{-1/2}).
\]
Since $\delta_n=o(n^{-1/4})$, it follows that
\begin{equation}\label{e:qstarhalf}
\sup_{|t|\le \delta_n}Q^*[r_t^2]=o_p(n^{-1/2}).
\end{equation}

Set
\[
A_t
=
D_{\full}^*(\cdot;\widetilde P(Q_n,U_t))
-
D_{\full}^*(\cdot;\widetilde P(Q^*,U_t)),
\qquad
B_t
=
D_{\full}^*(\cdot;\widetilde P(Q^*,U_t))
-
D_{\full}^*(\cdot;P^*),
\]
so that
\[
H_t=A_t+B_t.
\]

For the marginal-law difference $A_t$, the dependence on the marginal law enters
only through $\Omega$ and $\psi$. Hence, by direct inspection of the explicit
formula for $D_{\full}^*$,
\[
\|A_t\|_\infty
\le
C(\mfc)
\left(
|\Omega_t^n-\Omega_t^*|
+
|\psi_t^n-\psi_t^*|
\right).
\]
Therefore
\[
\bar P_n^*[A_t^2]
\le
C(\mfc)
\left(
|\Omega_t^n-\Omega_t^*|^2
+
|\psi_t^n-\psi_t^*|^2
\right).
\]

For the nuisance difference $B_t$, write, for $U=(q_1,q_0,e)$,
\[
\tau(U)=q_1-q_0,
\qquad
\Omega(U)=Q^*[\lambda(e)],
\qquad
\psi(U)=\psi(Q^*,U),
\]
and define
\[
c_1(U)(x,a)=\frac{\lambda(e(x))}{\Omega(U)}\frac{a}{e(x)},
\qquad
c_0(U)(x,a)=\frac{\lambda(e(x))}{\Omega(U)}\frac{1-a}{1-e(x)},
\]
\[
b(U)(x,a)=\frac{\dot\lambda(e(x))}{\Omega(U)}(a-e(x)),
\qquad
d(U)(x)=\frac{\lambda(e(x))}{\Omega(U)}.
\]
Then, for $o=(x,a,y)$,
\begin{align*}
D_{\full}^*(o;\widetilde P(Q^*,U))
&=
c_1(U)(x,a)(y-q_1(x))
-
c_0(U)(x,a)(y-q_0(x)) \\
&\quad+
b(U)(x,a)\bigl(\tau(U)(x)-\psi(U)\bigr)
+
d(U)(x)\bigl(\tau(U)(x)-\psi(U)\bigr).
\end{align*}

Also set
\[
\Omega^*=Q^*[\lambda(e^*)],
\qquad
N^*=Q^*[\lambda(e^*)\tau^*],
\qquad
\psi^*=\psi(P^*).
\]
By the mean value theorem and positivity, for every $(x,a)$,
\[
|c_1(U_t)-c_1(U^*)|
+
|c_0(U_t)-c_0(U^*)|
+
|b(U_t)-b(U^*)|
+
|d(U_t)-d(U^*)|
\le
C(\mfc)\bigl(r_t(x)+|\Omega_t^*-\Omega^*|\bigr).
\]
Moreover,
\[
|(\tau_t(x)-\psi_t^*)-(\tau^*(x)-\psi^*)|
\le
r_t(x)+|\psi_t^*-\psi^*|.
\]
Since
\[
|y-q_a^*(x)|\le 1,
\qquad
|a-e^*(x)|\le 1,
\qquad
|\tau^*(x)-\psi^*|
\le
1+\frac{\Lambda_{\max}}{\Lambda_{\min}},
\]
we conclude that
\[
|B_t((x,a,y))|
\le
C(\mfc)
\left(
r_t(x)+|\Omega_t^*-\Omega^*|+|\psi_t^*-\psi^*|
\right).
\]

Now
\[
|\Omega_t^*-\Omega^*|
=
|Q^*[\lambda(e_t)-\lambda(e^*)]|
\le
\dot\Lambda_{\max}Q^*[|e_t-e^*|]
\le
C(\mfc)Q^*[r_t^2]^{1/2}.
\]
Similarly,
\begin{align*}
|N_t^*-N^*|
&=
|Q^*[\lambda(e_t)\tau_t-\lambda(e^*)\tau^*]| \\
&\le
Q^*\!\left[|\lambda(e_t)-\lambda(e^*)|\,|\tau_t|\right]
+
Q^*\!\left[\lambda(e^*)\,|\tau_t-\tau^*|\right] \\
&\le
C(\mfc)Q^*[r_t^2]^{1/2},
\end{align*}
and therefore
\[
|\psi_t^*-\psi^*|
\le
C(\mfc)Q^*[r_t^2]^{1/2}.
\]
Hence
\[
|B_t((x,a,y))|
\le
C(\mfc)\left(r_t(x)+Q^*[r_t^2]^{1/2}\right),
\]
so
\[
\bar P_n^*[B_t^2]
\le
C(\mfc)\left(Q_n[r_t^2]+Q^*[r_t^2]\right).
\]

Combining the bounds for $A_t$ and $B_t$, we obtain
\begin{align*}
\bar P_n^*[H_t^2]
&\le
2\bar P_n^*[A_t^2]+2\bar P_n^*[B_t^2] \\
&\le
C(\mfc)
\left(
Q_n[r_t^2]
+
Q^*[r_t^2]
+
|\Omega_t^n-\Omega_t^*|^2
+
|\psi_t^n-\psi_t^*|^2
\right).
\end{align*}
By the bounds already proved in this lemma, together with
\[
\sup_{|t|\le \delta_n}Q^*[r_t^2]=o_p(n^{-1/2}),
\]
from \eqref{e:qstarhalf}, the right-hand side is $o_p(n^{-1/2})$ uniformly over $|t|\le \delta_n$. Thus
\[
\sup_{|t|\le \delta_n}\bar P_n^*[H_t^2]=o_p(n^{-1/2}).
\]

Finally, on \(\mathcal A_n\), \Cref{lem:full-eif-bdd} applied with \(Q=Q_n\) and \(Q=Q^*\) gives
\[
\|D_{\full}^*(\cdot;\widetilde P(Q_n,U_t))\|_\infty
+
\|D_{\full}^*(\cdot;P^*)\|_\infty
\le 2C_{\full}(\mfc),
\]
because \(U_t\in\mathcal B_\eta(U_0)\) and \(\mathcal A_n\) implies \(U^*\in\mathcal B_\eta(U_0)\).
Therefore
\[
|G_t|
=
\left|
D_{\full}^*(\cdot;\widetilde P(Q_n,U_t))^2
-
D_{\full}^*(\cdot;P^*)^2
\right|
\le
2C_{\full}(\mfc)\,|H_t|,
\]
and so by Cauchy--Schwarz,
\[
\bar P_n^*[|G_t|]
\le
2C_{\full}(\mfc)\,\bar P_n^*[H_t^2]^{1/2}.
\]
Therefore,
\[
\sup_{|t|\le \delta_n}\bar P_n^*[|G_t|]=o_p(1).
\]
This proves the final claim.
\end{proof}

\begin{lemma}\label{lem:path-empirical}
Let $(\delta_n)$ be any deterministic sequence with $\delta_n=o(n^{-1/4})$.
Then
\[
\mathbf 1_{\mathcal A_n}\sup_{|t|\le \delta_n}
\sqrt n\left|(\P_n-\bar P_n^*)[H_t]\right|
=
o_p(1),
\]
and
\[
\mathbf 1_{\mathcal A_n}\sup_{|t|\le \delta_n}
\left|(\P_n-\bar P_n^*)[G_t]\right|
=
o_p(1).
\]
\end{lemma}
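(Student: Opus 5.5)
Conditionally on $\mathcal G_n$, the path $t\mapsto U_t$, and hence the processes $t\mapsto H_t$ and $t\mapsto G_t$, are deterministic on $\mathcal A_n$. The only randomness left is in the conditionally independent pairs $(A_i,Y_i)$. The plan is therefore to mimic the grid-plus-Lipschitz argument of \Cref{lemma::emprical_score}, with two changes: for the $H_t$ part we use a Chebyshev (second-moment) bound rather than Bernstein, and we exploit the smallness of $\bar P_n^*[H_t^2]$ from \Cref{lem:path-controls}.

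For the first claim, fix $\eps>0$ and set $\Delta_n=n^{-1}$. Take a grid $\mathcal T_n\subset[-\delta_n,\delta_n]$ of mesh $\Delta_n$, so that $|\mathcal T_n|\le 2+2\delta_n n$. By the Lipschitz bound of \Cref{lem:path-controls}, the discretization error is controlled deterministically:
\[
\sqrt n\sup_{|t|\le\delta_n}\bigl|(\P_n-\bar P_n^*)[H_t-H_{t'(t)}]\bigr|\le 2\sqrt n\,C_{\full,\infty}(\mfc)\Delta_n\to0 .
\]
For a fixed grid point $t$, conditional independence gives
\[
\E\bigl[n((\P_n-\bar P_n^*)[H_t])^2\mid\mathcal G_n\bigr]\le \bar P_n^*[H_t^2].
\]
A naive union bound over $|\mathcal T_n|\approx n^{3/4}$ points would then cost a factor $n^{3/4}\cdot o_p(n^{-1/2})$, which does not vanish. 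This is the main obstacle. To avoid it, I would instead use a chaining-free increment argument: write $H_t=H_0+(H_t-H_0)$. The $H_0$ term is a single function, and the conditional Chebyshev/Markov device of \Cref{lem:score-small-root} together with $\bar P_n^*[H_0^2]=o_p(n^{-1/2})$ gives $\sqrt n(\P_n-\bar P_n^*)[H_0]=o_p(1)$. For the increment, note that $t\mapsto H_t-H_0$ is $C^1$ with a derivative uniformly bounded in $o$. Concretely, $\partial_tD^*_{\full}$ is bounded because $U_t'=F(U_t)$ is bounded by \eqref{eq:MF}, and $\Omega_t^n,\psi_t^n$ have bounded $t$-derivatives by the chain rule used in \Cref{lemma:wJ}. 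Hence
\[
(\P_n-\bar P_n^*)[H_t-H_0]=\int_0^t(\P_n-\bar P_n^*)[\partial_sH_s]\,ds,
\]
so that
\[
\sup_{|t|\le\delta_n}\sqrt n\,\bigl|(\P_n-\bar P_n^*)[H_t-H_0]\bigr|\le \delta_n\sqrt n\sup_{|s|\le\delta_n}\bigl|(\P_n-\bar P_n^*)[\partial_sH_s]\bigr| .
\]
The function $\partial_sH_s=\partial_sD^*_{\full}(\cdot;\widetilde P(Q_n,U_s))$ is bounded and, by \Cref{lemma:wJ}-type second-derivative estimates, Lipschitz in $s$. Applying exactly the grid and Bernstein argument of \Cref{lemma::emprical_score}, with $\eps$ replaced by $n^{-1/4}\log n$ (grid size polynomial in $n$, exponential tail), gives $\sup_s|(\P_n-\bar P_n^*)[\partial_sH_s]|=O_p(n^{-1/2}\log n)$ conditionally, uniformly on $\mathcal A_n$. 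Multiplying by $\delta_n\sqrt n=o(n^{1/4})$ yields $o_p(n^{-1/4}\log n)=o_p(1)$. The Lipschitz-in-$s$ property of $\partial_sH_s$ is the step needing the most care. I would obtain it from the explicit formula $\partial_sD^*=J(U_s)[F(U_s)]$ together with \Cref{lemma:wJ} and \Cref{l:Flips}, plus the analogous bounds for the extra $D_X^*$ component, which involves only $\lambda,\Omega,\psi$ and is handled in the same way.

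The second claim is easier, since only $o_p(1)$ without a $\sqrt n$ factor is needed. $G_t$ is bounded by $2C_{\full}(\mfc)^2$ and $C_{\full,\infty}$-Lipschitz in $t$ by \Cref{lem:path-controls}. Taking the grid of mesh $\eps/(2C_{\full,\infty})$ on $[-\delta_n,\delta_n]$, which has a bounded number of points, and applying Bernstein at each grid point gives conditional probability at most $C(\eps,\mfc)e^{-cn}$. This proves the claim after taking expectations and using $\mathbf 1_{\mathcal A_n}$ as in \Cref{lemma::emprical_score}.

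Finally, in both parts the conditional bounds are converted to unconditional $o_p$ statements by integrating the conditional probabilities over $\mathcal A_n$. For the $H_0$ term, this is done via the truncation trick (splitting on $\{\bar P_n^*[H_0^2]\le\eta n^{-1/2}\}$) used in \Cref{lem:score-small-root}.
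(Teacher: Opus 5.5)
Your proof is correct in substance, but it takes a different route from the paper. The obstacle that motivates your detour comes from your mesh choice, not from the problem.

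\textbf{Where the obstacle disappears.} The discretization error only has to be at most $\eps/(2\sqrt n)$ before the $\sqrt n$ scaling. So the paper uses mesh $\eps/(4C_{\full,\infty}\sqrt n)$ rather than $n^{-1}$, and the grid then has only $O(\delta_n\sqrt n/\eps)=o(n^{1/4})$ points. Take a deterministic $a_n=o(n^{-1/2})$ with $\P(\sup_{|t|\le\delta_n}\bar P_n^*[H_t^2]\le a_n)\to1$, which \Cref{lem:path-controls} provides. Conditional Chebyshev then costs $4a_n/\eps^2$ per grid point, and the union bound costs $o(n^{1/4})\cdot o(n^{-1/2})\to0$. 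The paper therefore needs only three things: first-order Lipschitz continuity of $t\mapsto H_t$, the uniform small-variance bound, and Chebyshev.

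\textbf{What your route buys and what it costs.} You isolate $H_0$ and control the increment through the derivative process. This uses the small-variance input only at $t=0$, and only in the form $o_p(1)$. It also bounds the $\sqrt n$-scaled increment by $O_p(\delta_n\log n)$, so this step would tolerate much larger $\delta_n$. The price is second-order smoothness of the path: you need $s\mapsto\partial_sH_s$ to be Lipschitz, which rests on the $C^3$ assumption on $\lambda$ through \Cref{lemma:wJ}. You also need an exponential inequality to pay for a polynomially large grid. Your treatment of $G_t$ matches the paper's, except that you use Bernstein where the paper uses Chebyshev.

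\textbf{Two corrections.}

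First, you do not need a separate analysis of the $D_X^*$ component. Conditionally on $\mathcal G_n$ the $X_i$ are fixed, so $(\P_n-\bar P_n^*)[f]=0$ for every $f$ depending on $x$ alone. Hence $(\P_n-\bar P_n^*)[\partial_sH_s]=(\P_n-\bar P_n^*)[g_s]$ with $g_s=\partial_sD^*(\cdot;U_s)$. The Lipschitz bound $\|g_t-g_s\|_\infty\le 62\mfc^{-18}|t-s|$ for this process is already proved in \Cref{lemma::emprical_score}.

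Second, the Bernstein tolerance must be $\eps_n=n^{-1/2}\log n$ (or $\eps_n\asymp n^{-1/4}$), not $n^{-1/4}\log n$ as you wrote. With $n^{-1/4}\log n$, the supremum is only $O_p(n^{-1/4}\log n)$. Multiplying by $\delta_n\sqrt n=o(n^{1/4})$ then leaves $o_p(\log n)$, not $o_p(1)$. With $\eps_n=n^{-1/2}\log n$, the tail $2\exp(-c(\log n)^2)$ beats the polynomial grid size, and your stated $O_p(n^{-1/2}\log n)$ follows.
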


\begin{proof}
We work throughout on the event $\mathcal A_n$.  
We first treat $H_t$. Conditional on $\mathcal G_n$, the path
$t\mapsto U_t$ and therefore the family $t\mapsto H_t$ are deterministic.
By \Cref{lem:path-controls}, there exists a deterministic sequence
$a_n\downarrow 0$ such that
\[
a_n=o(n^{-1/2}),
\qquad
\P\!\left(
\sup_{|t|\le \delta_n}\bar P_n^*[H_t^2]\le a_n
\right)\to 1.
\]
Let
\[
A_n=\left\{\sup_{|t|\le \delta_n}\bar P_n^*[H_t^2]\le a_n\right\}.
\]
Also, by \Cref{lem:path-controls},
\[
\|H_t-H_s\|_\infty\le C_{\full,\infty}(\mfc)|t-s|
\qquad\text{for all }|s|,|t|\le \delta_n.
\]

Fix $\eps>0$ and choose the grid
\[
\mathcal T_n
=
\{-\delta_n,-\delta_n+\eta_n,\ldots,\delta_n\},
\qquad
\eta_n=\frac{\eps}{4C_{\full,\infty}\sqrt n}.
\]
Then
\[
|\mathcal T_n|
\le
1+\frac{2\delta_n}{\eta_n}
=
1+\frac{8C_{\full,\infty}\delta_n\sqrt n}{\eps}
=
o(n^{1/4}).
\]
For any $t\in[-\delta_n,\delta_n]$, choose $t'\in\mathcal T_n$ with
$|t-t'|\le \eta_n$. Then
\[
\left|(\P_n-\bar P_n^*)[H_t]\right|
\le
\left|(\P_n-\bar P_n^*)[H_{t'}]\right|
+
2\|H_t-H_{t'}\|_\infty
\le
\left|(\P_n-\bar P_n^*)[H_{t'}]\right|
+
\frac{\eps}{2\sqrt n}.
\]
Hence
\[
\sup_{|t|\le \delta_n}\sqrt n\left|(\P_n-\bar P_n^*)[H_t]\right|
\le
\max_{t\in\mathcal T_n}
\sqrt n\left|(\P_n-\bar P_n^*)[H_t]\right|
+\frac{\eps}{2}.
\]

On the event $A_n$, for each fixed $t\in\mathcal T_n$, conditional on
$\mathcal G_n$ the variable
\[
\sqrt n\,(\P_n-\bar P_n^*)[H_t]
\]
has mean zero and variance at most $a_n$. Thus conditional Chebyshev gives
\[
\P\!\left(
\sqrt n\left|(\P_n-\bar P_n^*)[H_t]\right|>\frac{\eps}{2}
\Bigm|\mathcal G_n
\right)\mathbf 1_{A_n}
\le
\frac{4a_n}{\eps^2}.
\]
Applying the union bound over $\mathcal T_n$,
\[
\P\!\left(
\max_{t\in\mathcal T_n}
\sqrt n\left|(\P_n-\bar P_n^*)[H_t]\right|>\frac{\eps}{2}
\Bigm|\mathcal G_n
\right)\mathbf 1_{A_n}
\le
\frac{4|\mathcal T_n|a_n}{\eps^2}.
\]
Since $|\mathcal T_n|a_n=o(1)$, we conclude
\[
\max_{t\in\mathcal T_n}
\sqrt n\left|(\P_n-\bar P_n^*)[H_t]\right|
\mathbf 1_{A_n}
=o_p(1).
\]
Because $\P(A_n)\to 1$, this yields
\[
\max_{t\in\mathcal T_n}
\sqrt n\left|(\P_n-\bar P_n^*)[H_t]\right|
=o_p(1),
\]
and the interpolation bound proves
\[
\sup_{|t|\le \delta_n}
\sqrt n\left|(\P_n-\bar P_n^*)[H_t]\right|
=
o_p(1).
\]

Now consider $G_t$. By \Cref{lem:path-controls}, there exists a deterministic
sequence $b_n\downarrow 0$ such that
\[
\P\!\left(
\sup_{|t|\le \delta_n}\bar P_n^*[|G_t|]\le b_n
\right)\to 1.
\]
Let
\[
B_n=\left\{\sup_{|t|\le \delta_n}\bar P_n^*[|G_t|]\le b_n\right\}.
\]
Again by \Cref{lem:path-controls},
\[
\|G_t-G_s\|_\infty\le C_{\full,\infty}(\mfc)|t-s|
\qquad\text{for all }|s|,|t|\le \delta_n.
\]
Fix $\eps>0$ and choose the grid
\[
\mathcal S_n
=
\{-\delta_n,-\delta_n+\rho,\ldots,\delta_n\},
\qquad
\rho=\frac{\eps}{4C_{\full,\infty}}.
\]
Since $\delta_n\downarrow 0$, the grid size $|\mathcal S_n|$ is bounded by a
deterministic constant for all large $n$.

For any $t\in[-\delta_n,\delta_n]$, choose $t'\in\mathcal S_n$ with
$|t-t'|\le \rho$. Then
\[
\left|(\P_n-\bar P_n^*)[G_t]\right|
\le
\left|(\P_n-\bar P_n^*)[G_{t'}]\right|
+
2\|G_t-G_{t'}\|_\infty
\le
\left|(\P_n-\bar P_n^*)[G_{t'}]\right|
+\frac{\eps}{2}.
\]

On the event $B_n$, for each fixed $t'\in\mathcal S_n$, conditional on
$\mathcal G_n$,
\[
(\P_n-\bar P_n^*)[G_{t'}]
\]
has mean zero and variance at most
\[
\frac{1}{n}\bar P_n^*[G_{t'}^2].
\]

By \Cref{lem:full-eif-bdd}, applied with \(Q=Q_n\), we have on \(\mathcal A_n\)
\[
\|D_{\full}^*(\cdot;\widetilde P(Q_n,U_{t'}))\|_\infty\le C_{\full}(\mfc),
\]
because \(U_{t'}\in\mathcal B_\eta(U_0)\subset\mathcal B_{2\eta}(U_0)\).
Since \(\mathcal A_n\) also implies \(U^*\in\mathcal B_\eta(U_0)\), the same lemma applied with \(Q=Q^*\) yields
\[
\|D_{\full}^*(\cdot;P^*)\|_\infty\le C_{\full}(\mfc).
\]
Hence
\[
|G_{t'}|
=
\left|
D_{\full}^*(\cdot;\widetilde P(Q_n,U_{t'}))^2
-
D_{\full}^*(\cdot;P^*)^2
\right|
\le 2C_{\full}(\mfc)^2,
\]
so
\[
G_{t'}^2\le 2C_{\full}(\mfc)^2\,|G_{t'}|.
\]
Thus, with \(M(\mfc)=2C_{\full}(\mfc)^2\), on \(B_n\),
\[
\bar P_n^*[G_{t'}^2]\le M(\mfc)\,\bar P_n^*[|G_{t'}|]\le M(\mfc)b_n.
\]

Conditional Chebyshev yields
\[
\P\!\left(
\left|(\P_n-\bar P_n^*)[G_{t'}]\right|>\frac{\eps}{2}
\Bigm|\mathcal G_n
\right)\mathbf 1_{B_n}
\le
\frac{4M(\mfc)b_n}{n\eps^2}.
\]
Since $|\mathcal S_n|$ is bounded, a union bound over $\mathcal S_n$ gives
\[
\max_{t\in\mathcal S_n}\left|(\P_n-\bar P_n^*)[G_t]\right|
\mathbf 1_{B_n}
=o_p(1).
\]
Because $\P(B_n)\to 1$, it follows that
\[
\max_{t\in\mathcal S_n}\left|(\P_n-\bar P_n^*)[G_t]\right|
=o_p(1),
\]
and the interpolation bound proves
\[
\sup_{|t|\le \delta_n}
\left|(\P_n-\bar P_n^*)[G_t]\right|
=
o_p(1).
\]
\end{proof}

\begin{lemma}\label{lem:bias-full}
Let $(\delta_n)$ be any deterministic sequence with $\delta_n=o(n^{-1/4})$.
Then
\[
\mathbf 1_{\mathcal A_n}\sup_{|t|\le \delta_n}
\sqrt n\left|
\psi_t^n-\psi(P^*)+\bar P_n^*[H_t]
\right|
=
o_p(1).
\]
\end{lemma}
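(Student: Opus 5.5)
The plan is to prove an exact von Mises-type identity relative to the empirical design law $Q_n$. After that, everything reduces to bounds already available in \Cref{lem:path-controls}. Throughout we work on $\mathcal A_n$, where $U_t\in\mathcal B_\eta(U_0)$ for $|t|\le\mft_2$ and $U^*\in\mathcal B_\eta(U_0)$, so all nuisances lie in $[\eta,1-\eta]$. Write $P_t^n=\widetilde P(Q_n,U_t)$ and $\psi_n^\dagger=\psi(Q_n,U^*)$. We split
\[
\psi_t^n-\psi(P^*)+\bar P_n^*[H_t]
=\Big(\psi_t^n-\psi_n^\dagger+\bar P_n^*[D_{\full}^*(\cdot;P_t^n)]\Big)
+\Big(\psi_n^\dagger-\psi(P^*)-\bar P_n^*[D_{\full}^*(\cdot;P^*)]\Big),
\]
and show that the first bracket is $O(Q_n[r_t^2])$ uniformly in $t$ and the second is $O_p(n^{-1})$.

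\textbf{First bracket.} Compute $\bar P_n^*[D_{\full}^*(\cdot;P_t^n)]$ explicitly by integrating $(a,y)$ against $p^*(\cdot\mid X_i)$. The $D_X^*$ part, $Q_n[\lambda(e_t)(\tau_t-\psi_t^n)]/\Omega_t^n$, vanishes identically by the definition of $\psi_t^n$. The remaining terms are
\[
\frac{1}{\Omega_t^n}Q_n\!\Big[\lambda(e_t)\Big(\tfrac{e^*}{e_t}(q_1^*-q_{1,t})-\tfrac{1-e^*}{1-e_t}(q_0^*-q_{0,t})\Big)+\dot\lambda(e_t)(\tau_t-\psi_t^n)(e^*-e_t)\Big].
\]
Writing $e^*/e_t=1+(e^*-e_t)/e_t$, this equals
\[
Q_n[\lambda(e_t)(\tau^*-\tau_t)]/\Omega_t^n+Q_n[\dot\lambda(e_t)(\tau_t-\psi_t^n)(e^*-e_t)]/\Omega_t^n
\]
plus cross terms bounded by $C(\mfc)Q_n[r_t^2]$. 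In parallel we expand
\[
\psi_n^\dagger-\psi_t^n=\frac{Q_n[\lambda(e^*)(\tau^*-\psi_t^n)]}{\Omega_n(U^*)}.
\]
In this expansion we use a second-order Taylor bound $\lambda(e^*)=\lambda(e_t)+\dot\lambda(e_t)(e^*-e_t)+O(\Lambda_{2,\max}r_t^2)$. We also replace $1/\Omega_n(U^*)$ by $1/\Omega_t^n$, at a cost $|\Omega_n(U^*)-\Omega_t^n|\cdot|Q_n[\lambda(e^*)(\tau^*-\psi_t^n)]|$. Both factors of that cost are $O(Q_n[r_t])$: the first directly, and the second because $Q_n[\lambda(e_t)(\tau_t-\psi_t^n)]=0$. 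Hence the cost is $O(Q_n[r_t]^2)\le O(Q_n[r_t^2])$. The linear terms then cancel exactly against the previous display. What remains is bounded by $C(\mfc)\,Q_n[r_t^2]$, and by \Cref{lem:path-controls} we have $\sup_{|t|\le\delta_n}Q_n[r_t^2]=o_p(n^{-1/2})$.

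\textbf{Second bracket.} Since $U^*$ is the truth, the $D_q^*$ and $D_e^*$ parts of $D_{\full}^*(\cdot;P^*)$ have conditional mean zero. Therefore
\[
\bar P_n^*[D_{\full}^*(\cdot;P^*)]=\frac{Q_n[\lambda(e^*)(\tau^*-\psi^*)]}{\Omega^*},\qquad \Omega^*=Q^*[\lambda(e^*)],
\]
while $\psi_n^\dagger-\psi^*=Q_n[\lambda(e^*)(\tau^*-\psi^*)]/\Omega_n(U^*)$. The difference is
\[
Q_n[\lambda(e^*)(\tau^*-\psi^*)]\cdot\Big(\frac1{\Omega_n(U^*)}-\frac1{\Omega^*}\Big).
\]
The functions $\lambda(e^*)(\tau^*-\psi^*)$ and $\lambda(e^*)$ are fixed, bounded, and do not depend on $U_0$, and the evaluation-fold covariates are i.i.d.\ $Q^*$. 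Moreover $Q^*[\lambda(e^*)(\tau^*-\psi^*)]=0$ and $\Omega_n(U^*)\ge\Lambda_{\min}$. By Chebyshev, each factor is therefore $O_p(n^{-1/2})$, so the product is $O_p(n^{-1})$. This term does not depend on $t$.

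Multiplying by $\sqrt n$ and taking the supremum over $|t|\le\delta_n$ gives the claim. The main obstacle is the bookkeeping in the first bracket: arranging the expansion so that every linear term cancels exactly, and in particular tracking the normalizer mismatch between $\Omega_t^n$ and $\Omega_n(U^*)$. That mismatch must be shown to be second order via the identity $Q_n[\lambda(e_t)(\tau_t-\psi_t^n)]=0$, not merely first order.
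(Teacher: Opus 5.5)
Your proposal is correct and follows essentially the same route as the paper. You use the same split into the fixed-design remainder $A_t$, bounded by $C(\mfc)\,Q_n[r_t^2]$ and controlled through \Cref{lem:path-controls}, plus the $t$-free design correction $C_n$, which factors as a product of two $O_p(n^{-1/2})$ sample means. The only difference is cosmetic: the paper gets an exact expression over the common normalizer $\Omega_t^n$ by subtracting the vanishing quantity $Q_n[\lambda(e^*)(\tau^*-\psi(Q_n,U^*))]$, whereas you swap $\Omega_n(U^*)$ for $\Omega_t^n$ and show the swap costs only $O(Q_n[r_t]^2)$.
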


\begin{proof}
We work throughout on the event $\mathcal A_n$.
For $|t|\le \delta_n$, define
\[
B_t
=
\psi_t^n-\psi(P^*)+\bar P_n^*[H_t].
\]
We will decompose $B_t$ as 
\[
B_t=A_t+C_n,
\]
where \(A_t\) is the path-dependent remainder obtained by comparing
\(\psi(Q_n,U_t)\) to \(\psi(Q_n,U^*)\) at the realized empirical design law
\(Q_n\), and \(C_n\) is the separate design-correction term needed to replace
the empirical-design target \(\psi(Q_n,U^*)\) by the population target
\(\psi(P^*)\). Thus \(A_t\) captures the second-order remainder along the
targeted path with \(Q_n\) held fixed, whereas \(C_n\) captures
the effect of replacing \(Q_n\) by \(Q^*\) at the true nuisance \(U^*\).

Since
\[
\bar P_n^*\!\left[D_{\full}^*(\cdot;\widetilde P(Q_n,U_t))\right]
=
\bar P_n^*\!\left[D^*(\cdot;\widetilde P(Q_n,U_t))\right]
\]
because the $D_X^*$ term averages to zero under $Q_n$, and since
\[
\bar P_n^*\!\left[D_{\full}^*(\cdot;P^*)\right]=Q_n[D_X^*(\cdot;P^*)],
\]
we may write
\[
B_t=A_t+C_n,
\]
where
\[
A_t
=
\psi(Q_n,U_t)-\psi(Q_n,U^*)
+
\bar P_n^*\!\left[D^*(\cdot;\widetilde P(Q_n,U_t))\right],
\]
and
\[
C_n
=
\psi(Q_n,U^*)-\psi(P^*)-Q_n[D_X^*(\cdot;P^*)].
\]

We first bound $A_t$. For $|t|\le \delta_n$, set
\[
U_t=(q_{1,t},q_{0,t},e_t),
\qquad
\Delta_{1,t}=q_{1,t}-q_1^*,
\qquad
\Delta_{0,t}=q_{0,t}-q_0^*,
\qquad
\Delta e_t=e_t-e^*,
\]
and
\[
\Delta\tau_t=\Delta_{1,t}-\Delta_{0,t},
\qquad
\tau_t=q_{1,t}-q_{0,t},
\qquad
\tau^*=q_1^*-q_0^*.
\]
Also write
\[
\Omega_t^n=Q_n[\lambda(e_t)],
\qquad
\Omega_n^*=Q_n[\lambda(e^*)],
\qquad
\psi_t^n=\psi(Q_n,U_t),
\qquad
\psi_n^*=\psi(Q_n,U^*).
\]

Since
\[
\psi_t^n=\frac{Q_n[\lambda(e_t)\tau_t]}{\Omega_t^n},
\qquad
\psi_n^*=\frac{Q_n[\lambda(e^*)\tau^*]}{\Omega_n^*},
\]
direct algebra gives
\begin{align}
\psi_t^n-\psi_n^*
&=
\frac{Q_n[\lambda(e_t)\Delta\tau_t]}{\Omega_t^n}
+
\frac{Q_n[(\lambda(e_t)-\lambda(e^*))(\tau^*-\psi_n^*)]}{\Omega_t^n}.
\label{eq:biasfull-psi}
\end{align}

Next,
\begin{align}
\bar P_n^*\!\left[D_q^*(\cdot;\widetilde P(Q_n,U_t))\right]
&=
\frac{1}{\Omega_t^n}
Q_n\!\left[
\lambda(e_t)
\left(
\frac{e^*}{e_t}(q_1^*-q_{1,t})
-\frac{1-e^*}{1-e_t}(q_0^*-q_{0,t})
\right)
\right]
\nonumber\\
&=
-\frac{Q_n[\lambda(e_t)\Delta\tau_t]}{\Omega_t^n}
+
\frac{1}{\Omega_t^n}
Q_n\!\left[
\lambda(e_t)\Delta e_t
\left(
\frac{\Delta_{1,t}}{e_t}+\frac{\Delta_{0,t}}{1-e_t}
\right)
\right].
\label{eq:biasfull-Dq}
\end{align}
Also,
\[
\bar P_n^*\!\left[D_e^*(\cdot;\widetilde P(Q_n,U_t))\right]
=
-\frac{1}{\Omega_t^n}
Q_n\!\left[\dot\lambda(e_t)(\tau_t-\psi_t^n)\Delta e_t\right].
\tag{\ref{eq:biasfull-Dq}$'$}
\]
Combining \eqref{eq:biasfull-psi}, \eqref{eq:biasfull-Dq}, and
\eqref{eq:biasfull-Dq}$'$ gives
\begin{align}
A_t
&=
\frac{1}{\Omega_t^n}
Q_n\!\left[
(\tau^*-\psi_n^*)
\bigl(\lambda(e_t)-\lambda(e^*)-\dot\lambda(e_t)\Delta e_t\bigr)
\right]
\nonumber\\
&\quad+
\frac{1}{\Omega_t^n}
Q_n\!\left[
\lambda(e_t)\Delta e_t
\left(
\frac{\Delta_{1,t}}{e_t}+\frac{\Delta_{0,t}}{1-e_t}
\right)
\right]
\nonumber\\
&\quad-
\frac{1}{\Omega_t^n}
Q_n\!\left[
\dot\lambda(e_t)(\Delta\tau_t-(\psi_t^n-\psi_n^*))\Delta e_t
\right].
\label{eq:biasfull-master}
\end{align}

We now bound the three terms in \eqref{eq:biasfull-master}. By the mean value theorem,
\[
\left|\lambda(e_t)-\lambda(e^*)-\dot\lambda(e_t)\Delta e_t\right|
\le
\Lambda_{2,\max}(\Delta e_t)^2.
\]
Also,
\[
|\tau^*-\psi_n^*|
\le 1+\frac{\Lambda_{\max}}{\Lambda_{\min}},
\qquad
\Omega_t^n\ge \Lambda_{\min}.
\]
Therefore the first term in \eqref{eq:biasfull-master} is bounded by
\[
C(\mfc)\,Q_n[(\Delta e_t)^2].
\]

For the second term, using $1/e_t\le 1/\eta$, $1/(1-e_t)\le 1/\eta$,
$\lambda(e_t)\le \Lambda_{\max}$, and $ab\le (a^2+b^2)/2$,
\[
\left|
\frac{1}{\Omega_t^n}
Q_n\!\left[
\lambda(e_t)\Delta e_t
\left(
\frac{\Delta_{1,t}}{e_t}+\frac{\Delta_{0,t}}{1-e_t}
\right)
\right]
\right|
\le
C(\mfc)\,Q_n[r_t^2].
\]

For the third term, \eqref{eq:biasfull-psi} gives
\[
|\psi_t^n-\psi_n^*|
\le
\frac{Q_n[\lambda(e_t)|\Delta\tau_t|]}{\Omega_t^n}
+
\frac{Q_n[|\lambda(e_t)-\lambda(e^*)||\tau^*-\psi_n^*|]}{\Omega_t^n}.
\]
Since
\[
\Omega_t^n\ge \Lambda_{\min},
\qquad
\lambda(e_t)\le \Lambda_{\max},
\qquad
|\tau^*-\psi_n^*|\le 1+\frac{\Lambda_{\max}}{\Lambda_{\min}},
\]
and
\[
|\lambda(e_t)-\lambda(e^*)|
\le
\dot\Lambda_{\max}|\Delta e_t|,
\qquad
|\Delta\tau_t|\le r_t,
\qquad
|\Delta e_t|\le r_t,
\]
we obtain
\[
|\psi_t^n-\psi_n^*|
\le
C(\mfc)\left(
Q_n[|\Delta\tau_t|]+Q_n[|\Delta e_t|]
\right).
\]
By Cauchy--Schwarz,
\[
Q_n[|\Delta\tau_t|]+Q_n[|\Delta e_t|]
\le
2Q_n[r_t^2]^{1/2},
\]
so
\[
|\psi_t^n-\psi_n^*|
\le
C(\mfc)\,Q_n[r_t^2]^{1/2}.
\]
Therefore,
\[
Q_n\!\left[
|\Delta e_t|\cdot|\Delta\tau_t-(\psi_t^n-\psi_n^*)|
\right]
\le
Q_n[|\Delta e_t||\Delta\tau_t|]
+
|\psi_t^n-\psi_n^*|\,Q_n[|\Delta e_t|]
\le
C(\mfc)\,Q_n[r_t^2].
\]
Using $|\dot\lambda(e_t)|\le \dot\Lambda_{\max}$ and $\Omega_t^n\ge \Lambda_{\min}$,
the third term in \eqref{eq:biasfull-master} is therefore also bounded by
\[
C(\mfc)\,Q_n[r_t^2].
\]

Combining the three bounds gives
\[
|A_t|\le C(\mfc)\,Q_n[r_t^2].
\]
By \Cref{lem:path-controls},
\[
\sup_{|t|\le \delta_n}Q_n[r_t^2]=o_p(n^{-1/2}),
\]
hence
\[
\sup_{|t|\le \delta_n}|A_t|=o_p(n^{-1/2}).
\]

Next, we treat $C_n$. Define
\[
\Omega_n^*=Q_n[\lambda(e^*)],
\qquad
N_n^*=Q_n[\lambda(e^*)\tau^*],
\qquad
\psi_n^*=\frac{N_n^*}{\Omega_n^*},
\]
and
\[
\Omega^*=Q^*[\lambda(e^*)],
\qquad
N^*=Q^*[\lambda(e^*)\tau^*],
\qquad
\psi^*=\frac{N^*}{\Omega^*}=\psi(P^*).
\]
Then
\[
\psi(Q_n,U^*)=\psi_n^*,
\]
and
\[
Q_n[D_X^*(\cdot;P^*)]
=
Q_n\!\left[
\frac{\lambda(e^*)}{\Omega^*}(\tau^*-\psi^*)
\right]
=
\frac{\Omega_n^*}{\Omega^*}(\psi_n^*-\psi^*).
\]
Therefore
\[
C_n
=
\psi_n^*-\psi^*-\frac{\Omega_n^*}{\Omega^*}(\psi_n^*-\psi^*)
=
\frac{\Omega^*-\Omega_n^*}{\Omega^*}(\psi_n^*-\psi^*).
\]
Since
\[
\Omega_n^*-\Omega^*=(Q_n-Q^*)[\lambda(e^*)]
\qquad\text{and}\qquad
N_n^*-N^*=(Q_n-Q^*)[\lambda(e^*)\tau^*],
\]
and on $\mathcal A_n$ the functions $\lambda(e^*)$ and $\lambda(e^*)\tau^*$ are fixed and uniformly bounded, the usual sample-mean bound yields
\[
\Omega_n^*-\Omega^*=O_p(n^{-1/2}),
\qquad
N_n^*-N^*=O_p(n^{-1/2}).
\]
Moreover, since $\Omega_n^*,\Omega^*\ge \Lambda_{\min}>0$ and
\[
\psi_n^*-\psi^*
=
\frac{N_n^*-N^*}{\Omega_n^*}
+
N^*\left(\frac{1}{\Omega_n^*}-\frac{1}{\Omega^*}\right),
\]
it follows that $\psi_n^*-\psi^*=O_p(n^{-1/2})$. Therefore
\[
C_n
=
\frac{\Omega^*-\Omega_n^*}{\Omega^*}(\psi_n^*-\psi^*)
=
O_p(n^{-1})
=
o_p(n^{-1/2}).
\]
Combining the bounds for $A_t$ and $C_n$ proves that on $\mathcal A_n$,
\[
\sup_{|t|\le \delta_n}
\left|
\psi_t^n-\psi(P^*)+\bar P_n^*[H_t]
\right|
=o_p(n^{-1/2}),
\]
which is equivalent to the stated claim.
\end{proof}

\subsection{Final Preparations and Conclusion}
We begin with a preliminary lemma. 
This is  where using the empirical marginal law \(\hat Q^{(k)}\) rather than \(Q^*\) is algebraically essential: the \(X\)-component of the full EIF averages exactly to zero under the same empirical design law used to define \(\hat\psi_k\). 
\begin{lemma}\label{lem:full-vs-restricted-score}
Fix $k\in\{0,1\}$. On $\mathcal E_m^{(k)}$, let
\[
\hat P_k
=
\widetilde P\bigl(\hat Q^{(k)},\hat U_{\hat t_k}^{(k,1-k)}\bigr).
\]
Then
\[
\mathbb P_m^{(k)}[D_{\full}^*(\cdot;\hat P_k)]
=
0.
\]
\end{lemma}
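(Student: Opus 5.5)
The plan is to decompose $D_{\full}^* = D^* + D_X^*$, as in \Cref{prop:EIF_full} and \Cref{prop:EIF_restricted}, and to show that each piece averages to zero under $\mathbb P_m^{(k)}$ when evaluated at $\hat P_k$.

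\emph{The restricted part.} On $\mathcal E_m^{(k)}$, the time $\hat t_k$ is by definition the root of $\bigl(L_m^{(k)}\bigr)'$. By \Cref{lem:Ln-2nd-deriv}, applied with $n=m$, $Q_n=\hat Q^{(k)}$ and $U_0=\hat U_0^{(1-k)}$ (legitimate since $\mathcal E_m^{(k)}\subseteq\mathcal A_m^{(k)}$), this derivative equals $\mathbb P_m^{(k)}\bigl[D^*(\cdot;\widetilde P(\hat Q^{(k)},\hat U_t^{(k,1-k)}))\bigr]$. Evaluating at $t=\hat t_k$ gives
\[
\mathbb P_m^{(k)}[D^*(\cdot;\hat P_k)]=0.
\]

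\emph{The $X$-component.} Write $\hat U_{\hat t_k}^{(k,1-k)}=(q_1,q_0,e)$. By \eqref{e:EIF}, $D_X^*(o;\hat P_k)=\lambda(e(x))(\tau(x)-\psi(\hat P_k))/\Omega(\hat P_k)$, where
\[
\Omega(\hat P_k)=\hat Q^{(k)}[\lambda(e)],\qquad \psi(\hat P_k)=\frac{\hat Q^{(k)}[\lambda(e)\tau]}{\hat Q^{(k)}[\lambda(e)]}.
\]
The marginal of $X$ under $\hat P_k$ is $\hat Q^{(k)}$. Because $D_X^*$ depends only on $x$, we have $\mathbb P_m^{(k)}[D_X^*(\cdot;\hat P_k)]=\hat Q^{(k)}[D_X^*(\cdot;\hat P_k)]$, and hence
\[
\mathbb P_m^{(k)}[D_X^*(\cdot;\hat P_k)]=\frac{\hat Q^{(k)}[\lambda(e)\tau]-\psi(\hat P_k)\,\hat Q^{(k)}[\lambda(e)]}{\Omega(\hat P_k)}=0
\]
by the definition of $\psi(\hat P_k)$. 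Adding this to the restricted part gives the claim.

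The only point needing care is well-definedness. Since $\hat U_{\hat t_k}^{(k,1-k)}\in\mathcal B_\eta$ by \Cref{mainthm:ODE-C1}, \Cref{as::positivity} gives $e\in[\eta,1-\eta]$ and $\Omega\ge\Lambda_{\min}>0$. The algebraic extension of $D_{\full}^*$ to the discrete $Q$ case, noted after \Cref{prop:EIF_full}, then makes every expression above meaningful. No step is a genuine obstacle; the cancellation in the $X$-component relies on the empirical design law defining $\psi$ being the same measure as the evaluation-fold empirical average.
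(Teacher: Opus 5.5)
Your proposal is correct and is the paper's proof: the restricted part vanishes by the defining property of $\hat t_k$. The $X$-component $\lambda(e)(\tau-\psi)/\Omega$ averages to zero because $\mathbb P_m^{(k)}$ acts on functions of $X$ exactly as $\hat Q^{(k)}$, the same measure that defines $\Omega$ and $\psi$. Your appeal to \Cref{lem:Ln-2nd-deriv} is harmless but unnecessary, since the definition of $\hat t_k$ already records that $\bigl(L_m^{(k)}\bigr)'(\hat t_k)$ equals the empirical restricted-EIF mean.
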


\begin{proof}
On $\mathcal E_m^{(k)}$, the definition of $\hat t_k$ gives
\[
\mathbb P_m^{(k)}[D^*(\cdot;\hat P_k)]=0.
\]

Write
\[
\hat U_{\hat t_k}^{(k,1-k)}
=
(\hat q_{1,k}^\dagger,\hat q_{0,k}^\dagger,\hat e_k^\dagger),
\qquad
\hat\tau_k^\dagger
=
\hat q_{1,k}^\dagger-\hat q_{0,k}^\dagger,
\]
and set
\[
\hat\Omega_k
=
\hat Q^{(k)}[\lambda(\hat e_k^\dagger)],
\qquad
\hat\psi_k
=
\frac{
\hat Q^{(k)}[\lambda(\hat e_k^\dagger)\hat\tau_k^\dagger]
}{
\hat\Omega_k
}.
\]
By the explicit formulas for the full and restricted EIFs,
\[
D_{\full}^*(o;\hat P_k)
=
D^*(o;\hat P_k)
+
\frac{\lambda(\hat e_k^\dagger(x))}{\hat\Omega_k}
\bigl(\hat\tau_k^\dagger(x)-\hat\psi_k\bigr),
\qquad o=(x,a,y).
\]
The second term depends only on $X$, so
\begin{align*}
\mathbb P_m^{(k)}
\left[
\frac{\lambda(\hat e_k^\dagger(X))}{\hat\Omega_k}
\bigl(\hat\tau_k^\dagger(X)-\hat\psi_k\bigr)
\right]
&=
\hat Q^{(k)}
\left[
\frac{\lambda(\hat e_k^\dagger)}{\hat\Omega_k}
\bigl(\hat\tau_k^\dagger-\hat\psi_k\bigr)
\right] \\
&=
\frac{1}{\hat\Omega_k}
\left(
\hat Q^{(k)}[\lambda(\hat e_k^\dagger)\hat\tau_k^\dagger]
-
\hat\psi_k\,\hat Q^{(k)}[\lambda(\hat e_k^\dagger)]
\right) \\
&=0.
\end{align*}
Therefore
\[
\mathbb P_m^{(k)}[D_{\full}^*(\cdot;\hat P_k)]
=
\mathbb P_m^{(k)}[D^*(\cdot;\hat P_k)]
=
0.
\]
\end{proof}

We are now ready for the proof of \Cref{mainthm:AN_cross}. We use the notation
defined in \Cref{s:results}.

\begin{proof}[Proof of \Cref{mainthm:AN_cross}]

For each $k\in\{0,1\}$, condition on $\mathcal G_m^{(k)}$. Then $\hat Q^{(k)}$ and
$\hat U_0^{(1-k)}$ are fixed, while the evaluation-fold treatment and outcome
variables are conditionally independent with conditional law
$p^*(\cdot,\cdot\mid X_i)$. Therefore, by the foldwise cross-fitted setup fixed at the start of the
remainder subsection, \Cref{lem:score-small-root,lem:path-controls,lem:path-empirical,lem:bias-full}
apply with $n=m$, $Q_n=\hat Q^{(k)}$, $U_0=\hat U_0^{(1-k)}$, and
$\mathcal G_n=\mathcal G_m^{(k)}$.
In particular,
\[
\mathbf 1_{\mathcal A_m^{(k)}}\hat t_k=o_p(m^{-1/4}).
\]
By condition 2 of \Cref{mainthm:AN_cross},
\[
\P\big((\mathcal A_m^{(k)})^c\big)\to 0,
\]
so
\[
\hat t_k=o_p(m^{-1/4}).
\]

Hence, for each $k\in\{0,1\}$,
\[
m^{1/4}\hat t_k=o_p(1).
\]
Therefore
\[
Z_m
=
m^{1/4}\max_{k\in\{0,1\}}|\hat t_k|
\le
m^{1/4}|\hat t_0|+m^{1/4}|\hat t_1|
=
o_p(1).
\]
Therefore there exists a deterministic sequence $a_m\downarrow 0$ such that
\[
\P(Z_m\le a_m)\to 1.
\]
Indeed, for each $j\ge 1$, choose $N_j$ so large that
\[
\P(Z_m>j^{-1})\le j^{-1}
\qquad\text{for all }m\ge N_j,
\]
and define $a_m=j^{-1}$ for $N_j\le m<N_{j+1}$. Then $a_m\downarrow 0$ and
$\P(Z_m>a_m)\to 0$. Now set
\[
\delta_m=m^{-1/4}a_m.
\]
Then
\[
\delta_m=o(m^{-1/4}),
\qquad
\P\!\left(\max_{k\in\{0,1\}}|\hat t_k|\le \delta_m\right)\to 1.
\]

Set
\[
\Gamma_m
=
\bigcap_{k=0}^1
\left(
\mathcal E_m^{(k)}\cap\{|\hat t_k|\le \delta_m\}
\right).
\]
Also, by condition 2 of \Cref{mainthm:AN_cross} and \Cref{mainthm:det-bracket-one}, for each $k\in\{0,1\}$,
\begin{align*}
\P\big((\mathcal E_m^{(k)})^{\mathrm c}\big)
&\le
\P\big((\mathcal A_m^{(k)})^{\mathrm c}\big)
+
\E\!\left[
\mathbf 1_{\mathcal A_m^{(k)}}
\P\!\left((\mathcal E_m^{(k)})^{\mathrm c}\mid \mathcal G_m^{(k)}\right)
\right] \\
&\le
\P\big((\mathcal A_m^{(k)})^{\mathrm c}\big)
+
\frac{1}{m_0}e^{-m_0 m}.
\end{align*}
The first term tends to zero by condition 2 of \Cref{mainthm:AN_cross}, and the
second term also tends to zero. Hence
\[
\P\big((\mathcal E_m^{(k)})^{\mathrm c}\big)\to 0.
\]
Therefore, by the union bound,
\[
\P(\Gamma_m^c)
\le
\sum_{k=0}^1 \P\big((\mathcal E_m^{(k)})^c\big)
+
\P\!\left(\max_{k\in\{0,1\}}|\hat t_k|>\delta_m\right)
\to 0,
\]
so $\P(\Gamma_m)\to 1$.

For $k\in\{0,1\}$ and $|t|\le \delta_m$, define
\[
H_{k,t}(\cdot)
=
D_{\full}^*\!\left(\cdot;\widetilde P(\hat Q^{(k)},\hat U_t^{(k,1-k)})\right)
-
D_{\full}^*(\cdot;P^*),
\]
and
\[
G_{k,t}(\cdot)
=
D_{\full}^*\!\left(\cdot;\widetilde P(\hat Q^{(k)},\hat U_t^{(k,1-k)})\right)^2
-
D_{\full}^*(\cdot;P^*)^2.
\]
Also define
\[
\bar P_{m,k}^*[f]
=
\frac{1}{m}\sum_{i\in I_k}\sum_{a,y\in\{0,1\}} f(X_i,a,y)\,p^*(a,y\mid X_i).
\]

On $\Gamma_m$, we have
\[
\hat P_k=\widetilde P(\hat Q^{(k)},\hat U_{\hat t_k}^{(k,1-k)}),
\qquad
\hat\psi_k=\psi(\hat Q^{(k)},\hat U_{\hat t_k}^{(k,1-k)}).
\]
Moreover,
\[
\mathbb P_m^{(k)}\!\left[D_{\full}^*(\cdot;\hat P_k)\right]=0.
\]
Thus
\begin{align*}
0
&=
\mathbb P_m^{(k)}\!\left[D_{\full}^*(\cdot;P^*)\right]
+
\mathbb P_m^{(k)}[H_{k,\hat t_k}] \\
&=
(\mathbb P_m^{(k)}-P^*)\!\left[D_{\full}^*(\cdot;P^*)\right]
+
(\mathbb P_m^{(k)}-\bar P_{m,k}^*)[H_{k,\hat t_k}]
+
\bar P_{m,k}^*[H_{k,\hat t_k}],
\end{align*}
because $P^*[D_{\full}^*(\cdot;P^*)]=0$.

Therefore, still on $\Gamma_m$,
\begin{align*}
\hat\psi_k-\psi(P^*)
&=
(\mathbb P_m^{(k)}-P^*)\!\left[D_{\full}^*(\cdot;P^*)\right] \\
&\quad+
(\mathbb P_m^{(k)}-\bar P_{m,k}^*)[H_{k,\hat t_k}] \\
&\quad+
\left(
\hat\psi_k-\psi(P^*)+\bar P_{m,k}^*[H_{k,\hat t_k}]
\right).
\end{align*}

On $\Gamma_m$, the random time $\hat t_k$ belongs to the deterministic interval
$[-\delta_m,\delta_m]$, so the uniform-in-$t$ bounds from
\Cref{lem:path-empirical,lem:bias-full} may be evaluated at $t=\hat t_k$. 
Apply \Cref{lem:path-empirical} with $n$ replaced by $m$, $Q_n$ replaced by
$\hat Q^{(k)}$, $U_t$ replaced by $\hat U_t^{(k,1-k)}$, and
$\mathcal G_n$ replaced by $\mathcal G_m^{(k)}$.
Since $|\hat t_k|\le \delta_m$ on $\Gamma_m$, this gives
\[
\sqrt m\,(\mathbb P_m^{(k)}-\bar P_{m,k}^*)[H_{k,\hat t_k}]=o_p(1).
\]
Likewise, \Cref{lem:bias-full}, applied foldwise in the same way, yields
\[
\sqrt m
\left(
\hat\psi_k-\psi(P^*)+\bar P_{m,k}^*[H_{k,\hat t_k}]
\right)
=
o_p(1).
\]
Consequently, on $\Gamma_m$,
\[
X_{m,k}
:=
\sqrt m\left(
\hat\psi_k-\psi(P^*)
-
(\mathbb P_m^{(k)}-P^*)\!\left[D_{\full}^*(\cdot;P^*)\right]
\right)
=o_p(1).
\]
Equivalently,
\[
\mathbf 1_{\Gamma_m}X_{m,k}=o_p(1).
\]
Since $\P(\Gamma_m^c)\to 0$, this implies
\[
X_{m,k}=o_p(1),
\]
because for every $\eps>0$,
\[
\P(|X_{m,k}|>\eps)
\le
\P(\Gamma_m^c)
+
\P\!\left(\mathbf 1_{\Gamma_m}|X_{m,k}|>\eps\right)
\to 0.
\]
That is,
\[
\sqrt m\left(
\hat\psi_k-\psi(P^*)
-
(\mathbb P_m^{(k)}-P^*)\!\left[D_{\full}^*(\cdot;P^*)\right]
\right)
=
o_p(1).
\]

Averaging over $k\in\{0,1\}$ and using
\[
\P_n
=
\frac12\mathbb P_m^{(0)}+\frac12\mathbb P_m^{(1)},
\qquad
n=2m,
\]
we obtain
\[
\sqrt n\left(\hat\psi_{\mathrm{CF}}-\psi(P^*)\right)
=
\frac{1}{\sqrt n}\sum_{i=1}^n D_{\full}^*(O_i;P^*)+o_p(1).
\]

It remains to show that $\sigma^2>0$. We first claim that
\[
q_1^*(x),\ q_0^*(x),\ e^*(x)\in[\eta,1-\eta]
\qquad\text{for all }x\in\mathcal X.
\]
Indeed, on any realization for which the local bracketing conditions hold with
constants $(\mfc,c_\init)$, \Cref{as::TV} gives $U^*\in\mathcal B_\eta(U_0)$,
and hence
\[
U^*\in \mathcal B_\eta(U_0)\subset \mathcal B_{2\eta}(U_0).
\]
By \Cref{as::positivity}, every element of $\mathcal B_{2\eta}(U_0)$ takes
values in $[\eta,1-\eta]^3$, so the displayed bound follows for $U^*$. Since
$U^*$ is deterministic and condition~2 implies that such realizations occur
with probability tending to one, this bound is a deterministic property of
$U^*$.

In particular, if we write
\[
\Omega^*=\E_{P_X^*}[\lambda(e^*(X))],
\]
then
\[
\lambda(e^*(X))\ge \Lambda_{\min}
\qquad\text{and}\qquad
\Omega^*\le \Lambda_{\max}.
\]

Now write
\[
D_{\full}^*(O;P^*)
=
D^*(O;P^*)
+
\frac{\lambda(e^*(X))}{\Omega^*}\bigl(\tau^*(X)-\psi(P^*)\bigr).
\]
Because the second term is $X$-measurable and $\E^*[D^*(O;P^*)\mid X]=0$, we
have
\begin{align*}
\E^*\!\left[(D_{\full}^*(O;P^*))^2\mid X\right]
&=
\E^*\!\left[(D^*(O;P^*))^2\mid X\right]
+
\left(
\frac{\lambda(e^*(X))}{\Omega^*}\bigl(\tau^*(X)-\psi(P^*)\bigr)
\right)^2 \\
&\ge
\E^*\!\left[(D^*(O;P^*))^2\mid X\right].
\end{align*}

Using the explicit formula for $D^*(O;P^*)$, the cross-term vanishes
conditionally on $X$, and therefore
\begin{align*}
\E^*\!\left[(D^*(O;P^*))^2\mid X\right]
&=
\frac{\lambda(e^*(X))^2}{(\Omega^*)^2}
\left\{
\frac{q_1^*(X)(1-q_1^*(X))}{e^*(X)}
+
\frac{q_0^*(X)(1-q_0^*(X))}{1-e^*(X)}
\right\} \\
&\quad+
\frac{\dot\lambda(e^*(X))^2}{(\Omega^*)^2}
(\tau^*(X)-\psi(P^*))^2 e^*(X)(1-e^*(X)).
\end{align*}
Dropping the nonnegative second term and using
$q_1^*(X),q_0^*(X),e^*(X)\in[\eta,1-\eta]$ gives
\[
\E^*\!\left[(D^*(O;P^*))^2\mid X\right]
\ge
\frac{\Lambda_{\min}^2}{(\Omega^*)^2}(\eta+\eta)
\ge
\frac{2\eta\,\Lambda_{\min}^2}{\Lambda_{\max}^2}
\ge
2\mfc^5.
\]
Taking expectations, we obtain
\[
\sigma^2
=
\E^*\!\left[(D_{\full}^*(O;P^*))^2\right]
\ge
2\mfc^5
>
0.
\]

Since $D_{\full}^*(O;P^*)$ has mean zero and finite variance
\[
\sigma^2=\Var_{P^*}\big(D_{\full}^*(O;P^*)\big),
\]
the central limit theorem gives
\[
\sqrt n\left(\hat\psi_{\mathrm{CF}}-\psi(P^*)\right)
\xrightarrow{d}
N(0,\sigma^2).
\]

For the variance estimator, on $\Gamma_m$ we have, for $i\in I_k$,
\[
\hat D_{\mathrm{CF},i}
=
D_{\full}^*\!\left(O_i;\widetilde P(\hat Q^{(k)},\hat U_{\hat t_k}^{(k,1-k)})\right),
\]
and therefore
\[
\frac{1}{m}\sum_{i\in I_k}
\left(
\hat D_{\mathrm{CF},i}^2-D_{\full}^*(O_i;P^*)^2
\right)
=
\mathbb P_m^{(k)}[G_{k,\hat t_k}].
\]
Since $|\hat t_k|\le \delta_m$ on $\Gamma_m$, \Cref{lem:path-empirical} gives
\[
(\mathbb P_m^{(k)}-\bar P_{m,k}^*)[G_{k,\hat t_k}]=o_p(1),
\]
while \Cref{lem:path-controls} gives
\[
\bar P_{m,k}^*[|G_{k,\hat t_k}|]=o_p(1).
\]
Hence
\[
\mathbb P_m^{(k)}[G_{k,\hat t_k}]=o_p(1).
\]

Define
\[
Y_{m,k}
=
\frac{1}{m}\sum_{i\in I_k}
\left(
\hat D_{\mathrm{CF},i}^2-D_{\full}^*(O_i;P^*)^2
\right).
\]
On $\Gamma_m$, we have $Y_{m,k}=\mathbb P_m^{(k)}[G_{k,\hat t_k}]$, so
\[
\mathbf 1_{\Gamma_m}Y_{m,k}
=
\mathbf 1_{\Gamma_m}\mathbb P_m^{(k)}[G_{k,\hat t_k}]
=
o_p(1).
\]
Since $\P(\Gamma_m^c)\to 0$, the same argument used above for $X_{m,k}$ yields
\[
Y_{m,k}=o_p(1).
\]
Therefore
\[
\frac{1}{n}\sum_{i=1}^n
\left(
\hat D_{\mathrm{CF},i}^2-D_{\full}^*(O_i;P^*)^2
\right)
=
\frac12 Y_{m,0}+\frac12 Y_{m,1}
=
o_p(1).
\]
Since
\[
\frac{1}{n}\sum_{i=1}^n D_{\full}^*(O_i;P^*)^2
\to_p
\E^*\!\left[D_{\full}^*(O;P^*)^2\right]
=
\sigma^2,
\]
it follows that
\[
\hat\sigma_{\mathrm{CF}}^2
=
\frac{1}{n}\sum_{i=1}^n \hat D_{\mathrm{CF},i}^2
\to_p \sigma^2.
\]

Finally, Slutsky's theorem yields asymptotic validity of the stated Wald interval.
\end{proof}

\section{Spline Regression Estimator}
\label{a:spline}

\subsection{Spline Background}
For the spline constructions below, we use an isotropic tensor-product B-spline basis on $[0,1]^d$. Fix an integer spline degree $r\ge 1$ and an integer $J\ge r+1$. 
For each coordinate $j\in\{1,\dots,d\}$ and all $u \in [0, 1]$, let
\[
b^{(j)}(u)=\bigl(b^{(j)}_1(u),\dots,b^{(j)}_J(u)\bigr)^\top
\]
denote the univariate degree-$r$ B-spline basis on $[0,1]$ with uniformly spaced interior knots and boundary knots repeated $r+1$ times (as is conventional).

The corresponding tensor-product basis on $[0,1]^d$ is
\[
B(x)
=
\bigl(B_\nu(x)\bigr)_{\nu\in\{1,\dots,J\}^d},
\qquad
B_\nu(x)=\prod_{j=1}^d b^{(j)}_{\nu_j}(x_j),
\]
for $x=(x_1,\dots,x_d)\in[0,1]^d$. Hence the total number of basis functions is $K=J^d$.

\begin{definition}\label{def:spline}
Let $\{(X_i,A_i,Y_i)\}_{i=1}^m$ be independent and identically distributed observations, where
$X_i\in[0,1]^d$, $A_i\in\{0,1\}$, and $Y_i\in\{0,1\}$. Fix a truncation parameter $\eta^\circ>0$, an integer spline degree $r\ge1$, and an integer $J\ge r+1$. Let $K = J^d$, 
and let
\[
B(x)=\bigl(B_1(x),\dots,B_K(x)\bigr)^\top
\]
denote the tensor-product degree-$r$ B-spline basis on $[0,1]^d$ constructed above.

Define the untruncated propensity-score estimator $\bar e$ by least squares regression:
\[
\hat\gamma_e
\in
\arg\min_{\gamma\in\mathbb R^{K}}
\frac1m\sum_{i=1}^m \bigl(A_i-B(X_i)^\top\gamma\bigr)^2,
\qquad
\bar e(x)=B(x)^\top\hat\gamma_e.
\]
If the minimizer is not unique, we take \(\hat\gamma_e\) to be the unique minimum-Euclidean-norm least-squares solution (equivalently, the Moore--Penrose pseudoinverse solution).
For each $a\in\{0,1\}$, let
\[
I_a=\{i\in\{1,\dots,m\}:A_i=a\},
\qquad
N_a=|I_a|.
\]
If $N_a\ge 1$, define the untruncated outcome-regression estimator
\[
\hat\gamma_{q,a}
\in
\arg\min_{\gamma\in\mathbb R^{K}}
\frac1{N_a}\sum_{i\in I_a}\bigl(Y_i-B(X_i)^\top\gamma\bigr)^2,
\qquad
\bar q_a(x)=B(x)^\top\hat\gamma_{q,a},
\]
where we handle potential non-uniqueness as before. 
If $N_a=0$, set $\bar q_a(x)= 1/2$ for all $x \in [0,1]^d$.

Finally, define the truncated versions
\[
\tilde e(x)=\max\{\eta^\circ,\min(\bar e(x),1-\eta^\circ)\},
\qquad
\tilde q_a(x)=\max\{\eta^\circ,\min(\bar q_a(x),1-\eta^\circ)\},
\quad a\in\{0,1\}.
\]
\end{definition}

We also recall the definition of a H\"older ball.
\begin{definition}\label{d:holderball}
Let $\beta>0$ and $L>0$.

\begin{enumerate}
\item If $\beta\notin\mathbb{N}$, write $\beta=k+\alpha$ with
$k=\lfloor \beta\rfloor$ and $\alpha\in(0,1)$. Then
$\mathcal{C}^\beta([0,1]^d,L)$ is the set of all functions
$f\colon [0,1]^d\to\mathbb{R}$ such that
\[
\max_{|\gamma|\le k}\|D^\gamma f\|_\infty
+
\max_{|\gamma|=k}\sup_{x\neq y}
\frac{|D^\gamma f(x)-D^\gamma f(y)|}{\|x-y\|^\alpha}
\le L.
\]

    \item If $\beta\in\mathbb{N}$, then $\mathcal{C}^\beta([0,1]^d,L)$ is the set
    of all functions $f\colon [0,1]^d\to\mathbb{R}$ such that
    \[
    \max_{|\gamma|\le \beta-1}\|D^\gamma f\|_\infty
    +
    \max_{|\gamma|=\beta-1}\sup_{x\neq y}
    \frac{|D^\gamma f(x)-D^\gamma f(y)|}{\|x-y\|}
    \le L.
    \]
\end{enumerate}

Here $\gamma=(\gamma_1,\dots,\gamma_d)$ is a multi-index of non-negative integers,
$|\gamma|=\sum_{j=1}^d \gamma_j$, and
\[
D^\gamma f
=
\frac{\partial^{|\gamma|}f}{\partial x_1^{\gamma_1}\cdots\partial x_d^{\gamma_d}}.
\]
\end{definition}

\subsection{Proof of \Cref{lem:spline}}
In this section, we prove \Cref{lem:spline}.

\begin{proof}[Proof of \Cref{lem:spline}]
Set
\[
r_m
=
\left(\frac{m}{\log(m\vee 3)}\right)^{-\beta/(2\beta+d)},
\qquad
U_0=(q_{1,0},q_{0,0},e_0)=(\tilde q_1,\tilde q_0,\tilde e),
\qquad
\eta=\eta^\circ/4.
\]
We first prove the displayed \(L^2(P_X^*)\) rates, and then verify the local bracketing conditions.

\medskip
\noindent
\textit{Step 1: convergence rates.}
We begin with the propensity score estimator \(\bar e\).

Since \(A\in\{0,1\}\), the least-squares regression target is
\[
e^*(x)=\E^*[A\mid X=x].
\]
Under \Cref{ass:spline}(1)--(2), the support of \(X\) is \([0,1]^d\), the design density is bounded above and below on \([0,1]^d\), and \(e^*\in \mathcal C^\beta([0,1]^d,L)\). Also, the regression error
\[
\varepsilon_i^{(e)}=A_i-e^*(X_i)
\]
is uniformly bounded. 
Since \(r>\max\{\beta,1\}\) and
\[
K=J_m^d\asymp \left(\frac{m}{\log(m\vee 3)}\right)^{d/(2\beta+d)},
\]
Theorem 2.1 of \cite{chen2015optimal} applies to the least-squares spline estimator \(\bar e\). Because the covariate support is the compact set \([0,1]^d\), the weighted sup norm appearing in that theorem is equivalent here to the ordinary sup norm. Therefore
\[
\|\bar e-e^*\|_{\infty}
=
O_p\left(\left(\frac{m}{\log(m)}\right)^{-\beta/(2\beta+d)}\right)
=
O_p(r_m).
\]
In particular,
\[
\|\bar e-e^*\|_{\infty}=o_p(1).
\]
Since \(\|f\|_{2,*}\le \|f\|_{\infty}\) for every measurable \(f\), we also have
\[
\|\bar e-e^*\|_{2,*}=O_p(r_m).
\]

Now fix \(a\in\{0,1\}\), and let
\[
N_a=\sum_{i=1}^m \mathbf 1\{A_i=a\}.
\]
By \Cref{ass:spline}(3),
\[
P^*(A=1)=\E[e^*(X)]\ge 2\eta^*,
\qquad
P^*(A=0)=\E[1-e^*(X)]\ge 2\eta^*.
\]
Hence \(N_a/m\to P^*(A=a)\) in probability, so \(N_a\asymp m\) with probability tending to one.

Conditional on \(N_a\ge 1\), the regression target in the subgroup \(A=a\) is
\[
q_a^*(x)=\E^*[Y\mid A=a,X=x].
\]
The conditional density of \(X\mid A=a\) is
\[
f_{X\mid A=a}(x)
=
\frac{P^*(A=a\mid X=x)f_X(x)}{P^*(A=a)},
\]
and \Cref{ass:spline}(1) and \Cref{ass:spline}(3) imply that this density is bounded above and below on \([0,1]^d\). Also, \(q_a^*\in \mathcal C^\beta([0,1]^d,L)\) by \Cref{ass:spline}(2), and the regression error
\[
\varepsilon_i^{(q,a)}=Y_i-q_a^*(X_i)
\]
is uniformly bounded on the subsample with \(A_i=a\). 
Let
\[
I_a=\{i\in\{1,\dots,m\}:A_i=a\}.
\]
Conditional on the random index set \(I_a\), the observations
\[
\{(X_i,Y_i)\}_{i\in I_a}
\]
are independent and identically distributed from the conditional law of \((X,Y)\mid A=a\). Moreover, on an event of probability tending to one, we have \(c m\le N_a\le C m\) for some deterministic constants \(0<c<C<\infty\), and hence
\[
J_m\asymp \left(\frac{N_a}{\log(N_a\vee 3)}\right)^{1/(2\beta+d)},
\qquad
K=J_m^d\asymp \left(\frac{N_a}{\log(N_a\vee 3)}\right)^{d/(2\beta+d)}.
\]
Therefore, conditional on \(I_a\), Theorem 2.1 of \cite{chen2015optimal} applies to the least-squares estimator \(\bar q_a\) with sample size \(N_a\). Since the support is \([0,1]^d\), the weighted sup norm in that theorem coincides here with the ordinary sup norm, so
\[
\|\bar q_a-q_a^*\|_{\infty}
=
O_p\left(\left(\frac{N_a}{\log(N_a\vee 3)}\right)^{-\beta/(2\beta+d)}\right).
\]
Since \(N_a\asymp m\) with probability tending to one, it follows that
\[
\|\bar q_a-q_a^*\|_{\infty}=O_p(r_m)=o_p(1).
\]
Again using \(\|f\|_{2,*}\le \|f\|_{\infty}\), we obtain
\[
\|\bar q_a-q_a^*\|_{2,*}=O_p(r_m).
\]

Now define the truncation map
\[
\Pi_{\eta^\circ}(u)=\max\{\eta^\circ,\min(u,1-\eta^\circ)\}.
\]
This map is \(1\)-Lipschitz. Also, since
\[
\eta^\circ<2\min\{\eta^*,\kappa^*\},
\]
\Cref{ass:spline}(3)--(4) imply that
\[
e^*(x),\,q_1^*(x),\,q_0^*(x)\in[\eta^\circ,1-\eta^\circ]
\qquad
\text{for all }x\in[0,1]^d.
\]
Therefore \(\Pi_{\eta^\circ}(e^*)=e^*\) and \(\Pi_{\eta^\circ}(q_a^*)=q_a^*\), so
\[
\|\tilde e-e^*\|_{\infty}\le \|\bar e-e^*\|_{\infty},
\qquad
\|\tilde e-e^*\|_{2,*}\le \|\bar e-e^*\|_{2,*},
\]
and similarly
\[
\|\tilde q_a-q_a^*\|_{\infty}\le \|\bar q_a-q_a^*\|_{\infty},
\qquad
\|\tilde q_a-q_a^*\|_{2,*}\le \|\bar q_a-q_a^*\|_{2,*}.
\]
Hence
\[
\|\tilde q_1-q_1^*\|_{\infty}=O_p(r_m),
\qquad
\|\tilde q_0-q_0^*\|_{\infty}=O_p(r_m),
\qquad
\|\tilde e-e^*\|_{\infty}=O_p(r_m),
\]
and
\[
\|\tilde q_1-q_1^*\|_{2,*}=O_p(r_m),
\qquad
\|\tilde q_0-q_0^*\|_{2,*}=O_p(r_m),
\qquad
\|\tilde e-e^*\|_{2,*}=O_p(r_m).
\]
Since \(\beta>d/2\), we have \(r_m=o(m^{-1/4})\). This proves the displayed conclusions in \Cref{lem:spline}.

\medskip
\noindent
\textit{Step 2: local bracketing conditions.}
We now show that there exists \(c_\init>0\) such that, with probability tending to one, the pair \((Q,U_0)\) satisfies \Cref{as::positivity}, \Cref{as::square_bound}, \Cref{as::TV}, and \Cref{ass:mu0-small} with constants \((\mfc,c_\init)\).

\medskip
\noindent
\textit{\Cref{as::positivity}.}
By construction,
\[
q_{1,0}(x),\,q_{0,0}(x),\,e_0(x)\in[\eta^\circ,1-\eta^\circ]
\qquad
\text{for all }x\in[0,1]^d.
\]
If \(U=(q_1,q_0,e)\in \mathcal B_{2\eta}(U_0)\), then \(2\eta=\eta^\circ/2\), so each coordinate of \(U\) lies in
\[
[\eta^\circ-2\eta,\;1-\eta^\circ+2\eta]
=
[\eta^\circ/2,\;1-\eta^\circ/2]
\subset[\eta,1-\eta].
\]
Thus every \(U\in \mathcal B_{2\eta}(U_0)\) satisfies
\[
q_1(x),\,q_0(x),\,e(x)\in[\eta,1-\eta]
\qquad
\text{for all }x\in[0,1]^d.
\]
The \(C^3\) and boundedness conditions on \(\lambda\) are exactly those imposed in the statement of \Cref{lem:spline}. Hence \Cref{as::positivity} holds.

\medskip
\noindent
\textit{\Cref{as::square_bound}.}
Let
\[
P_0=\widetilde P(Q,U_0),
\qquad
\tau_0=q_{1,0}-q_{0,0},
\qquad
\Omega_0=\E_Q[\lambda(e_0(X))],
\qquad
\psi_0=\psi(P_0),
\]
and abbreviate
\[
D_0(o)=D^*\left(o;\widetilde P(Q,U_0)\right).
\]
Using the explicit form of the EIF, we have 
\[
D_0(O)
=
\frac{\lambda(e_0(X))}{\Omega_0}
\left[
\frac{A}{e_0(X)}\left(Y-q_{1,0}(X)\right)
-
\frac{1-A}{1-e_0(X)}\left(Y-q_{0,0}(X)\right)
\right]
+
\frac{\dot\lambda(e_0(X))}{\Omega_0}\left(\tau_0(X)-\psi_0\right)\left(A-e_0(X)\right).
\]
Under \(P_0\), conditional on \(X\), the first bracket has conditional mean zero, the second term has conditional mean zero, and the cross-term vanishes. Therefore
\begin{align*}
\E_0[D_0(O)^2\mid X]
&=
\left(\frac{\lambda(e_0(X))}{\Omega_0}\right)^2
\left[
\frac{q_{1,0}(X)(1-q_{1,0}(X))}{e_0(X)}
+
\frac{q_{0,0}(X)(1-q_{0,0}(X))}{1-e_0(X)}
\right]\\
&\quad
+
\left(\frac{\dot\lambda(e_0(X))}{\Omega_0}\right)^2
\left(\tau_0(X)-\psi_0\right)^2 e_0(X)(1-e_0(X)).
\end{align*}
Dropping the nonnegative second term gives
\[
\E_0[D_0(O)^2\mid X]
\ge
\left(\frac{\lambda(e_0(X))}{\Omega_0}\right)^2
\left[
\frac{q_{1,0}(X)(1-q_{1,0}(X))}{e_0(X)}
+
\frac{q_{0,0}(X)(1-q_{0,0}(X))}{1-e_0(X)}
\right].
\]
Now \(q_{1,0},q_{0,0},e_0\in[\eta^\circ,1-\eta^\circ]\), so for every \(x\),
\[
\frac{q_{1,0}(x)(1-q_{1,0}(x))}{e_0(x)}
\ge
\frac{\eta^\circ(1-\eta^\circ)}{1-\eta^\circ}
=
\eta^\circ,
\qquad
\frac{q_{0,0}(x)(1-q_{0,0}(x))}{1-e_0(x)}
\ge
\eta^\circ.
\]
Also,
\[
\lambda(e_0(x))\ge \Lambda_{\min}\ge \mfc,
\qquad
\Omega_0=\E_Q[\lambda(e_0(X))]\le \Lambda_{\max}\le \mfc^{-1}.
\]
Hence
\[
\E_0[D_0(O)^2\mid X]
\ge
\frac{\Lambda_{\min}^2}{\Lambda_{\max}^2}\cdot 2\eta^\circ
\ge
2\eta^\circ \mfc^4
=
8\eta \mfc^4
\ge
8\mfc^5.
\]
Taking expectations yields
\[
\E_0[D_0(O)^2]\ge 8\mfc^5.
\]
Thus \Cref{as::square_bound} holds deterministically with, for example,
\[
c_\init=\mfc^5.
\]
Since \(\mfc\le \eta<1/4\), we have \(c_\init\in(0,1)\), as required.

\medskip
\noindent
\textit{\Cref{as::TV}.}
Define
\[
g_m(x)=\tv\left(p^*(\cdot\mid x),p_{U_0}(\cdot\mid x)\right),
\qquad
x\in[0,1]^d.
\]
For binary \((A,Y)\), a direct calculation gives the pointwise bound
\[
g_m(x)
\le
|e_0(x)-e^*(x)|
+
|q_{1,0}(x)-q_1^*(x)|
+
|q_{0,0}(x)-q_0^*(x)|.
\]
Hence
\[
\sup_{x\in[0,1]^d} g_m(x)
\le
\|e_0-e^*\|_{\infty}
+
\|q_{1,0}-q_1^*\|_{\infty}
+
\|q_{0,0}-q_0^*\|_{\infty}
=
O_p(r_m)
=
o_p(1)
\]
by Step 1. Therefore
\[
\E_Q\left[\tv\left(p^*(\cdot\mid X),p_{U_0}(\cdot\mid X)\right)\right]
\le
\sup_{x\in[0,1]^d} g_m(x)
=
o_p(1).
\]

Also, Step 1 gives
\[
\|U_0-U^*\|_\infty
\le
\|q_{1,0}-q_1^*\|_{\infty}
+
\|q_{0,0}-q_0^*\|_{\infty}
+
\|e_0-e^*\|_{\infty}
=
o_p(1).
\]
Since \(\eta>0\) is fixed, this implies
\[
\P\left(U^*\in\mathcal B_\eta(U_0)\right)\to 1.
\]

Because \(c_\init=\mfc^5\), the threshold in \Cref{as::TV} is
\[
\frac{\mfc^{10}c_\init}{600}
=
\frac{\mfc^{15}}{600},
\]
which is a fixed positive constant. Since
\[
\E_Q\left[\tv\left(p^*(\cdot\mid X),p_{U_0}(\cdot\mid X)\right)\right]=o_p(1),
\]
it follows that
\[
\E_Q\left[\tv\left(p^*(\cdot\mid X),p_{U_0}(\cdot\mid X)\right)\right]
\le
\frac{\mfc^{10} c_\init}{600}
\]
with probability tending to one. Hence \Cref{as::TV} holds with probability tending to one.

\medskip
\noindent
\textit{\Cref{ass:mu0-small}.}
Let
\[
D_Q(o)=D^*\left(o;\widetilde P(Q,U_0)\right).
\]
By definition,
\[
\mu_0
=
\E_Q\left[
\sum_{a,y\in\{0,1\}}
D_Q((X,a,y))\,p^*(a,y\mid X)
\right].
\]
For each \(x\), the EIF has mean zero under the conditional law \(p_{U_0}(\cdot\mid x)\), so
\[
\sum_{a,y\in\{0,1\}}
D_Q((x,a,y))\,p_{U_0}(a,y\mid x)=0.
\]
Therefore
\[
\mu_0
=
\E_Q\left[
\sum_{a,y\in\{0,1\}}
D_Q((X,a,y))
\left(p^*(a,y\mid X)-p_{U_0}(a,y\mid X)\right)
\right].
\]
Now \(q_{1,0},q_{0,0},e_0\in[\eta,1-\eta]\), so a direct bound from the explicit EIF formula gives
\[
\|D_Q\|_\infty\le 4\mfc^{-4}.
\]
Hence
\begin{align*}
|\mu_0|
&\le
\|D_Q\|_\infty
\E_Q\left[
\sum_{a,y\in\{0,1\}}
\left|p^*(a,y\mid X)-p_{U_0}(a,y\mid X)\right|
\right]\\
&=
2\|D_Q\|_\infty
\E_Q\left[\tv\left(p^*(\cdot\mid X),p_{U_0}(\cdot\mid X)\right)\right]\\
&\le
8\mfc^{-4}
\E_Q\left[\tv\left(p^*(\cdot\mid X),p_{U_0}(\cdot\mid X)\right)\right].
\end{align*}
By the previous step,
\[
\E_Q\left[\tv\left(p^*(\cdot\mid X),p_{U_0}(\cdot\mid X)\right)\right]=o_p(1),
\]
so
\[
\mu_0=o_p(1).
\]
Since
\[
\frac{c_{\mathrm{init}}\delta_\init}{8}
=
\frac{c_\init^2\mfc^{20}}{8\cdot 10^6}
>0
\]
is a fixed deterministic constant, it follows that
\[
|\mu_0| \le \frac{c_{\mathrm{init}}\delta_\init}{8}
\]
with probability tending to one. Thus \Cref{ass:mu0-small} holds with probability tending to one.

Combining the preceding steps, we conclude that with probability tending to one, the pair \((Q,U_0)\) satisfies \Cref{as::positivity}, \Cref{as::square_bound}, \Cref{as::TV}, and \Cref{ass:mu0-small} with constants \((\mfc,c_\init)\), where \(c_\init=\mfc^5\). Equivalently, with probability tending to one, \((Q,U_0)\) satisfies the local bracketing conditions with constants \((\mfc,c_\init)\).

Together with Step 1, this proves \Cref{lem:spline}.
\end{proof}

\section{Analysis in Banach Spaces}

In this section, we recall some facts about analysis in Banach spaces needed in our proofs. We begin by stating the Fundamental Theorem of Calculus for Banach-space-valued functions. See, for example, \cite[Chapter 16]{pata2019fixed}.

\begin{lemma}\label{l:fixed_point}
Let $X$ be a Banach space, and fix a compact interval $[a,b] \subset \R$. For every $f \in C([a,b],X)$, the function
\[
F(t)=\int_a^t f(s)\,ds,
\qquad t\in [a,b],
\]
is well defined and differentiable on $(a,b)$, with
\[
F'(t)=f(t),
\qquad t\in(a,b).
\]
Equivalently, if the derivative is viewed as a Fr\'echet derivative, then
\[
DF(t)[h]=h\,f(t),
\qquad h\in\R,\ t\in(a,b).
\]
\end{lemma}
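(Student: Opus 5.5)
The statement is the Banach-space fundamental theorem of calculus. The plan is to reduce it to the scalar case via continuous linear functionals, while obtaining the derivative directly from a norm estimate. First, the integral is well defined. Since $f\in C([a,b],X)$ and $[a,b]$ is compact, $f$ is uniformly continuous and bounded. We define $\int_a^t f(s)\,ds$ as the limit of Riemann sums over tagged partitions of $[a,t]$ as the mesh goes to zero.

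Uniform continuity shows that any two Riemann sums with mesh below $\delta$ differ in norm by at most $\omega_f(\delta)(t-a)$, where $\omega_f$ is the modulus of continuity of $f$. Taking a refinement common to both partitions makes this comparison valid. Hence the sums form a Cauchy net, and completeness of $X$ gives a limit independent of the choices. From the Riemann sums we also read off three elementary properties:
\begin{itemize}
\item linearity;
\item additivity over adjacent intervals, $\int_a^{t+h}f=\int_a^t f+\int_t^{t+h} f$;
\item the norm bound $\|\int_I g\|\le |I|\sup_I\|g\|$, and in particular $\|\int_t^{t+h}c\,ds-hc\|=0$ for a constant $c\in X$.
\end{itemize}

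Second, differentiability. Fix $t\in(a,b)$ and $h\neq0$ small with $t+h\in[a,b]$. By additivity and the constant-integrand identity,
\[
\frac{F(t+h)-F(t)}{h}-f(t)=\frac1h\int_t^{t+h}\bigl(f(s)-f(t)\bigr)\,ds,
\]
with the usual orientation convention when $h<0$. The norm bound then controls this by $\sup_{|s-t|\le|h|}\|f(s)-f(t)\|$, which tends to $0$ by continuity of $f$ at $t$. This proves $F'(t)=f(t)$ in the norm of $X$. The Fréchet formulation is immediate: a linear map $\R\to X$ is $h\mapsto h\,v$, and the displayed estimate says exactly that $\|F(t+h)-F(t)-h f(t)\|=o(|h|)$. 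Hence $DF(t)[h]=h f(t)$.

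The only step needing care is the first, namely existence and independence of the Riemann-integral limit in a general Banach space, together with the additivity and norm inequalities. I would handle it with the common-refinement Cauchy argument above. As an alternative, one could cite the Bochner integral, which for continuous integrands agrees with the Riemann one. Once these properties are in place, the derivative computation is a direct one-line estimate and uses nothing specific to the paper.
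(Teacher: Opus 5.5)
Your proof is correct. The paper does not prove this lemma; it only cites the Banach-space fundamental theorem of calculus from Pata's book, and your argument is the standard proof behind that citation: a Riemann-sum construction (Cauchy net via common refinements plus completeness of $X$), followed by the estimate $\bigl\|h^{-1}(F(t+h)-F(t))-f(t)\bigr\|\le\sup_{|s-t|\le|h|}\|f(s)-f(t)\|$.

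Two cosmetic points, neither affecting correctness. Comparing two fine partitions through their common refinement gives the bound $2\,\omega_f(\delta)(t-a)$, not $\omega_f(\delta)(t-a)$. The reduction to the scalar case via linear functionals that you announce is never used, and it is not needed, since the direct norm estimate does all the work.
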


\begin{lemma}
\label{lem:commute-der}
Let $E,F$ be Banach spaces and $T\colon E\to F$ be a bounded linear map. Fix $c>0$ and let 
$U:[-c,c]\to E$ be a function differentiable at a given point $t_0 \in (-c,c)$. Then $T\circ U\colon [-c,c] \rightarrow F$ is differentiable at $t_0$, and
\[
(T\circ U)'(t_0) = T\bigl(U'(t_0)\bigr).
\]
Equivalently, if the derivative is viewed as a Fr\'echet derivative, then
\[
D(T\circ U)(t_0)[h]=h\,T\bigl(U'(t_0)\bigr),
\qquad h\in\R.
\]
\end{lemma}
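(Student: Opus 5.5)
This is a direct estimate using only the linearity of $T$ and the boundedness inequality $\|Tv\|_F\le\|T\|_{\mathrm{op}}\|v\|_E$, where $\|T\|_{\mathrm{op}}<\infty$ because $T$ is a bounded linear map.

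Differentiability of $U$ at $t_0$ means that
\[
\lim_{h\to0}\Bigl\|\frac{U(t_0+h)-U(t_0)}{h}-U'(t_0)\Bigr\|_E=0,
\]
with $h\ne0$ and $t_0+h\in[-c,c]$. Since $t_0$ is an interior point, all sufficiently small $h$ are admissible. By linearity of $T$, for each such $h$,
\[
\frac{(T\circ U)(t_0+h)-(T\circ U)(t_0)}{h}-T\bigl(U'(t_0)\bigr)
=T\Bigl(\frac{U(t_0+h)-U(t_0)}{h}-U'(t_0)\Bigr).
\]
Taking $F$-norms and applying the operator norm bound, the left-hand side has norm at most
\[
\|T\|_{\mathrm{op}}\Bigl\|\frac{U(t_0+h)-U(t_0)}{h}-U'(t_0)\Bigr\|_E,
\]
and this tends to $0$ as $h\to0$. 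Hence $T\circ U$ is differentiable at $t_0$ with $(T\circ U)'(t_0)=T(U'(t_0))$.

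For the Fréchet form of the statement, a derivative $V'(t_0)$ of a map $V$ from an interval into a Banach space corresponds to the linear map $h\mapsto hV'(t_0)$ from $\mathbb R$. Applying this with $V=T\circ U$ and using linearity of $T$ gives
\[
D(T\circ U)(t_0)[h]=h\,T\bigl(U'(t_0)\bigr),\qquad h\in\mathbb R.
\]

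The argument needs no other step. The one point requiring care is that boundedness of $T$ is exactly what allows the $E$-norm convergence of the difference quotients to be carried over to $F$.
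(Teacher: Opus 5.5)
Your proof is correct and is essentially the paper's argument. Both rewrite the difference quotient of $T\circ U$ as $T$ applied to the difference quotient of $U$, then pass to the limit using continuity of $T$; your explicit operator-norm bound is just the quantitative form of that continuity.
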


\begin{proof}
By the linearity and continuity of $T$,
\[
\frac{T(U(t))-T(U(t_0))}{t-t_0}
= T\left(\frac{U(t)-U(t_0)}{t-t_0}\right)\xrightarrow[t\to t_0]{} T\left(U'(t_0)\right).
\]
\end{proof}

For the next result, refer to \cite[Sec.~4.7]{zeidler2012applied}. 
\begin{lemma}[Banach-space chain rule]
\label{l:chain_rule}
Let $X,Y,Z$ be normed spaces, and let $U\subset X$ and $V\subset Y$ be open.
Suppose $f:U\to Y$ is Fr\'echet differentiable at $x_0\in U$, $f(U)\subset V$,
and $g:V\to Z$ is Fr\'echet differentiable at $f(x_0)$.
Then $g\circ f$ is Fr\'echet differentiable at $x_0$ and
\[
D(g\circ f)(x_0)=Dg(f(x_0))\circ Df(x_0).
\]
\end{lemma}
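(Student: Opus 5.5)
The statement to prove is the Banach-space chain rule, \Cref{l:chain_rule}. I would prove it directly from the definition of Fr\'echet differentiability, using a remainder-function formulation. Write $y_0=f(x_0)$, $A=Df(x_0)$ and $B=Dg(y_0)$; both are bounded linear operators. Define remainders by
\[
f(x_0+h)=y_0+Ah+\|h\|\,\rho(h),\qquad g(y_0+k)=g(y_0)+Bk+\|k\|\,\sigma(k),
\]
with $\rho(0)=0$ and $\sigma(0)=0$. Fr\'echet differentiability says exactly that $\rho(h)\to0$ as $h\to0$ and $\sigma(k)\to0$ as $k\to0$. Openness of $U$ and $V$ guarantees that both expansions make sense for small $h$ and $k$.

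The key step is to substitute $k(h)=f(x_0+h)-y_0=Ah+\|h\|\rho(h)$. This is legitimate because $f(U)\subset V$. The substitution gives
\[
g(f(x_0+h))-g(y_0)-BAh=\|h\|\,B\rho(h)+\|k(h)\|\,\sigma(k(h)).
\]
It then suffices to show that the right-hand side is $o(\|h\|)$.

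\begin{itemize}
\item The first term is bounded by $\|h\|\,\|B\|\,\|\rho(h)\|$, which is $o(\|h\|)$ since $\rho(h)\to0$.
\item For the second term, note that $\|k(h)\|\le(\|A\|+\|\rho(h)\|)\|h\|$. This is at most $(\|A\|+1)\|h\|$ for small $h$. In particular $k(h)\to0$, so $\sigma(k(h))\to0$. Hence the second term is bounded by $(\|A\|+1)\|h\|\,\|\sigma(k(h))\|=o(\|h\|)$.
\end{itemize}

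Combining the two bounds, $D(g\circ f)(x_0)=B\circ A$. This operator is bounded linear as a composition of bounded linear maps.

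The only delicate point is the composition of limits $\sigma(k(h))\to0$. It is handled by defining $\sigma(0)=0$, which makes $\sigma$ continuous at $0$. This covers the case $k(h)=0$ for $h\neq0$ without dividing by $\|k\|$.
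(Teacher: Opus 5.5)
Your proof is correct. It is the standard remainder-substitution argument, and it handles the delicate points properly: $f(U)\subset V$ makes $\sigma(k(h))$ defined, the convention $\sigma(0)=0$ covers the case $k(h)=0$, and the bound $\|k(h)\|\le(\|A\|+1)\|h\|$ forces $k(h)\to 0$. The paper gives no proof of its own and simply cites \cite[Sec.~4.7]{zeidler2012applied} for this standard fact, so your argument supplies exactly what that citation stands in for.
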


\begin{corollary}[Chain rule along a $C^1$ curve]
\label{cor:C1_chainrule}
Let $X,Y$ be normed spaces, $U\subset X$ open, and $F\in C^1(U;Y)$.
If $u\in C^1(I;U)$ for an open interval $I\subset\mathbb R$, then
$F\circ u\in C^1(I;Y)$ and for all $t\in I$,
\[
\frac{d}{dt}F(u(t)) = DF(u(t))[u'(t)].
\]
\end{corollary}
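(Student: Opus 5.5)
The statement to prove is \Cref{cor:C1_chainrule}, the chain rule along a $C^1$ curve. I would derive it from \Cref{l:chain_rule} together with \Cref{lem:commute-der}, treating the curve $u$ as a map between normed spaces.

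First, fix $t\in I$. The map $u\colon I\to U$ is Fr\'echet differentiable at $t$. Here $I$ is an open subset of $\mathbb R$, and the derivative is $Du(t)[h]=h\,u'(t)$. Since $F$ is $C^1$ on $U$, it is Fr\'echet differentiable at $u(t)\in U$. We have $u(I)\subset U$ with $U$ open, so \Cref{l:chain_rule} applies with $X=\mathbb R$, $Y=X$ and $Z=Y$. It gives that $F\circ u$ is Fr\'echet differentiable at $t$ with
\[
D(F\circ u)(t)[h]=DF(u(t))\bigl[h\,u'(t)\bigr]=h\,DF(u(t))[u'(t)],
\]
using linearity of $DF(u(t))$. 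Taking $h=1$ identifies the ordinary derivative: $\frac{d}{dt}F(u(t))=DF(u(t))[u'(t)]$.

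Second, I would show that $t\mapsto DF(u(t))[u'(t)]$ is continuous, which gives $F\circ u\in C^1(I;Y)$. Write $L(X;Y)$ for the bounded linear maps $X\to Y$ with the operator norm. For $s,t\in I$, split
\begin{align*}
&\bigl\|DF(u(s))[u'(s)]-DF(u(t))[u'(t)]\bigr\|\\
&\qquad\le \|DF(u(s))-DF(u(t))\|_{\mathrm{op}}\|u'(s)\|+\|DF(u(t))\|_{\mathrm{op}}\|u'(s)-u'(t)\|.
\end{align*}
As $s\to t$, the first term tends to zero. Indeed, $u$ is continuous and $DF\colon U\to L(X;Y)$ is continuous because $F\in C^1$; moreover $\|u'(s)\|$ stays bounded near $t$ by continuity of $u'$. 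The second term tends to zero by continuity of $u'$.

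The only point that needs care is interpreting $C^1(U;Y)$ as continuity of $DF$ in operator norm. With that reading, the continuity step is the one substantive estimate, and it is immediate from the splitting above.
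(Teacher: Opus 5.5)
Your proposal is correct and follows essentially the same route as the paper. You apply \Cref{l:chain_rule} to $F\circ u$ (with $Du(t)[h]=h\,u'(t)$) to get the derivative formula, then deduce continuity of $t\mapsto DF(u(t))[u'(t)]$ from continuity of $DF$ in operator norm and of $u$ and $u'$; your splitting estimate just spells out the continuity step the paper asserts in one line, and the mention of \Cref{lem:commute-der} is not actually needed.
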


\begin{proof}
By \Cref{l:chain_rule}, the map $F\circ u$ is differentiable at each $t\in I$ and satisfies
\[
\frac{d}{dt}F(u(t)) = DF(u(t))[u'(t)].
\]
Because $F\in C^1(U;Y)$ and both $u$ and $u'$ are continuous on $I$, the map
\[
t\mapsto DF(u(t))[u'(t)]
\]
is continuous. Hence $F\circ u\in C^1(I;Y)$.
\end{proof}

\printbibliography

@ARTICLE{cai2020,
  title     = "One-step targeted maximum likelihood estimation for
               time-to-event outcomes",
  author    = "Cai, Weixin and van der Laan, Mark J",
  journal   = "Biometrics",
  publisher = "Wiley",
  volume    =  76,
  number    =  3,
  pages     = "722--733",
  month     =  sep,
  year      =  2020,
  copyright = "http://onlinelibrary.wiley.com/termsAndConditions\#vor"
}

@book{pata2019fixed,
  title={Fixed Point Theorems and Applications},
  author={Pata, Vittorino},
  volume={116},
  year={2019},
  publisher={Springer}
}

@book{folland1999real,
  title={Real Analysis: Modern Techniques and Their Applications},
  author={Folland, Gerald},
  year={1999},
  publisher={John Wiley \& Sons}
}

@article{chen2015optimal,
  title={Optimal uniform convergence rates and asymptotic normality for series estimators under weak dependence and weak conditions},
  author={Chen, Xiaohong and Christensen, Timothy M},
  journal={Journal of Econometrics},
  volume={188},
  number={2},
  pages={447--465},
  year={2015},
  publisher={Elsevier}
}

@article{wang2025rate,
  title={Rate doubly robust estimation for weighted average treatment effects},
  author={Wang, Yiming and Liu, Yi and Yang, Shu},
  journal={Journal of Causal Inference},
  volume={13},
  number={1},
  pages={20240073},
  year={2025},
  publisher={De Gruyter}
}

@book{zeidler2012applied,
  title={Applied functional analysis: main principles and their applications},
  author={Zeidler, Eberhard},
  volume={109},
  year={2012},
  publisher={Springer Science \& Business Media}
}

@article{Neyman1990,
  author    = {Jerzy Splawa-Neyman and D. M. Dabrowska and T. P. Speed},
  title     = {On the Application of Probability Theory to Agricultural Experiments. Essay on Principles. Section 9},
  journal   = {Statistical Science},
  volume    = {5},
  number    = {4},
  pages     = {465--472},
  year      = {1990},
  doi       = {10.1214/ss/1177012031},
  url       = {https://projecteuclid.org/journals/statistical-science/volume-5/issue-4/On-the-Application-of-Probability-Theory-to-Agricultural-Experiments-Essay/10.1214/ss/1177012031.full}
}

@article{Rubin1974,
  author    = {Donald B. Rubin},
  title     = {Estimating Causal Effects of Treatments in Randomized and Nonrandomized Studies},
  journal   = {Journal of Educational Psychology},
  volume    = {66},
  number    = {5},
  pages     = {688--701},
  year      = {1974},
  doi       = {10.1037/h0037350},
  url       = {https://doi.org/10.1037/h0037350}
}

@article{RosenbaumRubin1983,
  author    = {Paul R. Rosenbaum and Donald B. Rubin},
  title     = {The Central Role of the Propensity Score in Observational Studies for Causal Effects},
  journal   = {Biometrika},
  volume    = {70},
  number    = {1},
  pages     = {41--55},
  year      = {1983},
  doi       = {10.1093/biomet/70.1.41}
}

@book{Rosenbaum2002,
  author    = {Rosenbaum, Paul R.},
  title     = {Observational Studies},
  edition   = {2},
  series    = {Springer Series in Statistics},
  publisher = {Springer New York},
  address   = {New York, NY, USA},
  year      = {2002},
  doi       = {10.1007/978-1-4757-3692-2},
}

@book{ImbensRubin2015,
  author    = {Imbens, Guido W. and Rubin, Donald B.},
  title     = {Causal Inference for Statistics, Social, and Biomedical Sciences: An Introduction},
  publisher = {Cambridge University Press},
  address   = {Cambridge, UK},
  year      = {2015},
  doi       = {10.1017/CBO9781139025751}
}

@article{Hirano2003,
  author  = {Hirano, Keisuke and Imbens, Guido W. and Ridder, Geert},
  title   = {Efficient Estimation of Average Treatment Effects Using the Estimated Propensity Score},
  journal = {Econometrica},
  year    = {2003},
  volume  = {71},
  number  = {4},
  pages   = {1161--1189}
}

@article{TaoFu2019,
  author  = {Tao, Yebin and Fu, Haoda},
  title   = {Doubly Robust Estimation of the Weighted Average Treatment Effect for a Target Population},
  journal = {Statistics in Medicine},
  year    = {2019},
  volume  = {38},
  number  = {3},
  pages   = {315--325},
  doi     = {10.1002/sim.7980}
}

@article{Crump2009,
  author  = {Crump, Richard K. and Hotz, V. Joseph and Imbens, Guido W. and Mitnik, Oscar A.},
  title   = {Dealing with Limited Overlap in Estimation of Average Treatment Effects},
  journal = {Biometrika},
  year    = {2009},
  volume  = {96},
  number  = {1},
  pages   = {187--199}
}

@article{YangDing2018,
  author  = {Yang, Shu and Ding, Peng},
  title   = {Asymptotic Inference of Causal Effects with Observational Studies Trimmed by the Estimated Propensity Scores},
  journal = {Biometrika},
  year    = {2018},
  volume  = {105},
  number  = {2},
  pages   = {487--493},
  doi     = {10.1093/biomet/asy008}
}

@article{LiGreene2013,
  author  = {Li, Liang and Greene, Tom},
  title   = {A Weighting Analogue to Pair Matching in Propensity Score Analysis},
  journal = {The International Journal of Biostatistics},
  year    = {2013},
  volume  = {9},
  number  = {2},
  pages   = {215--234},
  doi     = {10.1515/ijb-2012-0030}
}

@article{Mao2019,
  author  = {Mao, Huzhang and Li, Liang and Greene, Tom},
  title   = {Propensity Score Weighting Analysis and Treatment Effect Discovery},
  journal = {Statistical Methods in Medical Research},
  year    = {2019},
  volume  = {28},
  number  = {8},
  pages   = {2439--2454},
  doi     = {10.1177/0962280218781171}
}

@article{Li2018Overlap,
  author  = {Li, Fan and Morgan, Kari L. and Zaslavsky, Alan M.},
  title   = {Balancing Covariates via Propensity Score Weighting},
  journal = {Journal of the American Statistical Association},
  year    = {2018},
  volume  = {113},
  number  = {521},
  pages   = {390--400}
}

@article{zhou2020propensity,
  title={Propensity score weighting under limited overlap and model misspecification},
  author={Zhou, Yunji and Matsouaka, Roland A and Thomas, Laine},
  journal={Statistical methods in medical research},
  volume={29},
  number={12},
  pages={3721--3756},
  year={2020},
  publisher={SAGE Publications Sage UK: London, England}
}

@article{matsouaka2024causal,
  title={Causal inference in the absence of positivity: The role of overlap weights},
  author={Matsouaka, Roland A and Zhou, Yunji},
  journal={Biometrical Journal},
  volume={66},
  number={4},
  pages={2300156},
  year={2024},
  publisher={Wiley Online Library}
}

@book{bickel1998, 
  title={Efficient and adaptive estimation for semiparametric models}, 
  author={Bickel, Peter J. and Klaassen, Chris and Ritov, Yaacov and Wellner, Jon},
  publisher={Springer}, 
  year={1998} }

@article{one_step_tmle,
  author = {van der Laan, Mark and Gruber, Susan},
  year = {2016},
  month = {05},
  pages = {351-378},
  title = {One-Step Targeted Minimum Loss-based Estimation Based on Universal Least Favorable One-Dimensional Submodels},
  volume = {12},
  journal = {The International Journal of Biostatistics}, }

@techreport{vanderLaanRubin2006TMLE,
  author      = {van der Laan, Mark J. and Rubin, Daniel},
  title       = {Targeted Maximum Likelihood Learning},
  institution = {U.C. Berkeley Division of Biostatistics},
  type        = {Working Paper},
  number      = {213},
  year        = {2006},
  month       = oct,
  note        = {U.C. Berkeley Division of Biostatistics Working Paper Series},
  url         = {https://biostats.bepress.com/ucbbiostat/paper213}
}

@book{book2011,
  Author = {van der Laan, M.J. and Rose, S.},
  Publisher = {Springer},
  Title = {Targeted Learning: Causal Inference for Observational and Experimental Data (Springer Series in Statistics)},
  Year = {2011} }

@book{book2018,
  Author = {van der Laan, M.J. and Rose, S.},
  Publisher = {Springer Science and Business Media},
  Title = {Targeted Learning in Data Science: Causal Inference for Complex Longitudinal Studies},
  Year = {2018} }

@inproceedings{
li2025targeted,
title={Targeted Maximum Likelihood Learning: An Optimization Perspective},
author={Diyang Li and Kyra Gan},
booktitle={The Thirty-ninth Annual Conference on Neural Information Processing Systems},
year={2025},
url={https://openreview.net/forum?id=n63KgrgVHG}
}

@misc{vanderlaan2021higherordertargetedmaximum,
      title={Higher Order Targeted Maximum Likelihood Estimation}, 
      author={Mark van der Laan and Zeyi Wang and Lars van der Laan},
      year={2021},
      eprint={2101.06290},
      archivePrefix={arXiv},
      primaryClass={math.ST},
      url={https://arxiv.org/abs/2101.06290}, 
}

@techreport{carone2014higher,
  title        = {Higher-order Targeted Minimum Loss-based Estimation},
  author       = {Carone, Marco and D{\'i}az, Iv{\'a}n and van der Laan, Mark J.},
  institution  = {University of California, Berkeley, Division of Biostatistics},
  number       = {Working Paper 331},
  year         = {2014},
  url          = {https://biostats.bepress.com/ucbbiostat/paper331/}
}

@article{vanderlaan2006tmle,
  title   = {Targeted Maximum Likelihood Learning},
  author  = {van der Laan, Mark J. and Rubin, Daniel},
  journal = {The International Journal of Biostatistics},
  volume  = {2},
  number  = {1},
  year    = {2006},
  pages   = {Article 11},
  doi     = {10.2202/1557-4679.1043}
}

@article{schuler2017targeted,
  title={Targeted maximum likelihood estimation for causal inference in observational studies},
  author={Schuler, Megan  and Rose, Sherri},
  journal={American journal of epidemiology},
  volume={185},
  number={1},
  pages={65--73},
  year={2017},
  publisher={Oxford University Press}
}

@article{SMITH202334,
title = {Application of targeted maximum likelihood estimation in public health and epidemiological studies: a systematic review},
journal = {Annals of Epidemiology},
volume = {86},
pages = {34-48.e28},
year = {2023},
issn = {1047-2797},
doi={10.1016/j.annepidem.2023.06.004},
url = {https://www.sciencedirect.com/science/article/pii/S1047279723001151},
author = {Matthew J. Smith and Rachael V. Phillips and Miguel Angel Luque-Fernandez and Camille Maringe},
}

\end{document}